\newtheorem{theorem}{Theorem}
\newtheorem{lemma}[theorem]{Lemma}
\newtheorem{proposition}[theorem]{Proposition}
\newtheorem{corollary}[theorem]{Corollary}
\theoremstyle{definition}
\newtheorem{definition}[theorem]{Definition}
\newtheorem{rhproblem}[theorem]{RH problem}
\newtheorem{remark}[theorem]{Remark}
\let\Re\undefined
\let\Im\undefined
\DeclareMathOperator{\Re}{Re}
\DeclareMathOperator{\Im}{Im}
\DeclareMathOperator{\Ai}{Ai}
\DeclareMathOperator{\Disc}{Disc}
\DeclareMathOperator*{\supp}{supp}
\DeclareMathOperator*{\area}{area}
\numberwithin{equation}{section}
\numberwithin{theorem}{section}
\def\Xint#1{\mathchoice
	{\XXint\displaystyle\textstyle{#1}}%
	{\XXint\textstyle\scriptstyle{#1}}%
	{\XXint\scriptstyle\scriptscriptstyle{#1}}%
	{\XXint\scriptscriptstyle\scriptscriptstyle{#1}}%
	\!\int}
\def\XXint#1#2#3{{\setbox0=\hbox{$#1{#2#3}{\int}$}
		\vcenter{\hbox{$#2#3$}}\kern-.5\wd0}}
\def\dashint{\Xint-}
\title{This is some thing}
\author[1]{M. Kieburg\footnote{m.kieburg@unimelb.edu.au}}
\author[2]{A.B.J. Kuijlaars\footnote{arno.kuijlaars@kuleuven.be}}
\author[1,2]{S. Lahiry \footnote{sampad.lahiry@kuleuven.be, s.lahiry@unimelb.edu.au}}
\affil[1]{School of Mathematics and Statistics, University of Melbourne, 813
Swanston Street, Parkville, Melbourne, 3010, Victoria, Australia}
\affil[2]{Department of Mathematics,  Katholieke Universiteit Leuven,  Celestijnenlaan 200 B bus
2400, Leuven, 3001, Belgium}
\date{}                     
\title{Orthogonal polynomials in the normal matrix model
	with two insertions}
\date{\today}
\begin{document}

\maketitle
\begin{abstract}
    We consider orthogonal polynomials with respect to the weight 
    $|z^2+a^2|^{cN}e^{-N|z|^2}$
    in the whole complex plane. We obtain strong asymptotics and the limiting
    normalized zero counting 
 measure (mother body) of the orthogonal polynomials of degree $n$ in the scaling limit $n,N\to \infty$ such that $\frac{n}{N}\to t$. We restrict ourselves to the case $a^2\geq 2c$, $cN$ integer, and $t<t^{*}$ where $t^{*}$ is a constant depending only on $a,c$. Due to this restriction, the mother body is supported on an interval. We also find the two dimensional equilibrium measure (droplet) associated with the eigenvalues in the corresponding normal matrix model. 
 
 Our method relies on the recent result that the planar orthogonal polynomials 
 are a part of
 a vector of type I multiple orthogonal polynomials, and this enables
 us to apply the steepest descent method to the associated Riemann-Hilbert problem. 
\end{abstract}
\newpage

\tableofcontents

\section{Introduction}

\subsection{Planar orthogonal polynomials}
We consider a certain class of orthogonal polynomials whose weight function is supported on the whole complex plane. Let $P_{n,N}(z)$ be the monic polynomial of degree $n$ such that
\begin{equation}\label{orthogonal}
    \int_{\mathbb C} P_{n,N}(z)\overline{P_{m,N}(z)} e^{-N\mathcal{V}(z)} dA(z)=h_{n,N}\delta_{mn} \quad n,m=0,1, \ldots
\end{equation}
Here, $e^{-N\mathcal{V}(z)}$ is our weight function, $dA(z) = dx dy$ denotes the Lebesgue measure in the complex plane, 
$h_{n,N}$ is the norm of the polynomial with respect to the considered weight and $\delta_{mn}$ denotes the Kronecker delta symbol. 
The important feature of this model is that the weight function depends on a parameter $N$ that grows proportional to $n$. 
It is assumed that $\mathcal{V}(z)$ has sufficient growth at infinity so that the
integrals in \eqref{orthogonal} are convergent.

Our motivation comes from the relation between such planar orthogonal polynomials
and the normal matrix model. One considers the random matrix ensemble distributed according to the following density on the space of $n\times n$ normal matrices
\begin{equation}\label{normal}
    \mu_{n,N}(dM) = \frac{1}{Z_{N,n}}e^{-N\mathrm{Tr}\mathcal{V}(M)}dM,
\end{equation}
where $Z_{n,N}$ is a normalizing constant such that \eqref{normal}
is a probability measure. A typical form of $\mathcal{V}$ is 
\begin{equation}\label{normal2}
	\mathcal{V}(M)= MM^{*}-V(M)-\overline{V}(M^{*})
\end{equation}
with a scalar function $V$ satisfying certain growth conditions
at infinity.  Then it is known that the eigenvalues of $M$ have the
joint distribution proportional to
\begin{equation} \label{normal3} 
	\prod_{1 \leq j < k \leq n} |z_j-z_k|^2  \prod_{j=1}^n e^{-N |z_j|^2 + 2N \Re V(z_j)}, \end{equation}
	which is a Coulomb gas in an external potential.
The orthogonal polynomial $P_{n,N}$ from \eqref{orthogonal} is the average 
characteristic polynomial and the eigenvalue distribution \eqref{normal3} is
a determinantal point process with a correlation kernel built out of the orthogonal
polynomials, see, e.g., \cite{EF05}.

In addition, as $n,N\to \infty$ with $n/N \to t > 0$, the eigenvalues of $M$ fill out a compact two-dimensional domain $\Omega$ that is commonly referred to as the droplet.
Wiegmann and Zabrodin  \cite{WZ00, Z11} showed that (see also \cite{EF05, GTV14}) 
if 
\begin{equation}\label{taylor}
    V(z)=\sum_{k=1}^{\infty}\frac{t_k}{k} z^{k}
\end{equation}
and provided the droplet $\Omega$ is connected with $0 \in \Omega$, 
then $\Omega = \Omega(t,t_1,t_2,\ldots)$ is uniquely 
characterized by its area and its exterior harmonic moments, namely
\begin{equation}\label{harmonicmoment}
 t = \frac{1}{\pi}\area(\Omega), \qquad t_{k}=-\frac{1}{\pi}\int_{\mathbb C \setminus\Omega} \frac{dA(z)}{z^k}, \quad k=1,2,3,\ldots,
\end{equation}
where for $k \leq 2$ one has to suitably regularize the integral \eqref{harmonicmoment}.
As $t$ increases, the boundary of the droplet evolves according to the model of Laplacian growth 
\cite{GTV14, HM13}. The growth model may be unstable in time, in particular
for polynomial potentials \cite{BK12, EF05, KT15}, and in some scenarios displays a topological phase transition as analyzed for example in \cite{BBLM15, BGM17,B24}. 

\subsection{Normal matrix model with insertions}
Of particular interest is the case of
\begin{equation} \label{insertion} 
	V(z) = \sum_{k=1}^r c_k \log \left(z-a_k  \right), \quad a_k \in \mathbb C, \quad c_k > 0, \end{equation}
which is known as the normal matrix model with $r$ insertions.
Here, the joint eigenvalue density \eqref{normal3} is proportional to
\begin{equation} \label{insertion2} 
	\prod_{1 \leq j < k \leq n} |z_j-z_k|^2 \cdot
		\prod_{j=1}^n \prod_{k=1}^r |z_j-a_k|^{2 c_k N} \cdot \prod_{j=1}^n e^{-N |z_j|^2}, \end{equation}
which is a Coulomb gas in a quadratic potential that also interacts with 
fixed repelling external charges of strength $2 c_k N$ 
located at $a_k$ for $k=1, \ldots, r$. Alternatively, in case $c_k N$ is
an integer for every $k$, one can view \eqref{insertion2}
as the joint eigenvalue density of a complex Ginibre matrix of size $n + \sum_{k=1}^r c_k N$
conditioned to have $c_k N$ eigenvalues at $a_k$ for every $k=1,\ldots, r$,
see \cite{BF22} for a survey on the Ginibre unitary ensemble and its variations.

\begin{figure}[!tbp]
	\centering
	\includegraphics[width=0.9\textwidth, trim = 0 1.5cm 0 1cm,clip]{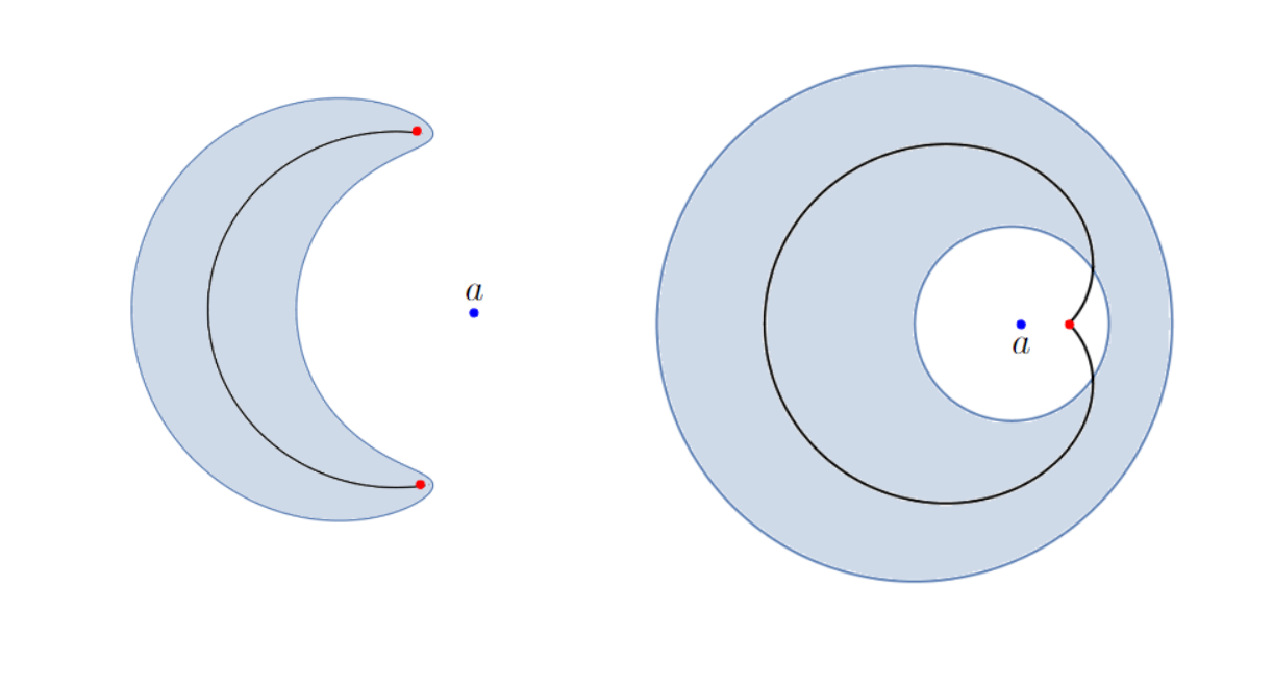}
	\caption{Droplet in shaded blue and the support of the mother body in black for the case $r=1$. The precritical case on the left and the postcritical case on the right.
	\label{oneptfig}}
\end{figure}

The case $r = 1$ in \eqref{insertion}, \eqref{insertion2} was analyzed in \cite{BBLM15}.
It was found that the planar orthogonal polynomials $P_{n,N}$ are also orthogonal
on a closed contour in the complex plane with a non-Hermitian weight function.
As a consequence, the polynomials are characterized by a $2 \times 2$ matrix valued
Riemann Hilbert problem \cite{FIK92} and the Deift Zhou method of steepest
descent \cite{DKMVZ99, DZ93} was successfully applied in \cite{BBLM15}.
One outcome is that the zeros of the polynomials $P_{n,N}$ when $n,N \to \infty$
with $n/N \to t > 0$ (with $t$ in the precritical case) 
tend to a contour inside the droplet 
with a limiting density that was explicitly calculated. The limiting zero distribution
is known as the mother body, or potential theory skeleton of the droplet, 
as it has the same logarithmic potential in the exterior region 
as the uniform measure on the droplet, while being supported on a contour,
see left panel of Figure \ref{oneptfig}. The droplet grows as $t$
increases while avoiding the external charge at $a$. 
At a critical value $t = t_c$ the droplet closes on itself and it becomes doubly
connected in the postcritical case, see right panel of Figure \ref{oneptfig}.
Following the results from \cite{BBLM15}, there are recent
works on the scaling limit of the correlation kernel in the critical 
regime \cite{KLY23}, as well
as on the large $n$ expansion of the partition function  \cite{BSY24} in the case
of one insertion.

For $r \geq 2$ the planar orthogonal polynomials $P_{n,N}$ are multiple
orthogonal of type II on a contour as shown in \cite{LY19}. This also
allows for a Riemann-Hilbert formulation \cite{VGK01}, but it is of size $(r+1) \times (r+1)$.
A steepest descent analysis was done by Lee and Yang in \cite{LY17,LY23} for
the case of small insertions, i.e., for the case where $c_k$ decays as $N$ increases
in such a way that $c_k N$ remain bounded as $n,N \to \infty$.  In that case the droplet 
is a disk (as it is
for the normal matrix model without insertions), but the zeros of the orthogonal polynomials
have an intricate non-trivial behavior within the unit disk.
The Riemann-Hilbert steepest descent analysis has not been performed for fixed $c_k$
in \eqref{insertion2}.

More recently, the planar orthogonal polynomials were found to be multiple
orthogonal of type I on a contour \cite{BKP23} as well, but only in cases 
where $c_k N$ is an integer
for every $k = 1, \ldots, r$. Type I multiple orthogonality comes with a Riemann-Hilbert problem as well; see \cite{VGK01} and  below. The weight functions in the
type I multiple orthogonality are much simpler than the ones in the type II multiple
orthogonality. As a result, we found that the type I Riemann-Hilbert problem tractable
to asymptotic analysis for the very special case of $r=2$, $a_1 = -a_2 = ia$ with $a > 0$
and $c_1 = c_2 = c > 0$ that we focus on the present work. 
Thus we consider  \eqref{normal} and \eqref{normal2}, with
\begin{equation}\label{potential}
    V(z) = c\log(z^2+a^2) \qquad a,c>0.
\end{equation}
We also assume $a^2 \geq 2c$ for reasons that will become clear shortly.
 We use $\Omega_t$ to denote the droplet.
The model, thus, depends on three positive parameters $a,c,t$.

\begin{figure}
	\centering 
\includegraphics[trim = {0 1.2cm 0 1cm},clip,width=\textwidth]{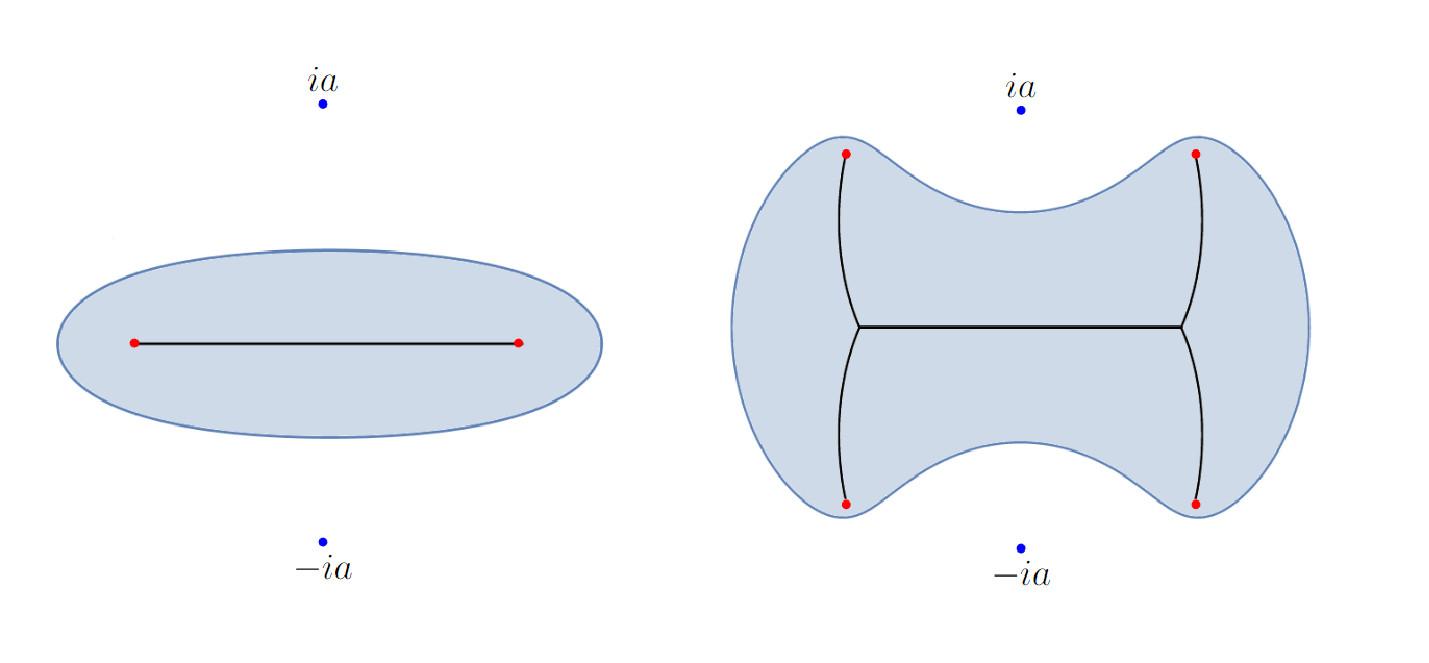}
\caption{The droplet $\Omega_{t}$ in shaded blue and the support of the mother body in black for $t<t^{*}$ on the left and $t^{*}<t<t_{c}$ on the right. \label{droplet1}}
\end{figure}

Our main new results deal with the asymptotic behavior of 
the orthogonal polynomials $P_{n,N}$ in this model. For all $t > 0$
there is a mother body supported on a contour, or system of contours, that
is the limiting distribution of the zeros of the polynomials. 
The model has two critical values,
denoted by $t^{*}$ and $t_{c}$ with 
\[  0 < t^{*} < t_{c}=a^2+2a\sqrt{c}-c. \]  
Depending on the range of $t$, with $a, c$ fixed, we observe the 
following in the large $n$ limit, 
see also Figures \ref{droplet1} and \ref{droplet2}. 

    \begin{figure}[t] 
    	\centering 
\includegraphics[width=\textwidth]{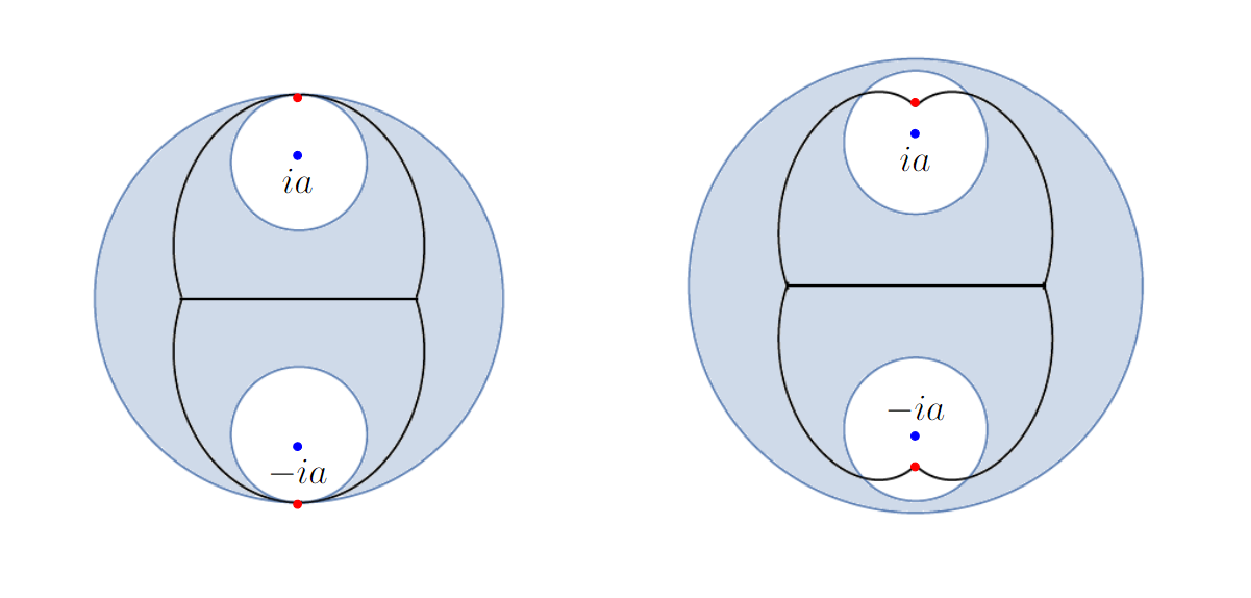}
\caption{The droplet $\Omega_{t}$ in shaded blue and the
	support of the mother body in black for $t=t_{c}$ on the left and $t>t_{c}$ on the right. \label{droplet2}}
\end{figure}
 \begin{description}
 \item[Phase 1, $0<t<t^{*}$]: 
 The support of the mother body is an interval on the real line. 
 The droplet $\Omega_{t}$ is simply connected and contains the support of the mother body, 
 see the left panel of Figure \ref{droplet1}.
 \item[Phase 2, $t^{*}<t<t_{c}$]:  
 	The support of the mother body consists of an interval on the real line, 
 	together with four analytic arcs emanating from the two endpoints of the interval.
 	The droplet $\Omega_t$ is still simply connected as in Phase 1, see the
 	right panel of Figure \ref{droplet1}.
\item[Phase 3, $t>t_{c}$]: 
	The analytic arcs in the  support of the mother body close onto themselves to form two contours connecting the two endpoints of the interval. One contour is in the
	upper half-plane and the other one is its mirror image in the lower
	half-plane. The droplet $\Omega_{t}$ is multiply connected. 
	It is equal to the closed disk with radius $\sqrt{t+2c}$ around $0$ 
	minus the two disks of radius $\sqrt{c}$ centered at $\pm ia$, see Figure \ref{droplet2}.
  \end{description}
In this paper, we restrict ourselves to Phase 1, i.e., to the regime $0 < t< t^{*}$.
We plan to discuss the other phases in a subsequent work.
 
 In the case $a^2 < 2c$ the insertions are close to the real axis (relative to
 the strength of the charges). Then, the droplet $\Omega_t$
 has two disjoint components if $t$ is small. Our methods may be adapted to handle
 this case, but we do not pursue this here. 
 
 \subsection{Type I multiple orthogonality}
 
 As already mentioned, our starting point is the type I multiple
 orthogonality that for the case $r=2$ and $a_1 = a_2 = ia$, $c_1 = c_2 = c$
 arises as follows. Let $V$ be as in \eqref{potential} with $cN$ an integer,
 so that $e^{NV(z)} = (z^2+a^2)^{cN}$ is a polynomial.
It was proved in \cite[Theorem 2.1]{BKP23} that the planar orthogonal polynomial
$P_{n,N}$ of degree $n$ with planar orthogonality
\begin{equation} \label{typeI0} 
	\int_{\mathbb C} P_{n,N}(z) \overline{z}^k e^{N \Re V(z)} e^{-N |z|^2} dA(z) = 0 
	\end{equation}
is characterized by
 
 \begin{equation} \label{typeI2}  
 	\left( \frac{d^2}{dz^2} + a^2N^2 \right)^{cN} \left[P_{n,N}(z) e^{NV(z)} \right] = \mathcal{O}(z^n) \quad \text{ as } z \to 0. \end{equation} 
 	Thus $P_{n,N} e^{NV}$ ``almost'' belongs to the kernel of the differential operator $\left(\frac{d^2}{dz^2} + a^2 N^2\right)^{cN}$.

 Following \cite{BKP23}, we obtain the existence of two polynomials $Q_{j,n,N}$,   
 with $\deg Q_{j,n,N} \leq c N-1$, for $j=1,2$, 
 such that
 \[ P_{n,N}(z) e^{NV(z)} + Q_{1,n,N}(z) e^{-iaNz} + Q_{2,n,N}(z) e^{iaNz} 
 	= \mathcal{O}\left(z^{n + 2cN}\right) \]
 as $z \to 0$.
 Equivalently, it means
 \begin{equation} \label{typeI3} 
 	P_{n,N}(z) e^{NV(z)} e^{-iaNz} + Q_{1,n,N}(z) e^{-2iaNz} = - Q_{2,n,N}(z)  +
 	 \mathcal{O}\left(z^{n + 2cN}\right) \end{equation}
 as $z \to 0$, which means that 
 the left-hand side of \eqref{typeI3} has a gap in its power series expansion around $0$:
 namely the coefficient of $z^{n+2cN-1-k}$ is zero for $k=0, \ldots, n + cN-1$.
  By Cauchy's theorem this implies that
for any contour $\gamma$ going around zero once in the positive direction,
we have 
\begin{equation} \label{typeI4}
	\frac{1}{2\pi i} \oint_{\gamma}
 	\left(P_{n,N}(z) e^{NV(z)} e^{-iaNz}
 		+ Q_{1,n,N}(z) e^{-2iaNz} \right) z^{k-n-2cN} dz  = 0,
 	\end{equation}
 	for $k=0, 1, \ldots, n+cN-1$. 
This is type I multiple orthogonality with two weight functions
$\frac{e^{NV(z)} e^{-iaNz}}{z^{n+2cN}}$
and $\frac{e^{-2iaNz}}{z^{n+2cN}}$, see e.g.\ \cite{I09}, \cite{NS91}. The polynomials $P_{n,N}$ and $Q_{1,n,N}$
are characterized by \eqref{typeI4}, and this leads to the
following RH problem, see, e.g., \cite{VGK01} and Lemma \ref{lemma12}.

\begin{rhproblem} \label{rhproblemY}
\leavevmode\newline\vspace{-0.6cm}
\begin{itemize}
  \item
		$Y : \mathbb C \setminus \Sigma_Y \to \mathbb C^{3 \times 3}$ is analytic,
		where $\Sigma_Y = \gamma$ with counterclockwise orientation.
		\item On $\Sigma_Y$, we have a jump $Y_+ = Y_- J_Y$, 
		where
		\begin{equation} \label{Yjump} 
			J_Y(z) = \begin{pmatrix} 1 & 0 &  z^{-(n+2cN)} e^{NV(z)}  e^{-ia Nz} \\
				0 & 1 & z^{-(n+2cN)} e^{-2ia Nz} \\
				0 & 0 & 1 \end{pmatrix}, \quad z \in \Sigma_Y. \end{equation}
		\item As $z \to \infty$, we have 
		\begin{equation} \label{Yasymp} 
			Y(z) = \left(I_3 + \mathcal O(z^{-1}) \right)	
			\begin{pmatrix} z^n & 0 & 0 \\ 0 & z^{cN} & 0 \\ 0 & 0 & z^{-n-cN} \end{pmatrix}.
		\end{equation}
	\end{itemize}
\end{rhproblem}

\begin{lemma} \label{lemma12} \cite{BKP23} 
	Assume that $cN \geq 1$ is an integer. Then, 
	the RH problem \ref{rhproblemY}
has a unique solution and $P_{n,N}(z) = Y_{11}(z)$.
\end{lemma}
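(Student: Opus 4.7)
The plan is to prove Lemma \ref{lemma12} in two stages: first uniqueness via a standard $\det Y$ and Liouville argument, then existence by constructing a solution row by row using the type I forms of \cite{BKP23} in the first row and near-neighbor analogues for rows $2$ and $3$. The identification $Y_{11} = P_{n,N}$ then follows directly from the construction.

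For uniqueness, the jump matrix $J_Y$ in \eqref{Yjump} is upper triangular with ones on the diagonal, so $\det J_Y \equiv 1$ and $\det Y$ extends across $\Sigma_Y = \gamma$ to an entire function. The diagonal part of \eqref{Yasymp} gives
\[ \det Y(z) = z^n \cdot z^{cN} \cdot z^{-n-cN}\bigl(1+O(z^{-1})\bigr) = 1 + O(z^{-1}) \]
at infinity, so Liouville forces $\det Y \equiv 1$ and any solution is invertible. If $Y^{(1)}, Y^{(2)}$ are two solutions, then $Y^{(1)}(Y^{(2)})^{-1}$ has no jump (the common right factor $J_Y$ cancels), is therefore entire, and tends to $I_3$ at infinity; by Liouville it equals $I_3$ identically.

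For existence of the first row, I would set $Y_{11} := P_{n,N}$, $Y_{12} := Q_{1,n,N}$, and
\[ Y_{13}(z) := \frac{1}{2\pi i}\oint_{\gamma} \frac{Y_{11}(s)\, w_1(s) + Y_{12}(s)\, w_2(s)}{s-z}\, ds, \]
where $w_1(s) = s^{-(n+2cN)} e^{NV(s)} e^{-iaNs}$ and $w_2(s) = s^{-(n+2cN)} e^{-2iaNs}$ are the weights appearing in the top two entries of the third column of $J_Y$. Plemelj-Sokhotski yields the jump $Y_{13,+} - Y_{13,-} = Y_{11} w_1 + Y_{12} w_2$, while expanding $(s-z)^{-1}$ as a geometric series for $|z|$ larger than the radius of $\gamma$ gives
\[ Y_{13}(z) = -\sum_{k\ge 0} z^{-k-1}\cdot\frac{1}{2\pi i}\oint_{\gamma} (Y_{11} w_1 + Y_{12} w_2)(s)\, s^k\, ds, \]
and the orthogonality relations \eqref{typeI4} eliminate precisely the terms $k = 0, \ldots, n+cN-1$, so $Y_{13}(z) = O(z^{-n-cN-1})$, matching the third column of \eqref{Yasymp}. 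Rows $2$ and $3$ are built by the same recipe at the neighboring multi-indices $(n-1, cN)$ and $(n, cN-1)$, together with the type I normalization $\tfrac{1}{2\pi i}\oint_\gamma (Y_{31} w_1 + Y_{32} w_2)(s)\, s^{n+cN-1}\, ds = 1$ required to produce $Y_{33}(z) = z^{-n-cN} + O(z^{-n-cN-1})$ instead of a faster decay.

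The main obstacle I expect is verifying that the linear systems determining the near-neighbor forms in rows $2$ and $3$ are non-singular, i.e.\ that the neighboring multi-indices are normal for the pair of weights $w_1, w_2$. This is precisely the content of the general type I RH framework of \cite{VGK01}, and in the present setting it reduces to the same construction that \cite[Theorem 2.1]{BKP23} uses to establish uniqueness of $P_{n,N}, Q_{1,n,N}, Q_{2,n,N}$, applied at shifted indices. Once all three rows are in hand, $Y_{11} = P_{n,N}$ is true by construction, and uniqueness from the first step forces this identification for \emph{every} solution, completing the proof.
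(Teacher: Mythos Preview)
Your proposal is correct and takes essentially the same approach as the paper: construct the first row from $(P_{n,N}, Q_{1,n,N})$ via the Cauchy integral, fill rows $2$ and $3$ with near-neighbor type~I forms whose existence is guaranteed by the normality results in \cite{BKP23,VGK01}, and read off $Y_{11}=P_{n,N}$. You supply an explicit uniqueness argument ($\det Y\equiv 1$ plus Liouville) and the geometric-series check for the decay of $Y_{13}$, both of which the paper leaves implicit, but the overall route is the same.
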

\begin{proof}
Putting $Y_{11}(z) = P_{n,N}(z)$, 	
	 $Y_{12}(z) = Q_{1,n,N}(z)$, and
\[ Y_{13}(z) = \frac{1}{2\pi i} \oint_{\gamma}
	\left[ P_{n,N}(s) e^{NV(s)} e^{-iaNs} + Q_{1,n,N}(s) e^{-2iaNs} \right] \frac{ds}{(z-s) s^{n+2cN}}
\]
one may check that the conditions for the first row of
$Y$ are satisfied. The other rows of $Y$ can be filled with the help of 
polynomials with appropriately modified
type I multiple orthogonality properties, and slightly different degrees. 
These polynomials exist since $P_{n,N}$ and $Q_{1,n,N}$
are uniquely characterized by \eqref{typeI4} and the degree conditions
$\deg P_{n,N} = n$, $\deg Q_{1,n,N} \leq cN-1$ with $P_{n,N}$ monic,
see also \cite{BKP23, VGK01}.
\end{proof}

\section{Statement of results}

\subsection{\texorpdfstring{Measures $\mu_1$ and $\mu_2$: existence and properties}{}}
We obtain the asymptotics of the polynomials $P_{n,N}$
through a Deift-Zhou steepest descent analysis of the RH problem \ref{rhproblemY}.
Following the fundamental works \cite{BI99,DKMVZ99}, this
has become a standard tool for the asymptotics of orthogonal polynomials with 
varying orthogonality weights, as well as for multiple orthogonal polynomials. 
The equilibrium measure in external field plays a major role in \cite{DKMVZ99}.
For multiple orthogonal polynomials we need more than one equilibrium measure.

In the present work we need two measures $\mu_1$ and $\mu_2$
on the real line, whose properties will be listed in Theorem \ref{thm:mu12} below,
see Remark \ref{vectorep} for the connection with a vector equilibrium problem.

Throughout the paper, we use
\begin{equation}
U^{\mu}(z) = \int \log \frac{1}{|z-s|} d\mu(s), \qquad z \in \mathbb C.
\end{equation} 
to denote the logarithmic potential of a measure $\mu$.
We also  use the standard notation 
\begin{equation}\label{logenergy}
 I(\mu_1,\mu_2) = \iint \log \frac{1}{|z-w|} d\mu_1(z) d\mu_2(w) \end{equation}
for the mutual energy between $\mu_1$ and $\mu_2$,
and $I(\mu) = I(\mu,\mu)$ for the logarithmic
energy of $\mu$.
Our main references for
logarithmic potential theory are \cite{R12, ST97}.
\begin{theorem} \label{thm:mu12}
	Suppose $a^2 \geq 2c >0$. Then there is $t^* > 0$
such that for every $0 < t < t^*$ there exist real numbers $x_1 = x_1(t)$ and $x_2 = x_2(t)$
with $x_2 > x_1 > 0$ and 
a pair $(\mu_1, \mu_2)$ of measures on $\mathbb R$, both symmetric with respect to $0$, satisfying the following.
\begin{enumerate} [label=\rm(\arabic*)]
 
	\item \label{item1} $\mu_1$ is a measure with 
	$\supp(\mu_1) = [-x_1,x_1]$ and $\int d\mu_1 = 1$,
	that is absolutely continuous with respect
	to  Lebesgue measure with a density that is real analytic
	and positive on $(-x_1,x_1)$ and vanishes like a square
	root at the endpoints $\pm x_1$.
	
	\item \label{item2}  $\mu_2$ is a measure with 
	$\supp(\mu_2) = \mathbb R$ and $\int d\mu_2 = \frac{t+c}{t}$.
	Additionally, using  $\sigma$ to denote the measure with constant 
	density $\frac{a}{\pi t}$ on the real line, we have
	\begin{equation} \label{mu12cond3a}
		\mu_2 \leq \sigma,
		\end{equation}
	and 
	\begin{equation} \label{mu12cond3b} 
		\supp(\sigma-\mu_2) = (-\infty,-x_2] \cup [x_2, \infty).	
		\end{equation}
	The density of $\sigma- \mu_2$ is real analytic and positive 
	on $(-\infty,-x_2) \cup (x_2,\infty)$ and vanishes like a square root
	at $\pm x_2$.

\item \label{item3} There is a real constant $\ell$ such that 
	\begin{equation} \label{mu12cond4} 
		2 U^{\mu_1} - U^{\mu_2} - \Re V_1
	   \begin{cases} = \ell, & \text{ on } \supp(\mu_1) = [-x_1,x_1],
	   	\\ > \ell, & \text{ on } [-x_2,-x_1) \cup (x_1, x_2], \end{cases} 
	   		\end{equation}
	   	where 
	   	\begin{equation} \label{V1} 
	   		V_1(z) =  \frac{c}{t} \log (z^2 + a^2). \end{equation}
	   		
	  \item \label{item4}
	 \begin{equation} \label{mu12cond5} 
	 	2 U^{\mu_2} - U^{\mu_1} + \Re V_2
	 \begin{cases} = 0, & \text{ on } 
	 (-\infty, -x_2] \cup [x_2, \infty), 
	 \\
		< 0, & \text{ on } (-x_2,x_2),
	\end{cases} \end{equation}
	where
	\begin{equation} \label{V2} 
		 V_2(z) = \frac{t+2c}{t} \log z. \end{equation}
		 
	\item \label{item5} There is a smooth closed contour $\gamma$ going around $\pm ia$ 
	in the complex plane such that
	\begin{equation} \label{mu12cond6} U^{\mu_1} + U^{\mu_2} - \Re V_1 + \Re V_2 + \frac{a}{t} |\Im z| > \ell	
		\quad \text{ on } \gamma, \end{equation}
	where $\ell$ is the same constant as in \eqref{mu12cond4} and $V_1$, $V_2$ are as in \eqref{V1} and \eqref{V2}.
	The contour $\gamma$ is symmetric with respect to the real and imaginary axes
	and intersects the positive real line at a point in the interval $(x_1,x_2)$ only.
	
	\item \label{item6} $t^{*}$ is determined by the property that 
	$ \lim\limits_{t \to t^*-} x_1(t) = \lim\limits_{t \to t^*-} x_2(t)$.
\end{enumerate}
\end{theorem}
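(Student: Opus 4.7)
The plan is to construct $(\mu_1,\mu_2)$ via the spectral curve associated with the three-sheeted Riemann--Hilbert problem \ref{rhproblemY} and to verify all stated properties by potential-theoretic analysis. Concretely, I would look for Cauchy transforms $F_j(z)=\int\frac{d\mu_j(s)}{z-s}$ ($j=1,2$) together with an auxiliary third combination that form the three branches of a single algebraic function $\xi(z)$ on a three-sheeted cover of the Riemann sphere. Differentiating \eqref{mu12cond4} on $(-x_1,x_1)$ and \eqref{mu12cond5} on $(-\infty,-x_2)\cup(x_2,\infty)$ gives linear jump relations for $F_1$ and $F_2$; combined with the saturation jump $F_{2,+}-F_{2,-} = -2ia/t$ on $(-x_2,x_2)$ coming from $\mu_2=\sigma$ there, these identify $\xi$ as the solution of a cubic
\[ \xi^3 + P(z)\,\xi + Q(z) = 0, \]
with rational $P, Q$ whose only singularities are at $0, \pm ia, \infty$ and whose local structure there is dictated by $V_1', V_2'$, the prescribed masses $1$ and $(t+c)/t$, and the density $a/(\pi t)$ of $\sigma$.

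Next, I would determine the free parameters in $P, Q$ and the endpoints $x_1(t), x_2(t)$ by imposing the branch-point conditions (the discriminant of the cubic should vanish precisely at $\pm x_1, \pm x_2$) together with the two mass normalizations; the $z\mapsto -z$ symmetry reduces everything to an effective system in $z^2$. A perturbative analysis near $t=0^+$, where $\mu_1$ collapses to $\delta_0$ and $\mu_2$ lies close to $\sigma$ on a small real neighborhood of $0$, provides an initial real solution with $0<x_1<x_2$, and the implicit function theorem propagates this to a maximal open interval $(0,t^*)$ at whose right endpoint the two branch points collide, establishing \ref{item6}. Reading off $\mu_1$ from the jump of $F_1$ across $[-x_1,x_1]$ and $\sigma-\mu_2$ from the jump of $F_2$ across $(-\infty,-x_2)\cup(x_2,\infty)$, the simple branch points yield square-root vanishing at the endpoints and analytic positive densities in the interior, establishing \ref{item1} and \ref{item2}. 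Integrating the resulting Plemelj identities recovers \eqref{mu12cond4} and \eqref{mu12cond5} on the supports, while the strict inequalities off the supports follow from a sign analysis of $\Re\,\xi$ on the relevant real intervals.

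For \ref{item5}, I would define $\gamma$ as a smooth simple closed symmetric curve lying inside the open set $\{\Phi > \ell\}$, where
\[
 \Phi(z) := U^{\mu_1}(z) + U^{\mu_2}(z) - \Re V_1(z) + \Re V_2(z) + \tfrac{a}{t}|\Im z|.
\]
By \eqref{mu12cond4}, $\Phi>\ell$ on the real segments $[-x_2,-x_1)\cup(x_1,x_2]$; a direct computation combining \eqref{mu12cond5} with the harmonic conjugate of $U^{\mu_2}$ handles the portion of the imaginary axis outside $[-ia,ia]$. Since $\Phi$ is harmonic on $\mathbb{C}\setminus(\mathbb{R}\cup\{\pm ia\})$, a maximum-principle argument then extends the inequality to an open connected region that contains the required simple closed curve.

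The main technical obstacle is the global analysis of the cubic spectral curve on the whole interval $(0, t^*)$: proving that the algebraic system admits a unique real solution with $0<x_1<x_2$ throughout, that the associated Cauchy transforms yield bona fide positive measures with exactly the claimed support structure, and that $a^2\geq 2c$ is the precise condition keeping the relevant discriminant non-negative so that the four branch points remain real. Geometrically this is the condition that the two insertions $\pm ia$ are sufficiently far from the real axis relative to their charge; for $a^2<2c$ the interval-support ansatz would no longer apply and the droplet itself would split into two components.
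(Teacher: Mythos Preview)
Your approach is genuinely different from the paper's and worth comparing. The paper does \emph{not} start from the spectral curve; it first solves a vector equilibrium problem (energy \eqref{vectorenergy} with the upper constraint $\mu_2\le\sigma$ and a cutoff $\supp\mu_1\subset[-X,X]$), and only afterwards builds the three-sheeted algebraic function $S$ from the Cauchy transforms of the minimizers. Positivity of $\mu_1$ and $\sigma-\mu_2$, the support structure, and the Euler--Lagrange conditions then come for free from the variational characterization; the hard work is in the choice of cutoff $X$, the monotonicity in $t$ via the de la Vall\'ee Poussin theorem (Lemma~\ref{lemma34}), and the single-interval property of $\supp\mu_1$ (this is precisely where $a^2\ge 2c$ enters, via the convexity computation \eqref{d2Urho}). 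Your spectral-curve-first route is viable in principle---it is the strategy of e.g.\ \cite{BK12}---but the obstacle you name (real solvability of the branch-point system and positivity of the resulting densities throughout $(0,t^*)$) is exactly what the variational approach sidesteps, and you give no concrete mechanism to control it beyond the implicit function theorem near $t=0$.

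There is also a specific error in your sketch of \ref{item5}. Your $\Phi$ is the sum of the left-hand sides of \eqref{mu12cond4} and \eqref{mu12cond5}, not the left-hand side of \eqref{mu12cond4} alone. On $[-x_1,x_1]$ one has $\Phi<\ell$ (equality in \eqref{mu12cond4}, strict inequality in \eqref{mu12cond5}), while at $\pm x_2$ one has $\Phi>\ell$; on $(x_1,x_2)$ the two contributions have opposite signs and $\Phi$ crosses $\ell$ at a single interior point $x_3$. So $\{\Phi>\ell\}$ does \emph{not} contain all of $(x_1,x_2]$, and your proposed $\gamma$ cannot simply sit there. The paper's argument for \ref{item5} is more delicate: it uses that $\Phi$ is subharmonic on $\mathbb C\setminus([-x_1,x_1]\cup\{\pm ia\})$ (the constraint $\mu_2\le\sigma$ is what makes $U^{\mu_2}+\tfrac{a}{t}|\Im z|$ subharmonic across $\mathbb R$), together with a maximum-principle argument ruling out that $\partial\{\Phi>\ell\}$ returns to the real axis or escapes to infinity, and finally the fact that $S_1-S_3$ has no zeros on $(0,ia)$ to show $\Phi$ is increasing along the imaginary axis toward $ia$. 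Your harmonicity statement off $\mathbb R$ is not enough to run this.
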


\begin{remark} \label{vectorep} 
Note that besides the parameters $a, c$, and $t$, there are three ingredients in
the properties of Theorem \ref{thm:mu12}, namely the measure $\sigma$ 
and the functions $V_1$ and $V_2$. The measure $\sigma$ 
acts as an upper constraint on the measure $\mu_2$, as discussed in item \ref{item2}. 
The functions $V_1$ and $V_2$ from \eqref{V1} and \eqref{V2} are multi-valued, 
but $\Re V_j$ and $V_j'$ for $j=1,2$ are well-defined and single valued for $j=1,2$. 
Also $e^{nV_1}$ and $e^{nV_2}$ are well-defined provided that
both $n$ and $n c/t$ are integers.
\end{remark}

The properties in \ref{item3} and \ref{item4}  are the Euler-Lagrange variational conditions for
a vector equilibrium problem associated with the energy functional
\begin{equation} \label{vectorenergy}
	I(\mu_1) + I(\mu_2) - I(\mu_1,\mu_2) - \int \Re V_1 \, d\mu_1 + \int \Re V_2 \, d\mu_2  
\end{equation}
subject to measures $\mu_1,\mu_2$ satisfying the normalizations
given in \ref{item1}, \ref{item2} and the upper constraint from \eqref{mu12cond3a}, with the additional assumption that $\supp(\mu_1) \subset [-x_2,x_2]$.
In addition there are strict inequalities in \ref{item3}  and \ref{item4}, which will be
important for the Riemann-Hilbert analysis that follows.

We also note that by property \ref{item1} the minimizer $\mu_1$ is supported on one interval,
and it may be called one-cut regular, because of the properties of the density of $\mu_1$
stated in \ref{item1}. 

The property in \ref{item5}  is an additional property of the logarithmic potentials of the two
measures that will also be used in the Riemann-Hilbert analysis to estimate 
the jump matrix on the contour $\gamma$.

\subsection{Droplet and mother body}

The next result shows that $\mu_1$ is indeed the mother body for the droplet
$\Omega_t$. 
\begin{theorem}\label{thm:droplet}
	Suppose $a^2 \geq 2c$ and $0<t<t^{*}$. Let $\mu_{1}$ be as in Theorem~\ref{thm:mu12}. 
	\begin{itemize}
	\item[\rm (a)] The identity
		\begin{equation} \label{S1zidentity}
			t\int\frac{d\mu_{1}(s)}{z-s}+\frac{2cz}{z^2+a^2}=\overline{z} 
		\end{equation}
	holds for $z$ on a simple closed curve $\partial \Omega_t$, symmetric with respect
	to the real and imaginary axes, that is the boundary of a 
	simply connected domain $\Omega_t$.   The interval $[-x_1, x_1]$
	is contained in $\Omega_t$ and the points $\pm ia$ are in 
	$\mathbb C \setminus \Omega_t$.
	
	\item[\rm (b)] We have 
	\begin{equation}\label{motherbody}
		\frac{1}{\pi} \int_{\Omega_t} \frac{dA(s)}{z-s} =  t\int\frac{d\mu_1(s)}{z-s} \quad \text{for } z \in \mathbb C \setminus \Omega_t.
	\end{equation}
	
	\item[\rm (c)] The domain has $\area(\Omega_t) = \pi t$
	and harmonic moments \eqref{harmonicmoment},
		\begin{equation} \label{wz2}
			t_{k} = - \frac{1}{\pi} \int_{\mathbb C \setminus \Omega_t} \frac{dA(z)}{z^k} = \begin{cases}
				0, &  \text{ if $k$ is odd}, \\
				(-1)^{\frac{k}{2}+1} \frac{2c}{a^{k}}, &  \text{ if $k$ is even},
			\end{cases}
		\end{equation}
		for $k=1,2,\ldots$
	
	\item[\rm (d)] $\Omega_t$ is the droplet for the normal matrix model \eqref{normal}, \eqref{normal2} with $V$ given by \eqref{potential}.		
	\end{itemize}
\end{theorem}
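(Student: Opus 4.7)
My plan is to prove (a) first; parts (b) and (c) then follow from (a) by Green's theorem and contour deformation, and (d) is a consequence of the Wiegmann--Zabrodin uniqueness of the droplet.

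For (a), introduce the candidate Schwarz function and effective potential
\begin{equation*}
S(z):=t\int\frac{d\mu_1(s)}{z-s}+\frac{2cz}{z^2+a^2},\qquad
\Phi(z):=2t\,U^{\mu_1}(z)+|z|^2-2c\log|z^2+a^2|.
\end{equation*}
A direct computation yields $\partial_z\Phi(z)=\bar z-S(z)$, so \eqref{S1zidentity} expresses that $\partial\Omega_t$ is a critical level set of $\Phi$. The function $\Phi$ is real, continuous on $\mathbb C\setminus\{\pm ia\}$, satisfies $\Delta\Phi=4$ off $[-x_1,x_1]\cup\{\pm ia\}$, and tends to $+\infty$ at $\pm ia$ and at infinity; the symmetries of $\mu_1$ give $\Phi(\bar z)=\Phi(-z)=\Phi(z)$. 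I would first use the square-root vanishing of the density of $\mu_1$ at $\pm x_1$ (item \ref{item1}) and the strict inequality \eqref{mu12cond4} in $(x_1,x_2)$ to show that on the positive real axis the equation $S(x)=x$ has a unique solution $x^{**}\in(x_1,x_2)$, hence a unique critical point of $\Phi$ there. Differentiating $\bar z=S(z)$ along the vertical tangent forced by the symmetry $\Phi(\bar z)=\Phi(z)$ then gives $S'(x^{**})=-1$. I would extend this local critical point to a full closed curve by tracing the level set $\{\Phi=\Phi(x^{**})\}$: the strict inequality \eqref{mu12cond6} on the contour $\gamma$ from item \ref{item5} confines the component through $x^{**}$ to the topological annulus enclosed by $\gamma$ and disjoint from $\pm ia$, and the symmetries $z\mapsto-z$, $z\mapsto\bar z$ close it into a simple closed curve $\partial\Omega_t$ enclosing $[-x_1,x_1]$. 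Define $\Omega_t$ as the bounded component of the complement.

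For (b), by Green's formula and \eqref{S1zidentity}, for $z\in\mathbb C\setminus\Omega_t$,
\begin{equation*}
\frac{1}{\pi}\int_{\Omega_t}\frac{dA(s)}{z-s}=\frac{1}{2\pi i}\oint_{\partial\Omega_t}\frac{\bar s\, ds}{z-s}=\frac{1}{2\pi i}\oint_{\partial\Omega_t}\frac{S(s)\, ds}{z-s}.
\end{equation*}
Deforming the contour to $|s|=R\to\infty$, where $S(s)/(z-s)=O(1/s^2)$, one picks up residues at $s=z,\pm ia\in\Omega_t^c$ and a branch-cut contribution around $[-x_1,x_1]\subset\Omega_t$; the $\pm ia$ residues cancel the polar part of $S(z)$, leaving $t\int d\mu_1(s)/(z-s)$, which is \eqref{motherbody}. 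For (c), letting $z\to\infty$ in \eqref{motherbody} yields $\area(\Omega_t)=\pi t$, and for $k\ge 2$ the moments are computed from $t_k=(2\pi i)^{-1}\oint_{\partial\Omega_t}S(z)z^{-k}dz$ by the same kind of deformation: the Cauchy part is analytic on $\Omega_t^c$ with $O(1/z^{k+1})$ decay and contributes zero, while the polar part of $S$ produces residues at $z=0\in\Omega_t$ that combine into \eqref{wz2}; the case $k=1$ follows from $\Omega_t=-\Omega_t$. Part (d) is then a direct application of Wiegmann--Zabrodin uniqueness \cite{WZ00,Z11}: $\Omega_t$ is simply connected, contains $0$, has area $\pi t$, and its exterior harmonic moments agree with those read off from the Taylor expansion of $V(z)=c\log(z^2+a^2)$ at the origin.

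The main obstacle is the global step in (a): showing that the critical level set of $\Phi$ through $\pm x^{**}$ is a single Jordan curve with the stated nesting. Morally this follows from the strict inequalities in items \ref{item3}--\ref{item5} together with the subharmonicity of $\Phi$, but turning them into clean topological information about the sublevel sets of $\Phi$ is the delicate part of the argument.
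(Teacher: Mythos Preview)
Your treatment of (b)--(d) is essentially the paper's: Green's theorem converts area integrals to contour integrals of $S$ over $\partial\Omega_t$, which are then evaluated by residues in the exterior domain. One correction: in (b) there is no ``branch-cut contribution around $[-x_1,x_1]$'' when you deform $\partial\Omega_t$ outward to $|s|=R$, since $[-x_1,x_1]\subset\Omega_t$ is never crossed; the answer comes solely from the residues at $z$ and $\pm ia$.

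Part (a) is where the real difference lies. Your observation $\partial_z\Phi=\bar z-S(z)$ is correct, so $\partial\Omega_t$ is precisely the critical set of $\Phi$. But your proposed control of this set has two concrete gaps. First, the inequalities you invoke from items \ref{item3}--\ref{item5} of Theorem~\ref{thm:mu12} all involve $U^{\mu_2}$, $V_2$, or the term $\frac{a}{t}|\Im z|$, none of which appear in $\Phi$; in particular, \eqref{mu12cond4} is a statement about $2U^{\mu_1}-U^{\mu_2}-\Re V_1$, not about $\Phi$, so it does not directly locate the real zero $x^{**}$ of $x-S(x)$, and \eqref{mu12cond6} concerns the function $u$ of \eqref{upr0}, not $\Phi$, so it does not confine level sets of $\Phi$. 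Second, since every point of $\partial\Omega_t$ is a critical point of $\Phi$, the level set through $x^{**}$ is not a regular level set, and ``tracing'' it requires more than the implicit function theorem; you correctly flag this as the main obstacle but do not resolve it.

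The paper takes a different, algebraic route that sidesteps these issues entirely. It views $S=S_1$ as one branch of a meromorphic function on the three-sheeted Riemann surface $\mathcal R$ with spectral curve $P(S,z)=0$ (Corollary~\ref{corol39}), and studies the degree-four function $zS$ on $\mathcal R$. Riemann--Hurwitz pins down the six critical points of $zS$, including $\pm p_1$ on the real axis and $\pm ip_2$ on the imaginary axis of the first sheet (Lemma~\ref{lemma53}). The curve $\partial\Omega_t$ is then constructed as the locus in the first sheet where $zS_1(z)$ is real and in $[s_2,s_1]$ (Lemma~\ref{lemma54}); this is a clean implicit-curve argument because the critical points are known and the values at infinity and at the axes are controlled by Corollary~\ref{imaginary}. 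Finally, the symmetry $P(S,z)=P(z,S)$ gives that $z^2+S_1(z)^2$ is also real on $\partial\Omega_t$ via \eqref{PS1z}, and elementary algebra together with $zS_1(z)>0$ forces $S_1(z)=\bar z$. The Riemann-surface bookkeeping is what replaces your missing topological control of the critical set of $\Phi$.
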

The droplet $\Omega_{t}$ in the normal matrix model is uniquely 
characterized by a variational problem from logarithmic potential theory
as well, see, e.g.,\ \cite{EF05}. From Theorem \ref{thm:droplet} (d)
it then follows that for some constant $\ell_t$,
\begin{equation}\label{aw1}
	2U^{\frac{1}{\pi} dA\mid_{\Omega_t}}(z) + |z|^2 - 2c \log |z^2+a^2| 
	\begin{cases}=\ell_{t} \qquad z\in \Omega_t, \\
		\geq\ell_{t} \qquad z\in \mathbb C.
	\end{cases}  
\end{equation}
After integration, we obtain from \eqref{motherbody} that
\begin{equation} \label{aw2} 
	U^{\frac{1}{\pi} dA\mid_{\Omega_t}}(z) = t U^{\mu_1}(z), \quad z \in 
\mathbb C \setminus \Omega_t, \end{equation}
that is, the logarithmic potential of $\mu_1$ agrees with 
that of the normalized Lebesgue measure on the droplet.

\begin{remark}[Remark on conformal map]
The identity \eqref{S1zidentity} means that the left-hand side is the
Schwarz function of the boundary of the droplet. Later in the paper,
we are going to  call it $S_1(z)$, see  \eqref{S1z} below. 

Denote $f$ to be the Riemann map from the exterior of the unit disk to the exterior of the droplet (which exists since $\Omega_t$ is simply connected).
The Schwarz function provides a way to analytically 
continue the conformal  map on the interior of the unit disc. 
Let $f$ be normalized such that $f(w) = \rho w + O(w^{-1})$
as $w \to \infty$, where $\rho >0$ is 
the logarithmic capacity of $\Omega_t$. 
Then defining 
\begin{equation} \label{fsymmetry} f(w) =  S_1( f(w^{-1})), \qquad |w| < 1, \end{equation}
 one can readily show, using the property \eqref{S1zidentity}
  of the Schwarz function and the symmetry in the
  real axis, that this
definition extends $f$ to a rational function on $\mathbb C$.
The rational function has poles
at the two points $w= \pm i \alpha$ on the imaginary axis that are such 
that $\pm i \alpha^{-1}$ are mapped to $\pm ia$ by $f$. Note that $0 < \alpha < 1$. 
Then $f$ takes the form
\begin{equation}\label{conformalmap}
    f(w) = \rho w + \frac{2 \kappa w}{w^2+\alpha^2},\end{equation}
for certain real constants $\rho$, $\kappa$, and $\alpha$.
See Lemma \ref{lemma56} below for equations that relate these
constants to the parameters $a, c$, and $t$ of the model. 

The Phase 1 that we are considering here (i.e., $0 < t < t^*$) corresponds
to $f$ having real critical points, that is, there are $w_1, w_2$
with $0 < w_2 < w_1$ such that $f'(\pm w_j) = 0$ for $j=1,2$.
Then the points $x_1, x_2$ from Theorem \ref{thm:mu12} are the critical values
$x_j = f(w_j)$ for $j=1,2$. 
For $t = t^*$ the critical points of $f$ coincide, 
and for larger $t$ the
critical points of $f$ are no longer real and this brings us to Phase 2.
For even larger $t$ there is no conformal map \eqref{conformalmap}
and then we are in Phase 3.
\end{remark}

\subsection{Strong asymptotics of the polynomials}

We now state our main result concerning the strong asymptotics of the orthogonal polynomials $P_{n,N}(z)$. We use the probability measure $\mu_1$
satisfying the properties of Theorem \ref{thm:mu12} and its
associated $g$-function
 \begin{equation} \label{g1z} 
 	g_1(z) = \int \log(z-s) d\mu_1(s). \end{equation}
We also use the conformal map $F$ from the exterior of the droplet to
the exterior of the unit circle, with $F(\infty) = \infty$ and $F'(\infty) > 0$. 
It is the inverse of  \eqref{conformalmap}, and it extends to an analytic
function $F : \mathbb C \setminus [-x_1,x_1] \to \mathbb C$.
Note that $F(z) = \rho^{-1}z + \mathcal{O}(z^{-1})$ as $z \to \infty$.

We then have  the following.
\begin{theorem}\label{thm:PnNasym}
    Let $0<t<t^{*}$ and let $n,N\to\infty$ such that $n-Nt$ is bounded.
    Suppose  $cN$ is an integer for every $N$. Then we have
  \begin{equation} \label{PnNasym} 
  	P_{n,N}(z) =   \left(\rho F'(z)\right)^{1/2}  e^{n g_1(z)} \left( 1 + \mathcal O(1/n) \right) 
\quad  \text{as } n \to \infty, \end{equation}
  uniformly for $z$ in compact subsets of $\overline{\mathbb C} \setminus [-x_1,x_1]$.
\end{theorem}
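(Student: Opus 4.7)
The plan is to perform a Deift-Zhou steepest descent analysis of the $3\times 3$ RH problem \ref{rhproblemY} for $Y$, driven by the measures $\mu_{1},\mu_{2}$ of Theorem \ref{thm:mu12}. With the $g$-functions
\[
g_{1}(z)=\int\log(z-s)\,d\mu_{1}(s),\qquad g_{2}(z)=\int\log(z-s)\,d\mu_{2}(s),
\]
I would first set
\[
T(z) = e^{n\Lambda}\, Y(z)\, \mathrm{diag}\!\left(e^{-n g_{1}(z)},\ e^{-n(g_{2}-g_{1})(z)},\ e^{n g_{2}(z)}\right),
\]
where $e^{n\Lambda}$ is a constant diagonal factor absorbing the Lagrange multiplier $\ell$ from item \ref{item3} (and the implicit multiplier $0$ from item \ref{item4}) so that $T(z)\to I_{3}$ as $z\to\infty$; the choice of exponents matches the $Y$-asymptotics \eqref{Yasymp} because $\int d\mu_{1}=1$ and $\int d\mu_{2}=(t+c)/t$, while $cN=nc/t+\mathcal{O}(1)$. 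Inserting this $T$ rewrites the jump on $\gamma$ in terms of the phase functions whose behavior is exactly controlled by items \ref{item3}-\ref{item5}: the variational equalities produce oscillating jumps on $\supp(\mu_{1})=[-x_{1},x_{1}]$ and on $\supp(\sigma-\mu_{2})=(-\infty,-x_{2}]\cup[x_{2},\infty)$, and the strict inequalities force the jumps everywhere else to be exponentially small.

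Next I would open lenses to produce $S$. On $[-x_{1},x_{1}]$ the oscillating jump factors in the standard upper/lower triangular way, and item \ref{item3} makes the jumps on the lens sides exponentially close to $I_{3}$. An analogous factorization around $(-\infty,-x_{2}]\cup[x_{2},\infty)$ uses the upper constraint $\mu_{2}\leq\sigma$ of item \ref{item4}. Simultaneously, I would deform the original contour $\gamma$ into the region provided by item \ref{item5}: the estimate \eqref{mu12cond6} is tailored so that the contribution $\tfrac{a}{t}|\Im z|$ from $e^{-iaNz}$ and $e^{-2iaNz}$ dominates, and the jump on this deformed contour becomes exponentially negligible. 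After these steps the only leading-order jumps of $S$ live on $[-x_{1},x_{1}]$ and on $(-\infty,-x_{2}]\cup[x_{2},\infty)$.

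I would then build the global parametrix $P^{(\infty)}$ satisfying precisely those leading jumps, and match it near $\pm x_{1}$ with standard Airy parametrices, justified by the square-root vanishing of the density of $\mu_{1}$ in item \ref{item1}. Because $\mu_{1}$ is one-cut regular and the Schwarz function identity \eqref{S1zidentity} holds by Theorem \ref{thm:droplet}, the exterior conformal map $F$ (inverse of \eqref{conformalmap}) provides the geometric backbone, and the $(1,1)$ entry of $P^{(\infty)}$ should be the explicit scalar factor $(\rho F'(z))^{1/2}$. The final small-norm RH problem for $R=S(P^{(\infty)})^{-1}$, with Airy parametrices inserted in fixed disks around $\pm x_{1}$, has jump $I_{3}+\mathcal{O}(1/n)$ and therefore $R=I_{3}+\mathcal{O}(1/n)$ uniformly on compact subsets of $\overline{\mathbb{C}}\setminus[-x_{1},x_{1}]$. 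Unwinding $Y\to T\to S\to R$ and reading off the $(1,1)$ entry gives \eqref{PnNasym}; the hypothesis that $n-Nt$ is bounded absorbs the $\mathcal{O}(1)$ discrepancy between $cN$ and $nc/t$ into the error term.

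The principal obstacle will be the construction of $P^{(\infty)}$. Unlike the $2\times 2$ one-cut case for Hermitian orthogonal polynomials, one cannot write the parametrix directly in terms of a single scalar conformal map: its entries must be organized on a three-sheeted Riemann surface whose branch structure is dictated jointly by $[-x_{1},x_{1}]$ and $(-\infty,-x_{2}]\cup[x_{2},\infty)$, and one has to identify meromorphic sections with the correct pole/zero data at the points where the sheets meet at $\infty$. A subtlety specific to this model is that $\supp(\sigma-\mu_{2})$ is unbounded, so $P^{(\infty)}$ carries jumps on unbounded contours, and the decay of its entries as $|z|\to\infty$ must be controlled carefully both to make the model problem solvable and to keep the small-norm estimate valid on the corresponding $L^{2}$ spaces. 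Once $P^{(\infty)}$ is in hand, matching the Airy parametrix in the $(1,1)$ entry at $\pm x_{1}$ is what ultimately pins down the prefactor $(\rho F'(z))^{1/2}$ in \eqref{PnNasym}.
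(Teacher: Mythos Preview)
Your overall strategy is right, and you correctly identify the global parametrix on the three-sheeted surface as the main technical hurdle. But there is a genuine gap: you build Airy parametrices only at $\pm x_{1}$. In the paper, local parametrices are also required at $\pm x_{2}$, because by item \ref{item2} of Theorem~\ref{thm:mu12} the density of $\sigma-\mu_{2}$ vanishes like a square root at $\pm x_{2}$. Consequently the global parametrix $M$ blows up like $(z\mp x_{2})^{-1/4}$ there (see \eqref{Mnearbranch}), and without local parametrices the jump of $R=SM^{-1}$ does not tend to $I_{3}$ near $\pm x_{2}$; the small-norm argument collapses. The pleasant surprise, proved in Section~6.8, is that on the disks around $\pm x_{2}$ one has $P_{1,1}=(E_{n})_{1,1}=M_{1,1}$, so these extra parametrices do not alter the final formula for $P_{n,N}$ --- but you only learn this \emph{after} constructing them.

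A second point where your sketch diverges from the paper is the first transformation. You propose a single diagonal conjugation $Y\mapsto T$, but in the paper the passage $Y\mapsto X$ (Definition~\ref{YtoX}) is genuinely non-diagonal: it uses four different $3\times3$ matrices in the four regions cut out by $\gamma$ and $\mathbb{R}$, permuting and mixing the second and third columns so as to convert the two exponentials $e^{-iaNz}$ and $e^{-2iaNz}$ in $J_{Y}$ into a single block-triangular structure adapted to $\Delta_{1}$ and $\Delta_{2}$. Only after this preparation do the $g$-functions act diagonally in $X\mapsto T$. A bare diagonal conjugation of $Y$ will not produce the clean oscillatory jumps on $[-x_{1},x_{1}]$ and $(-\infty,-x_{2}]\cup[x_{2},\infty)$ that you need for lens opening; you would find residual off-diagonal terms that are not controlled by the variational conditions.
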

We believe that the condition that $cN$ is an integer is not 
essential. As already said, we need it for the solvability of the RH problem \ref{rhproblemY}, see Lemma~\ref{lemma12}, that is basic for our analysis.

The prefactor $(\rho F'(z))^{1/2}$ in \eqref{PnNasym} arises in our
approach as the $(1,1)$ entry of the global parametrix $M$ that we use
in the steepest descent analysis, see RH problem \ref{rhproblemM}, and
see \eqref{M11z} for the connection with the conformal map ($F$ is denoted
as $F_1$ in \eqref{M11z}). It turns out that $\rho F'$ has no zeros in
$\mathbb C \setminus [-x_1,x_1]$, and $(\rho F'(z))^{1/2}$ denotes the unique branch
of the square root that tends to $1$ as $z \to \infty$.

\begin{remark} In principle our methods allow us to determine the
	$\mathcal{O}\left(n^{-1}\right)$ term in \eqref{PnNasym}
	and as well as higher order terms. This can be done by improving
	the matching condition that we will obtain in the steepest descent analysis,
	similar to what is done in  \cite{DKMVZ99,KMVV04} 
	for orthogonal polynomials on the real line. We do not
	pursue this here. 
\end{remark}

\begin{remark}
The asymptotic formula \eqref{PnNasym} is known in the exterior
and on the boundary of the droplet. This was shown by Hedenmalm
and Wennman \cite{HW21} in much greater generality. What is new is 
that for our specific case the asymptotic formula extends to the complement 
of the mother body. This should be a quite general phenomenon, and
it will be most interesting to prove it in more generality.  

The asymptotic formula is stated in \cite[Theorem 1.3]{HW21} for orthonormal
polynomials. For the monic polynomials it means that (in our notation)
\begin{equation} \label{PnNasymHW} P_{n,N}(z) = (\rho F'(z))^{1/2} 
	F(z)^n e^{\frac{n}{2t} (\mathcal Q_t(z)-\mathcal Q_t(\infty))} \left( 1 + \mathcal{O}(n^{-1}) \right) \end{equation}
as $n \to \infty$ with an error term that has a full asymptotic expansion
in inverse powers of $n$.
The function $\mathcal Q_t$ in \eqref{PnNasymHW} is bounded 
and holomorphic in the complement of the droplet $\Omega_t$ (including infinity) and 
its real part equals
the potential $|z|^2 - 2c \log |z^2+a^2|$ along the 
boundary of $\partial \Omega_t$.

We claim that we can take, with the constant $\ell_t$ from \eqref{aw1},
\begin{equation} \label{defQt}
	\mathcal Q_t(z) = 2 t(g_1(z) - \log F(z)) + \ell_t. 
	\end{equation}
To see this, we note that $F^{-1} e^{g_1}$ is analytic and non-zero in
$\Omega_t$ with the limit $\rho$ at infinity. Then it has an analytic
logarithm and this is what is meant by  $g_1 - \log F$ in \eqref{defQt}.
On $\partial \Omega_t$ we have
\begin{equation} \label{ReQt} \Re \mathcal Q_t(z) = 2 t \Re g_1(z) + \ell_t = -2t U^{\mu_1}(z) + \ell_t, 
	\qquad z \in \partial \Omega_t. \end{equation}
The identity \eqref{aw2} extends by continuity to the boundary,
and combining this with \eqref{aw1} (which is also valid on $\partial \Omega_t$),
we find from \eqref{ReQt} that indeed $\Re \mathcal Q_t$ agrees
with $|z|^2 - 2c \log |z^2+a^2|$. 

Inserting \eqref{defQt} into \eqref{PnNasymHW} we find our asymptotic
formula \eqref{PnNasym}.
\end{remark}
 
As a consequence of Theorem \ref{thm:PnNasym}, we obtain the 
limiting zero counting measure of the polynomials $P_{n,N}(z)$.

\begin{theorem} \label{thm:zeros}
	Under the same assumptions as in Theorem \ref{thm:PnNasym},
	we have that all zeros of $P_{n,N}$ tend to the interval
	$[-x_1,x_1]$ as $n \to \infty$. In addition, $\mu_{1}$ is the weak limit of the normalized zero counting measures of the polynomials $P_{n,N}$.
\end{theorem}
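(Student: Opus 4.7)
The plan is to deduce both conclusions from the strong asymptotic in Theorem~\ref{thm:PnNasym}. As the text following that theorem notes, $\rho F'$ is nonvanishing on $\overline{\mathbb{C}}\setminus[-x_1,x_1]$, so the leading factor $(\rho F'(z))^{1/2}\, e^{n g_1(z)}$ on the right of \eqref{PnNasym} is itself zero-free there. Writing the asymptotic in the form
\begin{equation*}
	P_{n,N}(z) = (\rho F'(z))^{1/2}\, e^{n g_1(z)} \bigl(1+\mathcal{O}(1/n)\bigr)
\end{equation*}
uniformly on compact subsets of $\overline{\mathbb{C}}\setminus[-x_1,x_1]$, I would conclude that $P_{n,N}$ is zero-free on every such compact set for all sufficiently large $n$. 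Consequently, for any open neighborhood $U$ of $[-x_1,x_1]$, every zero of $P_{n,N}$ eventually lies inside $U$, which establishes the first assertion.

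For the limiting zero distribution, let $\nu_n = \frac{1}{n}\sum_{j=1}^n \delta_{\zeta_j}$ denote the normalized zero counting measure of $P_{n,N}$, so that
\begin{equation*}
	-U^{\nu_n}(z) = \frac{1}{n}\log\bigl|P_{n,N}(z)\bigr|.
\end{equation*}
By the previous step $\{\nu_n\}$ is eventually supported in a fixed bounded neighborhood of $[-x_1,x_1]$; it is therefore tight, and every weak cluster point is a probability measure supported on $[-x_1,x_1]$. Taking logarithms in \eqref{PnNasym} and using $\Re g_1 = -U^{\mu_1}$, I would obtain
\begin{equation*}
	U^{\nu_n}(z) \longrightarrow U^{\mu_1}(z)
\end{equation*}
uniformly on compact subsets of $\mathbb{C}\setminus[-x_1,x_1]$, since the contributions $\frac{1}{2n}\log|\rho F'(z)|$ and $\frac{1}{n}\log\bigl|1+\mathcal{O}(1/n)\bigr|$ both vanish in the limit.

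To identify the cluster points, suppose $\nu_{n_k}\to\nu$ weakly. For any fixed $z\in\mathbb{C}\setminus[-x_1,x_1]$ the function $s\mapsto -\log|z-s|$ is bounded and continuous on any compact neighborhood of $[-x_1,x_1]$ not containing $z$, so weak convergence yields $U^{\nu_{n_k}}(z)\to U^{\nu}(z)$; combined with the previous display this gives $U^{\nu}=U^{\mu_1}$ throughout $\mathbb{C}\setminus[-x_1,x_1]$. Since $\nu$ and $\mu_1$ are compactly supported probability measures whose logarithmic potentials agree on the connected open complement of their common support, the unicity theorem for logarithmic potentials (see, e.g., \cite{ST97}) forces $\nu=\mu_1$, after which tightness together with uniqueness of the subsequential limit gives $\nu_n\to\mu_1$ weakly. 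I expect the main obstacle to be this last identification: the asymptotic \eqref{PnNasym} is only valid off $[-x_1,x_1]$, so one has no direct access to $U^{\nu}$ on the support and must rely on the unicity theorem to convert equality of potentials in the exterior into equality of the measures themselves.
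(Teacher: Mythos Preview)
Your proof is correct and follows essentially the same approach as the paper: both deduce the zero-clustering and the weak limit directly from the strong asymptotic formula \eqref{PnNasym}, using that the prefactor $(\rho F')^{1/2}$ is zero-free off $[-x_1,x_1]$ and that $\frac{1}{n}\log|P_{n,N}| \to \Re g_1 = -U^{\mu_1}$ uniformly on compacts of $\mathbb{C}\setminus[-x_1,x_1]$. The only difference is that the paper simply cites the implication ``weak asymptotic formula $\Rightarrow$ weak convergence of zero counting measures'' as a well-known fact from \cite{ST97}, whereas you unpack this step explicitly via tightness, subsequential limits, and the unicity theorem for logarithmic potentials; your concern about the last identification is unfounded, since equality of $U^{\nu}$ and $U^{\mu_1}$ off a set of planar Lebesgue measure zero is exactly what the unicity theorem needs.
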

Again, we believe that the condition that $cN$ is an integer is not necessary.

\subsection{About the proof and the organisation of the paper}
As already indicated, the starting point of our analysis
is the characterization of $P_{n,N}$ as a type I multiple
orthogonal polynomial and the resulting
RH problem \ref{rhproblemY} for $P_{n,N}$.

The Deift-Zhou method of steepest descent \cite{DKMVZ99,DZ93} requires the existence of the two measures $\mu_1$ and $\mu_2$
whose desirable properties are detailed in Theorem \ref{thm:mu12}.
We obtain these measures as minimizers of a vector
equilibrium problem from logarithmic potential theory.

 We consider the energy functional \eqref{vectorenergy} 
 acting on pairs $(\mu_1,\mu_2)$ of measures.
There are two external fields $-\Re V_1$ and $\Re V_2$ in \eqref{vectorenergy}
coming from \eqref{V1} and \eqref{V2}. 
Such vector equilibrium problems have appeared before in the analysis of 
random matrix models, see e.g.~\cite{BDK11, BK12, CK22, DK09}.
A general existence and uniqueness theorem is in \cite{HK12},
and it applies to our situation.

In the present situation the two measures are on the real line, with total
masses $\int d\mu_1 = 1$ and $\int d\mu_2 = \frac{t+c}{t}$,
as in \ref{item1} and \ref{item2} of Theorem \ref{thm:mu12}.
Thus, $\mu_1$ is
a probability measure, but $\mu_2$ is not. The second measure satisfies 
an upper constraint $\mu_2 \leq \sigma$ as in \eqref{mu12cond3a}.
Equilibrium problems with upper constraint appeared in the asymptotic
analysis of orthogonal polynomials with a discrete orthogonality \cite{BKMM07, DS97, R96},
and see \cite{ALT11, BDK11, DKRZ12} for other connections with random matrix theory,
and \cite{DM98,K00,LL83} for earlier works in integrable systems involving a minimization
problem with upper constraint. 

Given $\mu_1$, the external field $\Re V_2$ acting on $\mu_2$ is weakly admissible in the sense of \cite{S05}, see also \cite{HK12, OSW19}. As a result, the
minimizing measure $\mu_2$  will have unbounded support. 
However, the  external field $-\Re V_1$
acting on $\mu_1$ tends to $-\infty$ at infinity and therefore does not act as a confining potential on the whole real line. As a result, the minimization problem
is not well-defined when considering arbitrary probability measures $\mu_1$ on $\mathbb R$.

We resolve this issue by restricting to measures $\mu_1$ supported on  some
suitable interval $[-X,X]$, in such a way that the minimizing $\mu_1$
is supported in  $(-X,X)$. This can only be done for $t<t^{*}$ where $t^{*}$ is a constant which depends only on $a$ and $c$. 
One of our technical novelties lies in judiciously choosing the cutoff $X$. 

The analysis of the vector equilibrium problem
in Section \ref{VEPsection} will be 
a main part of the present paper. Along the way, we also establish the monotonicity property of
the measures $\mu_1$ and $\mu_2$ as a function of $t$, namely
$t\mu_1$ and $t\mu_2$ are increasing with $t$, see Lemma \ref{lemma34}(c).
As a result, we will also have that $x_1(t)$ and $x_2(t)$ are increasing for $0 < t < t^*$.

The supports of the measures $\mu_1$, $\mu_2$ give
rise to the construction of a three sheeted Riemann surface
with a meromorphic function $S$ on it as in Definition \ref{def:Schwarz} below. On the first sheet $S$
coincides with the Schwarz function \eqref{S1zidentity}.
The equilibrium measures $\mu_1$ and $\mu_2$
provide explicit formulas for its analytic continuation to the other two sheets, see \eqref{S2z}, \eqref{S3z} below.
We give the proof of Theorem \ref{thm:mu12} in  Section \ref{mainthmproofsection}.

In Section \ref{dropletproof}, we construct the droplet and 
prove Theorem \ref{thm:droplet}. Finally, in 
Section \ref{Steepestdescent}, we complete the 
Deift-Zhou steepest descent analysis, leading to 
the proofs of Theorems \ref{thm:PnNasym} and \ref{thm:zeros}.

\section{Vector equilibrium problem}\label{VEPsection}

\subsection{Setup and discussion}

We modify the energy functional \eqref{vectorenergy} by multipliying it by $t^2$
and consider $\nu_1 = t\mu_1$, $\nu_2 = t \mu_2$ instead of $\mu_1$ and $\mu_2$.
Then in view of \eqref{V1} and \eqref{V2} we consider
the energy functional (recall Definition \eqref{logenergy})
\begin{multline} \label{Etnu}
	E_t(\nu_1,\nu_2) = I(\nu_1) + I(\nu_2) - I(\nu_1,\nu_2) \\
	- c \int \log |x^2+a^2| d\nu_1(x)
	+(t+2c) \int \log |x| d\nu_2(x) \end{multline}
acting on pairs of measures $(\nu_1,\nu_2)$ with
\begin{equation} \label{nu12norms}
	\int d \nu_1 = t, \quad \int d\nu_2 = t+c, \end{equation}
 and
\begin{equation} \label{sigma0} 
	\nu_2 \leq \sigma_0, \quad d\sigma_0(x) = \frac{a}{\pi} dx. \end{equation}
We cannot minimize $E_t$ over all such pairs, since
given $\nu_2$, the minimization over $\nu_1$ is ill-defined
since the effective external field $-U^{\nu_2} - c \log |x^2+a^2|$
acting on $\nu_1$ has not enough increase at infinity. Because of \eqref{nu12norms}
it behaves like $(t-c) \log |x|$ at infinity, while it should behave at
least like $2t \log |x|$ to be (weakly) admissible for $\nu_1$ of total mass $t$. 

By contrast, given $\nu_1$ with compact support
the minimization over $\nu_2$ is well-defined as it
results in a weakly admissible equilibrium problem \cite{S05}, since the
effective external field $-U^{\nu_1} + (t+2c) \log |x|$ acting on $\nu_2$ 
behaves like $2(t+c) \log|x|$ at infinity, and $\nu_2$ is required to have total
mass $t+c$, see \eqref{nu12norms}. There is a unique minimizer,
although not necessarily with bounded support.

To have a well-defined vector equilibrium problem we introduce a cut-off $X >0$ 
and impose that 
\begin{equation} \label{nu1cutoff} 
		\supp(\nu_1) \subset [-X,X] \end{equation}
Then for every $X > 0$ there is a unique minimizer $(\nu_1,\nu_2)$ of \eqref{Etnu} 
under the conditions \eqref{nu12norms}, \eqref{sigma0} and \eqref{nu1cutoff}.
The minimizer is characterized by  the following Euler-Lagrange variational conditions
\begin{align} \label{nu1Xcond0}
	2 U^{\nu_1} - U^{\nu_2} - c \log |x^2+a^2| &
	\begin{cases} = \ell_1, &  \text{ on } \supp(\nu_1) \subset [-X,X], \\ 
		\geq \ell_1, &  \text{ on } [-X,X], 
	\end{cases} \\ \label{nu2Xcond0}
	2U^{\nu_2} - U^{\nu_1} + (t+2c) \log|x| &
	\begin{cases} = \ell_2, & \text{ on } \supp(\sigma_0-\nu_2) \cap \supp(\nu_2), \\
		\leq \ell_2, & \text{ on } \mathbb R \setminus 
		\supp(\sigma_0 - \nu_2), \\
		\geq \ell_2, & \text{ on } \mathbb R \setminus \supp(\nu_2).	 
	\end{cases}		 
\end{align}
The inequality $\leq \ell_2$ in \eqref{nu2Xcond} is a consequence of the upper
constraint \eqref{sigma0} acing on $\nu_2$.
Later, we will see that $\supp(\nu_2) = \mathbb R$ and
$\ell_2 = 0$, as will follow from Lemma~\ref{lemma34} below.

\begin{remark}[on continuity of logarithmic potentials]
	In the general theory of equilibrium measures with external fields \cite{ST97}
	the variational conditions are valid quasi-everywhere (q.e.), that is,
	with the possible exception of a polar set, see also \cite{DS97} for
	the situation with upper constraint. In the present situation, there
	are no q.e.\ exceptional sets, as can be seen as follows.
	
	First of all, the upper constraint \eqref{sigma0} implies that  $U^{\nu_2}$
	is a continuous function on $\mathbb C$. Then $\nu_1$ is the minimizer
	of an equilibrium problem on the interval $[-X,X]$ with a continuous
	external field $-U^{\nu_2}(x) - c \log |x^2+a^2|$. Then it is known
	that \eqref{nu1Xcond0} is valid without any q.e.\ exceptional set. 
	In addition $U^{\nu_1}$ is continuous as well. This in turn implies
	that \eqref{nu2Xcond0} holds everywhere on the indicated sets. 
	
	Since the minimizers of \eqref{Etnu} have continuous logarithmic
	potentials, we will typically assume this property in what follows.
\end{remark}

In what follows $a$ and $c$ are fixed and we vary $t$ and $X$.
If we want to emphasize the dependence on $t$ and $X$ we denote 
the minimizer by $\left(\nu_{1,t}^{X}, \nu_{2,t}^X\right)$.

\subsection{\texorpdfstring{Case $t=0$}{}}

At  $t=0$ the measure $\nu_1$ disappears (the total mass vanishes while the measure is non-negative). We write $\rho$ instead
of $\nu_2$  and the 
equilibrium problem \eqref{Etnu}--\eqref{nu1cutoff}, reduces to the minimization of
\begin{equation} \label{Etrho}
	E_0(\rho) =	 I(\rho) + 2c \int \log |x| d\rho(x) \end{equation}
over measures $\rho$ on $\mathbb R$ satisfying 
\begin{equation} \label{rhonorms}
	\int d \rho = c, \qquad d\rho(x) \leq \frac{a}{\pi} dx. \end{equation}

\begin{proposition} \label{prop31}
	The unique minimizer $\rho$ of \eqref{Etrho} subject to \eqref{rhonorms} is given by
	\begin{equation} \label{rhodef} 
		\frac{d\rho}{dx} = \begin{cases} 
			\frac{a}{\pi}, & \text{ on } [-\frac{c}{a}, \frac{c}{a}], \\
			\frac{a}{\pi} - \frac{\sqrt{a^2x^2-c^2}}{\pi |x|}, &
			\text{ on }  \mathbb R \setminus (-\frac{c}{a}, \frac{c}{a}). 
	\end{cases}  \end{equation}
	Its logarithmic potential $U^{\rho}$ is, then,
	\begin{equation} \label{Urho}
		U^{\rho}(x)  = \begin{cases} 
			\sqrt{c^2-a^2x^2} - c \log \left(c+ \sqrt{c^2-a^2x^2}\right) + c \log a, 
			& \text{on } [-\frac{c}{a}, \frac{c}{a}], \\
			- c \log|x|, &
			\text{on }  \mathbb R \setminus (-\frac{c}{a}, \frac{c}{a}). 
			\end{cases}  \end{equation}
	\end{proposition}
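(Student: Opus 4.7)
The plan is to verify directly that the measure $\rho$ given by \eqref{rhodef} satisfies the Euler--Lagrange variational conditions associated with the minimization of $E_0$, and to invoke uniqueness of the minimizer. Existence and uniqueness follow from general principles for equilibrium problems with upper constraint and weakly admissible external field: the logarithmic energy is strictly convex on the admissible class, the external field $2c\log|x|$ is weakly admissible for measures of mass $c$, and the upper constraint guarantees that $U^{\rho}$ is continuous; see \cite{DS97, HK12}. The Euler--Lagrange conditions then take the form
\[
2U^{\rho}(x)+2c\log|x| \begin{cases} = \ell, & |x|\geq c/a,\\ \leq \ell, & |x|\leq c/a,\end{cases}
\]
where $\{|x|\geq c/a\}=\supp(\sigma_0-\rho)$ is the region where the constraint is inactive and $\{|x|\leq c/a\}$ is where the upper constraint saturates. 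I will verify these with $\ell=0$.

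The technical heart is the explicit computation of $U^{\rho}$ through the Cauchy transform $C_{\rho}(z)=\int\frac{d\rho(s)}{z-s}$. Let $R(z)=\sqrt{a^{2}z^{2}-c^{2}}$ denote the branch analytic on $\mathbb{C}\setminus[-c/a,c/a]$ with $R(z)/z\to a$ at infinity, and set
\[
F(z)=\frac{c}{z}-ia+\frac{iR(z)}{z}, \qquad z\in\mathbb{C}^{+}.
\]
The expansion $R(z)=az-c^{2}/(2az)+O(z^{-3})$ gives $F(z)=c/z+O(z^{-2})$ at infinity, while the local expansion $R(z)=ic+O(z^{2})$ near $z=0$ in $\mathbb{C}^{+}$ yields $c+iR(z)=O(z^{2})$, so $F$ extends continuously to the origin. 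Computing the boundary values of $R$ separately on $|x|>c/a$ (where $R(x)=\mathrm{sign}(x)\sqrt{a^{2}x^{2}-c^{2}}$) and on $(-c/a,c/a)$ (where $R(x+i0)=i\sqrt{c^{2}-a^{2}x^{2}}$) yields
\[
\operatorname{Im} F(x+i0) = \begin{cases} -a+\sqrt{a^{2}x^{2}-c^{2}}/|x|, & |x|>c/a,\\ -a, & |x|<c/a,\end{cases}
\]
which coincides with $-\pi\rho'(x)$ in both cases. Together with the Plemelj--Sokhotski formulas and decay at infinity, a Schwarz reflection plus Liouville argument then forces $F\equiv C_{\rho}$ on $\mathbb{C}^{+}$. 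Integrating the identity $(U^{\rho})'(x)=-\operatorname{Re} C_{\rho}(x+i0)$, using the antiderivative of $\sqrt{c^{2}-a^{2}x^{2}}/x$ obtained via the substitution $ax=c\sin\theta$, and fixing the constant of integration by continuity at $x=\pm c/a$ and the asymptotics $U^{\rho}(z)=-c\log|z|+o(1)$, produces exactly formula \eqref{Urho}.

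With \eqref{Urho} in hand, both variational conditions are immediate. For $|x|>c/a$ we read off $2U^{\rho}(x)+2c\log|x|=0$, so equality holds with $\ell=0$. For $|x|\leq c/a$, the substitution $u=\sqrt{c^{2}-a^{2}x^{2}}\in[0,c]$ reduces the desired inequality to
\[
u\leq \tfrac{c}{2}\log\tfrac{c+u}{c-u}=c\,\mathrm{arctanh}(u/c),
\]
which follows from the convexity of the right-hand side on $[0,c)$ together with matching values and first derivatives at $u=0$. Uniqueness then identifies $\rho$ with the candidate \eqref{rhodef} and $U^{\rho}$ with \eqref{Urho}. The main delicate step is the Cauchy-transform computation: the ansatz for $F$ is dictated by the structure of the density, but one must track the branch of $R$ and use the cancellation $c+iR(z)=O(z^{2})$ at $z=0$ in order to conclude $F=C_{\rho}$; once this is in hand, the rest is elementary bookkeeping.
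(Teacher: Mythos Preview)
Your proof is correct and follows the same overall strategy as the paper: verify that the candidate $\rho$ satisfies the Euler--Lagrange conditions for the constrained problem, then invoke uniqueness. The difference lies in how the Cauchy transform is obtained. The paper computes $\int\frac{d\rho(s)}{z-s}$ by direct integration: it splits off the constant part $\tfrac{a}{\pi}\int_{-R}^R\frac{ds}{z-s}\to\mp ia$, and handles the correction $\tfrac{1}{\pi}\int\frac{\sqrt{a^2s^2-c^2}}{(z-s)|s|}ds$ by the substitution $s=1/u$, which reduces it to the known Cauchy transform of the semicircle law. You instead write down the candidate $F(z)=\tfrac{c}{z}-ia+\tfrac{iR(z)}{z}$, match imaginary boundary values with $-\pi\rho'$, and close with Schwarz reflection plus Liouville; this is cleaner once one has guessed $F$, while the paper's computation explains \emph{where} $F$ comes from. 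For the inequality on $[-c/a,c/a]$ the paper simply differentiates, obtaining $\tfrac{d}{dx}(U^{\rho}(x)+c\log|x|)=\tfrac{\sqrt{c^2-a^2x^2}}{x}$ and reading off monotonicity, which is a shade quicker than your substitution to $u\le c\,\operatorname{arctanh}(u/c)$; both are fine. One small point worth making explicit in your write-up is that $0\le\rho\le\sigma_0$ and $\int d\rho=c$ (the latter follows from your $F(z)=c/z+O(z^{-2})$ once $F=C_\rho$ is established), since these are needed before the variational conditions identify $\rho$ as the minimizer.
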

\begin{proof}
	Let $\rho$ be given by \eqref{rhodef}. It is easy to see that
	$0 < \frac{d\rho}{dx}  \leq \frac{a}{\pi}$ and, thus, $\rho$ is a measure
	satisfying $\rho \leq \sigma_0$. 
	We are going to compute its Cauchy transform.
	
	For $R > \frac{c}{a}$ and $z \in \mathbb C \setminus \mathbb R$, we have from \eqref{rhodef},
	\begin{align} \label{contour1} 
		\int_{-R}^R \frac{d \rho(s)}{z-s}  = I_1(R) + I_2(R) 
	\end{align}
	with 
	\begin{align} \label{contour2}
	I_1(R) & = \frac{a}{\pi} \int_{-R}^R \frac{ds}{z-s}, \\
	\label{contour3}
	I_2(R) & = 	- \frac{1}{\pi} \int_{\frac{c}{a}}^R \frac{\sqrt{a^2s^2-c^2}}{(z-s)s} ds
	+ \frac{1}{\pi} \int_{-R}^{-\frac{c}{a}} \frac{\sqrt{a^2s^2-c^2}}{(z-s)s} ds.
	\end{align}
	The integral in \eqref{contour2} is easy to evaluate and
	the result is
	\begin{equation} \label{contour4} 
	\lim_{R \to \infty} I_1(R) = \mp ia,
			\quad \text{ for } \pm \Im z > 0.
		\end{equation}
	
	In \eqref{contour3} we introduce the change of variables $s = \frac{1}{u}$ to
	obtain
	\begin{equation} \label{contour5}
	I_2(R) = 	- \frac{1}{\pi}
			\left( \int_{-\frac{a}{c}}^{-\frac{1}{R}} + 
				\int_{\frac{1}{R}}^{\frac{a}{c}} \right)
			\frac{\sqrt{a^2-c^2 u^2}}{(uz-1)u} du. 
		\end{equation}
	We use the partial fraction decomposition
	$\frac{1}{(uz-1)u} = \frac{z}{uz-1} - \frac{1}{u}$
	and we observe that the contribution from the $\frac{1}{u}$ term	
	will vanish as it leads to an odd integrand. The remaining
	integral has no singularity at $u=0$,
	and we take the limit $R \to \infty$ 
	\begin{equation} \label{contour6}
	\lim_{R \to \infty}	I_2(R) = - \frac{z}{\pi}
		\int_{-\frac{a}{c}}^{\frac{a}{c}} 
		\frac{\sqrt{a^2-c^2 u^2}}{uz-1} du. 
	\end{equation}
	After rescaling, the integral \eqref{contour6} is
	transformed to the
	well-known Cauchy transform of the semicircle law,
	see e.g.\ \cite{AGZ10,T12},
	\[ F_{sc}(z) = \frac{2}{\pi} \int_{-1}^1
		\frac{\sqrt{1-u^2}}{z-u} du = 2z - 2 (z^2-1)^{1/2}, \]
	namely
	\begin{equation} \label{contour7}
		\lim_{R \to \infty} I_2(R)
			= \frac{a}{2} F_{sc}\left(\frac{c}{az} \right)
			= \frac{c}{z} - \frac{(c^2-a^2 z^2)^{1/2}}{z}
		\end{equation}
	with the square root that is positive for $z \in (-\frac{c}{a},\frac{c}{a})$ and  holomorphic for
	$z \in \mathbb C \setminus ((-\infty, - \frac{c}{a}] \cup
	[\frac{c}{a},\infty))$.
	
	Combining \eqref{contour1}, \eqref{contour2}, and \eqref{contour7},
	we obtain the Cauchy transform 
	\begin{equation} \label{contour8} 
		\int \frac{d\rho(s)}{z-s} = \mp ia + \frac{c}{z} - \frac{(c^2-a^2z^2)^{1/2}}{z}
		\quad \text{ for } \pm \Im z > 0. \end{equation} 	

	The square root in \eqref{contour8} is such that 
	\begin{equation} \label{contour9}
		(c^2-a^2z^2)^{1/2} = \begin{cases} -iaz  + O(z^{-1}) & \text{ as } 
		z \to \infty \text{ with } \Im z > 0, \\
		iaz + O(z^{-1}) & \text{ as } 
		z \to \infty \text{ with } \Im z < 0. \end{cases} \end{equation} 
 	Then letting $z \to \infty$  along
	the positive imaginary axis, we conclude from 
	\eqref{contour8} and \eqref{contour9}  that $\int d\rho = c$.
	After integration we also obtain from \eqref{contour8}
	\begin{multline} \label{contour10}
		\int \log(z-s) d\rho(s) = -iaz
		- (c^2-a^2 z^2)^{1/2}  \\
		+ c \log \left( c + \sqrt{c^2-a^2 z^2} \right)
		- c \log a + c \pi i/2, \quad \Im z > 0.
	\end{multline}
	The constant of integration comes from the fact that the
	left-hand side is $ c\log z + o(1)$ as $z \to \infty$, and the
	constant $- c \log a + c \pi i/2$ ensures that this also holds for the right-hand side.
	Letting $z \to x \in \mathbb R$ and taking the real part, we 
	obtain the expressions for $U^{\rho}(x)$ as in formula \eqref{Urho}.
	
	From \eqref{Urho} we compute for $x \in (-\frac{c}{a}, \frac{c}{a})$,
	\[ \frac{d}{dx} \left(U^{\rho}(x) + c \log |x|\right)
	= \frac{\sqrt{c^2-a^2 x^2}}{x}   \]
	Thus $U^{\rho}(x) + c \log |x|$ is strictly increasing
	on the interval $(0,\frac{c}{a}]$ and strictly decreasing on $(-\frac{c}{a},0)$. 
	Since it takes the value $0$ at $x = \pm \frac{c}{a}$, we obtain
	\begin{equation} \label{rhocond}	
		U^{\rho}(x) + c \log |x| \begin{cases} = 0, & \text{ for } x \in \supp(\sigma_0 - \rho), \\
			\leq 0, & \text{ for } x \in \mathbb R, \end{cases}  \end{equation}
	where we also used the equality \eqref{Urho} on 
	$\mathbb R \setminus (-\frac{c}{a}, \frac{c}{a}) = \supp(\sigma_0 - \rho)$,
	The properties \eqref{rhocond} are the Euler Lagrange variational conditions 
	that characterize the minimizer of the constrained equilibrium problem.
	Since $\rho$ satisfies the conditions \eqref{rhonorms} we conclude
	that it is indeed the minimizer.	
	This completes the proof of Proposition \ref{prop31}.
\end{proof}
It is obvious from \eqref{rhodef} that $\rho$ is increasing as a function
of $c$.

\subsection{De la Vall\'ee Poussin Theorem}

In what follows, we are going to use several times 
a theorem of de la Vall\'ee Poussin that is stated in the book
of Saff and Totik \cite[Theorem IV.4.5]{ST97} for measures of compact support.
The assumption of compact support is not necessary.
We verified that the proof goes through  without modification
also if the measures do not have compact support. 
\begin{theorem} \label{thm:dlVP}
    Let $\mu$ and $\nu$ be two measures and let $\Omega$ be a domain
	in which both potentials $U^{\mu}$ and $U^{\nu}$ are finite and satisfy with some
	constant $c$ the inequality
	\begin{equation} \label{dlVP} U^{\mu}(z) \leq U^{\nu}(z) + c, \quad z \in \Omega. \end{equation}
	If $A$ is the subset of $\Omega$ in which equality holds in \eqref{dlVP} then 	$\nu\mid_A \leq \mu\mid_A$,
	i.e., for every Borel subset $B$ of $A$ the inequality $\nu(B) \leq \mu(B)$ holds.
\end{theorem}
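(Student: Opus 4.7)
The natural starting point is to set $h(z) := U^{\nu}(z) - U^{\mu}(z) + c$, so that by hypothesis $h \geq 0$ on $\Omega$, $h \equiv 0$ on $A$, and distributionally $\frac{1}{2\pi}\Delta h = \mu - \nu$ on $\Omega$. The desired conclusion $\nu|_A \leq \mu|_A$ is then a set-theoretic form of the minimum principle: since $h$ attains its minimum value along $A$, its Laplacian there should be a non-negative measure.

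My plan is first to reduce, by inner regularity of Radon measures, to showing $\nu(K) \leq \mu(K)$ for every compact $K \subset A$. Next I would apply the mean value identity for logarithmic potentials,
\[
\tfrac{1}{2\pi}\int_0^{2\pi}U^{\sigma}(z_0+re^{i\theta})\,d\theta \;-\; U^{\sigma}(z_0) \;=\; -\int \log^+\!\frac{r}{|z_0-w|}\,d\sigma(w),
\]
valid for any positive Radon measure $\sigma$ and any $z_0$, $r$ with $\overline{D}(z_0,r) \subset \Omega$. Subtracting this identity for $\sigma=\mu$ from the one for $\sigma=\nu$, and combining with $h(z_0)=0$, $h\geq 0$ for $z_0\in A$, yields the key localised inequality
\[
\int_0^{r}\frac{(\mu-\nu)\bigl(D(z_0,s)\bigr)}{s}\,ds \;\geq\; 0, \qquad z_0\in A,
\]
for all $r>0$ small enough that $\overline{D}(z_0,r)\subset \Omega$.

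Finally I would extract the set-level inequality $\nu(K)\leq\mu(K)$ from this family of disk-averaged inequalities. Convolve $h$ with a smooth radial mollifier $\chi_\varepsilon$ to obtain $h_\varepsilon := h\ast\chi_\varepsilon$, which is smooth and non-negative on the interior subdomain $\{z : \mathrm{dist}(z,\partial\Omega)>\varepsilon\}$ and satisfies $\frac{1}{2\pi}\Delta h_\varepsilon = (\mu-\nu)\ast\chi_\varepsilon$. Applying the classical minimum principle to $h_\varepsilon$ on a shrinking neighborhood of $K$, combined with the disk-averaged inequality above, forces the smooth function $(\mu-\nu)\ast\chi_\varepsilon$ to be non-negative near $K$; passing to the limit $\varepsilon \to 0^+$ via the weak-$\ast$ continuity of convolution gives $\nu(K)\leq\mu(K)$.

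The main obstacle is this last step. When $\mu$ or $\nu$ carry singular parts, or $A$ has intricate Borel structure, one cannot evaluate $h$ pointwise on $A$ after smoothing: $h_\varepsilon(z_0)$ is only close to, not equal to, its infimum for $z_0\in K$. A coordinated sub-level-set argument --- working on $\{h_\varepsilon < \delta\}$ and sending $\delta,\varepsilon \to 0$ in the right order --- is needed to separate the contribution of $\mu-\nu$ on $A$ from its behaviour in a neighborhood. These local technicalities are precisely what the proof in Saff-Totik \cite[Thm.~IV.4.5]{ST97} handles, and the authors' remark that compactness of supports is inessential is justified by the fact that no such compactness enters the local mean-value estimates above.
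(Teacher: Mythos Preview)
The paper does not supply its own proof of this theorem: it simply quotes the result from Saff--Totik \cite[Theorem~IV.4.5]{ST97} and asserts, without further argument, that the compact-support hypothesis there can be dropped. Your sketch therefore goes beyond what the paper offers, and your closing observation---that the mean-value identity and the resulting disk inequality are purely local, so that no global compactness of $\supp\mu$ or $\supp\nu$ ever enters---is precisely the justification the paper omits.

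Your derivation of the localized inequality
\[
\int_0^{r}\frac{(\mu-\nu)\bigl(D(z_0,s)\bigr)}{s}\,ds \;\geq\; 0, \qquad z_0\in A,\quad 0<r<\mathrm{dist}(z_0,\partial\Omega),
\]
is correct and is the heart of the argument. However, the mollification route you propose for the final step is not merely technically delicate---it does not work as written. The smoothed function $h_\varepsilon$ need not attain its infimum on $K\subset A$ (indeed $h_\varepsilon>0$ there in general), so the classical minimum principle yields no information about the sign of $\Delta h_\varepsilon$ near $K$. A clean way to close the gap, staying with your disk inequality, is this: apply the Hahn decomposition $\mu-\nu=\lambda^{+}-\lambda^{-}$ and the Besicovitch differentiation theorem. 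Since $\lambda^{+}\perp\lambda^{-}$, for $\lambda^{-}$-a.e.\ $z_0$ one has $\lambda^{+}(D(z_0,r))/\lambda^{-}(D(z_0,r))\to 0$ as $r\to 0$, hence $(\mu-\nu)(D(z_0,s))<0$ for all sufficiently small $s$; the disk integral is then strictly negative, forcing $z_0\notin A$. Thus $\lambda^{-}(A)=0$, which is exactly $\nu|_A\le\mu|_A$.
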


As a first application of Theorem \ref{thm:dlVP} we prove the following.

\begin{lemma} \label{lemma33}
	Let $t > 0$.
	Suppose $\nu_1$ is a measure of compact support, total mass $\int d\nu_1 = t$
	and a continuous logarithmic potential.
	Let $\nu_2$ be the minimizer of \eqref{Etnu} among measures on $\mathbb R$ satisfying 
	$\int d\nu_2 = t + c$ and $\nu_2 \leq \sigma_0$. 
	Then, $\nu_2 \geq \rho$ (where $\rho$ is as in Proposition \ref{prop31})
	and $\nu_2$ satisfies the variational conditions
	\begin{equation} \label{nu2cond} 
		2 U^{\nu_2} - U^{\nu_1} + (t+2c) \log|x| 
		\begin{cases} = 0, & \text{ on } \supp(\sigma_0-\nu_2), \\
			\leq 0, & \text{ on } \mathbb R.
		\end{cases} \end{equation}
  In particular $\nu_2$ is supported on the full real line.
\end{lemma}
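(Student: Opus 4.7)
\emph{Plan.} I would begin by setting up the Euler--Lagrange inequalities for $\nu_2$. Dropping $\nu_1$-only terms, minimizing $E_t(\nu_1,\nu_2)$ amounts to minimizing $I(\nu_2) + \int W\,d\nu_2$ with $W(x) = -U^{\nu_1}(x) + (t+2c)\log|x|$ over $\nu_2\leq\sigma_0$ with $\int d\nu_2 = t+c$. Since $\nu_1$ has compact support and $U^{\nu_1}$ is continuous, $W$ is continuous away from $0$ and satisfies $W(x) \sim 2(t+c)\log|x|$ as $|x|\to\infty$, so it is weakly admissible for a measure of mass $t+c$. The standard theory of constrained weakly admissible equilibrium problems (see \cite{DS97,S05,HK12}) then yields existence and uniqueness of the minimizer and Euler--Lagrange inequalities: there is a real constant $\ell_2$ with
\begin{equation*}
 2U^{\nu_2}(x) - U^{\nu_1}(x) + (t+2c)\log|x| \begin{cases} = \ell_2, & x\in\supp(\nu_2)\cap\supp(\sigma_0-\nu_2),\\ \geq \ell_2, & x\in\supp(\sigma_0-\nu_2),\\ \leq \ell_2, & x\in\supp(\nu_2),\end{cases}
\end{equation*}
holding pointwise thanks to the continuity of $U^{\nu_1}$ and of $U^{\nu_2}$ (the latter following from the upper constraint $\nu_2\leq\sigma_0$).

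The heart of the proof is the dominance $\nu_2\geq\rho$, which I would establish by reformulating the problem on the affine slice $\{\rho+\tilde\nu\}$. For any $\tilde\nu\geq 0$ with $\tilde\nu\leq\sigma_0-\rho$ and $\int d\tilde\nu = t$, the measure $\rho+\tilde\nu$ is a valid competitor for the $\nu_2$-problem. Because $\supp\tilde\nu\subseteq\supp(\sigma_0-\rho) = \{|x|\geq c/a\}$, on which $2U^\rho + 2c\log|x| = 0$ by Proposition \ref{prop31}, a short calculation shows that the energy collapses to
\begin{equation*}
 E_t(\nu_1,\rho+\tilde\nu) = C + I(\tilde\nu) + \int\omega(x)\,d\tilde\nu(x), \qquad \omega(x) = -U^{\nu_1}(x) + t\log|x|,
\end{equation*}
with $C$ independent of $\tilde\nu$. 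The external field $\omega$ is itself weakly admissible for a measure of mass $t$, so this auxiliary constrained problem has a unique minimizer $\tilde\nu^*$ with Euler--Lagrange constant $\tilde\ell$. I would then verify that $\rho+\tilde\nu^*$ satisfies the Euler--Lagrange conditions of the original $\nu_2$-problem with $\ell_2=\tilde\ell$, using the decomposition $2U^{\rho+\tilde\nu^*} + W = [2U^\rho + 2c\log|x|] + [2U^{\tilde\nu^*} + \omega]$ and analysing each bracket; by uniqueness of the original minimizer, $\nu_2 = \rho + \tilde\nu^*\geq\rho$. The main technical obstacle lies in verifying the upper bound $\leq\tilde\ell$ on the interval $(-c/a,c/a)$, where $\tilde\nu^* = 0$ and the auxiliary Euler--Lagrange inequality provides no direct control. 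This requires combining the strict inequality $2U^\rho + 2c\log|x| < 0$ on $(-c/a,c/a)$ from Proposition \ref{prop31} with a maximum-principle argument for $2U^{\tilde\nu^*}+\omega$, which is subharmonic off $\supp\tilde\nu^*$, bounded above by $\tilde\ell$ on $\supp\tilde\nu^*$, and tends to $0$ at infinity; one also has to check that $\supp\tilde\nu^*$ equals $\{|x|\geq c/a\}$, so that the auxiliary bound covers all of $\supp(\sigma_0-\rho)$.

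Finally, since Proposition \ref{prop31} gives a strictly positive density for $\rho$ on all of $\mathbb R$, we have $\supp(\rho) = \mathbb R$, so $\nu_2\geq\rho$ immediately yields $\supp(\nu_2) = \mathbb R$. The asymptotic expansions $U^{\nu_1}(x) = -t\log|x|+O(|x|^{-1})$ and $U^{\nu_2}(x) = -(t+c)\log|x|+O(|x|^{-1})$ as $|x|\to\infty$ give $2U^{\nu_2}(x) - U^{\nu_1}(x) + (t+2c)\log|x|\to 0$. Since $\nu_2$ has finite mass while $\sigma_0$ has infinite mass, all sufficiently large $|x|$ lie in $\supp(\sigma_0-\nu_2)$; combined with $\supp(\nu_2) = \mathbb R$ they lie in the equality set from the first paragraph, forcing $\ell_2 = 0$ and yielding the claimed variational conditions \eqref{nu2cond}.
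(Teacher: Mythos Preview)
Your affine-slice approach is genuinely different from the paper's, and the energy reduction on $\{\rho+\tilde\nu:\tilde\nu\geq 0,\ \tilde\nu\leq\sigma_0-\rho,\ \int d\tilde\nu=t\}$ is correct, but the gap you flag---that one must check $\supp\tilde\nu^*=\{|x|\geq c/a\}$---is not a side issue; it is the heart of the matter. Without full support of $\tilde\nu^*$ your verification breaks: at any $x_0$ with $|x_0|>c/a$ and $x_0\notin\supp\tilde\nu^*$ the auxiliary Euler--Lagrange conditions give $2U^{\tilde\nu^*}(x_0)+\omega(x_0)\geq\tilde\ell$, so the sum of your two brackets is $\geq\tilde\ell$ there rather than $\leq\tilde\ell$, and $\rho+\tilde\nu^*$ cannot be shown to satisfy the variational conditions of the $\nu_2$-problem. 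Proving full support of $\tilde\nu^*$ on $\supp(\sigma_0-\rho)$ is of exactly the same character as proving $\nu_2\geq\rho$ itself, so your reformulation has relocated the difficulty rather than dissolved it; you give no indication of how to establish it, and I do not see a route that avoids the comparison technique below.

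The paper sidesteps the auxiliary problem entirely. It subtracts the variational condition \eqref{rhocond} for $\rho$ from that for $\nu_2$ to obtain bounds on $h:=2U^{\nu_2}-2U^{\rho}-U^{\nu_1}+t\log|x|$, namely $h\leq\ell_2$ on $\supp(\nu_2)\cap\supp(\sigma_0-\rho)$ and $h\geq\ell_2$ on $\supp(\sigma_0-\nu_2)$. The key observation is that $h$ is subharmonic on $\mathbb C\setminus\bigl(\supp(\nu_2)\cap\supp(\sigma_0-\rho)\bigr)$, since at each point of that set either $\nu_2=0$ or $\rho=\sigma_0\geq\nu_2$ in a neighborhood. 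The maximum principle then forces $h\leq\ell_2$ on all of $\mathbb C$ with equality on $\supp(\sigma_0-\nu_2)$, after which the de~la~Vall\'ee~Poussin theorem (Theorem~\ref{thm:dlVP}) applied with $\mu=2\nu_2$ and $\nu=2\rho+\nu_1+t\delta_0$ yields $2\nu_2\geq 2\rho+\nu_1+t\delta_0$ on $\supp(\sigma_0-\nu_2)$, hence $\nu_2\geq\rho$ there; on the complement $\nu_2=\sigma_0\geq\rho$ trivially. Full support of $\nu_2$ and $\ell_2=0$ then follow exactly as in your final paragraph.
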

\begin{proof}
	The minimizer $\nu_2$ satisfies, for some $\ell_2 \in \mathbb R$, the variational conditions
	\begin{equation} \label{nu2cond0} 2 U^{\nu_2} - U^{\nu_1} + (t+2c) \log|x| 
	\begin{cases} 
		  \leq \ell_2, & \text{ on } \supp(\nu_2), \\
		  \geq \ell_2, & \text{ on } \supp(\sigma_0-\nu_2).
	\end{cases} \end{equation}
	Combining this with \eqref{rhocond}  we obtain
	\begin{equation} \label{nu2rhocond} 2 U^{\nu_2} - 2 U^{\rho} - U^{\nu_1} + t \log|x| 
	\begin{cases} \leq \ell_2, & \text{on } \supp(\nu_2)  
		\cap \supp(\sigma_0-\rho),  \\
		\geq \ell_2, & \text{on }  \supp(\sigma_0 - \nu_2). 
		\end{cases} \end{equation}
	
	The left-hand side of \eqref{nu2rhocond} is subharmonic in every domain $\Omega \subset \mathbb C$
	with $\left. \nu_2 \right|_{\Omega} \leq \left. \rho \right|_{\Omega}$.
	This is certainly the case if $\left.\nu_2 \right|_{\Omega} = 0$ or $\left. \rho \right|_{\Omega} = 
	\left. \sigma_0 \right|_{\Omega}$. Hence, the left-hand side 
	is subharmonic outside $\supp(\nu_2) \cap \supp(\sigma_0-\rho)$. It tends to
	$0$ at infinity, since 
	$2\int d\nu_2 - 2 \int d\rho - \int d\nu_1 = 2(t+c) - 2c - t = t$. 
	Then we first conclude that $\supp(\nu_2) \cap \supp(\sigma_0-\rho)$ is non-empty, since otherwise
	the left-hand side of \eqref{nu2rhocond} would
	be a subharmonic function on $\mathbb C$ which is bounded above
	and Liouville's theorem for subharmonic 
	function \cite{R95} would imply that it is identically $0$ which it is not.
	
	Then, by the maximum principle for subharmonic functions,
	the left-hand side of \eqref{nu2rhocond} 
	attains its maximum on $\supp(\nu_2) \cap \supp(\sigma_0 - \rho)$, and \eqref{nu2rhocond} yields
	\[ 2 U^{\nu_2} - 2 U^{\rho} - U^{\nu_1}
 	 + t \log |x| \leq \ell_2 \quad \text{ on } \mathbb C, \]
	with equality on $\supp(\sigma_0 - \nu_2)$.
	
	Then we apply Theorem \ref{thm:dlVP} with $\mu = 2\nu_2$ and $\nu = 2\rho + 
	\nu_1 + t \delta_0$,
	to find that $2 \nu_2 \geq 2 \rho + \nu_1 + t \delta_0$ on
	$\supp(\sigma_0-\nu_2)$, and in particular $\nu_2 \geq \rho$ on $\supp(\sigma_0-\nu_2)$.
	On $\mathbb R \setminus \supp(\sigma_0 -\nu_2)$, the measure $\nu_2$
	agrees with $\sigma_0$. Since $\rho \leq \sigma_0$, we conclude that 
	$\nu_2 \geq \rho$ on $\mathbb R \setminus \supp(\sigma_0-\nu_2)$ as well,
	and therefore $\nu_2 \geq \rho$ on the whole real line. 
	In particular $\nu_2$ has full support. Then, letting $x \to \infty$ 
	in \eqref{nu2cond0}, we obtain $\ell_2=0$,  and \eqref{nu2cond} follows.
\end{proof}

\subsection{\texorpdfstring{Monotonicity in $t$}{}}
Recall that $(\nu_{1,t}^X, \nu_{2,t}^X)$ is the minimizer of
\eqref{Etnu} under the conditions \eqref{nu12norms}, \eqref{sigma0}
and \eqref{nu1cutoff}. We use 
\begin{equation} \label{Delta12X}
	\Delta_{1,t}^X = \supp(\nu_{1,t}^X), \qquad \Delta_{2,t}^X = \supp(\sigma_0-\nu_{2,t}^X).
\end{equation}

The variational conditions \eqref{nu1Xcond0} and \eqref{nu2Xcond0}
are satisfied. From Lemma~\ref{lemma33} it follows that 
$\nu_{2,t}^X \geq \rho$, and \eqref{nu2Xcond0} will simplify 
to \eqref{nu2cond}. Thus, we
have for some constant $\ell_{t}^X$
\begin{align} \label{nu1Xcond}
	2 U^{\nu_{1,t}^X} - U^{\nu_{2,t}^X} - c \log |x^2+a^2| &
	\begin{cases} = \ell_t^X, &  \text{ on } \Delta_{1,t}^X \subset [-X,X], \\ 
		\geq \ell_t^X, &  \text{ on } [-X,X], 
	\end{cases} \\ \label{nu2Xcond}
	2U^{\nu_{2,t}^X} - U^{\nu_{1,t}^X} + (t+2c) \log|x| &
	\begin{cases} = 0, & \text{ on } \Delta_{2,t}^X, \\
		\leq 0, & \text{ on } \mathbb R.	 
	\end{cases}		 
\end{align}
Expanding on the idea used in the above proof of Lemma \ref{lemma33}, we will
be able to establish that $\nu_{1,t}^X$ and $\nu_{2,t}^X$ are increasing as a function
of $t >  0$, see part~(c) of Lemma \ref{lemma34} below.

To do so, we need to introduce more definitions and notations. 
Given $\nu_2$ with $\int d\nu_2 = t + c$, $0 \leq \nu_2 \leq \sigma_0$ we minimize $E_t$,
see \eqref{Etnu},
with respect to $\nu_1$ only, subject to $\int \nu_1 = t$ and $\supp(\nu_1) \subset [-X,X]$. The minimizer is denoted by $\nu_{1,t}^{X,\nu_2}$.
Likewise, given $\nu_1$ with $\int d\nu_1 = t$
and $\supp(\nu_1) \subset [-X,X]$, we minimize $E_t$ with respect to $\nu_2$, subject to
$\int d\nu_2 = t + c$ and $0 \leq \nu_2 \leq \sigma_0$.
The minimizer is denoted by $\nu_{2,t}^{X, \nu_1}$.

The variational conditions for these minimization
problems are with some constants $\ell = \ell_{t}^{X,\nu_2}$
\begin{align} \label{nu1Xnu2cond}
	2 U^{\nu_{1,t}^{X,\nu_2}} - U^{\nu_2} - c \log |x^2+a^2| &
	\begin{cases} = \ell, & \text{ on } \supp(\nu_{1,t}^{X,\nu_2}) \subset [-X,X], \\ 
		\geq \ell, &  \text{ on } [-X,X], 
	\end{cases} \\ \label{nu2Xnu1cond}
	2U^{\nu_{2,t}^{X,\nu_1}} - U^{\nu_1} + (t+2c) \log|x| &
	\begin{cases} = 0, & \text{ on } \supp(\sigma_0 - \nu_{2,t}^{X,\nu_1}), \\
		\leq 0, & \text{ on } \mathbb R,	 
	\end{cases}		 
\end{align}
see also Lemma \ref{lemma33}. Also by Lemma \ref{lemma33} 
we have $\nu_{2,t}^{X,\nu_1} \geq \rho$.

\begin{lemma} \label{lemma34}
	Let $t > t'>  0$ and $X > 0$.
	\begin{enumerate}
		\item[\rm (a)] Let $\nu_2'\leq \nu_2$ be measures with
		continuous logarithmic potentials, total masses $\int d \nu_2' = t' + c$, 
		$\int d\nu_2 = t+c$, and upper constraint $\nu_2 \leq \sigma_0$. 
		Then $\nu_{1,t'}^{X, \nu_2'} \leq \nu_{1,t}^{X, \nu_2}$.
		
		\item[\rm (b)] Let $\nu_1' \leq \nu_1$ be measures with 
		continuous logarithmic potentials, total masses $\int d\nu_1 = t'$ and $\int d \nu_2 = t$ and support
		$\supp(\nu_1) \subset [-X,X]$. Then
		$\nu_{2,t'}^{X, \nu_1'} \leq \nu_{2,t}^{X, \nu_1}$.
		
		\item[\rm (c)] We have $ \nu_{1,t'}^{X} \leq \nu_{1,t}^{X}$
		and $\nu_{2,t'}^{X} \leq \nu_{2,t}^{X}$. 
	\end{enumerate}
\end{lemma}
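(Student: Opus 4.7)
The plan is to prove parts (a) and (b) by the same kind of argument---subtracting the two variational conditions to build an auxiliary potential difference, applying the maximum principle on the sign set where it is positive, and then invoking Theorem \ref{thm:dlVP}---and then to deduce (c) from (a) and (b) by a monotone alternating-minimization iteration. The main difficulty will be (c), since a direct joint maximum-principle argument gives mixed Laplacian signs; the iteration bypasses this by repeated applications of (a) and (b).

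For part (a), set $\mu = \nu_{1,t}^{X,\nu_2}$, $\mu' = \nu_{1,t'}^{X,\nu_2'}$, with Lagrange constants $\ell, \ell'$ from \eqref{nu1Xnu2cond}, and define
\[
\Phi(z) = 2\bigl(U^\mu(z) - U^{\mu'}(z)\bigr) - \bigl(U^{\nu_2}(z) - U^{\nu_2'}(z)\bigr) - (\ell - \ell').
\]
Subtracting the two variational conditions yields $\Phi \le 0$ on $\supp\mu$ and $\Phi \ge 0$ on $\supp\mu'$. On $\Omega = \{\Phi > 0\}$ one has $\mu|_\Omega = 0$ and, since $\nu_2 \ge \nu_2'$,
\[
\Delta\Phi\big|_\Omega = 4\pi\,\mu'|_\Omega + 2\pi\,(\nu_2 - \nu_2')|_\Omega \ge 0,
\]
so $\Phi$ is subharmonic on $\Omega$. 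A mass count gives $\Phi(z) \sim (t'-t)\log|z| \to -\infty$ at infinity, so the maximum principle forces $\Omega = \emptyset$ and hence $\Phi \le 0$ everywhere with equality on $\supp\mu'$. Rewriting this as $U^{2\mu + \nu_2'} \le U^{2\mu' + \nu_2} + (\ell - \ell')$ and applying Theorem \ref{thm:dlVP} with $A = \supp\mu'$ gives $(2\mu' + \nu_2)|_A \le (2\mu + \nu_2')|_A$; using $\nu_2' \le \nu_2$ this yields $\mu' \le \mu$ on $A$, and trivially $\mu' = 0 \le \mu$ off $A$.

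Part (b) is parallel. With $\eta = \nu_{2,t}^{X,\nu_1}$, $\eta' = \nu_{2,t'}^{X,\nu_1'}$ and the Lagrange constants in \eqref{nu2Xnu1cond} both equal to $0$, set
\[
\Psi(z) = 2\bigl(U^\eta(z) - U^{\eta'}(z)\bigr) - \bigl(U^{\nu_1}(z) - U^{\nu_1'}(z)\bigr) + (t-t')\log|z|;
\]
this satisfies $\Psi \ge 0$ on $\supp(\sigma_0 - \eta)$ and $\Psi \le 0$ on $\supp(\sigma_0 - \eta')$. On $\Omega' = \{\Psi > 0\}$ one has $\eta' = \sigma_0$, so $\eta' - \eta = \sigma_0 - \eta \ge 0$, and $0 \notin \Omega'$ because $\Psi(0) = -\infty$; hence $\Delta\Psi|_{\Omega'} = 4\pi(\sigma_0 - \eta)|_{\Omega'} + 2\pi(\nu_1 - \nu_1')|_{\Omega'} \ge 0$, and a mass count shows the $\log|z|$ coefficient of $\Psi$ at infinity vanishes so $\Psi \to 0$ there. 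The maximum principle then forces $\Omega' = \emptyset$ and $\Psi \le 0$ globally, with equality on $\supp(\sigma_0 - \eta)$. Theorem \ref{thm:dlVP} gives $(2\eta' + \nu_1 + (t-t')\delta_0)|_{\supp(\sigma_0-\eta)} \le (2\eta + \nu_1')|_{\supp(\sigma_0-\eta)}$; by Lemma \ref{lemma33} and Proposition \ref{prop31}, $\eta \ge \rho = \sigma_0$ on $[-c/a, c/a]$, so $0 \notin \supp(\sigma_0 - \eta)$ and the $\delta_0$ term drops; together with $\nu_1' \le \nu_1$ this gives $\eta' \le \eta$ on $\supp(\sigma_0 - \eta)$, and $\eta' \le \sigma_0 = \eta$ on the complement.

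For part (c), pick any measure $\eta^{(0)}$ with $\eta_{t'}^X \le \eta^{(0)} \le \sigma_0$ and $\int d\eta^{(0)} = t + c$ (for instance $\eta_{t'}^X + \alpha(\sigma_0 - \eta_{t'}^X)\mathbf{1}_K$ for a compact $K$ with $\int_K(\sigma_0 - \eta_{t'}^X) \ge t - t'$ and suitably chosen $\alpha \in (0,1]$), and run the alternating iteration $\mu^{(k)} = \nu_{1,t}^{X,\eta^{(k-1)}}$, $\eta^{(k)} = \nu_{2,t}^{X,\mu^{(k)}}$. Part (a) applied to the pair $(t', \eta_{t'}^X)$ versus $(t, \eta^{(k-1)})$ gives $\mu_{t'}^X \le \mu^{(k)}$, and part (b) applied to $(t', \mu_{t'}^X)$ versus $(t, \mu^{(k)})$ then gives $\eta_{t'}^X \le \eta^{(k)}$ for every $k \ge 1$. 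The iterates lie in a weakly compact family, the energy $E_t$ decreases along the alternating minimization, and uniqueness of the joint minimizer (see \cite{HK12}) forces every weak subsequential limit to equal $(\nu_{1,t}^X, \nu_{2,t}^X)$; passing to the limit in the lower bounds yields $\mu_{t'}^X \le \nu_{1,t}^X$ and $\eta_{t'}^X \le \nu_{2,t}^X$, completing the proof.
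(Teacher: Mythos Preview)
Your arguments for (a) and (b) are correct and essentially the paper's proof, phrased via the positivity set $\{\Phi>0\}$ rather than ``subharmonic off $\supp(\nu_1)$''; both variants work. (In (b) the remark that $0\notin\supp(\sigma_0-\eta)$ is harmless but unnecessary: from $(2\eta'+\nu_1+(t-t')\delta_0)|_A \le (2\eta+\nu_1')|_A$ and $\nu_1'\le\nu_1$ you already get $\eta'\le\eta$ on $A$ regardless.)

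For (c) your approach diverges from the paper, and the convergence step has a genuine gap. You assert that energy decrease plus uniqueness of the minimizer forces every weak subsequential limit of the alternating iterates to be $(\nu_{1,t}^X,\nu_{2,t}^X)$, but this does not follow: decreasing energy only gives $E_t(\mu^{(k)},\eta^{(k)})\downarrow L\ge\min E_t$, and lower semicontinuity gives $E_t(\mu^*,\eta^*)\le L$ for a subsequential limit, not $E_t(\mu^*,\eta^*)=\min E_t$. To close this you would need to show that any weak limit is a \emph{coordinate-wise} minimizer (hence the global one by strict convexity), which requires passing to the limit in the partial optimality $\mu^{(k)}=\nu_{1,t}^{X,\eta^{(k-1)}}$---and for that you must control $\eta^{(k-1)}$ and $\mu^{(k)}$ along the \emph{same} subsequence, together with some continuity of the partial-minimizer map. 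You also have not established tightness of $(\eta^{(k)})$ on $\mathbb R$; the bounds $\rho\le\eta^{(k)}\le\sigma_0$ with $\int d(\eta^{(k)}-\rho)=t$ do not by themselves prevent mass from escaping to infinity.

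The paper avoids all of this with a one-step argument: let $(\widetilde\nu_1,\widetilde\nu_2)$ be the minimizer of $E_t$ subject to the \emph{additional} constraints $\widetilde\nu_1\ge\nu_{1,t'}^X$, $\widetilde\nu_2\ge\nu_{2,t'}^X$. Since $\widetilde\nu_2\ge\nu_{2,t'}^X$, part (a) shows the unconstrained partial minimizer $\nu_{1,t}^{X,\widetilde\nu_2}$ already satisfies $\ge\nu_{1,t'}^X$, so the lower-bound constraint on the first coordinate is inactive and $\widetilde\nu_1=\nu_{1,t}^{X,\widetilde\nu_2}$; symmetrically $\widetilde\nu_2=\nu_{2,t}^{X,\widetilde\nu_1}$ by (b). Thus $(\widetilde\nu_1,\widetilde\nu_2)$ is a coordinate-wise minimizer of the unconstrained problem, hence by strict convexity equals $(\nu_{1,t}^X,\nu_{2,t}^X)$, and the lower bounds follow immediately. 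This uses (a) and (b) once each and needs no iteration or compactness.
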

\begin{proof}	
	(a) The variational conditions are \eqref{nu1Xnu2cond} and
	\begin{equation} \label{nu1pXnu2cond} 
		2U^{\nu_{1,t'}^{X,\nu_2'}} - U^{\nu_2'} - c \log(x^2+a^2) 
		\begin{cases}
			= \ell' & \text{ on } \supp(\nu_{1,t'}^{X,\nu_2'})
			\subset [-X,X], \\
			\geq \ell' & \text{ on } [-X,X], \end{cases} \end{equation}
	for some $\ell'$. Then by \eqref{nu1Xnu2cond} and \eqref{nu1pXnu2cond}, where we write $\nu_1 = \nu_{1,t}^{X,\nu_2}$ and $\nu_1'= \nu_{1,t'}^{X,\nu_2'}$,
	\begin{equation} \label{Unu1pminUnu1} 2 U^{\nu_1} - 2 U^{\nu_1'} - U^{\nu_2-\nu_2'}  \begin{cases}
			\geq \ell - \ell' & \text{ on } \supp(\nu_1'), \\
			\leq \ell - \ell' & \text{ on } \supp(\nu_1). \end{cases} \end{equation}
	The left-hand side of \eqref{Unu1pminUnu1} is subharmonic in 
	$\mathbb C \setminus \supp(\nu_1)$ (we use  $\nu_2'\leq\nu_2$).
	At infinity, it tends to $-\infty$. Hence, the maximum is assumed at $\supp(\nu_1)$, where it is bounded
	by $\ell-\ell'$ by the second inequality in \eqref{Unu1pminUnu1}. This implies
	\[ 2 U^{\nu_1} - 2 U^{\nu_1'} - U^{\nu_2-\nu_2'}  \leq \ell - \ell' \text{ on } \mathbb C, \]
	and equality holds on $\supp(\nu_1')$ because of the first inequality in \eqref{Unu1pminUnu1}. 
	Therefore,   $2\nu_1 \geq 2 \nu_1' + \nu_2-\nu_2'$ by \eqref{thm:dlVP} on $\supp(\nu_1')$,
	and in particular $\nu_1' \leq \nu_1$ on $\supp(\nu_1')$. Outside $\supp(\nu_1')$ we have $\nu_1 \geq 0 = \nu_1'$, and it follows that $\nu_1' \leq \nu_1$ everywhere.
	
	\medskip
	
	(b) 
	Both $\nu_{2,t}^{X,\nu_1}$ and $\nu_{2,t'}^{X,\nu_1'}$ 
	have full supports by Lemma \ref{lemma33}.
	We also have variational conditions \eqref{nu2Xnu1cond} and
	similarly
	\begin{equation} \label{nu2pXnu1cond} 
		2U^{\nu_{2,t'}^{X,\nu_1'}} - U^{\nu_1'} + (t'+2c) \log |x| \begin{cases} = 0, &  \text{ on } \supp(\sigma_0-\nu_{2,t'}^{X,\nu_1'}), \\
			\leq 0, & \text{ on } \mathbb R. \end{cases} \end{equation}
	Writing $\nu_2 = \nu_2^{t,X,\nu_1}$ and 
	$\nu_2' = \nu_2^{t',X,\nu_1'}$ and
	combining \eqref{nu2Xnu1cond} and \eqref{nu2pXnu1cond}, we get
	\begin{equation} \label{Unu2pminUnu2} 
		2 U^{\nu_2} - 2 U^{\nu_2'} - U^{\nu_1-\nu_1'} - (t-t') U^{\delta_0}
		\begin{cases} \leq 0, & \text{ on } \supp(\sigma_0-\nu_2'), \\
			\geq 0, & \text{ on } \supp(\sigma_0-\nu_2). \end{cases} \end{equation}
	The left-hand side of \eqref{Unu2pminUnu2} is subharmonic in $\mathbb C \setminus \supp(\sigma_0-\nu_2')$ 
	since $t'< t$, $\nu_1' \leq \nu_1$ (everywhere) and $\nu_2 \leq \nu_2'$ in $\mathbb C \setminus \supp(\sigma_0-\nu_2')$. 
	Hence, the maximum of the left-hand side of \eqref{Unu2pminUnu2} is attained at the boundary, 
	which now consists of $\supp(\sigma_0-\nu_2')$
	and the point at infinity. There is a limit $0$ at infinity, because the total
	masses of the involved measures add up to $0$. On $\supp(\sigma_0-\nu_2')$ the values are $\leq 0$
	due to the first inequality in \eqref{Unu2pminUnu2}.
	This means
	\[  2 U^{\nu_2} - 2 U^{\nu_2'} - U^{\nu_1-\nu_1'} - (t-t') U^{\delta_0} \leq 0 \]
	and equality holds on $\supp(\sigma_0 - \nu_2)$ because of the second inequality
	in \eqref{Unu2pminUnu2}.
	Theorem \ref{thm:dlVP} tells us that $2\nu_2 \geq 2\nu_2' + \nu_1-\nu_1' + (t-t') \delta_0$
	on $\supp(\sigma_0-\nu_2)$, and, consequentially, $\nu_2 \geq \nu_2'$, 
	on $\supp(\sigma_0-\nu_2)$.
	Outside $\supp(\sigma_0-\nu_2)$ we have $\nu_2 = \sigma_0 \geq \nu_2'$. 
	Thus, $\nu_2' \leq \nu_2$ everywhere as claimed in part (b). 
	
	\medskip
	
	(c)
	Let $(\widetilde{\nu}_1, \widetilde{\nu}_2)$ be the minimizer of $E_{t}$ 
	with the extra 
	conditions $\widetilde{\nu}_1 \geq \nu_{1,t'}^X$ and $\widetilde{\nu}_2 \geq \nu_{2,t'}^X$,
	in addition to the usual conditions \eqref{nu12norms} and \eqref{sigma0} associated with $t$
	and $X$.
	
	Because of strict convexity of the energy functional, cf.~\cite{HK12}, 
	and convexity of its domain, there is a unique
	minimizer. Then, $\widetilde{\nu}_1$ is the unique minimizer
	of $ E_t( \cdot, \widetilde{\nu}_2)$ under the conditions
	$\int \widetilde{\nu}_1 = t$, $\supp(\widetilde{\nu}_1)
	\subset [-X,X]$, and $\widetilde{\nu}_1 \geq 
	\nu_{1,t'}^X$. 
	However, due to part (b) the condition $\widetilde{\nu}_1 \geq \nu_{1,t'}^X$
	is redundant. If we do not include this condition a priori, then the
	minimizer is still going to satisfy it, since $\widetilde{\nu}_2 \geq \nu_{2,t'}^X$. This leads to the conclusion
	\begin{equation} \label{nu1tilde} \widetilde{\nu}_1 =  \nu_{1,t}^{X,\widetilde{\nu}_2}. \end{equation}
	
	Similarly, due to part (a) $\widetilde{\nu}_2$ is the unique minimizer
	of $E_t(\widetilde{\nu}_1, \cdot)$ under the conditions 
	$\int \widetilde{\nu}_2 = t + c$, $\widetilde{\nu}_2 \leq \sigma_0$, i.e.,
	\begin{equation} \label{nu2tilde} \widetilde{\nu}_2 =  
		\nu_{2,t}^{X,\widetilde{\nu}_1}. \end{equation}
	This implies that $(\widetilde{\nu}_1, \widetilde{\nu}_2) = (\nu_{1,t}^{X}, \nu_{2,t}^X)$
	and part (c) of the lemma follows.
\end{proof}

\subsection{Result on equilibrium measures}\label{eqmeasure}

\subsubsection{Statement}

We continue with the study of the measures $\nu_{1,t}^X$ and $\nu_{2,t}^X$.
We are interested in situations where $X$ is not in the support of $\nu_{1,t}^X$. 
Recall that $\Delta_{1,t}^X$ and $\Delta_{2,t}^X$ are given by
\eqref{Delta12X}.

In this subsection we are going to prove the following result.

\begin{proposition} \label{prop35}
	Suppose $t > 0$ and $X > 0$ are such that $\Delta_{1,t}^X \subset (-X,X)$ and $\Delta_{2,t}^X \subset \mathbb R \setminus (-X,X)$. Then the following hold.
	\begin{enumerate}
		\item[\rm (a)]
		$\Delta_{2,t}^X$ is the union of two intervals 
		$(-\infty, -x_2] \cup [x_2, \infty)$ for some $x_2 \geq X$. 
		\item[\rm (b)]
		$\Delta_{1,t}^X$ is either an interval $[-x_1,x_1]$ with $0 <x_1 < X$,
		or a union of two intervals $[-x_1,-x_0] \cup [x_0, x_1]$ with
		$0 < x_0 < x_1 < X$.
		\item[\rm (c)] 	The measure $\nu_{1,t}^X$ has a density 
		that is real analytic and positive on the interior of $\Delta_{1,t}^X$
		except possibly at $0$,
		and vanishes like 	a square root at the endpoints of $\Delta_{1,t}^X$.
		\item[\rm (d)] 
		The measure $\nu_{2,t}^X$ satisfies $\nu_{2,t}^X \geq \rho$ and
		$\sigma_0 - \nu_{2,t}^X$ has a density that is real
		analytic and positive on the interior of $\Delta_{2,t}^X$ and vanishes
		like a square root at the endpoints of $\Delta_{2,t}^X$.
		\item[\rm (e)] 
		The variational inequality in \eqref{nu1Xcond} is strict outside of $\Delta_{1,t}^X$ 
		and  extends as a strict inequality up to $\pm x_2$, that is,
		\begin{align} \label{Unu1strict}
			2 U^{\nu_{1,t}^X} - U^{\nu_{2,t}^X} - c \log |x^2+a^2| 
			> \ell_t^X,  \quad \text{ on } [-x_2,x_2] \setminus \Delta_{1,t}^X, 
			\end{align}
		\item[\rm (f)] 
		The variational inequality in \eqref{nu2Xcond} is strict outside of $\Delta_{2,t}^X$, 	that is,
		\begin{align}
			2U^{\nu_{2,t}^X} - U^{\nu_{1,t}^X} + (t+2c) \log|x| 
			< 0,  \quad \text{ on } (-x_2,x_2).	
			\label{Unu2strict} 		 
		\end{align}
	\end{enumerate}
\end{proposition}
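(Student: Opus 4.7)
The plan is to combine the $x\mapsto -x$ symmetry of the problem with the decoupling provided by the hypothesis $\Delta_{1,t}^X \cap \Delta_{2,t}^X = \emptyset$, and then to invoke the standard regularity theory for equilibrium measures with real analytic external fields, supplemented by a pole-count of the associated Cauchy transforms to pin down the support structure.

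First I would establish symmetry: the energy functional $E_t$, the normalisations \eqref{nu12norms}, the upper constraint \eqref{sigma0}, and the cut-off $[-X,X]$ are all invariant under $x\mapsto -x$, and by strict convexity of $E_t$ the minimiser is unique, hence invariant, so $\Delta_{1,t}^X$ and $\Delta_{2,t}^X$ are symmetric. The hypothesis gives $\nu_{2,t}^X = \sigma_0$ on $(-X,X) \supset \Delta_{1,t}^X$, so $U^{\nu_{2,t}^X}$ is real analytic on a neighbourhood of $\Delta_{1,t}^X$. The equality in \eqref{nu1Xcond} then reduces to the Euler-Lagrange condition for the equilibrium measure of total mass $t$ on $[-X,X]$ in the real analytic external field $-\tfrac{1}{2}U^{\nu_{2,t}^X}(x) - \tfrac{c}{2}\log(x^2+a^2)$; standard theory (see \cite{ST97}, and for real analyticity and the square-root endpoint behaviour the analysis in \cite{DKMVZ99}) yields (c), with the even symmetry permitting a higher-order zero of the density only at the distinguished point $0$. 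Symmetrically, writing $\nu_{2,t}^X = \sigma_0 - (\sigma_0-\nu_{2,t}^X)$ and using that $U^{\nu_{1,t}^X}$ is real analytic off $\Delta_{1,t}^X$, the measure $\sigma_0 - \nu_{2,t}^X$ satisfies an Euler-Lagrange condition on $\Delta_{2,t}^X$ in a real analytic external field on $\mathbb{R}\setminus\{0\}$; together with $\nu_{2,t}^X \geq \rho$ from Lemma \ref{lemma33} this gives (d).

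For the support structure in (a) and (b) I would form the combined function built from the Cauchy transforms $F_j(z) = \int d\nu_{j,t}^X(s)/(z-s)$ together with the derivatives of the external fields $V_1$ and $V_2$. The variational equalities force the appropriate square to be a rational function on $\mathbb{P}^1$ whose double poles are restricted to $\pm ia$, $0$, and $\infty$. Matching principal parts via \eqref{nu12norms} and imposing $x\mapsto -x$ symmetry leaves only a few real parameters to be fixed, and the sign of the resulting rational function on $\mathbb{R}$ forces $\Delta_{1,t}^X$ to be either $[-x_1,x_1]$ or $[-x_1,-x_0]\cup[x_0,x_1]$, and forces $\Delta_{2,t}^X$ to be exactly the two unbounded symmetric intervals $(-\infty,-x_2]\cup[x_2,\infty)$; unboundedness uses that the density of $\rho$ in \eqref{rhodef} is strictly below $a/\pi$ on $\mathbb{R} \setminus [-c/a,c/a]$, combined with $\nu_{2,t}^X \geq \rho$. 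Finally, for (e) and (f) I would study $h(x) := 2U^{\nu_{1,t}^X}(x) - U^{\nu_{2,t}^X}(x) - c\log(x^2+a^2) - \ell_t^X$, which is real analytic on each component of $[-x_2,x_2]\setminus\Delta_{1,t}^X$, vanishes on $\Delta_{1,t}^X$, and whose local expansion just outside each endpoint of $\Delta_{1,t}^X$ is of the form $(x-\text{endpoint})^{3/2}$ with strictly positive leading coefficient (coming from the square-root vanishing of the density established in (c)). Combining this endpoint behaviour with an analytic-continuation argument ruling out interior zeros in the gap components yields $h > 0$ on $[-x_2,x_2]\setminus\Delta_{1,t}^X$, giving \eqref{Unu1strict}; \eqref{Unu2strict} follows by the symmetric argument applied to $\sigma_0 - \nu_{2,t}^X$.

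The principal obstacle is the support-counting step: one must rule out extraneous intervals by a careful degree-count of the squared Cauchy-transform combination, and one must handle the infinite-mass behaviour of $\sigma_0$ when analysing $\sigma_0-\nu_{2,t}^X$ (so that the relevant rational function really is rational on $\mathbb{P}^1$ rather than merely single-valued up to a pure imaginary jump). The remaining steps (symmetry, density analyticity, square-root endpoints, and strict positivity via real analyticity) follow standard templates in the theory of vector equilibrium problems.
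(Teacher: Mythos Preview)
Your overall architecture---symmetry, real-analytic external fields giving the density regularity in (c) and (d), then an algebraic/spectral-curve argument for the support structure (a), (b), and a Rolle-type argument for (e)---is close to the paper's, with two caveats worth flagging.

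First, the ``appropriate square'' language is misleading for this vector problem. The paper builds three functions $S_1,S_2,S_3$ from the Cauchy transforms and external-field derivatives (Definition~\ref{def:Schwarz}) that are the three branches of a meromorphic function on a three-sheeted Riemann surface, and then shows they satisfy a cubic spectral curve $P(S,z)=0$ whose discriminant in $S$ is a degree-$12$ polynomial in $z$. Counting the nodes on the imaginary axis (each a double zero of the discriminant) against the $12$ available zeros is what forces at most six branch points and hence the dichotomy in (b). For (a) the paper does \emph{not} use the spectral curve at all: it truncates $\sigma_0$ and $\nu_2$ to $[-R,R]$, shows the effective external field on $\sigma_{0,R}-\nu_{2,R}$ is convex on $[X,R/\sqrt{2}]$, and invokes the one-interval theorem for convex fields. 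This sidesteps the infinite-mass issue you mention and, importantly, is logically prior to constructing the Riemann surface.

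Second, and this is the genuine gap: your symmetric argument for (f) fails on $[-x_1,x_1]$. On that interval $\nu_1$ has positive density, so $S_{1,+}$ and $S_{2,+}$ are non-real complex conjugates, and the Rolle condition $h_2'(x^*)=0$ translates to $\tfrac12(S_{1,+}+S_{2,+})(x^*)=S_3(x^*)$, which is \emph{not} a branch-point or node condition and gives no contradiction with the discriminant count. The paper explicitly notes that ``the same technique does not work'' there and instead invokes the monotonicity in $t$ (Lemma~\ref{lemma34}(c)): it picks $t'<t$ with the same cutoff $X$ still proper (Lemma~\ref{lemma313}), subtracts the two variational conditions, and applies the maximum principle to the resulting subharmonic function to force strict inequality on $[-x_1(t),x_1(t)]$. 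Without this $t$-variation argument, part (f) is incomplete.
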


The condition $a^2 \geq 2c$ will be used to guarantee that $\Delta_{1,t}^X$
is an interval, since otherwise $\Delta_{1,t}^X$ may be the union of two intervals. This is done in Lemma \ref{lemma42}.

\subsubsection{Proof of Proposition \ref{prop35} (a)}

\begin{proof}
	We write $\nu_j = \nu_{j,t}^X$ for $j=1,2$.
	
	Take $R > 2X$ and let $\sigma_{0,R}$ and $\nu_{2,R}$ 
	be the restrictions of $\sigma_0$ and $\nu_2$ to $[-R,R]$.
	Then by \eqref{nu2Xnu1cond}
	\begin{multline} 2 U^{\sigma_{0,R}-\nu_{2,R}} 
		+ U^{\nu_1} - (t+2c) \log |x| - 2 U^{\sigma_{0,R}} - 2 U^{\nu_2 - \nu_{2,R}} \\ 
		\begin{cases} = 0, & \text{ on } \supp(\sigma_0-\nu_2) \\
			\geq 0, & \text{ on } \mathbb R.
			\end{cases} \end{multline}
Observe that  $\sigma_{0,R} - \nu_{2,R} \geq 0$ and $\supp(\sigma_{0,R} - \nu_{2,R}) =
[-R,R] \cap \supp(\sigma_0-\nu_2)$. Then 
$\sigma_{0,R} -\nu_{2,R}$ is the equilibrium measure corresponding to the external field
\begin{equation} \label{VR} V_R := U^{\nu_1} - (t+2c) \log |x| - 2 U^{\sigma_{0,R}} - 2 U^{\nu_2 - \nu_{2,R}} \end{equation}
for measures on $[-R,R]$ with finite total mass equal to $\int d(\sigma_{0,R} - \nu_{2,R})$.

The first term on the right-hand side of \eqref{VR} is convex
on $[X,\infty)$ since $\supp(\nu_1) \subset [-X,X]$. The second
term is clearly convex on $(0,\infty)$. The third term is convex on $[-R,R]$,
which can be checked by direct calculation
\begin{align*} - 2 U^{\sigma_0,R}(x) & = \frac{2a}{\pi} \int_{-R}^R \log |x-s| ds \\
	& = \frac{2a}{\pi} \left[ (R-x) \log (R-x) + (R+x) \log(R+x) -2R \right] 
	\end{align*}
and
\begin{equation} \label{convex3} 
	\frac{d^2}{dx^2} \left(- 2 U^{\sigma_0,R}(x) \right)	
	= \frac{4aR}{\pi(R^2-x^2)} \geq \frac{4a}{\pi R},
	\quad \text{ for } x \in [-R,R]. \end{equation}
The fourth term on the right-hand side of \eqref{VR} is concave on $[-R,R]$.
We calculate its second derivative for $x \in [-R,R]$, using also the
symmetry (about the origin) of $\nu_2-\nu_{2,R}$,
\begin{align*} \frac{d^2}{dx^2} \left(- 2 U^{\nu_2 - \nu_{2,R}}(x) \right)
	& = \frac{d^2}{dx^2} \left( 2 \int \log |x-s| d(\nu_2-\nu_{2,R})(s) \right) \\
	& =  -2 \int \frac{1}{(x-s)^2} d(\nu_2-\nu_{2,R})(s) \\
	& = -4 \int_{R}^{\infty} \left( \frac{1}{(x-s)^2} + 
	\frac{1}{(x+s)^2} \right) d(\nu_2-\nu_{2,R})(s).
\end{align*}
The integrand is decreasing for $s \in [R,\infty)$ and we obtain
the lower estimates
\begin{align*} \frac{d^2}{dx^2} \left(- 2 U^{\nu_2 - \nu_{2,R}}(x) \right)
	\geq   - \frac{8(R^2+x^2)}{(R^2-x^2)^2} (t+c) \quad
	\text{ for } x \in [-R,R].
	\end{align*}
and
\begin{align} \label{convex4} \frac{d^2}{dx^2} \left(- 2 U^{\nu_2 - \nu_{2,R}}(x) \right)
	\geq   - \frac{54}{R^2} (t+c) \quad
	\text{ for } x \in [-\tfrac{R}{\sqrt{2}},\tfrac{R}{\sqrt{2}}],
\end{align}
From \eqref{convex3} and \eqref{convex4} it follows that for $R$ large
enough $-2 U^{\sigma_{0,R}} - 2 U^{\nu_2-\nu_{2,R}}$ is convex
on $[-\tfrac{R}{\sqrt{2}},\tfrac{R}{\sqrt{2}}]$.

Then the external field \eqref{VR} is convex on $[X, \tfrac{R}{\sqrt{2}}]$. 
By a standard result from 
equilibrium problems with external fields, this implies 
that $\supp(\sigma_{0,R} - \nu_{2,R} ) \cap [X, \tfrac{R}{\sqrt{2}}]$
is an interval (maybe empty) for every large enough $R$. 
Letting $R \to \infty$, we conclude that
$\supp(\sigma_0-\nu_2) \cap [X, \infty)$ is an interval.
Since $\sigma_0-\nu_2$ has infinite mass in $[X,\infty)$, 
it has to be an interval of the form $[x_2,\infty)$ for some $x_2 \geq X$. 
By symmetry 
$\supp(\sigma_0-\nu_2) \cap (-\infty,-X] = (-\infty,-x_2]$, and by assumption of Proposition \ref{prop35} we have $\supp(\sigma_0 - \nu_2) = \Delta_{2,t}^X \subset	
\mathbb R \setminus (-X,X)$. Then part (a) of Proposition \ref{prop35} follows.
	\end{proof}

\subsubsection{\texorpdfstring{Functions $S_j$ and spectral curve}{}}\label{Schwarzsection}

The proofs of parts (b)-(f) of Proposition \ref{prop35} require more preliminary work.
	The measure $\nu_{1,t}^X$ minimizes (recall Definition \ref{logenergy}) 
	\[ I(\nu_1) - I(\nu_1, \nu_{2,t}^X) - c \int \log (x^2+a^2) d\nu_1(x) \]	
	among all measures $\nu_1$ on $[-X,X]$ with total mass $t$.
	This is an equilibrium problem with an effective external field
	$ - U^{\nu_{2,t}^X} - c \log |x^2+a^2|$ acting on $\nu_1$
	that is real analytic on $(-X,X)$ because of the assumption that
	$\Delta_{2,t}^X \subset \mathbb R \setminus (-X,X)$.
	Since also $\Delta_{1,t}^X \subset (-X,X)$, it then 
	follows from \cite{DKM98} that $\Delta_{1,t}^X$ consists
	of a finite
	union of intervals. Also $\nu_{1,t}^X$ has a bounded density with respect
	to Lebesgue measures, that is real analytic on the interior of
	each interval in its support, and vanishes at least
	as a square root at endpoints. We also denote $x_{1}$ to be the rightmost end point of $\Delta_{1,t}^X$.

	For simplicity of notation we write $\nu_1$ and $\nu_2$
	instead of $\nu_{1,t}^X$ and $\nu_{2,t}^X$, and we consider
	$t$ and $X$ fixed. We also write $\Delta_1$ and $\Delta_2$
	instead of $\Delta_{1,t}^X$ and $\Delta_{2,t}^X$.
	If we are going to change $t$ and $X$, then
	we will use the more elaborate notation again.

	Based on $\nu_1$ and $\nu_2$ we define three functions which play a significant role in the rest of the analysis.
\begin{definition} \label{def:Schwarz}
	We define 
	\begin{align} \label{S1z} 
		S_1(z) & =   \int \frac{d\nu_1(s)}{z-s} + \frac{2cz}{z^2+a^2},
		&&\text{for } z \in \mathbb C \setminus \Delta_1, \\ \label{S2z}
		S_2(z) & = -  \int \frac{d\nu_1(s)}{z-s} +  \int \frac{d\nu_2(s)}{z-s} \pm ia, 
		&& \text{for } \pm \Im z > 0, \\ \label{S3z}
		S_3(z) & = -  \int \frac{d\nu_2(s)}{z-s} + \frac{t+2c}{z}  \mp ia,
		&& \text{for } \pm \Im z > 0.
	\end{align}
\end{definition}
We will later see that $S_{1}(z)$ plays the role of the  Schwarz function for the droplet $\Omega_t$ defined in Theorem \ref{thm:droplet} and $S_2(z)$ and $S_{3}(z)$ are it's analytic continuation on Riemann sheets. Moreover we will be able to recover the measures $\nu_1$ and $\nu_2$ from the jumps of $S_i(z)$ on branch cuts.

The function $S_1$ is meromorphic with simple
poles at $\pm ia$ of residue $c$, while $S_2$ and $S_3$
are defined and analytic on $\mathbb C \setminus \mathbb R$. All functions are odd, i.e., $S_j(-z) = - S_j(z)$ for $j=1,2,3$, since the measures are 
symmetric with respect to $0$.

The functions $S_2$ and $S_3$ are the analytic continuations
of $S_1$ to the other sheets of a three-sheeted Riemann surface,
as we will show next. We write
\begin{equation} \label{R123}
	\mathcal R_1: \ \mathbb C \setminus \Delta_1, \quad
	\mathcal R_2: \ \mathbb C \setminus (\Delta_1 \cup \Delta_2), \quad
	\mathcal R_3: \  \mathbb C \setminus \Delta_2, 
\end{equation}
with $\mathcal R_j$ glued to $\mathcal R_{j+1}$ along $\Delta_j$ for $j=1,2$,
in the usual crosswise manner, see Figure~\ref{fig:three-sheets}
for the case where $\Delta_1$ consists of one interval. 
We also add three points at infinity
to obtain the compact Riemann surface $\mathcal R$.
The Riemann surface has branch points at $\pm x_1$ and $\pm x_2$, 
but there is no branching at infinity. It has genus $g$ if $g+1$ is
the number of intervals in $\Delta_1$.

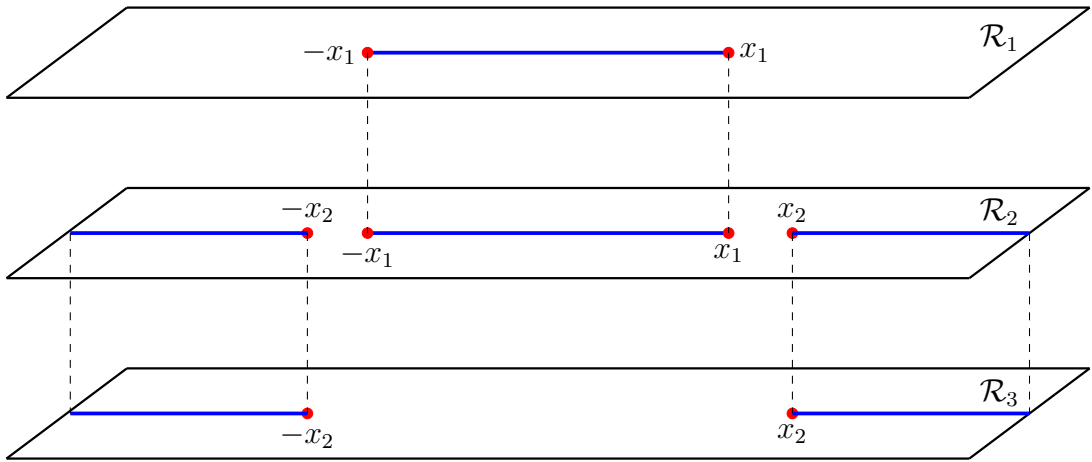
\begin{figure}
	\begin{tikzpicture}[scale=0.8]
		\draw[line width=0.3mm] (0,0) -- (16,0);
		\draw[line width=0.3mm] (0,0)--(2,1.5);
		\draw[line width=0.3mm] (2,1.5)--(18,1.5);
		\draw[line width=0.3mm](16,0)--(18,1.5);
		\draw[line width=0.3mm] (0,3) -- (16,3);
		\draw[line width=0.3mm] (0,3)--(2,4.5);
		\draw[line width=0.3mm] (2,4.5)--(18,4.5);
		\draw[line width=0.3mm](16,3)--(18,4.5);
		\draw[line width=0.3mm] (0,-3) -- (16,-3);
		\draw[line width=0.3mm] (0,-3)--(2,-1.5);
		\draw[line width=0.3mm] (2,-1.5)--(18,-1.5);
		\draw[line width=0.3mm](16,-3)--(18,-1.5);
		\draw[line width=0.5mm, color=blue](6,0.75)--(12,0.75);
		\draw[line width=0.5mm, color=blue](6,3.75)--(12,3.75);
		\filldraw[red] (12,3.75) circle (2.5 pt)  node[right,black]{$x_1$};
		\filldraw[red] (12,.75) circle (2.5 pt)  node[below,black]{$x_1$};
		\filldraw[red] (6,3.75) circle (2.5 pt)  node[left,black]{$-x_1$};
		\filldraw[red] (6,0.75) circle (2.5 pt)  node[below,black]{$-x_1$};
		\filldraw[red] (5,0.75) circle (2.5 pt)  node[above,black]{$-x_2$};
		\filldraw[red] (5,-2.25) circle (2.5 pt)  node[below,black]{$-x_2$};
		\filldraw[red] (13.06,-2.25) circle (2.5 pt)  node[below,black]{$x_2$};
		\filldraw[red] (13.06,0.75) circle (2.5 pt)  node[above,black]{$x_2$};
		
		\draw[line width=0.5mm, color=blue](1.06,0.75)--(5,0.75);
		\draw[line width=0.5mm, color=blue](13.06,-2.25)--(17.0,-2.25);
		\draw[line width=0.5mm, color=blue](1.06,-2.25)--(5,-2.25);
		\draw[line width=0.5mm, color=blue](13.06,0.75)--(17.0,0.75);
		\draw[dashed](12,3.75)--(12,0.75); 
		\draw[dashed](6,3.75)--(6,0.75);
		\draw[dashed](1.06,.75)--(1.06,-2.25);
		\draw[dashed](5,.75)--(5,-2.25);
		\draw[dashed](13.06,.75)--(13.06,-2.25);
		\draw[dashed](17,.75)--(17,-2.25);
		
		\draw (16.5,3.6)  node[above]{$\mathcal{R}_1$};
		\draw (16.5,0.7)  node[above]{$\mathcal{R}_2$};
		\draw (16.5,-2.3)  node[above]{$\mathcal{R}_3$};
		
	\end{tikzpicture}
	\caption{The three sheets $\mathcal R_1$, $\mathcal R_2$ and $\mathcal R_3$ of the Riemann surface in the case where $\Delta_1 = [-x_1,x_1]$ consists of one interval. \label{fig:three-sheets}}
\end{figure}

\begin{lemma} \label{lemma32}
	The functions \eqref{S1z}--\eqref{S3z} are the three branches of a meromorphic function
	$S$
	on the Riemann surface $\mathcal R$, where $S_j$ is considered as a function
	on the $j$th sheet.	
	The meromorphic function has degree three with three simple poles, namely 
	at $\pm ia$ on the first sheet, and at $0$ on the third sheet.
\end{lemma}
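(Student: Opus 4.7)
My plan is to verify that the three functions $S_1$, $S_2$, $S_3$ defined by \eqref{S1z}--\eqref{S3z} are the branches of a single meromorphic function on $\mathcal{R}$. The key tool is Plemelj--Sokhotski: for a measure $\mu$ on $\mathbb R$ with density $\mu'(x)$ at a point $x$, the Cauchy transform $C_\mu(z) := \int \frac{d\mu(s)}{z-s}$ satisfies $C_{\mu,\pm}(x) = C_{\mu,\mathrm{pv}}(x) \mp i\pi \mu'(x)$, where $C_{\mu,\mathrm{pv}}(x) = -(U^\mu)'(x)$. Using this, I would turn each equality in the variational conditions \eqref{nu1Xcond}--\eqref{nu2Xcond} (after differentiating in $x$) into a matching of the boundary values of $S_j$ and $S_{j+1}$ across the corresponding branch cut.

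I would first handle $\Delta_1$: differentiating the equality in \eqref{nu1Xcond} gives $C_{\nu_2,\mathrm{pv}}(x) = 2 C_{\nu_1,\mathrm{pv}}(x) + \tfrac{2cx}{x^2+a^2}$ on $\Delta_1$. Since $\Delta_1 \subset (-x_2,x_2)$ where $\nu_2 = \sigma_0$ has density $a/\pi$, one has $C_{\nu_2,\pm}(x) = C_{\nu_2,\mathrm{pv}}(x) \mp ia$, and a short substitution yields $S_{1,\pm}(x) - S_{2,\mp}(x) = 0$ on $\Delta_1$, the desired crosswise gluing. The analogous computation on $\Delta_2$, now using that $\nu_1$ has no density there (so $C_{\nu_1}$ has continuous boundary values) together with the equality in \eqref{nu2Xcond}, yields $S_{2,\pm}(x) = S_{3,\mp}(x)$ on $\Delta_2$. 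I also need to check that $S_2$ is single-valued on $\mathbb{C} \setminus (\Delta_1 \cup \Delta_2)$: on the gap $(-x_2,x_2) \setminus \Delta_1$, $\nu_1$ has no density while $\nu_2 = \sigma_0$ still, so the jump $-2ia$ of $C_{\nu_2}$ is exactly cancelled by the $+2ia$ difference between the piecewise constants $\pm ia$ in \eqref{S2z}. A parallel calculation shows that $S_3$ extends analytically across $(-x_2,x_2) \setminus \{0\}$, confirming that the three branches assemble into a single-valued function on $\mathcal{R}$.

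For the pole count, since the Cauchy transforms are analytic off $\mathbb R$, poles can come only from the explicit rational terms: $\tfrac{2cz}{z^2+a^2}$ gives simple poles of $S_1$ at $\pm ia$ each with residue $c$, and $\tfrac{t+2c}{z}$ gives a simple pole of $S_3$ at $0$ with residue $t+2c$. For the behavior at infinity, $S_1(z) = \tfrac{t+2c}{z} + O(z^{-3})$, while $S_2(z) \to \pm ia$ and $S_3(z) \to \mp ia$ as $z \to \infty$ with $\pm \Im z > 0$. The main subtlety, and what I expect to be the trickiest point, is the careful bookkeeping at infinity: because $\Delta_2$ extends through $\infty$, each of $\mathcal{R}_2$ and $\mathcal{R}_3$ naively has two asymptotic directions, but the crosswise identification along $\Delta_2$ glues the upper infinity of $\mathcal{R}_2$ to the lower infinity of $\mathcal{R}_3$ (both carrying $S = ia$) and vice versa, producing exactly three points above $\infty$ with distinct $S$-values $0$, $ia$, $-ia$. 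This is consistent with no branching at infinity and shows $S$ has degree three, the total number of poles counted with multiplicity being $1+1+1 = 3$.
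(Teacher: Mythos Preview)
Your proposal is correct and follows essentially the same route as the paper: use Plemelj--Sokhotski to compute the boundary values of $S_1,S_2,S_3$, differentiate the equalities in the variational conditions \eqref{nu1Xcond}--\eqref{nu2Xcond} to obtain the crosswise matchings $S_{1,\pm}=S_{2,\mp}$ on $\Delta_1$ and $S_{2,\pm}=S_{3,\mp}$ on $\Delta_2$, and use the constraint $\nu_2=\sigma_0$ on $(-x_2,x_2)$ to see that $S_2$ and $S_3$ are single-valued on their sheets. Your explicit bookkeeping at infinity (gluing the upper end of $\mathcal R_2$ to the lower end of $\mathcal R_3$ and vice versa) is a bit more detailed than the paper's treatment, which simply records the asymptotics \eqref{S123asymp} and counts three simple poles to conclude the degree is three.
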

\begin{proof}
	Denoting by $S_{j,\pm}(x)$ the limiting value of $S_j(z)$ as $z \to x \in \mathbb R$
	with $ \pm \Im z > 0$, we have to show
	that $S_{2,+} = S_{2,-}$ on $\mathbb R\setminus \left(\Delta_{1}\cup \Delta_{2}\right)$, $S_{3,+} = S_{3,-}$
	on $(-x_2,0) \cup (0,x_2)$, $S_{1,\pm} = S_{2,\mp}$ on $\Delta_1$ and $S_{2,\pm} = S_{3,\mp}$
	on $\Delta_2$.
	
	To do so, we use the Sokhotskii-Plemelj formula according to which for
	measures $\mu$ with a continuous density, such as $\nu_1$ and $\nu_2$,
	one has 
	\begin{equation} \label{pvint1} 
		\lim_{\varepsilon \to 0+} \int \frac{d\mu(s)}{x \pm i \varepsilon -s}
		= \dashint  \frac{d\mu(s)}{x-s}  \mp \pi i \frac{d\mu}{dx}, 
		\qquad x \in \mathbb R,
	\end{equation}
	where $\dashint$ denotes the principal value. 
	We also use that
	\begin{equation} \label{pvint2} 
		\dashint \frac{d\mu(s)}{x-s} = \frac{d}{dx} \left( \int \log |x-s| d\mu (s) \right)
		= - \frac{d}{dx} U^{\mu}(x). \end{equation}
	
	From \eqref{S1z}, \eqref{S2z}, \eqref{S3z}, \eqref{pvint1} and \eqref{pvint2} one obtains for $x \in \mathbb R$
	\begin{align} 	\label{S1pm} 
		S_{1,\pm}(x) & = - \frac{d}{dx} U^{\nu_1}(x) \mp \pi i \frac{d\nu_1}{dx} 
		+ \frac{2cx}{x^2+a^2}, \\ 	\label{S2pm} 
		S_{2,\pm}(x) & = \frac{d}{dx} U^{\nu_1}(x) \pm  \pi i \frac{d\nu_1}{dx}
		-  \frac{d}{dx} U^{\nu_2}(x) \mp \pi i \frac{d\nu_2}{dx} 
		\pm ia, \\
		\label{S3pm} 
		S_{3,\pm}(x) & =  \frac{d}{dx} U^{\nu_2}(x) \pm  \pi i\frac{d\nu_2}{dx}
		+ \frac{t+2c}{x}  \mp ia.
	\end{align}
	For $x \in (-x_2, x_2)$ we have $\frac{d\nu_2}{dx} = \frac{d\sigma_0}{dx} = \frac{a}{\pi}$  by part (a) of Proposition \ref{prop35}.   
	Then, \eqref{S3pm} shows that $S_{3,+}(x) = S_{3,-}(x)$ for $x \in (-x_2,0) \cup (0,x_2)$,
	and $S_3$ has analytic continuation across $(-x_2,0) \cup (0,x_2)$ with
	a simple pole at $0$. From \eqref{S2pm} one obtains 
	\begin{align} \label{S2pmb} 
		S_{2,\pm}(x)  =  \frac{d}{dx} U^{\nu_1}(x) \pm  \pi i \frac{d\nu_1}{dx}
		-  \frac{d}{dx} U^{\nu_2}(x), \quad \text{for } x \in (-x_2,x_2). 
	\end{align}
	For $x \in (-x_2,x_2) \setminus \Delta_1$ one has additionally
	$\frac{d\nu_1}{dx} = 0$, and thus $S_{2,+}(x) = S_{2,-}(x)$ by \eqref{S2pmb}.
	Hence, $S_2$ has analytic continuation across the 
	intervals in $\mathbb R \setminus (\Delta_1 \cup \Delta_2)$. 
	
	Also from \eqref{S1pm} and \eqref{S2pmb}, we get
	\begin{align*} S_{1,\pm}(x) - S_{2,\mp}(x)
		= -2\frac{d}{dx} U^{\nu_1}(x) +  \frac{d}{dx} U^{\nu_2}(x) +
		\frac{2cx}{x^2+a^2}, \qquad
		x \in (-x_2,x_2),  \end{align*}
	which is zero on $\Delta_1$ 
	since, 
	$2 U^{\nu_1} -  U^{\nu_2} - c \log(x^2+a^2)$ is constant on 
	$\supp(\nu_1) = \Delta_1$ due to \eqref{nu1Xnu2cond}.
	This means $S_{1,\pm} = S_{2,\mp}$ on $\Delta_1$. 
	
	Finally, from \eqref{S2pm} and \eqref{S3pm}, we find
	\begin{align*} S_{2,\pm}(x) - S_{3,\mp}(x)
		=  \frac{d}{dx} U^{\nu_1}(x) - 2 \frac{d}{dx} U^{\nu_2}(x) - \frac{t+2c}{x}, 
		\qquad \text{for } x \in \mathbb R \setminus \Delta_1. \end{align*}
	Due to,  \eqref{nu2Xnu1cond}
	we have that $2 U^{\nu_2} -  U^{\nu_1} + (t+2c) \log|x|$ is constant
	and thus $S_{2,\pm} = S_{3,\mp}$ on $\Delta_2$.
	
	We conclude that $S$ is indeed a meromorphic function on the Riemann surface
	with simple poles at $\pm ia$ on $\mathcal R_1$ and $0$ at $\mathcal R_3$.
	There is no pole at infinity, and, due to the total mass properties
	of $\nu_1$ and $\nu_2$ and the definitions \eqref{S1z}--\eqref{S3z}, one has
	\begin{equation} \label{S123asymp}
		\begin{aligned}
			S_1(z) & = \frac{t+2c}{z} + \mathcal O\left(z^{-3}\right),  \\
			S_2(z) & = \pm ia + \frac{c}{z} + \mathcal O\left(z^{-2}\right), \quad \pm \Im z > 0, \\
			S_3(z) & = \mp ia + \frac{c}{z} + \mathcal O\left(z^{-2}\right),  \quad \pm \Im z > 0,
		\end{aligned}
	\end{equation}
	as $z \to \infty$. Thus, $S$ has exactly three simple poles and therefore has degree three.
\end{proof}
The analytic continuation of $S_3$ across $(-x_2,x_2)$ is also denoted by $S_3$
and similarly for $S_2$.

We recover the measures from the $S_j$ functions via the Stieltjes-Perron inversion formula:
\begin{align} \label{nu1x}
	d\nu_1(x) & = \frac{1}{2\pi i} \left( S_{1,-}(x) - S_{1,+}(x) \right) dx,  &&  x \in \Delta_1, \\
	\label{nu2x}
	d\nu_2(x) & = \frac{a}{\pi} dx - \frac{1}{2\pi i} \left( S_{3,-}(x) - S_{3,+}(x) \right) dx, 
	&& x \in \mathbb R. \end{align}
see also \eqref{S1pm} and \eqref{S3pm}.
From \eqref{S123asymp} and \eqref{nu2x} it follows in particular that
\begin{equation} \label{mu2asymp} 
	\frac{d\mu_2(x)}{dx} = \mathcal O\left(x^{-2}\right), \quad \text{ as } x \to \pm \infty. 
\end{equation}

Because we are dealing with branches of an algebraic functions, any symmetric function in the $S_j$'s is a rational function. 
\begin{lemma}
	We have 
	\begin{equation} \label{Sjsum}
	\begin{aligned} 
		S_1(z) + S_2(z) + S_3(z) & = \frac{2cz}{z^2+a^2} + \frac{t+2c}{z} \\
		& = \frac{(t+4c)z^2 + (t+2c)a^2}{z(z^2+a^2)}, 
	\end{aligned} \end{equation}
	and, for some real constants $C_1$ and $C_2$,
	\begin{align} \label{SjSksum}
		S_1(z) S_2(z) + S_1(z) S_3(z) + S_2(z) S_3(z) & = \frac{a^2z^2 + C_1}{z^2+a^2}, \\
		S_1(z) S_2(z) S_3(z) & = \frac{(t+2c)a^2z^2 + C_2}{z(z^2+a^2)}. \label{Sjprod}
	\end{align}
	The constants satisfy
	\begin{equation} \label{C1C2ineq} 
		C_1 > a^4 +4c^2  \quad \text{ and } \quad C_2 \geq 0.  \end{equation}
\end{lemma}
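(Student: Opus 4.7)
My plan is to exploit the fact, established in Lemma \ref{lemma32}, that $S_1,S_2,S_3$ are the three branches of a degree-three meromorphic function $S$ on the Riemann surface $\mathcal R$. Consequently, the elementary symmetric functions $e_1 = S_1+S_2+S_3$, $e_2 = S_1S_2+S_1S_3+S_2S_3$, $e_3 = S_1S_2S_3$ are single-valued meromorphic functions on $\mathbb C$. The asymptotic expansions \eqref{S123asymp} show all three are bounded at infinity (in fact $e_1 \to 0$, $e_2 \to a^2$, $e_3 \sim (t+2c)a^2/z$), so each $e_j$ is a rational function whose only possible finite poles lie among the poles of the $S_j$'s, namely $\pm ia$ (from $S_1$, residue $c$) and $0$ (from $S_3$, residue $t+2c$). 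Moreover, since every $S_j$ is odd, $e_1$ and $e_3$ are odd while $e_2$ is even.

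The formula for $e_1$ is then immediate from partial fractions: the residue data together with the $0$ limit at infinity force $e_1(z) = \frac{c}{z-ia} + \frac{c}{z+ia} + \frac{t+2c}{z}$, which rewrites to \eqref{Sjsum}. For $e_2$, the even parity plus the limit $a^2$ at infinity gives the ansatz $a^2 + \frac{\alpha}{z^2+a^2}$ provided I can rule out a pole at $0$. Writing $e_2 = S_1S_2 + (S_1+S_2)S_3$ and using $S_3(z) = \frac{t+2c}{z}+ (\text{analytic})$, the residue of $e_2$ at $0$ equals $(t+2c)(S_1+S_2)(0)$. If $0\notin \Delta_1$ this vanishes by oddness; if $0\in \Delta_1^\circ$, the jump relation $S_{2,\pm}=S_{1,\mp}$ on $\Delta_1$ and reality of $\nu_1$ give $S_{2,+}(x) = \overline{S_{1,+}(x)}$ on $\Delta_1$, hence $(S_{1,+}+S_{2,+})(0) = 2\Re S_{1,+}(0) = -2(U^{\nu_1})'(0) = 0$ by symmetry. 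Thus no pole at $0$ and $e_2$ has the claimed form. For $e_3$, oddness combined with the pole structure and leading order $(t+2c)a^2/z$ at infinity forces $e_3 = \frac{p(z)}{z(z^2+a^2)}$ with $p$ even of degree $\leq 2$ and leading coefficient $(t+2c)a^2$, giving \eqref{Sjprod} with $C_2 = p(0)$.

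To prove the inequalities I evaluate two residues. For $C_1$, equating the residue of $e_2$ at $z=ia$ computed from the formula, $\frac{C_1-a^4}{2ia}$, with $c\,(S_2(ia)+S_3(ia))$ computed via \eqref{S2z}--\eqref{S3z} and the symmetry identity $\int \frac{d\nu_1(s)}{ia-s} = -ia\int \frac{d\nu_1(s)}{s^2+a^2}$, gives
\[
C_1 = a^4 + 2c(t+2c) - 2a^2c \int \frac{d\nu_1(s)}{s^2+a^2},
\]
so that
\[
C_1 - a^4 - 4c^2 = 2c\int \frac{s^2}{s^2+a^2}\,d\nu_1(s) > 0,
\]
since $\nu_1$ has a continuous positive density on a nondegenerate union of intervals. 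For $C_2$, comparing the residue of $e_3$ at $0$ from the formula ($C_2/a^2$) with $(t+2c)\cdot (S_1S_2)(0)$ from the definition yields $C_2 = a^2(t+2c)(S_1S_2)(0)$. If $0\notin\Delta_1^\circ$ this equals $0$; if $0\in\Delta_1^\circ$, the Schwarz reflection identity above gives $(S_{1,+}S_{2,+})(0) = |S_{1,+}(0)|^2 = \pi^2\rho_1(0)^2\geq 0$, where $\rho_1 = d\nu_1/dx$. In both cases $C_2 \geq 0$. The one place requiring real care is the no-pole-at-$0$ argument for $e_2$ and the parallel evaluation of $(S_1S_2)(0)$ for $C_2$: both hinge on the Schwarz reflection $S_{2,+} = \overline{S_{1,+}}$ on $\Delta_1$ combined with the symmetry of $\nu_1$, which is what I expect to be the main technical point of the proof.
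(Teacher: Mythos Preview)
Your proof is correct and follows essentially the same approach as the paper: identify the elementary symmetric functions as rational by Lemma~\ref{lemma32}, pin down their form via the pole data at $0,\pm ia$ and the asymptotics \eqref{S123asymp}, then compute $C_1$ and $C_2$ by residues at $ia$ and $0$ respectively. The only noteworthy difference is in the argument that $e_2$ has no pole at $0$: the paper observes directly that oddness gives $S_{1,-}(0)=-S_{1,+}(0)$, which combined with the jump $S_{2,\pm}=S_{1,\mp}$ on $\Delta_1$ yields $S_{1,+}(0)+S_{2,+}(0)=S_{1,+}(0)+S_{1,-}(0)=0$ in one line, whereas you go through the Schwarz reflection $S_{2,+}=\overline{S_{1,+}}$ and then use evenness of $U^{\nu_1}$ to kill the real part; both are fine, and your case split on whether $0\in\Delta_1$ is a point the paper leaves implicit.
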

\begin{proof}
	The sum formula \eqref{Sjsum} is immediate from \eqref{S1z}--\eqref{S3z}.
	
	The product $S_1(z) S_2(z) S_3(z)$ is a rational function with simple poles at
	$z=\pm ia$ and at $z=0$. Because of \eqref{S123asymp} it satisfies
	$a^2(t+2c) z^{-1} + \mathcal O(z^{-3})$ as $z \to \infty$, which implies \eqref{Sjprod}.
	
	The left-hand side of \eqref{SjSksum}
	has simple poles at $z=\pm ia$. However, the singularity at $z=0$ is removable
	since the simple pole of $S_3$ at $z=0$ is cancelled by the zero of $S_1(z) + S_2(z)$.
	Indeed, we have  $S_{1,-}(0) = -S_{1,+}(0)$ since $S_1$ is odd, which implies 
	$S_{1,\pm}(0) + S_{2,\pm}(0) = S_{1,+}(0) + S_{1,-}(0) = 0$.
	From \eqref{S123asymp} we further see that the left-hand side of \eqref{SjSksum}
	behaves as $a^2 + \mathcal{O}(z^{-2})$ as $z \to \infty$ and this gives us \eqref{SjSksum}.
	
	The property $C_2 > 0$ comes from considering the residue at $z=0$.
	The left-hand side of  \eqref{Sjprod} has residue $S_{1,\pm}(0) S_{2,\pm}(0) (t+2c)$,
	see \eqref{S3z}, and the right-hand side has residue $\frac{C_2}{a^2}$.
	Since $S_{2,\pm}(0) = \overline{S_{1,\pm}(0)}$ , we get
	\[ C_2 = a^2 \left| S_{1,\pm}(0) \right|^2  (t+2c) \geq 0.\]
	
	Now, let us look at the residue of \eqref{SjSksum} at $z= ia$. 
	For the left-hand side we get by \eqref{S1z}--\eqref{S3z}
	\begin{align*} \lim_{z \to ia} (z-ia) S_1(z) (S_2(ia) + S_3(ia)) 
		= c \left(- \int \frac{d\nu_1(s)}{ia-s} + \frac{t+2c}{ia} \right) \end{align*}
	which by symmetry (as $\nu_1$ is even), can be evaluated further as
	\[ 2iac \int \frac{d\nu_1(s)}{s^2+a^2}  - \frac{c(t+2c)}{a}i.
	\]
	Since the right-hand side of \eqref{SjSksum} has residue  
	$\frac{a^4 - C_1}{2a} i$, we find
	\begin{align*} C_1 & = a^4 + 4c^2 + 2ct - 2a^2 c \int \frac{d\nu_1(s)}{s^2+a^2}  \\
		& = a^4 + 4c^2 + 2c \int \frac{s^2}{s^2+a^2} d\nu_1(s)  
		 > a^4 + 4c^2,
	\end{align*}
	where for the second identity we used that $\nu_1$ has total mass $t$. 
	Hence \eqref{C1C2ineq} follows.
\end{proof}

\begin{corollary} \label{corol39} 
	$S_1(z)$, $S_2(z)$ and $S_3(z)$ are the three solutions of the
	polynomial equation $P(S,z) = 0$, seen as a polynomial in $S$
	with coefficients depending on $z$, where 
	\begin{multline} \label{PSz} 
		P(S,z) =  
		z^3 S^3 - (t+4c) z^2 S^2  + 
		a^2 (zS^3 + z^3 S) \\
		- a^2(t+2c) (S^2 + z^2) 	+ C_1 zS  - C_2. \end{multline}
\end{corollary}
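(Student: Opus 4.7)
The plan is to recognize Corollary \ref{corol39} as a direct consequence of Vieta's formulas applied to the three branches $S_1, S_2, S_3$ of the meromorphic function $S$ on the three-sheeted Riemann surface $\mathcal{R}$. Since $S_1, S_2, S_3$ are the three values of $S$ above a generic $z$, they are the roots of the monic cubic
\begin{equation*}
    S^3 - e_1(z)\, S^2 + e_2(z)\, S - e_3(z) = 0,
\end{equation*}
where $e_1, e_2, e_3$ are the elementary symmetric polynomials in $S_1, S_2, S_3$. The previous lemma gives us explicit rational expressions for these:
\begin{equation*}
    e_1(z) = \frac{(t+4c)z^2+(t+2c)a^2}{z(z^2+a^2)}, \quad
    e_2(z) = \frac{a^2z^2+C_1}{z^2+a^2}, \quad
    e_3(z) = \frac{(t+2c)a^2z^2+C_2}{z(z^2+a^2)}.
\end{equation*}

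The next step is purely algebraic: multiply the cubic equation through by the common denominator $z(z^2+a^2)$ to clear fractions. This yields
\begin{equation*}
    z(z^2+a^2) S^3 - \bigl[(t+4c)z^2+(t+2c)a^2\bigr] S^2 + z(a^2z^2+C_1)\, S - \bigl[(t+2c)a^2z^2+C_2\bigr] = 0.
\end{equation*}
One then regroups terms to match the form in \eqref{PSz}: the $S^3$ coefficient gives $z^3 S^3 + a^2 z S^3$, the $S^2$ coefficient splits into $-(t+4c)z^2 S^2 - a^2(t+2c) S^2$, the $S$ coefficient becomes $a^2 z^3 S + C_1 z S$, and the constant term (in $S$) becomes $-a^2(t+2c) z^2 - C_2$. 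Rearranging, this is exactly
\begin{equation*}
    z^3 S^3 - (t+4c) z^2 S^2 + a^2(zS^3+z^3 S) - a^2(t+2c)(S^2+z^2) + C_1 zS - C_2 = 0,
\end{equation*}
which is $P(S,z) = 0$ with $P$ as defined in \eqref{PSz}.

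There is essentially no obstacle here; the work was all done in the preceding lemma, which identified the elementary symmetric functions as rational functions of $z$. The only thing to verify carefully is the bookkeeping when clearing denominators and matching terms to the slightly regrouped form of \eqref{PSz}. It may also be worth remarking that since $P(S,z)$ is a polynomial of degree three in $S$, the three values $S_1(z), S_2(z), S_3(z)$ exhaust its roots for each $z$ off the branch cuts, confirming that the spectral curve of the Riemann surface $\mathcal{R}$ is cut out by $P(S,z) = 0$ in $\mathbb{C}^2$.
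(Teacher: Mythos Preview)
Your proof is correct and follows essentially the same approach as the paper: expand $(S-S_1(z))(S-S_2(z))(S-S_3(z))=0$, substitute the rational expressions for the elementary symmetric functions from the preceding lemma, and clear denominators by multiplying through by $z(z^2+a^2)$. The paper's proof is more terse but the content is identical.
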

\begin{proof}
	Note that $(S-S_1(z))(S-S_2(z))(S-S_3(z)) = 0$ is the polynomial
	in $S$ with the three solutions $S_j(z)$, $j=1,2,3$. Expanding
	the product and using the expressions \eqref{Sjsum}--\eqref{Sjprod}
	for the elementary symmetric functions, we find that
	the coefficients are meromorphic in $z$ with possible simple poles
	at $0$ and $\pm ia$ only. The corollary then follows if we 
	multiply  by $z (z^2+a^2)$ and rearrange the terms.
\end{proof}
Observe that $P(S,z) = P(z,S)$, and $P(-S,-z) = P(S,z)$.
The latter identity reflects the fact that the solutions $S_j(z)$
are odd functions. We call $P(S,z) = 0$ the \textit{spectral curve}.
\begin{remark}
 The constant $C_1$ in \eqref{SjSksum} and $C_2$ in \eqref{Sjprod}  (which is of interest in itself)  can be determined by studying the discriminant $\Disc _S P$.  Namely the constant $C_1$ can be determined as a unique root of a certain degree $5$ polynomial with coefficient in $a,c$ and $t$. See \cite{BK12} for a similar approach. We do not give further details as it will not be relevant for our analysis.
\end{remark}

\subsubsection{Nodes and discriminant}

We have the following picture for $S_1, S_2, S_3$.
The three functions are purely imaginary on the imaginary axis.

If we assume $0\in \Delta_{1}$, because of \eqref{nu1x}
one has 
\[  \Im S_2(iy) =  - \Im S_1(iy) \to  \pi  \frac{d\nu_1}{dx}(0) \geq 0
\quad \text{as } y \to 0+. \] Then the following
picture emerges for $y > 0$. 
\begin{itemize}
	\item $y \mapsto \Im S_1(iy)$ starts at a non-positive value 
	at $y=0+$, and due to the pole at $ia$ with residue $c > 0$
	tends to $+\infty$ as $y \to a-$.
	On $(a,\infty)$ it goes from $-\infty$ to $0$.
	\item $y \mapsto \Im S_2(iy)$ starts at a non-negative value
	at $y=0+$ and tends to $a > 0$ as $y \to +\infty$
	by \eqref{S123asymp}.
	\item $y \mapsto \Im S_3(iy)$ starts at $-\infty$ as $y \to 0+$
	and tends to $-a$ as $y \to +\infty$.
\end{itemize} 
Thus, $S_1$ has a zero on the imaginary axis in $[0,ia)$, say at $ib_0$,
and then by symmetry also a zero at $-ib_0$. 
There is also a zero of $S_1$ at infinity, due to \eqref{S123asymp}.
 These are the only zeros of $S$ on the Riemann surface
since $S$ has degree three.

\begin{figure}[t] \centering 
	\includegraphics[scale=0.5]{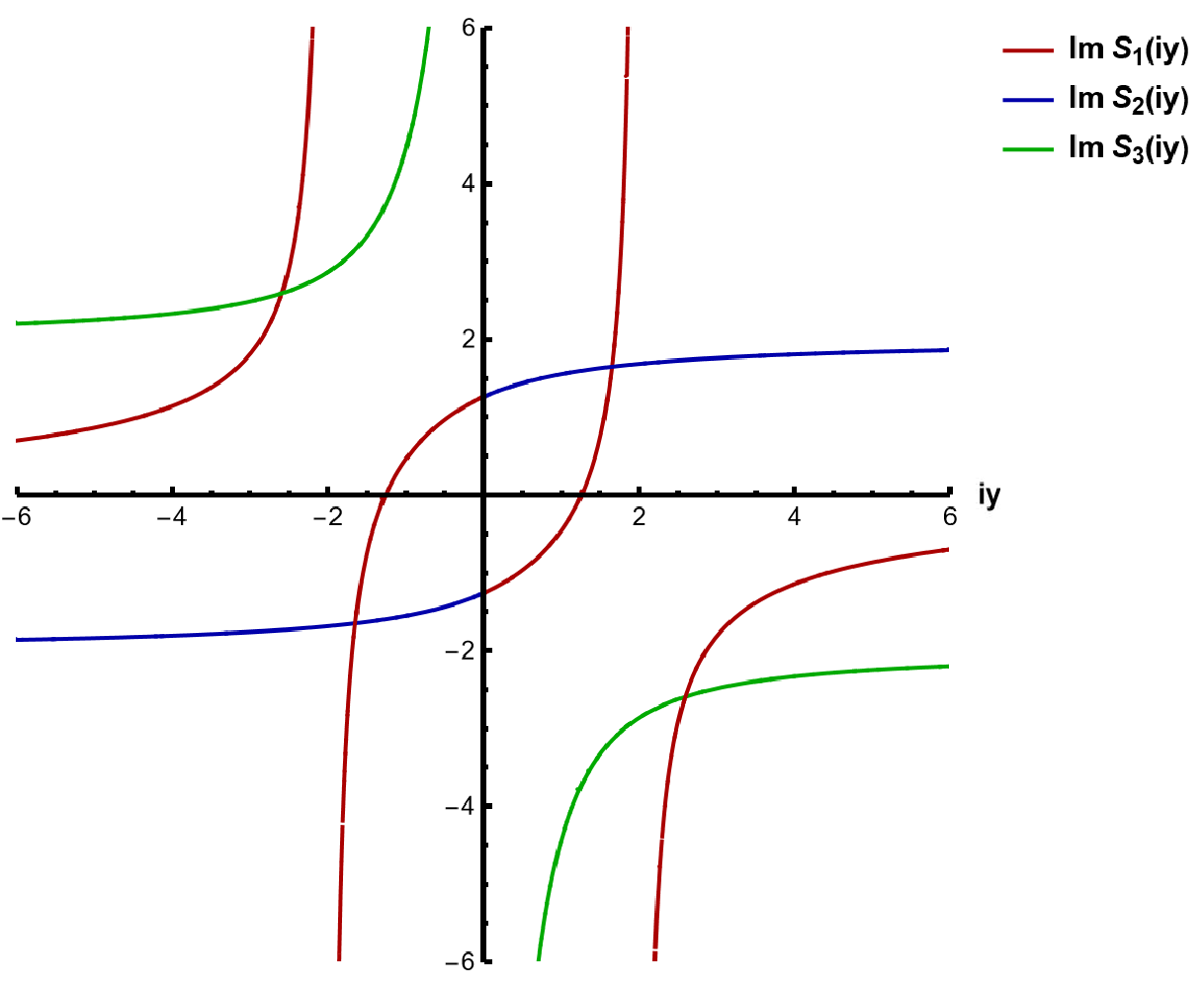}
	\caption{Behavior of $S_1$, $S_2$, $S_3$ on the imaginary axis
		in case $\frac{d\nu_1}{dx}(0) > 0$. 
		These functions are purely imaginary axis on the imaginary axis and the figure shows the graphs of their imaginary parts. The graphs of $S_1$ and $S_2$ intersect at $\pm ib_1$
		and the graphs of $S_1$ and $S_3$ intersect at $\pm ib_2$. \label{fig:Sonimagaxis}}
\end{figure}

Similarly, we conclude from the above that every purely imaginary 
value is taken at least three times on the imaginary axis by one of the functions $S_j$,
and hence exactly three times (counted with multiplicity), since the degree is three.
 In summary we obtain graphs as shown in Figure \ref{fig:Sonimagaxis}, in particular that the functions $\Im S_j$ are everywhere increasing on the imaginary axis, except at the poles. 
From the plots in Figure~\ref{fig:Sonimagaxis} we also observe
that there are points on the positive imaginary axis where two branches intersect. Namely there is $b_1 < a$ with $S_1(ib_1) = S_2(ib_1)$
and there is  $b_2 > a$ with $S_1(ib_2) = S_3(ib_2)$. 

In contrast, if we assume $0\notin \Delta_1$, then the functions $S_{1},S_{2}$ are analytic on the imaginary axis. As they are odd functions, we must have $S_{1}(0)=S_{2}(0)=0$. By analysing the graph, also in this scenario there is a point $b_{2}>a$ such that $S_{1}(ib_{2})=S_{3}(ib_{2}).$

Combining the above observations more precisely, we have the following.

\begin{corollary}\label{zeroandnode}
	There exist $b_0$, $b_1$, $b_2$ with $0 \leq b_0 \leq b_1 < a < b_2 < \infty$ such that
	$S_1(ib_0) = 0$, $S_{2,+}(0) = ib_0$, and
	\begin{equation} \label{S1ibj} 
		S_1(\pm ib_1) = S_2(\pm ib_1) = \pm ib_1, \quad S_1(\pm ib_2) = S_3(\pm ib_2) = \mp ib_2. 
	\end{equation}
	The inequalities $0 \leq b_0 \leq b_1$ are strict in case $\frac{d \nu_1}{dx}(0) >0$
	(as shown in Figure~\ref{fig:Sonimagaxis}).
\end{corollary}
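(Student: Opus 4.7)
The plan is to combine the intermediate value theorem applied to the monotone graphs $y\mapsto \Im S_j(iy)$ with the reflection symmetry $P(S,z)=P(z,S)$ of the spectral curve.

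First, for existence, using the boundary values and strict monotonicity already established in the paragraphs preceding the corollary (see Figure~\ref{fig:Sonimagaxis}), the IVT yields $b_0\in[0,a)$ with $S_1(ib_0)=0$ (since $\Im S_1(iy)$ increases from $-\pi\nu_1'(0)\le 0$ to $+\infty$ on $(0,a)$), $b_1\in[0,a)$ with $S_1(ib_1)=S_2(ib_1)=:i\lambda_1$ and $\lambda_1\in[0,a)$ (since $\Im(S_1-S_2)(iy)$ goes from $-2\pi\nu_1'(0)\le 0$ to $+\infty$ on $(0,a)$, while $\Im S_2<a$), and $b_2\in(a,\infty)$ with $S_1(ib_2)=S_3(ib_2)=:i\lambda_2$ and $\lambda_2\in(-\infty,-a)$ (since $\Im(S_1-S_3)(iy)$ goes from $-\infty$ to $a$ on $(a,\infty)$, while $\Im S_3<-a$). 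The $-ib_j$ companion statements come from the odd symmetry $S_j(-z)=-S_j(z)$.

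The main step is the diagonal identifications, which rely on the symmetry of the spectral curve. For $b_0$, applying \eqref{S2pmb} at $x=0$ and using evenness of $\nu_1,\nu_2$ to kill the potential-derivative terms gives $S_{2,+}(0)=i\pi\nu_1'(0)$, a non-negative purely imaginary number. On the other hand, direct substitution into the explicit form of $P$ yields $P(0,z)=-a^2(t+2c)z^2-C_2=\left.P(S,0)\right|_{S=z}$, so both equations $P(0,ib_0)=0$ and $P(S_{2,+}(0),0)=0$ give $b_0=|S_{2,+}(0)|=\sqrt{C_2/(a^2(t+2c))}$, and non-negativity of both quantities forces $S_{2,+}(0)=ib_0$. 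For $b_1$ and $b_2$, the symmetry $P(S,z)=P(z,S)$ induces an involution $\tau(S,z)=(z,S)$ of the Riemann surface that maps nodes of the algebraic curve to nodes. At the node $(i\lambda_1,ib_1)$ both partial derivatives $\partial_S P$ and $\partial_z P$ vanish, so by the symmetry they also vanish at $(ib_1,i\lambda_1)$; equivalently, at $z=i\lambda_1$ two of the branches $S_j(i\lambda_1)$ take the common value $ib_1$. But by the monotonicity from Step 1, the only values of $y>0$ at which two $S$-branches of $S_j(iy)$ coincide on the positive imaginary axis are $y=b_1$ (common value $i\lambda_1$) and $y=b_2$ (common value $i\lambda_2<0$); matching the required positive value $ib_1$ forces $\lambda_1=b_1$, as the alternative $\lambda_1=b_2$ would require $\lambda_2=b_1\ge 0$, contradicting $\lambda_2<-a$. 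An analogous argument applied to $(i\lambda_2,ib_2)$, after first using $S_j(-z)=-S_j(z)$ to transport the companion node from the negative to the positive imaginary axis, yields $|\lambda_2|=b_2$ and hence $\lambda_2=-b_2$.

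The strict inequalities in case $\nu_1'(0)>0$ then follow immediately: $\Im S_1(i0^+)<0$ and $\Im(S_1-S_2)(i0^+)<0$ push $b_0,b_1>0$, and strict monotonicity of $\Im S_1$ combined with $\Im S_1(ib_0)=0<b_1=\Im S_1(ib_1)$ gives $b_0<b_1$. The main obstacle I expect is the diagonal identification: mere equality $S_1(ib_1)=S_2(ib_1)$ at a crossing is much weaker than knowing the common value, and pinning it down to $ib_1$ itself requires both the $\tau$-symmetry of the spectral curve acting on its nodes and the rigid ordering of crossings on the positive imaginary axis coming from monotonicity.
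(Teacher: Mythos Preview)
Your argument for $S_{2,+}(0)=ib_0$ via the explicit form $P(0,z)=-a^2(t+2c)z^2-C_2=P(z,0)$ is correct and in fact cleaner than the paper's route (which extracts it as a limiting case of the graph-swapping bijection $S_1(iy_1)=iy_2\Leftrightarrow S_2(iy_2)=iy_1$).

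The diagonal identification for $b_1$, however, has a genuine gap. You assert that ``by the monotonicity from Step~1, the only values of $y>0$ at which two $S$-branches coincide are $y=b_1$ and $y=b_2$''. But Step~1 only records that $\Im(S_1-S_2)$ changes sign on $(0,a)$; monotonicity of each $\Im S_j$ separately does \emph{not} make $\Im(S_1-S_2)$ monotone, so multiple crossings are not excluded by what you have written. In the paper this uniqueness is exactly Corollary~\ref{nodes}, and it is proved only \emph{after} the present corollary, by counting the twelve zeros of the discriminant $\Disc_S P$. As written, your argument therefore forward-references a fact whose proof in turn uses that $\pm ib_1,\pm ib_2$ are nodes --- which is what you are trying to establish.

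The paper closes this gap without any uniqueness claim. From the symmetry you already have that at $z=i\lambda_1$ two branches take the common value $ib_1$; ruling out $S_3$ (range $\Im S_3<-a<0\le b_1$) shows those branches are $S_1$ and $S_2$, so in particular $\Im S_2(i\lambda_1)=b_1$. Combined with $\Im S_2(ib_1)=\lambda_1$, strict monotonicity of $y\mapsto\Im S_2(iy)$ on $(0,\infty)$ gives an immediate contradiction from either $b_1<\lambda_1$ or $\lambda_1<b_1$, forcing $\lambda_1=b_1$. This is the step you should substitute for your uniqueness appeal; once in place, the $b_2$ identification proceeds by the same mechanism after eliminating $S_2$ instead of $S_3$, and your strict-inequality paragraph then stands as written.
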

\begin{proof} 
	We already noted that $S_1$ has a zero on the positive imaginary axis (or at zero),
	which we denote by $ib_0$. Since $S_{1,+}(0) \leq S_{2,+}(0)$ 
		while $\Im S_{1}(iy) \to +\infty$ as $y \to a-$,
		the graphs of $S_1$ and $S_2$ 
	intersect at a point $ib_1$ with $b_0 \leq b_1 < a$,
	and the inequality is strict when $\frac{d \nu_1}{dx}(0) >0$.
	
	We use the fact that \eqref{PSz} is symmetric in the two variables $S$ and $z$. which implies
	that the totality of the graphs is invariant under $(z,S) \mapsto (S,z)$.  
	From the graphs in Figure \ref{fig:Sonimagaxis} we then get for $b_0 < y_1 < a$
	and $y_2 > 0$,
	that $S_1(iy_1) = i y_2$ if and only if $S_2(iy_2) = iy_1$.
	Letting $y_1 \to b_0+$ we have $y_2 \to 0+$ and thus $S_{2,+}(0) = i b_0$.
	
	Also, from $S_1(ib_1) = S_2(ib_1) = i \tilde{b}$ we get $S_1(i \tilde{b}) = 
	S_2(i\tilde{b}) = ib_1$. If $b_1 < \tilde{b}$, then   $\Im S_2(ib_1) < \Im S_2(i \tilde{b})$, since $\Im S_2$ is increasing along the positive imaginary axis.
	This results in $\tilde{b} < b_1$, which is a contradiction. We get a similar
	contradiction if $\tilde{b} < b_1$. Hence $S_1(ib_1) = S_2(ib_1) = ib_1$
	and by symmetry also $S_1(-ib_1) = S_2(-ib_1) = - i b_1$, 
	since the functions are odd.
	
	The identity $S_1(ib_2) = S_3(ib_2) = - i b_2$ follows in a similar way.
\end{proof}

The points  $(\mp i b_2, \pm i b_2)$
are nodes of the spectral curve (a.k.a.\ ordinary double points), 
since the two solutions $S_1$ and $S_3$
agree at $\pm i b_2$ with value $\mp i b_2$, and their tangents at
$\pm ib_2$ are different. 
Also, $(\pm i b_1, \pm i b_1)$ are nodes if $\frac{d \nu_1}{dx}(0) >0$, i.e., if $b_1 > 0$. 
Each node is a
double zero of the discriminant $\Disc_S P$ of \eqref{PSz} with respect to $S$.

The crucial point now is the following.
\begin{lemma}
	$\Disc_S P$ is a polynomial in $z$ of degree $12$.
\end{lemma}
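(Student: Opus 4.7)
The plan is to view $P(S,z)$ from \eqref{PSz} as a cubic polynomial in $S$ whose coefficients are polynomials in $z$, and then apply the standard discriminant formula to count the top-degree contributions.

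First, I would collect terms in \eqref{PSz} according to powers of $S$, writing
\begin{equation*}
	P(S,z) = A(z) S^3 + B(z) S^2 + C(z) S + D(z),
\end{equation*}
where
\begin{align*}
	A(z) &= z^3 + a^2 z = z(z^2+a^2), & \deg A &= 3, \\
	B(z) &= -(t+4c) z^2 - a^2(t+2c), & \deg B &= 2, \\
	C(z) &= a^2 z^3 + C_1 z, & \deg C &= 3, \\
	D(z) &= -a^2(t+2c) z^2 - C_2, & \deg D &= 2.
\end{align*}

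Next, I would invoke the standard formula for the discriminant of a cubic in $S$:
\begin{equation*}
	\Disc_S P = 18\, ABCD - 4 B^3 D + B^2 C^2 - 4 A C^3 - 27 A^2 D^2,
\end{equation*}
and track the degree in $z$ of each of the five terms. One obtains the degrees $3+2+3+2=10$, $6+2=8$, $4+6=10$, $3+9=12$, and $6+4=10$ respectively. Only the term $-4AC^3$ has degree $12$, so there is no possible cancellation of the top-degree contribution against any other summand.

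Finally, I would read off the leading coefficient: the leading term of $A$ in $z$ is $z^3$ and the leading term of $C$ is $a^2 z^3$, so
\begin{equation*}
	-4 A(z) C(z)^3 = -4\, a^6\, z^{12} + (\text{lower order}),
\end{equation*}
and since $a > 0$ by hypothesis, the coefficient $-4a^6$ is nonzero. This shows $\Disc_S P$ is a polynomial in $z$ of degree exactly $12$, as claimed. The argument is entirely routine; there is no real obstacle beyond the bookkeeping, and the crucial structural input is just that $a^2 \neq 0$ so the leading coefficient of $C$ does not vanish.
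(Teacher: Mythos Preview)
Your proof is correct and, in fact, more informative than the paper's own argument. The paper simply states that the discriminant was computed explicitly in Maple and Mathematica and found to be a polynomial of degree $12$, without recording the expression or any further analysis. Your approach instead exploits the standard closed-form discriminant for a cubic, reads off the degrees of $A,B,C,D$ from \eqref{PSz}, and observes that among the five summands $18ABCD$, $-4B^3D$, $B^2C^2$, $-4AC^3$, $-27A^2D^2$ only $-4AC^3$ reaches degree $12$, with leading coefficient $-4a^6 \neq 0$. This avoids any appeal to symbolic computation and makes transparent exactly why the degree is $12$: it is driven by the single term $-4AC^3$, and the non-vanishing of the top coefficient rests only on $a>0$. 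The paper's route has the advantage of producing the full discriminant if one ever needed the lower-order coefficients, but for the purpose of the lemma your argument is both self-contained and sharper.
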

\begin{proof}
	Based on \eqref{PSz} the discriminant can be explicitly calculated
	by any symbolic computing system. We did it with Maple and Mathematica,
	and it indeed gives a polynomial of degree $12$
	that we could write down explicitly, but it is not very illuminating to do so here.
\end{proof}

Having only $12$ zeros of the discriminant limits of course the number
of possibilities.
We can distinguish three cases.
\begin{description}
	\item[Case $\frac{d\nu_1}{dx}(0) > 0$:] In this case, $b_1 > 0$
	and $\pm ib_1$ are double zeros of $\Disc_S P$ as well.
	Thence, we have eight zeros on the imaginary axis (counting multiplicities),
	which leaves us with only four remaining zeros.
	Since we have at least four branch points, we conclude that
	there are exactly four branch points and they are all
	simple zeros of the discriminant.
	
	\item[Case $0 \not\in \Delta_1$:] In this case $\Delta_1$
	has at least two intervals (by symmetry) and thus there are
	at least six branch points on the real axis. We also
	have the double zeros at $\pm ib_2$ and at least a 
	double zero at $ib_1 = 0$. Then we conclude that $0$
	is a double zero and we have exactly six branch points that are 
	all simple zeros 	of the discriminant.	
	
	\item[Case $0 \in \Delta_1$ and $\frac{d\nu_1}{dx}(0)= 0$:]
	In this case the density of $\nu_1$ vanishes quadratically at $0$.
	By \eqref{nu1x}, one has then $S_2(iy) - S_1(iy) = O(y^2)$
	as $y \to 0$, and the graphs of $iS_1$ and $iS_2$ not only intersect at $0$
	but are even tangent to each other. Then $0$ is a zero of
	the discriminant of order at least four. Then again by simple
	counting argument, all branch points are simple zeros
	of the discriminant and $0$ is a zero of degree exactly four. 
\end{description}
As a corollary we obtain the following result, which characterises the nodes.
\begin{corollary}\label{nodes}
    On the imaginary axis, $S_{1}(z)=S_{2}(z)$ only when $z=\pm ib_{1}$, and $S_{1}(z)=S_{3}(z)$ only when $z=\pm ib_{2}$.
\end{corollary}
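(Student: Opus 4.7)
The plan is to prove this corollary by a degree-counting argument for $\Disc_S P$, which by the preceding lemma is a polynomial of degree $12$ in $z$. The key observation is that whenever $S_i(z_0) = S_j(z_0)$ for some $i \neq j$ and some $z_0 \in \mathbb{C}$, the point $z_0$ must be a zero of $\Disc_S P$, with multiplicity at least $2k$, where $k$ is the order to which $S_i - S_j$ vanishes at $z_0$, since $(S_i - S_j)^2$ is a factor of $\Disc_S P$.

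I would then enumerate the known zeros of $\Disc_S P$ in each of the three cases distinguished just before the corollary, verifying that the multiplicities already exhaust the degree $12$. In the generic case $\frac{d\nu_1}{dx}(0) > 0$, the four simple real branch points $\pm x_1, \pm x_2$ contribute multiplicity $4$, while the four nodes at $\pm i b_1$ and $\pm i b_2$ contribute double zeros of total multiplicity $8$, summing to exactly $12$. When $0 \notin \Delta_1$, the two-interval structure of $\Delta_1$ produces six real simple branch points (multiplicity $6$); the simple coincidence $S_1(0) = S_2(0) = 0$ gives a double zero at the origin (multiplicity $2$); and $\pm i b_2$ remain transverse nodes (multiplicity $4$), again totaling $12$. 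In the transitional case $0 \in \Delta_1$ with $\frac{d\nu_1}{dx}(0) = 0$, the quadratic vanishing of the density at $0$ forces $S_1 - S_2 = O(z^2)$ on the imaginary axis (via the Stieltjes--Perron formula \eqref{nu1x}), so $0$ becomes a zero of $\Disc_S P$ of multiplicity $4$; combined with the four simple real branch points and the two double zeros at $\pm i b_2$, we again obtain exactly $12$.

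Since the degree of $\Disc_S P$ is saturated in every scenario, no further zeros can exist; in particular, no additional coincidences $S_1 = S_2$ or $S_1 = S_3$ can occur anywhere in $\mathbb{C}$, and hence not on the imaginary axis beyond $\pm i b_1$ and $\pm i b_2$ respectively. That $S_2 \neq S_3$ on the imaginary axis is immediate from Figure~\ref{fig:Sonimagaxis}: for $y > 0$ one has $\Im S_2(iy) \geq 0$ while $\Im S_3(iy) < 0$, and by the oddness of the $S_j$ the same inequality holds with signs reversed for $y < 0$.

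The main obstacle is simply confirming each local multiplicity precisely: square-root branch points must contribute exactly $1$, transverse node crossings exactly $2$, and the tangential crossing at the origin in the degenerate case exactly $4$. These are routine local expansions, but the argument only closes because the resulting sum equals $12$ on the nose, leaving no room for any additional coincidence on the imaginary axis.
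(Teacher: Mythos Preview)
Your proposal is correct and follows essentially the same approach as the paper: the corollary is stated immediately after the three-case discriminant analysis, and the paper treats it as a direct consequence of the fact that the twelve zeros of $\Disc_S P$ are already fully accounted for in each case, leaving no room for additional coincidences on the imaginary axis. Your write-up makes this counting explicit and matches the paper's reasoning case by case.
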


\subsubsection{Proof of Proposition \ref{prop35}, parts (b)-(e)}

We continue to write $\nu_j = \nu_{j,t}^X$ and $\Delta_j = \Delta_{j,t}^X$.

\begin{proof}

(b) From the above discussion, we see that there are at most six branch
points, that is, at most six endpoints of $\Delta_1$ and $\Delta_2$.
Among them are the endpoints $\pm x_2$ of $\Delta_2$. That leaves at most four 
endpoints of $\Delta_1$, and then by symmetry $\Delta_1$ is either
a single interval $[-x_1,x_1]$ or a union of two disjoint
intervals $[-x_1,-x_0] \cup [x_0, x_1]$ with $0 < x_0 < x_1$. 
Since $\Delta_1 \subset (-X,X)$
we also have $x_1 < X$.

\medskip

(c) The measure $\nu_1$ is given by \eqref{nu1x}, which can
also be written as
\[ \frac{d\nu_1}{dx} = \frac{1}{2 \pi i} \left(S_{2,+}(x) - S_{1,+}(x)\right), \quad x \in \Delta_1. \]
It shows that the density is indeed real analytic on the interior of $\Delta_1$.
(This also follows from the results of \cite{DKM98}, since $\nu_1$
is an equilibrium measure in a real analytic external field.)
The density of $\nu_1$ will vanish like a square root at each endpoint of $\Delta_1$
since if there were a higher vanishing at an endpoint, it would 
lead to higher order zero of the discriminant. (Recall that all branch points
are simple zeros.) 

There cannot be any zeros in the interior of $\Delta_1$, except possibly at $0$,
since any such zero $x^*$ would lead to $S_{1,+}(x^*) = S_{2,+}(x^*)$
and hence to a zero of the discriminant.  

\medskip
(d) Lemma \ref{lemma33} tells us that $\nu_2 \geq \rho$.
Because of \eqref{nu2x} we have that
\[ \frac{d (\sigma_0 - \nu_2)}{dx}
	= \frac{1}{2\pi i} \left(S_{2,+}(x) - S_{3,+}(x) \right), \quad x \in \Delta_2. \]
Therefore, part (d) follows with similar reasoning as we did for part (c).

\medskip
(e) Suppose that equality holds in \eqref{Unu1strict} for some $x \in [-x_2,x_2] \setminus
\Delta_{1}$. We may assume that either $x_1 < x \leq x_2$, or $0 < x < x_0$.
Since equality holds in \eqref{Unu1strict} for $x_0$ and $x_1$,
we may apply Rolle's theorem and  find in either case that there is an $x^* \in (-x_2,x_2) \setminus (\Delta_{1} \cup \{0\})$
with
\[ \frac{d}{dx} \left. \left( 2 U^{\nu_1}(x) - U^{\nu_2}(x) - c \log(x^2+a^2) \right) 	\right|_{x=x^*} = 0. \]
In view of \eqref{S1pm} and \eqref{S3pm} this means that
\begin{equation} \label{Unu1strict1} 
	- 2 S_{1}(x^*) - S_{3}(x^*)  + \frac{2cx^*}{(x^*)^2+a^2} + \frac{t+2c}{x^*} = 0 
	\end{equation} 
To obtain \eqref{Unu1strict1} we also used $x^* \in (-x_2,x_2) \setminus \Delta_1$
which implies that $S_1(x^*)$ and $S_3(x^*)$ are real. 
Because of \eqref{Sjsum} we then get $S_{1}(x^*) = S_{2}(x^*)$ which implies that
$x^*$ is zero of the discriminant and this is a contradiction, since
the only real zeros of the discriminant are at the branch points, or possibly at $0$. 
\end{proof}

\subsubsection{Proof of Proposition \ref{prop35} (f) (first part)}\label{prelimineq}

The proof idea of part (e) can be used to establish  the inequality \eqref{Unu2strict} 
on $(-x_2,x_2) \setminus [-x_1,x_1]$.

\begin{proof}[Proof of part (f) for  $-x_2 < x \leq -x_1$ or
	$x_1 \leq x < x_2 $.] 

Suppose that the equality holds in \eqref{Unu2strict} for some $x \in (-x_2,x_2) \setminus
[-x_1,x_1]$. We may assume $x \in [x_1,x_2)$ because of the symmetry. 
Since equality holds at $x_2$ as well, Rolle's theorem gives us 
$x^* \in (x_1,x_2)$ with
\[ \frac{d}{dx} \left. \left( 2 U^{\nu_2}(x) - U^{\nu_1}(x) + (t+c) \log|x| \right) 	\right|_{x=x^*} = 0. \] 
Using \eqref{S1pm}, \eqref{S3pm}, and \eqref{Sjsum}, we then find, similar
to the proof of part (e) above, that this leads to
$S_{2}(x^*) = S_3(x^*)$ which is impossible since $x^*$
is not a branch point or a node.
\end{proof}

The proof of part (f) for $x \in [-x_1,x_1]$ requires some more effort as the same technique does not work.
We are going to vary $t$ and $X$ and so we write
$\nu_{1,t}^X$ and $\nu_{2,t}^X$. We also write $\Delta_{j,t}^X$ for $j=1,2$.

\subsubsection{The proper cutoff}
Some extra care is needed for the choice of the cutoff $X$. 

\begin{definition}  
We call  $X=X(t)$ a \textit{proper cutoff for $t$},  if
the assumptions of Proposition \ref{prop35} are satisfied, that is, if
$\Delta_{1,t}^X \subset (-X,X)$ and $\Delta_{2,t}^X \subset \mathbb R \setminus [-X,X]$.
\end{definition}
If $X$ is a proper cutoff for $t$, then
$\Delta_{1,t}^X$ and $\Delta_{2,t}^X$ have the
form as given in Proposition \ref{prop35} (a), (b).

In what follows, we show that there is a $t^{*}>0$ which satisfies part \ref{item6}  of Theorem \ref{thm:mu12}, such that for $0<t<t^*$, we can always choose a proper cutoff for $t$.
The following is now immediate from Proposition \ref{prop35} (a)-(e):

\begin{corollary} \label{cor312}
	Suppose $X$ is a proper cutoff for $t$, and let $x_2 > x_1 > 0$
	be as in Proposition \ref{prop35} (a), (b).  
	Then $X'$ is a proper cutoff for $t$  for every $X' \in (x_1, x_2)$ 
	and $\nu_{j,t}^{X'} = \nu_{j,t}^X$ for $j=1,2$. 
\end{corollary}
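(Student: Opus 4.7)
The plan is to show that the pair $(\nu_{1,t}^X, \nu_{2,t}^X)$ is itself the unique minimizer of the vector equilibrium problem with cutoff $X'$, and then to read off that $X'$ is a proper cutoff for $t$ as a simple consequence.

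First I would check admissibility of $(\nu_{1,t}^X,\nu_{2,t}^X)$ for the problem with cutoff $X'$. The total mass conditions \eqref{nu12norms} and the constraint $\nu_{2,t}^X \leq \sigma_0$ do not involve $X$, so only the support condition on $\nu_1$ needs to be verified. By Proposition \ref{prop35}(b) we have $\supp(\nu_{1,t}^X) = \Delta_{1,t}^X \subset [-x_1,x_1]$, and since $X' > x_1$ this is contained in $(-X',X')$, as required.

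Next I would verify the Euler-Lagrange conditions \eqref{nu1Xcond}--\eqref{nu2Xcond} with $X$ replaced by $X'$. The second condition \eqref{nu2Xcond} is independent of the cutoff and therefore remains valid. For \eqref{nu1Xcond}, equality on $\supp(\nu_{1,t}^X) = \Delta_{1,t}^X$ is unaffected; the only thing that needs to be checked is the inequality
\[
2U^{\nu_{1,t}^X} - U^{\nu_{2,t}^X} - c\log|x^2+a^2| \geq \ell_{t}^{X} \qquad \text{on } [-X',X'].
\]
This is where the key input from Proposition \ref{prop35}(e) enters: the inequality is strict on $[-x_2,x_2]\setminus \Delta_{1,t}^X$. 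Because $X' < x_2$ we have $[-X',X']\subset [-x_2,x_2]$, so the inequality holds on the full interval $[-X',X']$ with the same Lagrange constant $\ell_{t}^{X'} = \ell_{t}^{X}$. Thus $(\nu_{1,t}^X,\nu_{2,t}^X)$ satisfies all the variational conditions for the problem with cutoff $X'$.

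Finally, strict convexity of the energy functional on its convex admissible domain (as already invoked in the proof of Lemma \ref{lemma34}(c)) guarantees a unique minimizer, so $\nu_{j,t}^{X'} = \nu_{j,t}^X$ for $j=1,2$. From the identity of minimizers one reads off $\Delta_{1,t}^{X'} \subset [-x_1,x_1] \subset (-X',X')$ and $\Delta_{2,t}^{X'} = (-\infty,-x_2]\cup[x_2,\infty) \subset \mathbb R\setminus(-X',X')$, so $X'$ is indeed a proper cutoff for $t$. The only nontrivial step is the extension of the variational inequality from $[-X,X]$ to a potentially larger interval $[-X',X']$ when $X' > X$; this is precisely what Proposition \ref{prop35}(e) is designed to supply, and without it the argument would not go through directly.
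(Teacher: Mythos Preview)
Your proposal is correct and follows essentially the same approach as the paper: both arguments show that the pair $(\nu_{1,t}^X,\nu_{2,t}^X)$ continues to satisfy the variational conditions \eqref{nu1Xcond}--\eqref{nu2Xcond} when $X$ is replaced by $X'$, with Proposition~\ref{prop35}(e) supplying the needed extension of the inequality to $[-X',X']$, and then invoke uniqueness of the minimizer. Your write-up simply makes explicit the admissibility check and the appeal to strict convexity that the paper leaves implicit.
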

\begin{proof}
	Let $X' \in (x_1, x_2]$. Then $\supp(\nu_{1,t}^X) \subset [-X',X']$
	and  the variational conditions \eqref{nu1Xcond}-\eqref{nu2Xcond} 
	remain satisfied 
	by the pair ($\nu_{1,t}^X, \nu_{2,t}^X)$  if we replace $X$ by $X'$. For the variational inequality in \eqref{nu1Xcond}
	this is due to part (e) of Proposition \ref{prop35}.
\end{proof}

If $X=X(t)$ is a proper cutoff for $t$, then we write $x_1 = x_1(t)$ and $x_2 = x_2(t)$
and we note that $x_1(t) < X \leq x_2(t)$.  If $X$ is a proper cutoff for $t$,
we may vary $t$ and then the following holds.

\begin{lemma} \label{lemma313}
Suppose $X$ is a proper cutoff for $t_{0}$ with $x_1(t_0) < X < x_2(t_0)$. 
Then there exist $\varepsilon_1, \varepsilon_2 > 0$ such that $X$ is a proper cutoff 
for every $t \in (t_0-\varepsilon_2, t_0+\varepsilon_1)$.
  
\end{lemma}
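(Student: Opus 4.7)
My plan rests on two ingredients: the monotonicity of $t\mapsto \nu_{j,t}^X$ established in Lemma~\ref{lemma34}(c), and the strict variational inequalities from Proposition~\ref{prop35}(e) together with the already-proven portion of (f).

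First I would establish that $t\mapsto \nu_{j,t}^X$ is continuous at $t_0$ in total variation. By monotonicity, the one-sided limits $\nu_j^{t_0\pm}:=\lim_{t\to t_0\pm}\nu_{j,t}^X$ exist and satisfy $\nu_j^{t_0-}\leq \nu_{j,t_0}^X \leq \nu_j^{t_0+}$, while all three measures have the same total mass ($t_0$ for $j=1$ and $t_0+c$ for $j=2$), so equality holds throughout. Combined with the upper constraint $\nu_{2,t}^X\leq \sigma_0$ and $\supp\nu_{1,t}^X\subset[-X,X]$, standard logarithmic potential theory then yields $U^{\nu_{j,t}^X}\to U^{\nu_{j,t_0}^X}$ uniformly on compact subsets of $\mathbb C$.

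Monotonicity already handles half of the conclusion for free: for $t\leq t_0$, $\nu_{1,t}^X\leq \nu_{1,t_0}^X$ forces $\Delta_{1,t}^X\subset\Delta_{1,t_0}^X\subset(-X,X)$; for $t\geq t_0$, $\nu_{2,t}^X\geq\nu_{2,t_0}^X$ forces $\Delta_{2,t}^X\subset\Delta_{2,t_0}^X\subset\mathbb R\setminus[-X,X]$. What remains is (i) $\Delta_{1,t}^X\subset(-X,X)$ for $t$ slightly larger than $t_0$, and (ii) $\Delta_{2,t}^X\cap[-X,X]=\emptyset$ for $t$ slightly smaller than $t_0$. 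For (i), set $\Phi_t(x):=2U^{\nu_{1,t}^X}(x)-U^{\nu_{2,t}^X}(x)-c\log(x^2+a^2)$, so that $\Phi_t\geq \ell_t^X$ on $[-X,X]$ with equality on $\Delta_{1,t}^X$. Since $0\in\Delta_{1,t_0}^X\subset\Delta_{1,t}^X$ for $t>t_0$, we have $\ell_t^X=\Phi_t(0)$, so $\Phi_t-\Phi_t(0)\geq 0$ on $[-X,X]$ with equality precisely on $\Delta_{1,t}^X$. Fix $\eta\in(0,X-x_1(t_0))$. Proposition~\ref{prop35}(e), together with continuity of $\Phi_{t_0}-\ell_{t_0}^X$ on the compact set $\{\xi: x_1(t_0)+\eta\leq |\xi|\leq X\}$, produces a uniform positive lower bound $c_\eta>0$ there; $t$-continuity transfers this to $\Phi_t-\Phi_t(0)>c_\eta/2$ for $t$ near $t_0$, so $\Delta_{1,t}^X\subset\{|\xi|<x_1(t_0)+\eta\}\subset(-X,X)$. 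For (ii), set $\Psi_t(x):=2U^{\nu_{2,t}^X}(x)-U^{\nu_{1,t}^X}(x)+(t+2c)\log|x|$, so that $\Psi_t\leq 0$ with equality on $\Delta_{2,t}^X$. Since $\Delta_{2,t_0}^X\cap[-X,X]=\emptyset$, one has $\Psi_{t_0}<0$ on $[-X,X]\setminus\{0\}$, bounded above by some $-c_\delta<0$ on the compact set $\{\delta\leq|\xi|\leq X\}$; $t$-continuity extends this to $\Psi_t\leq -c_\delta/2<0$ there, while on $|\xi|<\delta$ the term $(t+2c)\log|\xi|$ dominates uniformly and makes $\Psi_t<0$ for sufficiently small $\delta$. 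Hence $\Psi_t<0$ throughout $[-X,X]$ and $\Delta_{2,t}^X\cap[-X,X]=\emptyset$.

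The main obstacle is verifying the $t$-continuity of the logarithmic potentials, in particular $U^{\nu_{2,t}^X}$ which involves a measure with unbounded support; the upper constraint $\nu_{2,t}^X\leq \sigma_0$ provides the uniform tail control that makes this work.
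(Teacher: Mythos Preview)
Your overall architecture matches the paper's: monotonicity handles one half of each inclusion for free, and continuity in $t$ of the variational quantities handles the other half. But there is a genuine circularity in your part (ii). You write ``Since $\Delta_{2,t_0}^X\cap[-X,X]=\emptyset$, one has $\Psi_{t_0}<0$ on $[-X,X]\setminus\{0\}$.'' The variational condition \eqref{nu2Xcond} only gives $\Psi_{t_0}\le 0$; the \emph{strict} inequality off $\Delta_{2,t_0}^X$ is exactly Proposition~\ref{prop35}(f), and on the subinterval $[-x_1(t_0),x_1(t_0)]\setminus\{0\}$ that strict inequality has not been proven yet --- its proof (the ``remaining part'' of (f) in the paper) \emph{uses} Lemma~\ref{lemma313}. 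So when you pick $\delta$ small and claim a uniform negative upper bound for $\Psi_{t_0}$ on $\{\delta\le|\xi|\le X\}$, you are invoking precisely the statement that Lemma~\ref{lemma313} is meant to help establish. Your opening sentence correctly flags ``the already-proven portion of (f)'', but the body of (ii) then silently uses the full (f).

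The paper avoids this by arguing pointwise at $\pm X$ only, by contradiction: if $\pm X\in\Delta_{2,t_0-\varepsilon}^X$ for every $\varepsilon>0$, then $\Psi_{t_0-\varepsilon}(\pm X)=0$; letting $\varepsilon\to0^+$ (this is where your continuity argument is implicitly used) gives $\Psi_{t_0}(\pm X)=0$, contradicting the already-proven portion of (f) because $x_1(t_0)<X<x_2(t_0)$. Only strict inequality at the single point $X$ is needed, and that is available. A minor separate issue in your part (i): you identify $\ell_t^X=\Phi_t(0)$ via $0\in\Delta_{1,t_0}^X$, but that fact is established only later (Lemma~\ref{lemma42}); the trivial fix is to take any $x_0\in\Delta_{1,t_0}^X$ (nonempty, mass $t_0>0$), which by monotonicity lies in $\Delta_{1,t}^X$ for all $t\ge t_0$, and use $\ell_t^X=\Phi_t(x_0)$.
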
 
\begin{proof}
 By the monotonicity property of Lemma \ref{lemma34} (a), 
	we have that $\Delta_{1,t}^X = \supp(\nu_{1,t}^X)$ is increasing as a function of $t$,
	while $\Delta_{2,t}^X = \supp(\sigma_0 - \nu_{2,t}^X)$ is decreasing.
	Thus, we have for every $t > 0$,
	\begin{align} \label{Delta1monotone}
		t < t_0 \implies \Delta_{1,t}^X & \subset \Delta_{1,t_0}^X \subset (-X,X), \\
		\label{Delta2monotone}
		t > t_0 \implies \Delta_{2,t}^X & \subset \Delta_{2,t_0}^X \subset \mathbb R \setminus [-X,X],
	\end{align} 
	since $X$ is a proper cutoff for $t_{0}$ and $X \neq x_2(t_0)$.

	Let $\varepsilon > 0$ and suppose that $\pm X \in \Delta_{1,t_0+\varepsilon}^X$.
	Then, by \eqref{nu1Xcond} the following  equality holds at $\pm X$
	\[ 2 U^{\nu_{1,t_0+ \varepsilon}^X}(x) - U^{\nu_{2,t_0+\varepsilon}}(x)
		- c \log (x^2 + a^2)  =  \ell_{t_0+\varepsilon}^X 
		\qquad \text{ for } x = \pm X. \]
	If this holds for every $\varepsilon > 0$, then by letting $\varepsilon \to 0+$
	we easily obtain the equality for $\varepsilon = 0$ as well, 
	and this would contradict \eqref{Unu1strict}, since $x_1(t_0) < X < x_2(t_0)$.
	Thus, there is $\varepsilon_1 > 0$ such that $\Delta_{1,t_0 + \varepsilon_1}^X \subset (-X,X)$. 
	This also implies $\Delta_{1,t}^X \subset (-X,X)$ for every $t < t_0 + \varepsilon_1$
	because of the monotonicity in $t$. Combining this with \eqref{Delta2monotone}
	we conclude that $X$ is a proper cutoff for every $t \in (t_0, t_0 + \varepsilon_1)$.

	If $\pm X \in \Delta_{2,t_0-\varepsilon}^X$ then
	\[ 2 U^{\nu_{2,t_0- \varepsilon}^X}(x) - U^{\nu_{1,t_0-\varepsilon}}(x)
	+ (t_0-\varepsilon + 2c) \log |x| =  0 
	\qquad \text{ for } x = \pm X. \]
	It this holds for every $ \varepsilon >0$ then by letting $\varepsilon \to 0+$
	we obtain the equality at $x = \pm X$ for $\varepsilon =0$ as well.
	Then the strict inequality \eqref{Unu2strict} in 
	part (f) of Proposition \ref{prop35} would not hold at $\pm X$.
	However, we just proved above that strict inequality holds
	on $(-x_2(t_0),-x_1(t_0)] \cup [x_1(t_0),x_2(t_0))$, and we obtain a contradiction 
	since $x_1(t_0) < X <x_2(t_0)$.
	Thus there is an $\varepsilon_2 > 0$ such that  $\pm X \not\in \Delta_{2,t_0-\varepsilon_2}^X$. Then also $\pm X \not\in \Delta_{2,t}^X$ for every $t > t_0 - \varepsilon_2$
	by the monotonicity property.
	Hence $\Delta_{2,t}^X \subset (-\infty, X) \cup (X, \infty)$ for every $t > t_0 - \varepsilon_2$.
	Recalling \eqref{Delta1monotone} we conclude that 
	$X$ is a proper cutoff  for every $t \in (t_0-\varepsilon_2,t_0)$.
\end{proof}
\subsubsection{Proof of Proposition \ref{prop35} (f) (remaining part)}

Now we prove part (f) of Proposition \ref{prop35} for $x$ in the full interval $(-x_2,x_2)$.

\begin{proof}[Proof of part (f).] In Subsection \ref{prelimineq} we have proved the strict inequality \eqref{Unu2strict} in the interval $[-x_2(t),-x_1(t)]\cup [x_1(t),x_2(t)]$.
	It remains to prove the strict inequality \eqref{Unu2strict} on
	the interval $[-x_1(t), x_1(t)]$.
	
	Because of Corollary \ref{cor312}  we may assume without
	loss of generality that $x_1(t) < X < x_2(t)$.
	Then by Lemma \ref{lemma313}, there is a $t' < t$
	such that $X$ is a proper cutoff for  $t'$ and $x_2(t') > X$. 
	Thence we have the two sets of  variational conditions
	\begin{align} \label{Unu2Xcond1} 2 U^{\nu_{2,t}^X} - U^{\nu_{1,t}^X}
		+ (t + 2c) \log|x|
		\begin{cases} = 0, & \text{ on } \Delta_{2,t}^X, \\
			\leq 0, & \text{ on } \mathbb R, \end{cases} \\
		2 U^{\nu_{2,t'}^X} - U^{\nu_{1,t'}^X} \label{Unu2Xcond2}
			+ (t' + 2c) \log|x|
				\begin{cases} = 0, & \text{ on } \Delta_{2,t'}^X, \\
				\leq 0, & \text{ on } \mathbb R. \end{cases} \end{align}
	We subtract \eqref{Unu2Xcond2} from \eqref{Unu2Xcond1} to find
	\begin{equation} \label{Unu2tminUnu2tp} 2 U^{\nu_{2,t}^X}  - 2 U^{\nu_{2,t'}^X} 
	- U^{\nu_{1,t}^X} +  U^{\nu_{1,t'}^X}
	+  (t-t') \log|x| \leq 0,  \text{ on } \Delta_{2,t'}^X.
				\end{equation}

	Recall that $\nu_{j,t'}^X \leq \nu_{j,t}^X$ for $j=1,2$
	by the monotonicity result in Lemma \ref{lemma34} (c).
	From this it follows that $-U^{\nu_{1,t}^X} +  U^{\nu_{1,t'}^X}$
	is subharmonic in $\mathbb C$. Also, $(t-t') \log |x|$ is subharmonic, since $t'< t$.
	The two remaining terms on the left-hand side of \eqref{Unu2tminUnu2tp} 
	give us a harmonic function
	on $\mathbb C \setminus \Delta_{2,t'}^X$, since $\nu_{2,t'}^X$
	has the constant density $\frac{a}{\pi}$ on $\mathbb R \setminus \Delta_{2,t'}^X$
	and therefore $\nu_{2,t}^X = \nu_{2,t'}^X$ outside of $\Delta_{2,t'}^X$.
	Hence, the  left-hand side of \eqref{Unu2tminUnu2tp} is subharmonic 
	on $\mathbb C \setminus \Delta_{2,t'}^X$. It tends to $0$ at infinity, 
	and  the maximum principle tells us that, its maximum value is
	attained on $\Delta_{2,t'}^X$, and on $\Delta_{2,t'}^X$ only,
	as it is not constant. Hence the inequality \eqref{Unu2tminUnu2tp}
	holds on $\mathbb C$ with strict inequality $<0$ on $\mathbb C \setminus \Delta_{2,t'}^X$.
	
	In particular, since $x_2(t') > x_1(t)$ strict inequality in \eqref{Unu2tminUnu2tp}
	holds on $[-x_1(t), x_1(t)]$. After adding to this the inequality \eqref{Unu2Xcond2}
	we conclude that strict inequality in \eqref{Unu2Xcond1} holds
	for $x \in [-x_1(t), x_1(t)]$, which completes the proof. 
\end{proof}

\section{Proofs of Theorems \ref{thm:mu12}, parts \ref{item1}--\ref{item5}}\label{mainthmproofsection}

\subsection{\texorpdfstring{Definition of  $t^*$ and proof that $t^* > 0$}{}}

\begin{definition}
	We now put
	\begin{equation}\label{deft*} 
		t^* = \sup \left\{ t_0 > 0 \mid \forall t \in (0, t_0] : 
		\text{ there is a proper cutoff $X$ for $t$} \right\}. \end{equation}
\end{definition}

\begin{lemma}\label{lemma42}
If $a^2\geq 2c$, then $t^*$ defined in \eqref{deft*} is positive.
In addition, $\nu_{1,t}^X$ has a positive density at $0$
for every $t < t^*$ and proper cutoff $X$ for $t$.
\end{lemma}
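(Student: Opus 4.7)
The plan is to use the effective external field at $t=0$ to construct an explicit proper cutoff $X_0$ that works for all small $t>0$, and then propagate positivity of the density at $0$ to the whole interval $(0,t^*)$ via the monotonicity of $\nu_{1,t}^X$ in $t$.

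First I would investigate the effective field acting on $\nu_1$ in the limit $t\to 0$. At $t=0$ one has $\nu_2=\rho$ (Proposition \ref{prop31}), so by \eqref{nu1Xnu2cond} the relevant field is $\phi(x):=-U^{\rho}(x)-c\log(x^2+a^2)$. Using the explicit density \eqref{rhodef}, on $(-c/a,c/a)$ one computes
\[
\phi'(x)=\frac{c-\sqrt{c^2-a^2x^2}}{x}-\frac{2cx}{x^2+a^2}=\frac{a^4-4c^2}{2ca^2}\,x+O(x^3),
\]
so $\phi''(0)=(a^4-4c^2)/(2ca^2)\geq 0$ precisely when $a^2\geq 2c$. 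When $a^2>2c$, $0$ is a non-degenerate local minimum of $\phi$; when $a^2=2c$ the next Taylor coefficient gives $\phi'(x)=x^3/c+O(x^5)$, so $0$ remains a strict local minimum. In either case I would fix $X_0\in(0,c/a)$ with $\phi(x)>\phi(0)$ for all $x\in[-X_0,X_0]\setminus\{0\}$.

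Next I would verify that $X_0$ is a proper cutoff for all sufficiently small $t>0$, giving $t^*>0$. The $\Delta_2$ condition is automatic: since $X_0<c/a$, $\sigma_0=\rho$ on $[-X_0,X_0]$, so $\Delta_{2,0}^{X_0}\subset\mathbb R\setminus[-X_0,X_0]$, and by Lemma \ref{lemma34}(c), $\nu_{2,t}^{X_0}\geq\rho$ implies $\Delta_{2,t}^{X_0}\subset\Delta_{2,0}^{X_0}$. For the $\Delta_1$ condition I would argue by contradiction: if $\pm X_0\in\Delta_{1,t_n}^{X_0}$ for some sequence $t_n\to 0^+$, subtracting the equality in \eqref{nu1Xcond} at $\pm X_0$ from the inequality at $0$ and passing to the limit (using $U^{\nu_{1,t_n}^{X_0}}\to 0$ since the total mass is $t_n\to 0$, and $\nu_{2,t_n}^{X_0}\to\rho$ by monotonicity together with uniqueness of the $t=0$ minimizer) would force $\phi(X_0)\leq\phi(0)$, contradicting the choice of $X_0$. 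Only the differences of logarithmic potentials enter the limit, so the unbounded support of $\nu_{2,t_n}^{X_0}$ causes no trouble.

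Finally, to prove positivity of the density at $0$ I would first establish it for small $t$. The same contradiction argument, applied at any would-be limit point $p\neq 0$ of $\Delta_{1,t_n}^{X_0}$, shows that $\Delta_{1,t}^{X_0}$ shrinks to $\{0\}$ as $t\to 0$. Rescaling by this shrinking width and passing to the limit, $\nu_{1,t}^{X_0}$ reduces to the small-mass equilibrium measure in the frozen external field $\phi$ near its minimum: the semicircle law in the quadratic field $\tfrac12\phi''(0)x^2$ when $a^2>2c$, with density $\sqrt{\phi''(0)t}/\pi$ at $0$, and the equilibrium measure in the quartic field $x^4/(4c)$ when $a^2=2c$, also with strictly positive density at $0$. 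Thus $\tfrac{d\nu_{1,t}^{X_0}}{dx}(0)>0$ on some initial range $(0,t_1)$. To conclude, I would fix $t'\in(0,t_1)$ and invoke Lemma \ref{lemma34}(c): for any $t\in[t',t^*)$ the inequality $\nu_{1,t}^{X_0}\geq\nu_{1,t'}^{X_0}$, combined with the real analyticity of both densities near $0$, gives $\tfrac{d\nu_{1,t}^{X_0}}{dx}(0)\geq\tfrac{d\nu_{1,t'}^{X_0}}{dx}(0)>0$; Corollary \ref{cor312} removes the dependence on the specific proper cutoff $X$. The main technical obstacle will be justifying the rescaling limit cleanly, particularly in the degenerate case $a^2=2c$ where the limiting field is quartic rather than quadratic and one must appeal to the corresponding equilibrium theory.
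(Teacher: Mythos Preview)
Your argument for $t^*>0$ is essentially the paper's: both identify $0$ as a strict minimum of $\phi(x)=-U^\rho(x)-c\log(x^2+a^2)$ on a small interval and deduce that a small $X_0\in(0,c/a)$ is a proper cutoff for all small $t$. The paper actually computes the full second derivative on $[-c/a,c/a]$ and obtains strict convexity globally on that interval, which is slightly stronger but not essential here.

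For the positivity of the density at $0$, your approach differs from the paper's and has a genuine gap in the propagation step. You fix a small $X_0$ and conclude $\tfrac{d\nu_{1,t}^{X_0}}{dx}(0)>0$ for all $t<t^*$ via Lemma~\ref{lemma34}(c), then invoke Corollary~\ref{cor312} to remove the dependence on the cutoff. But Corollary~\ref{cor312} only identifies $\nu_{1,t}^{X'}$ with $\nu_{1,t}^{X_0}$ when \emph{both} $X'$ and $X_0$ are proper cutoffs for $t$. Since $x_1(t)$ is increasing and your $X_0$ was chosen small, $X_0$ need not remain proper for all $t<t^*$; once $x_1(t)\geq X_0$ the constrained minimizer $\nu_{1,t}^{X_0}$ is a different object from $\nu_{1,t}^{X}$ for proper $X$, and positivity for the former says nothing about the latter. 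This can be repaired by a compactness/chaining argument using Lemma~\ref{lemma313} to manufacture overlapping proper cutoffs along $[t',t]$, but that step is missing from your outline.

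The paper sidesteps both the rescaling limit and the global propagation entirely. For any $t<t^*$ with proper cutoff $X$, it takes $\varepsilon>0$ small enough that $X$ is also proper for $t-\varepsilon$ (Lemma~\ref{lemma313}), subtracts the two variational conditions \eqref{nu1Xcond}, and inserts the arcsine equilibrium measure $\omega$ of $[-x_1(t),x_1(t)]$ (whose potential is constant there). A single application of Theorem~\ref{thm:dlVP} then yields $\nu_{1,t}^X\geq \tfrac{\varepsilon}{2}\omega$ on $\Delta_{1,t-\varepsilon}^X$, which immediately gives a positive density at $0$. This is both shorter and avoids the delicate small-$t$ asymptotics, in particular the quartic-field equilibrium analysis you flag as the main obstacle in the degenerate case $a^2=2c$.
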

In particular this means for $a^2\geq 2c$ we have $\Delta_{1,t}^X$ to be an interval $[-x_1,x_1]$. See part (b) of Proposition \ref{prop35}.
\begin{proof}
    We recall the measure $\rho$ from Proposition \ref{prop31} and its potential $U^{\rho}$
that is explicitly given by \eqref{Urho}. By straightforward calculation,
\begin{multline} \label{d2Urho} \frac{d^2}{dx^2} \left(- U^{\rho}(x) - c \log(x^2+a^2) \right) \\
	=  \frac{a^4 - 4c^2}{2ca^2} +  \frac{2c x^2 (x^2+3a^2)}{a^2(x^2+a^2)^2}
	+  \frac{a^4 x^2 \left(3c^2-a^2 x^2 + 3c \sqrt{c^2-a^2 x^2}\right)}{2c \left(c+ \sqrt{c^2-a^2 x^2}\right)^3 \sqrt{c^2-a^2 x^2}}, \\
	\text{ for } x \in [-\tfrac{c}{a}, \tfrac{c}{a}].
\end{multline} 

Due to $a^2 \geq 2c$, the right-hand side of  \eqref{d2Urho} is positive on  $[-\frac{c}{a}, \frac{c}{a}]$,
and the function $x \mapsto -U^{\rho}(x) - c \log(x^2+a^2)$ is thus strictly convex
and even on that interval and it takes its minimum value at $x = 0$ only. 
Setting $\ell_0 = - U^{\rho}(0) - 2c \log a$, then
\begin{equation} \label{Urhoineq}
		-U^{\rho}(x) - c \log(x^2+a^2) 
		\begin{cases} = \ell_0 & \text{ for } x  = 0, \\
			> \ell_0 & \text{ for } x \in [-\frac{c}{a},0) \cup (0, \frac{c}{a}].
			\end{cases} \end{equation} 

Take $X \in (0, \frac{c}{a})$. 
For $t > 0$ we have by \eqref{nu1Xcond}
\begin{equation} \label{nu1Xcond1} 2 U^{\nu_{1,t}^X}(x) - U^{\nu_{2,t}^X}(x) - c \log (x^2+a^2)
	\begin{cases} = \ell_{t}^X & \text{ for } x \in \Delta_{1,t}^X, \\
			\geq \ell_t^X & \text{ for } x \in [-X,X].
	\end{cases} \end{equation} 
As $t \to 0+$ the measures $\nu_{1,t}^X \to 0$, $\nu_{2,t}^X \to \rho$
and the left-hand side of \eqref{nu1Xcond1} tends to the left-hand side of \eqref{Urhoineq}.
Then $\ell_t^X \to \ell_0$ as $t \to 0+$, and we also find that
$\pm X \not \in \Delta_{1,t}^X$ if $t$ is small enough, i.e,
$\Delta_{1,t}^X \subset (-X,X)$.   
The inclusion $\Delta_{2,t}^X \subset \mathbb R \setminus (-X,X)$
is also satisfied, since $\nu_{2,t}^X \geq \rho$ by  Lemma \ref{lemma33}, and $X < \frac{c}{a}$.
Hence,  $X$ is proper  for $t >0$ small enough (depending on $X$).
We also conclude that $x_1(t) < X < x_2(t)$ for $t$ small enough,
and the two functions $x_1(t)$ and $x_2(t)$ are 
increasing by Lemma \ref{lemma34}(c).

Since $X>0$ can be taken as small as we like, we also get $x_1(t) \to 0$ as $t \to 0+$.
As the sets $\Delta_{1,t}^X$ are increasing, it follows that $0 \in \Delta_{1,t}^X$
whenever $X$ is a proper cutoff for $t$. In particular we always have that
$\Delta_{1,t}^X$ is equal to the single interval $[-x_1(t), x_1(t)]$, 
see Proposition \ref{prop35} (b).

By Proposition \ref{prop35} (c) the measure $\nu_{1,t}^X$ has a real
analytic density on $(-x_1(t), x_1(t))$ that does not vanish, except maybe at $0$.
We have to show that it does not vanish at $0$ either.

Take $\varepsilon > 0$ such that  $X$ is a proper cutoff for $t-\varepsilon$ as well.
Then, \eqref{nu1Xcond1} holds for both $t$ and $t-\varepsilon$.
Subtracting and using the maximum principle (as we already did a few times)
we get   
\[ 2 U^{\nu_{1,t}^X} - 2 U^{\nu_{1,t-\varepsilon}^X}
	- U^{\nu_{2,t}^X} + U^{\nu_{2,t-\varepsilon}^X} 
	 \begin{cases} = \ell_{t}^X - \ell_{t-\varepsilon}^X  & \text{ on } \Delta_{1,t-\varepsilon}^X, \\
	 	\leq \ell_{t}^X - \ell_{t-\varepsilon}^X & \text{ on } \Delta_{1,t}^X.  \end{cases} \]
Let 
\[ d\omega = \frac{1}{\pi \sqrt{x_1(t)^2 - x^2}} dx \]
	be the equilibrium measure (without any external field) of the interval $\Delta_{1,t}^X = [-x_1(t), x_1(t)]$. See for e.g., \cite{ST97}.
	It is a probability measure on $\Delta_{1,t}^X$ whose 
logarithmic potential is constant (say $\ell$) on $\Delta_{1,t}^X$.
Then we obtain
\[ 2 U^{\nu_{1,t}^X} - 2 U^{\nu_{1,t-\varepsilon}^X}
- U^{\nu_{2,t}^X} + U^{\nu_{2,t-\varepsilon}^X} - \varepsilon U^{\omega} 
	\leq \ell_{t}^X - \ell_{t-\varepsilon}^X - \varepsilon \ell, \quad 
	\text{ on } \Delta_{1,t}^X,  \]
with equality on $\Delta_{1,t-\varepsilon}^X$.

The left-hand side is subharmonic outside $\Delta_{1,t}^X$ and it tends to $0$
at infinity. By the maximum principle, it has its maximum on $\Delta_{1,t}^X$
and the inequality holds on $\mathbb C$. We apply Theorem \ref{thm:dlVP} to obtain
\[ 2 \nu_{1,t-\varepsilon}^X + (\nu_{2,t} - \nu_{2,t-\varepsilon}^X) + \varepsilon \omega	
	\leq 2 \nu_{1,t}^X  \quad \text{ on } \Delta_{1,t-\varepsilon}^X. \]
Hence $\nu_{1,t}^X \geq \frac{\varepsilon}{2} \omega$ on $\Delta_{1,t-\varepsilon}^X$,
and thus $\nu_{1,t}^X$ has a positive density at $0$.
\end{proof}

\subsection{Proof of Theorem \ref{thm:mu12}, parts \ref{item1}--\ref{item4}}
\begin{proof}  
We  proved in Lemma \ref{lemma42} that $t^* > 0$. 

Let  $t \in (0,t^*)$.
Then we have
$x_1(t)$ and $x_2(t)$ as in Proposition \ref{prop35} (a), (b), and measures $\nu_{1,t}^X$
and $\nu_{2,t}^X$ for some $X$ satisfying the remaining properties of Proposition \ref{prop35}.
We write
\begin{equation} \label{mujtdef} 
	\mu_{1,t} = \frac{1}{t} \nu_{1,t}^X, \quad \mu_{2,t} = \frac{1}{t} \nu_{2,t}^X.
	\end{equation}
Then parts \ref{item1}--\ref{item4} of  Theorem \ref{thm:mu12} 
are satisfied because of Proposition \ref{prop35} and Lemma~\ref{lemma42}.
\end{proof}
\subsection{Proof of Theorem \ref{thm:mu12} part \ref{item5}}
\begin{proof}
Define for $z \in \mathbb C$,
\begin{equation} \label{upr0} 
	u(z) = U^{\mu_1}(z) + U^{\mu_2}(z) - \Re V_1(z) + \Re V_2(z) + \frac{a}{t} | \Im z |, \end{equation}
i.e., $u$ is the left-hand side of \eqref{mu12cond6}.
For  real $z$, $u(z)$ is equal to the sum of the left-hand sides of \eqref{mu12cond4} and \eqref{mu12cond5}. 
Combining these equalities and inequalities we conclude that 
\begin{equation} \label{upr1} 
	\sup_{z \in [0,x_1]} u(z) < \ell < u(x_2). 
\end{equation}
By continuity, there is an $x_3 \in (x_1,x_2)$ with $u(x_3) = \ell$.
Actually, $x_3$ is unique with this property, since
we showed in the proofs of \ref{item3}  and \ref{item4}  above that
the left-hand sides of \eqref{mu12cond4} and
\eqref{mu12cond5}  are both strictly increasing on $(x_1,x_2)$.
Thus we have
\begin{equation} \label{upr2}
	u(x) \begin{cases} < u(x_3) = \ell, & \text{for } x \in [0,x_3), \\
		> u(x_3) = \ell, & \text{for } x \in (x_3,x_2].
	\end{cases} 
		\end{equation}
	
Define
\begin{equation} \label{upr3} 
	\mathcal{U} = \{ z \in \mathbb C \mid u(z) > \ell \}, \end{equation}
and let $\mathcal{U}_0$ be the connected component of $\mathcal{U}$
containing $x_2$. Due to \eqref{upr2} and the symmetry 
in the imaginary axis, we then have
\begin{equation} \label{upr4}
	\mathcal{U}_0 \cap [-x_1,x_1] = \emptyset.
\end{equation} 

The rest of the proof relies on the fact that $u$ is
subharmonic in 
\begin{equation} \label{upr5}
	\mathbb C \setminus ([-x_1,x_1] \cup \{ \pm ia\}).
\end{equation}
Indeed, it is immediate from \eqref{upr0} that
$u$ is harmonic away from the real axis, except at $\pm ia$,
where $-\Re V_1$ becomes $+\infty$.  
In the distributional sense one has
(with $\Delta$ denoting the Laplacian) $\Delta |\Im z| =  2dx$
and $\Delta U^{\mu} = -2 \pi \mu$  for a measure $\mu$.
Hence 
\[ \Delta \left(U^{\mu_2} + \frac{a}{t} |\Im z|\right)	
= -2 \pi  \mu_2 + \frac{2a}{t} dx,\]
which is positive due to the upper constraint $\mu_2 \leq \frac{a}{\pi t} dx$ satisfied by $\mu_2$.
Thus, $U^{\mu_2} + \frac{a}{t} | \Im z|$ is subharmonic
on $\mathbb C$. The other terms in the definition \eqref{upr0} of $u$ are harmonic in \eqref{upr5}, 
and it follows indeed that $u$ is subharmonic in the domain \eqref{upr5}.

Now we consider the boundary curve $\partial \mathcal{U}_0$
where we start at $x_3$ and follow it into the upper half-plane.
The domain $\mathcal{U}_0$ then lies to the right of $\partial \mathcal{U}_0$.
We want to know where it leaves the first quadrant.
Because of \eqref{upr4} it cannot do so in $[0,x_3]$.

Suppose that it leaves the first quadrant on the real axis
at a point $> x_3$. By symmetry it must continue in the
lower half-plane to form a closed contour starting and ending at $x_3$. 
Then  $\mathcal{U}_0$ would be a bounded domain contained in 
the region \eqref{upr5} where $u$ is subharmonic. 
Since $u = \ell$ on $\partial \mathcal{U}_0$, the maximum
principle for subharmonic functions tells us that
$u \leq \ell$ in $\mathcal{U}_0$ which is a contradiction
since by definition $u > \ell$ in $U_0$. 

Next, suppose that the curve $\partial \mathcal{U}_0$ starting
at $x_3$ tends to infinity in the first quadrant. 
Yet the first four terms in the right-hand side
of \eqref{upr0} have logarithmic behavior at infinity.
More precisely
\begin{equation} \label{upr6} 
	u(z) = -\left(1+ \frac{c}{t}\right) \log |z| + \frac{a}{t} |\Im z|
	+ \mathcal{O} (z^{-1}) \end{equation}
as $z \to \infty$.
Hence, unless we are very close to the real axis, the behavior of $u$
at infinity is dominated by the term $\frac{a}{t} |\Im z|$.
Since $u = \ell$ on $\partial \mathcal{U}_0$, it easily follows
from \eqref{upr6} that  $\arg z \to 0$ as $z \to \infty$
along $\partial \mathcal{U}_0$ in the upper half-plane. By symmetry the same is true in the lower half-plane.
 Since $\mathcal{U}_0$ is to the right of $\partial \mathcal{U}_0$,  we now also have
that $\arg z \to 0$ as $z \to \infty$ within $\mathcal{U}_0$.
Let $v(z) = \Re z^2$ and $\varepsilon > 0$. 
Then $v$ is harmonic and by the above
and \eqref{upr6} we have $u - \varepsilon v \to -\infty$
as $z \to \infty$ inside $\mathcal{U}_0$. 
We apply the maximum principle to the subharmonic function
$u-\varepsilon v$ to conclude that
\[ u - \varepsilon v \leq \max_{z \in \partial \mathcal{U}_0}
	(u(z) - \varepsilon v(z)) =
		\ell - \varepsilon \min_{z \in \partial \mathcal{U}_0} v(z),
		\quad z \in \mathcal{U}_0. \] 
Letting $\varepsilon \to 0+$ we obtain $u \leq \ell$ in $\mathcal{U}_0$, which is again a contradiction. 
Thus, the part of $\partial \mathcal{U}_0$ that starts at $x_3$ 
does not tend to infinity either.

The only remaining possibility is that $\partial \mathcal{U}_0$ comes to
the positive imaginary axis, say at $iy_0$ with $y_0 > 0$, and its mirror image
in the real axis comes to the negative imaginary axis
at $-iy_0$. 

On the imaginary axis it can be checked from, \eqref{upr0} that
  \begin{align} \nonumber
     t\frac{d}{dy} u(iy)& = t\Im \left(\int\frac{d\mu_1(w)}{iy-w}+\int\frac{d\mu_{2}(w)}{iy-w}+V_{1}'(iy)-V_{2}'(iy)\pm ia\right)
	\\
	&=\Im S_1(iy) - \Im S_3(iy).\end{align}
where we recall that $S_1$ and $S_3$ are given by \eqref{S1z} and \eqref{S3z},
 and $t \mu_j = \nu_j$ for $j=1,2$.
	 
 Additionally on the imaginary axis, both $S_{1}(iy)$ and $S_{3}(iy)$ are purely imaginary, and,  due to Corollary \ref{nodes}, $S_{1}(iy)-S_{3}(iy)$ only
 vanishes at $y= \pm b_2$. Since $b_2 > a$, there is no zero as $iy$ 
 ranges in $(0, ia)$,  hence it has a  constant sign on this interval. 
 This sign is positive as $\lim_{y\to a-}\Im(S_{2}(iy))=+\infty$,
 see also the plots in Figure ~\ref{fig:Sonimagaxis}. 
  This shows that $u(iy)$ is strictly increasing for $y \in [0,a]$. 

Thus,   clearly the interval $[iy_0, ia]$
belongs to $\mathcal{U}_0$ if $y_0 < a$, and we find a value $y_{\gamma} > a$
with $iy_{\gamma} \in \mathcal{U}_0$.  The same conclusion is true
if $y_0 > a$. 
We also take a point $x_{\gamma}$ in the real interval $(x_3,x_2)$. Then both $x_{\gamma}$ and $i y_{\gamma}$
are in $\mathcal{U}_0$. Since $\mathcal{U}_0$ is connected, we can
then connect them with a smooth contour inside $\mathcal{U}_0$
in the first quadrant. Together with its reflections
in the real and imaginary axes it constitutes 
a closed contour $\gamma$
that satisfies the requirements in part \ref{item5} of Theorem \ref{thm:mu12}.   
\end{proof}	
 
Theorem \ref{thm:mu12} is now proved, except for part \ref{item6}. 
The proof of part \ref{item6} will come after we first proved Theorem \ref{thm:droplet}.

\section{Proofs of Theorem \ref{thm:droplet} and \ref{thm:mu12} \ref{item6}} \label{dropletproof}
Recall that $S_1$, $S_2$, $S_3$ are  defined by \eqref{S1z}--\eqref{S3z}
and they are the three branches of a meromorphic function $S$ on the Riemann surface $\mathcal R$ (see \eqref{R123})
by Lemma \ref{lemma32}.
We now study $zS$ as a meromorphic function on the Riemann surface.  The level curves of $\Im zS(z)$ will be used to construct the droplet $\Omega_t$.
\subsection{\texorpdfstring{Zeros and poles of $zS$}{}}

Since $S$ is purely imaginary on the imaginary axis, we find that $zS$ is real-valued on the imaginary axis (on all sheets).

\begin{figure}[t]
	\centering
	\includegraphics[trim={0 1.5cm 0 0cm}, clip, scale=0.6]{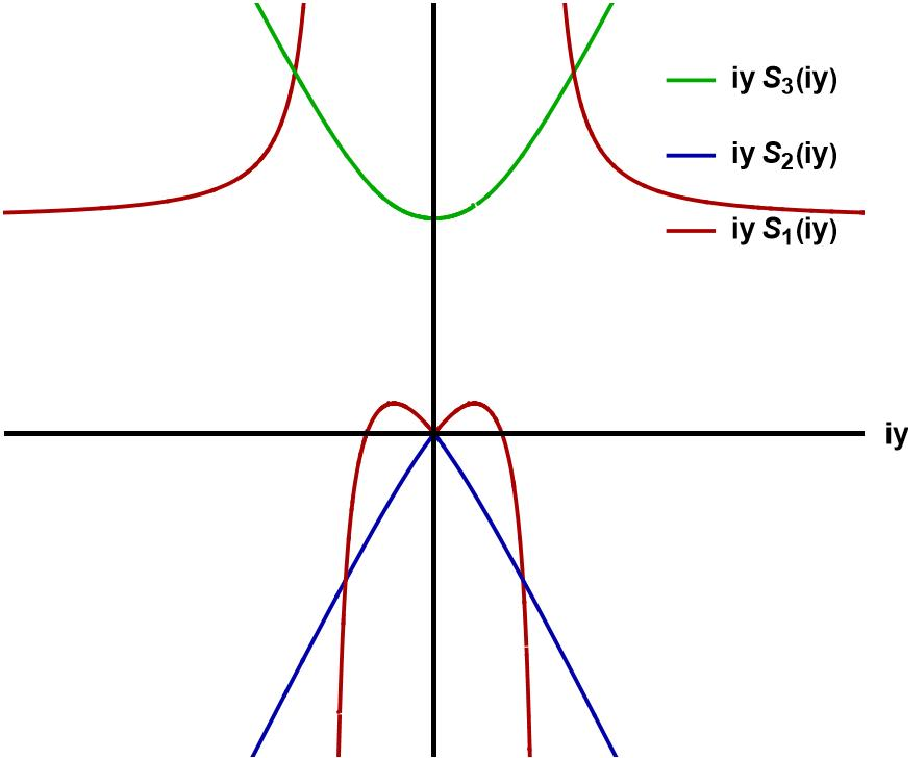}
	\caption{Plot of $zS(z)$ on the imaginary axis for $a=2,c=1,t=2$.
		$zS_1$ has four zeros on the imaginary axis, including two zeros at $0$
		on either side of the branch cut $\Delta_1$. The plot also shows that
		each real value is taken either zero or  four times on the imaginary axis (counting multiplicities including the points $\infty$).
	\label{xsximaginary2}}
\end{figure}

\begin{lemma} \label{lemma51}
    The function $zS$ is a meromorphic function of degree $4$ on $\mathcal{R}$ with the following properties.
    \begin{itemize}
     \item[\rm (a)] $zS$ has simple poles at $\pm ia$ on $\mathcal{R}_1$ 
     with residue $\pm ia c$, and simple poles at the two points at infinity
     that are common to $\mathcal{R}_2$ and $\mathcal R_3$. 
     
     \item[\rm (b)] Let $b_0 \in (0,a)$ be as in Corollary \ref{zeroandnode}.
	Then, $zS$ has simple zeros at $\pm i b_0$ on $\mathcal R_1$. 
     The other two zeros are at the two points $0$ on either side of the
     cut $\Delta_1$ that connects $\mathcal{R}_1$ and $\mathcal{R}_{2}$.
        
     \item[\rm (c)] $zS_3$ has a removable singularity at $0$ 
     with the value $t+2c$. Also, $zS_1(z) \to t+2c$ as $z \to \infty$. 
    \end{itemize}
\end{lemma}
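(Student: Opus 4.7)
The plan is to exploit Lemma~\ref{lemma32}, which identifies $S$ as a meromorphic function of degree $3$ on $\mathcal R$, and to analyze $zS$ as the product of $S$ with the (degree $3$) meromorphic function $z$ on $\mathcal R$. The divisor of $z$ consists of simple zeros at the three points over $0$ and simple poles at the three points over $\infty$, while by Lemma~\ref{lemma32}, \eqref{S123asymp}, and Corollary~\ref{zeroandnode}, the function $S$ has simple poles at $\pm ia \in \mathcal R_1$ (residue $c$) and at $0 \in \mathcal R_3$ (residue $t+2c$), together with simple zeros at $\pm ib_0 \in \mathcal R_1$ and at the point $\infty_{\mathcal R_1}$. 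Combining the two divisors determines the pole/zero structure of $zS$ and its degree.

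For part (a), at $\pm ia \in \mathcal R_1$ we have $z \neq 0$, so the simple pole of $S_1$ with residue $c$ lifts to a simple pole of $zS$ with residue $\pm iac$. At the three points over $\infty$, the expansions in \eqref{S123asymp} control the leading behavior: on $\mathcal R_1$ one has $zS_1 = t+2c + O(z^{-2})$, so $\infty_{\mathcal R_1}$ is regular for $zS$ (which also yields the last assertion of (c)); at the two points at infinity common to $\mathcal R_2$ and $\mathcal R_3$, formed because the unbounded cut $\Delta_2$ glues upper $\mathcal R_2$ to lower $\mathcal R_3$ and vice versa, the values $S \to \pm ia \neq 0$ produce simple poles of $zS$ with leading behavior $\pm iaz$. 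At the point over $0$ on $\mathcal R_3$, the zero of $z$ cancels the pole of $S_3$ (this is part (c)). Hence $zS$ has $2 + 2 = 4$ simple poles, so $\deg(zS) = 4$.

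For part (b), the zeros of $zS$ come either from zeros of $S$ away from poles of $z$, or from zeros of $z$ away from poles of $S$. The zeros of $S$ at $\pm ib_0 \in \mathcal R_1$ give simple zeros of $zS$, while $\infty_{\mathcal R_1}$ does not (since $zS = t+2c$ there). Over $z=0$ there are three points on $\mathcal R$: one on $\mathcal R_3$, which was absorbed by the pole of $S_3$; and two coming from the gluing of $\mathcal R_1$ and $\mathcal R_2$ across $\Delta_1 \ni 0$. At these latter two points, the gluing identity $S_{1,\pm} = S_{2,\mp}$ on $\Delta_1$, together with $S_{2,+}(0) = ib_0$ from Corollary~\ref{zeroandnode} and the oddness of $S_1$, gives $S = \mp ib_0 \neq 0$, and so $zS$ has a simple zero at each. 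The total count of four zeros matches the degree.

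Part (c) follows by direct asymptotic analysis of \eqref{S3z}. Since $\nu_2 = \sigma_0$ has constant density $a/\pi$ on a neighborhood of $0 \in (-x_2,x_2)$ and is symmetric about $0$, Sokhotski--Plemelj yields $\int \frac{d\nu_2(s)}{z-s} \to -ia$ as $z \to 0+$ (the principal-value part vanishes by symmetry), which precisely cancels the $-ia$ term in $S_3(z) = -\int \frac{d\nu_2(s)}{z-s} + \frac{t+2c}{z} - ia$; hence $S_3(z) - \frac{t+2c}{z} \to 0$ and $zS_3 \to t+2c$, so the singularity at $0$ on $\mathcal R_3$ is removable. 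The limit $zS_1(z) \to t+2c$ as $z \to \infty$ is immediate from the first line of \eqref{S123asymp}. The main care point throughout is the topology of $\mathcal R$ at infinity -- recognising that the unbounded cut $\Delta_2$ merges the sheet-$2$ and sheet-$3$ infinities into exactly two common points rather than three -- but this is bookkeeping rather than a real obstacle.
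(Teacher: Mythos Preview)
Your proof is correct and follows essentially the same approach as the paper: identify the poles and zeros of $zS$ by combining the known pole/zero data of $S$ from Lemma~\ref{lemma32}, Corollary~\ref{zeroandnode}, and \eqref{S123asymp} with the obvious divisor of $z$, then count to get degree $4$. Your treatment is slightly more explicit in places---framing things via divisors, spelling out the Sokhotski--Plemelj computation for $zS_3(0)$ where the paper simply quotes the residue $t+2c$ from \eqref{S3z}, and discussing the topology at infinity in more detail---but the argument is the same.
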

\begin{proof}
From  Definition \ref{S1z}  we find  that $zS$ has poles 
at $\pm ia$ with residue $\pm iac$ on $\mathcal{R}_1$ as $S_{1}$ has poles 
at $\pm ia$ with residue $c$. 
From \eqref{S3z} we see that  $S_{3}$ has a simple pole with residue $t+2c$ at $0$. Then
$zS_3$ has a removable singularity with value $t+2c$. 
The only other poles of $zS$ are at infinity. From the behavior of $S_{2}$ and $S_{3}$ 
at infinity, see \eqref{S123asymp}, we find that $zS$ has simple poles at the two
points at infinity that connect the second and third sheets,
and we also find that $z S_1$ tends to $t+2c$
at infinity. This proves parts (a) and (c) of the lemma. We also conclude that $zS$
has degree four as there are exactly four poles.

For part (b) we recall from  Corollary \ref{zeroandnode}
that $S_{1}$ has zeros on the imaginary axis at $\pm i b_0$ with $0 < b_0<a$.
Then  $zS$ has zeros at $\pm i b_0$  on the first sheet.
The formulas \eqref{S1z} and \eqref{S2z} tell us that $S_{1,\pm}$ 
and  $S_{2,\pm}$ have finite non-zero values at $0$ 
meaning  $zS$ has  simple zeros at the two points at $0$ on the branch cut that
connects the first two sheets. There are no other zeros since the degree of $zS$ is four.
\end{proof}

See Figure  \ref{xsximaginary2} for a plot of $zS$ on the imaginary axis, which
shows its zeros and poles. 

\begin{corollary} \label{imaginary}
		$zS$ assumes all real values in $(-\infty,0)\cup(t+2c, \infty)$ exactly four times
		on the imaginary axis, and at no other points of the Riemann surface.
\end{corollary}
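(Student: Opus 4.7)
The strategy is to combine the degree-four property of $zS:\mathcal R\to\mathbb{CP}^1$ (Lemma~\ref{lemma51}) with an intermediate-value count on the imaginary axis. Because the total number of preimages of any value equals four with multiplicity, it suffices to exhibit four distinct preimages on the imaginary axis for every $r\in(-\infty,0)\cup(t+2c,\infty)$; the degree bound will then automatically force these preimages to be simple, to exhaust the whole preimage, and to leave nothing elsewhere on $\mathcal R$.

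To carry out the count I will work sheet by sheet with the real function $f_j(y):=(zS_j)(iy)=-y\,\Im S_j(iy)$, which is even in $y$ by the oddness $S_j(-z)=-S_j(z)$. The required boundary values are read off from Lemma~\ref{lemma51} and Corollary~\ref{zeroandnode}, and the sign of $\Im S_j(iy)$ on each interval between consecutive poles follows from the strict monotonicity of $\Im S_j$ along the imaginary axis, established in the discussion preceding Corollary~\ref{zeroandnode} and displayed in Figure~\ref{fig:Sonimagaxis}. Concretely, for $y>0$ one reads off
\begin{align*}
	\text{sheet }1:\quad & f_1(0^+)=0,\ f_1(b_0)=0,\ f_1(a^{\mp})=\mp\infty,\ f_1(\infty)=t+2c,\\
	\text{sheet }2:\quad & f_2(0^+)=0,\ f_2(\infty)=-\infty,\\
	\text{sheet }3:\quad & f_3(0)=t+2c,\ f_3(\infty)=+\infty.
\end{align*}
Applying the intermediate value theorem: for each $r\in(-\infty,0)$ one obtains at least one preimage on sheet~1 in $(b_0,a)$ (since $f_1(b_0)=0>r$ and $f_1(a^-)=-\infty<r$) and at least one on sheet~2 in $(0,\infty)$ (since $f_2(0^+)=0>r$ and $f_2(\infty)=-\infty<r$); for each $r\in(t+2c,\infty)$ one obtains at least one preimage on sheet~1 in $(a,\infty)$ (since $f_1(a^+)=+\infty>r$ and $f_1(\infty)=t+2c<r$) and at least one on sheet~3 in $(0,\infty)$ (since $f_3(0)=t+2c<r$ and $f_3(\infty)=+\infty>r$). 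Doubling each count by the evenness $y\mapsto -y$ produces at least four distinct preimages on the imaginary axis of $\mathcal R$ for every such $r$.

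Matching this lower bound against $\deg(zS)=4$ from Lemma~\ref{lemma51} then finishes the proof of both conclusions simultaneously. The main point I would want to double-check is the boundary data and the sign of $f_j$ at each endpoint, especially the removable singularity of $zS_3$ at $z=0$ giving $f_3(0)=t+2c$ and the residues of $S_1$ at $\pm ia$ producing $f_1(a^\mp)=\mp\infty$; both are direct consequences of Lemma~\ref{lemma51}. Notably, no separate monotonicity analysis on each subinterval is required: any hypothetical extra oscillation of $f_j$ would produce a fifth preimage and contradict the degree bound, so simplicity and monotonicity on each interval are automatic a posteriori.
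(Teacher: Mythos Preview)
Your proof is correct and follows essentially the same approach as the paper's: exhibit by the intermediate value theorem at least four preimages on the imaginary axis for each value in $(-\infty,0)\cup(t+2c,\infty)$, then invoke $\deg(zS)=4$ from Lemma~\ref{lemma51} to conclude these are all preimages and lie nowhere else. Your treatment of the case $r\in(-\infty,0)$ (locating preimages on sheets~1 and~2) is slightly more explicit than the paper's, which handles $(t+2c,\infty)$ in detail (sheets~1 and~3) and dispatches the negative range with ``similar considerations''.
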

\begin{proof}
From items (a) and (c) in Lemma \ref{lemma51} it follows that 
$iy S_1(iy) \to +\infty$ as $y \to a+$
and $iy S_1(iy) \to t+2c$ as $y \to +\infty$. Hence, every real
value in $(t+2c, +\infty)$ is assumed at least once by $iy S_1(iy)$
in the interval $(a,\infty)$, and by symmetry also at least once in $(-\infty,-a)$.

Furthermore, $zS_3$ has no zeros on the imaginary axis
by Lemma \ref{lemma51}(b), with the value $t+2c$ at $0$ by Lemma \ref{lemma51}(c). 
Therefore, it is positive on the full imaginary axis.
By Lemma \ref{lemma51}(a) we know that $iy S_3(iy) \to +\infty$ as $y \to \pm \infty$.
Which leads us to the conclusion that $iy S_3(iy)$ assumes every value in $(t+2c, \infty)$ 
at least once in $(0,\infty)$ as well as at least once in $(-\infty,0)$.  

Thus, $zS$ assumes every value in $(t+2c,\infty)$ at least
four times on the imaginary axis. Since $zS$ has degree four, it takes
every value exactly four times, and there are no other points on the Riemann
surface where the value is in $(t+2c,\infty)$.
This proves part (a).

The values $(-\infty,0)$ are also taken exactly four times on the imaginary axis follow from similar considerations.
\end{proof}

Note that $zS$ is also real for real values of $z$ whenever $S$ is real.
That is $zS_1$ is real for $z \in \mathbb R  \setminus \Delta_1$,
$zS_2$ is real for $z \in \mathbb R \setminus (\Delta_1 \cup \Delta_2)$,
and $zS_3$ is real for $z \in \mathbb R \setminus \Delta_2$.
It follows from Corollary \ref{imaginary} that $0 < zS < t+2c$ for
those values on the Riemann surface.

\subsection{\texorpdfstring{Critical points of $zS$}{}}

Critical points (or ramification points) of $zS$ are points on the Riemann surface where 
$zS$ is not locally one-to-one. Since $zS$ has degree four, see Lemma \ref{lemma51},
and the Riemann surface has genus zero, there are six critical points by
the Riemann-Hurwitz formula (counting multiplicities).

\begin{lemma}\label{lemma53}
	The six critical points of $zS$ are located as follows.
\begin{itemize}
\item [\rm (a)] The points $0$ on $\mathcal{R}_3$ and $\infty$ on $\mathcal{R}_1$ are critical points of $zS$.
\item[\rm (b)] There is $p_1 > x_1$ such that $zS$ has 
critical  points at $\pm p_1$ on $\mathcal R_1$ 
with a critical value $s_1 = p_1 S_1(p_1) > 0$. Also $x \mapsto x S_1(x)$ has
a local minimum at $x=p_1$ when restricted to the interval $[x_1,\infty)$. 
\item [\rm (c)] There is $p_2$ with $0<p_2<a$ such that $zS$ has 
critical  points at $\pm ip_2$ on $\mathcal R_1$ 
with critical value $s_2 = ip_2 S_1(ip_2) > 0$. 
\item[\rm (d)] All real values in $(0,s_2)$ are taken by $zS$ four times 
on the imaginary axis of the first sheet, and at no other values on the Riemann surface.
\item[\rm (e)] We have $0 < s_2 < s_1 < t+2c$. 
\end{itemize}
\end{lemma}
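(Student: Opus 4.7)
The plan is to invoke Riemann-Hurwitz to bound the total ramification of $\pi := zS$ on $\mathcal R$, exhibit six critical points by hand, and then close the count by counting distinct preimages of regular values.

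Since $\pi : \mathcal R \to \mathbb{CP}^1$ has degree $4$ (Lemma \ref{lemma51}) and $\mathcal R$ has genus $0$, Riemann-Hurwitz gives $\sum_{p}(e_p - 1) = 6$. I first check that the branch points $\pm x_1,\pm x_2$ of $\mathcal R$ are \emph{not} critical points of $\pi$: at $x_1$, a local coordinate is $w = \sqrt{z-x_1}$, and the Sokhotski-Plemelj relation $\Im S_{1,+}(x) = -\pi (d\nu_1/dx)(x)$ on $\Delta_1$, combined with the square-root vanishing of $d\nu_1/dx$ with positive leading coefficient (Proposition \ref{prop35}(c)), forces the coefficient of $w$ in $S = S(x_1) + Cw + \mathcal O(w^2)$ to be nonzero; the argument at $\pm x_2$ is analogous via $d(\sigma_0-\nu_2)/dx$. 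For (a), the expansion \eqref{S123asymp} gives $zS_1 = (t+2c) + \mathcal O(z^{-2})$ at infinity, which in the local coordinate $w=1/z$ on $\mathcal R_1$ reads $\pi = (t+2c) + \mathcal O(w^2)$; since $S_3$ is odd with a simple pole of residue $t+2c$ at $0\in\mathcal R_3$, similarly $zS_3 = (t+2c) + \mathcal O(z^2)$. Each is a simple critical point. For (c), the real-valued function $\psi(y) := iyS_1(iy)$ is positive for small $y>0$ (using $S_{1,+}(0) = -ib_0$ with $b_0>0$, which follows from Corollary \ref{zeroandnode} and Lemma \ref{lemma42}), vanishes at $y=b_0$, and tends to $-\infty$ as $y \to a^-$ (from the pole of $S_1$ at $ia$ with residue $c$). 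Hence $\psi$ has an interior maximum $s_2 > 0$ on $(0,b_0)$ at some $p_2$, and $\pm ip_2$ are critical points.

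For (b), near $x_1^+$ I write $S_1(z) = S_1(x_1) + C\sqrt{z-x_1} + \mathcal O(z-x_1)$; taking $z = x + i0$ with $x < x_1$ gives $\sqrt{z-x_1} = i\sqrt{x_1 - x}$, so $\Im S_{1,+}(x) = C\sqrt{x_1-x}$, and matching with Sokhotski-Plemelj forces $C = -\pi\kappa < 0$. Consequently $(xS_1)'(x) \sim x_1 C/(2\sqrt{x-x_1}) \to -\infty$ as $x \to x_1^+$. At infinity, the bound $xS_1 < t+2c$ from the remark after Corollary \ref{imaginary} gives $(xS_1)(x) \to (t+2c)^-$, from which $(xS_1)'(x) \to 0^+$. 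The intermediate value theorem yields a point $p_1 \in (x_1,\infty)$ with $(xS_1)'(p_1) = 0$, and $-p_1$ is critical by symmetry. The six distinct critical points $\{0, \infty, \pm ip_2, \pm p_1\}$ now saturate the Riemann-Hurwitz total (each contributing $e_p-1 = 1$), so they are all simple and no others exist; in particular $p_1$ is the unique critical point of $xS_1$ on $(x_1,\infty)$, hence a local minimum, proving (b). Part (d) follows because $\psi$ is monotone on each of $(0,p_2)$ and $(p_2,b_0)$ and thus hits every value in $(0,s_2)$ exactly twice on $(0,b_0)$; by symmetry twice on $(-b_0,0)$; and these four imaginary-axis preimages exhaust $\pi^{-1}(s)$ since $\deg\pi = 4$. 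For (e), the bounds $0 < s_2$ and $s_1 < t+2c$ are immediate; if some $s \in (s_1, s_2)$ existed, then $\pi^{-1}(s)$ would contain the four imaginary-axis preimages from (d) \emph{together with} four real preimages on $\mathcal R_1$ (two flanking $p_1$, two flanking $-p_1$), exceeding $\deg\pi = 4$, and the case $s_1 = s_2$ is likewise excluded by a multiplicity count at the common critical value. Hence $0 < s_2 < s_1 < t+2c$.

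The main obstacle is the sign determination of $C$ in part (b): the direction of the $\sqrt{\cdot}$-coefficient of $S_1$ at $x_1$ decides whether $xS_1$ has a genuine interior minimum or is merely monotone on $(x_1,\infty)$, and getting this right requires careful matching between the Sokhotski-Plemelj jump on $\Delta_1$ and the chosen branch of $\sqrt{z-x_1}$ on $\mathcal R_1$. With the opposite sign one would miss the critical points $\pm p_1$ and have to search for them on $\mathcal R_2$ or $\mathcal R_3$, in conflict with the Riemann-Hurwitz balance.
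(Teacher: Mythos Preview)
Your proof is correct and reaches the same conclusions as the paper, but organizes the argument differently. The paper proves each item by direct calculus: (a) from the evenness of $zS$; (b) by observing that $xS_1(x)$ starts below $t+2c$ at $x_1$, has derivative $\to -\infty$ there (from the square-root density), and tends to $t+2c$ at infinity, hence attains an interior minimum; (c) by Rolle on $(0,b_0)$ where $\psi$ vanishes at both ends; (d) by noting that each value in $(0,s_2)$ is hit at least twice on $(0,b_0)$ and then invoking $\deg\pi=4$; (e) by observing that $s_1$ is attained at a real point, which by (d) cannot lie in $(0,s_2)$. The paper never needs to know that the six critical points found are \emph{all} of them, nor that the branch points $\pm x_j$ are regular for $\pi$.

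You instead make Riemann--Hurwitz do structural work: after exhibiting six critical points and checking that the branch points of $\mathcal R$ are not among them, the saturation of $\sum(e_p-1)=6$ gives simplicity and exhaustiveness for free, from which you deduce the uniqueness of $p_1$ on $(x_1,\infty)$ (hence the local-minimum property) and the monotonicity of $\psi$ on each side of $p_2$ (hence the exact count in (d)). This is a clean global argument; the paper's route is slightly more elementary but requires a separate observation for each item. One minor point: your claim ``$(xS_1)'(x)\to 0^+$'' at infinity is correct but not immediate from $xS_1<t+2c$ alone; it follows once you expand $zS_1(z)=(t+2c)+Cz^{-2}+\mathcal O(z^{-4})$ and note $C<0$ from that same bound. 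The paper sidesteps this by arguing directly with function values rather than the derivative.
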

\begin{proof}
(a) Note that $zS$ is an even function. Therefore, $zS_3$ is
even, then $0$ on $\mathcal R_3$ must be a critical point.
Similarly, $zS_1$ is  even, and $\infty$ on $\mathcal R_1$ is a critical point. 

(b) According to item \ref{item1}  of Theorem \ref{thm:mu12}, the density
of $\mu_1$ vanishes
like a square root at $x_1$. This implies that
\[- \int \frac{d\mu_1(s)}{(x-s)^2}  \to - \infty  \quad \text{ as } x \to x_1+. \]
By the definition \eqref{S1z} of $S_1$  we then get $S_1'(x) \to -\infty$ as $x \to x_1+$
and therefore also $(x S_1(x))' \to -\infty$ as $x \to x_1+$.

Since $0 < x_1 S_1(x_1) < t + 2c$ because of Corollary \ref{imaginary}
and $x S_1(x) \to t+2c$ as $x \to +\infty$ by Lemma \ref{lemma51}(c), we conclude
that $x S_1(x)$ must assume a minimum value at an interior point of $[x_1,\infty]$,
which gives the critical point $p_1$ of part (b) and we have $0 < p_1 S_1(p_1) < t+2c$. 
By symmetry $-p_1$ is a critical point as well.

(c) By Lemma \ref{lemma51}(b) we have that $iy S_1(iy)$ is zero at $y = b_0> 0$ and also
tends to $0$ as $y \to 0+$. It is also real valued for real values of $y$.
Therefore, there is a zero of the derivative, which gives the
critical point $ip_2$ in $(0,ib_0)$. By symmetry $-ip_2$ is a critical point as well. 

(d) Since $iy S_1(iy) > 0$ for $0 < y < b_0$, see also the graph in Figure \ref{xsximaginary2}.
we have that $s_2 = ip_2 S_2(p_2) > 0$. Every value on $(0,s_2)$ is taken two
times by $iy S_1(iy)$ in $(0,b_0)$ and by symmetry also two times in $(-r_1,0)$.
Since $zS$ has degree four there are no points on the Riemann surface where it
takes real values in $(0,s_2)$.

(e) We already observed that $s_1 = p_1 S_1(p_1)$ satisfies $0 < s_1 < t+2c$.
Since by part (d) values in $(0,s_2)$ are taken on the imaginary axis only, we also have
$s_2 < s_1$. 
\end{proof}

\subsection{Proof of Theorem \ref{thm:droplet}}

We need one more lemma which guarantees the existence of a simply connected domain which will turn out to be the droplet $\Omega_t$ of Theorem \ref{thm:droplet}.

\begin{lemma} \label{lemma54}
There is a simply connected domain $\Omega_t$, symmetric with respect
to both the real and imaginary axes, and containing the interval $[-x_1,x_1]$
such that
$z S_1(z) \in [s_2,s_1]$ for every $z \in \partial \Omega_t$.
The critical points $\pm p_1$ and $\pm i p_2$ are on the boundary of $\Omega_t$.
\end{lemma}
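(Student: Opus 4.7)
The plan is to realize $\partial \Omega_t$ as four analytic arcs of the level set $\{\Im zS_1(z) = 0\}$ meeting at the four critical points $\pm p_1$ and $\pm ip_2$. I start at $p_1$. Because $p_1$ is a nondegenerate minimum of $xS_1(x)$ on $[x_1,\infty)$ by Lemma \ref{lemma53}(b), there is a local expansion $zS_1(z) = s_1 + \alpha(z-p_1)^2 + O((z-p_1)^3)$ with $\alpha > 0$. The level set $\{\Im zS_1 = 0\}$ therefore has four orthogonally meeting branches at $p_1$: two along the real axis with $\Re zS_1 > s_1$, and two perpendicular branches with $\Re zS_1 < s_1$. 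Let $\gamma^+$ denote the branch leaving $p_1$ into the open first quadrant. Away from critical points, the Cauchy--Riemann equations force $\nabla \Re zS_1$ to be tangent to the level curve of $\Im zS_1$, so $\Re zS_1$ is strictly monotonic along $\gamma^+$, and hence strictly decreases from the initial value $s_1$.

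The main obstacle is to trace the maximal analytic continuation of $\gamma^+$ and show it terminates at $ip_2$. I argue by elimination. The arc $\gamma^+$ cannot return to the real axis in $(x_1,\infty)$, since $xS_1(x) \geq s_1$ there with equality only at $p_1$ by Lemma \ref{lemma53}(b), while $\Re zS_1 < s_1$ strictly on $\gamma^+$. It cannot terminate at the branch point $x_1$, since the square-root vanishing of the density of $\mu_1$ gives $zS_1(z) - x_1 S_1(x_1) \sim C\sqrt{z-x_1}$, whose zero level set is only the real ray $z > x_1$. It cannot approach the branch cut $(-x_1,x_1)$ at an interior point $x^* \neq 0$ because $\Im zS_{1,+}(x^*) = -\pi x^* \frac{d\nu_1}{dx}(x^*) \neq 0$; and it cannot approach $0$ from the upper half plane because the identities $S_{1,+}(0) + S_{1,-}(0) = 0$ and $S_{2,+}(0) = ib_0$ established in the proof of Corollary \ref{zeroandnode} give $zS_1(z) \sim -ib_0 z$ near $0$, whose zero level set is the imaginary axis. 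The pole at $ia$ is excluded because $zS_1(z) \sim iac/(z-ia)$ has level set only along the imaginary axis near $ia$; infinity is excluded because $zS_1(z) = (t+2c) + O(z^{-2})$ forces the level set tails to become asymptotically tangent to the real or imaginary direction with $\Re zS_1 \to t+2c > s_1$, incompatible with monotone decrease from $s_1$. Finally, $\gamma^+$ cannot close upon itself in the open first quadrant: the bounded interior would have $\Im zS_1 = 0$ on its boundary, and the maximum principle applied to the harmonic function $\Im zS_1$ (which has no singularity in the open first quadrant) would force it to vanish identically, a contradiction. The only remaining possibility is that $\gamma^+$ meets the positive imaginary axis, and any such meeting is a crossing of two branches of the level set and therefore a critical point of $zS_1$. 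By Lemma \ref{lemma53} the sole critical point of $zS_1$ on $\mathcal{R}_1$ in the open upper half plane is $ip_2$, so $\gamma^+$ terminates at $ip_2$ with limiting value $\Re zS_1 = s_2$.

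To assemble $\partial \Omega_t$, I reflect $\gamma^+$ in the real and imaginary axes, using that $zS_1(-z) = zS_1(z)$ and $\overline{zS_1(z)} = zS_1(\bar z)$ off the branch cut. The four resulting arcs form a Jordan curve through $\pm p_1$ and $\pm ip_2$ on which $\Re zS_1$ is continuous and monotonically sweeps $[s_2, s_1]$. The bounded complementary component $\Omega_t$ is simply connected and invariant under both reflections; since $x_1 < p_1$ by Lemma \ref{lemma53}(b), the segment $(-p_1, p_1)$, and in particular $[-x_1, x_1]$, lies in its interior.
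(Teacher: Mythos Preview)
Your proof is correct and follows essentially the same route as the paper's: start a level curve of $\Im zS_1 = 0$ at $p_1$ heading into the first quadrant, show by elimination that it must terminate at $ip_2$, and then reflect. You supply more detail than the paper does---separately excluding the branch point $x_1$, interior cut points, the origin, the pole $ia$, and self-closure---while the paper argues more tersely (it dispatches the entire segment $(0,x_1)$ with the single observation that $xS_{1,+}(x)$ is not real there, and does not explicitly address self-closure). Your explicit remark that a meeting with the positive imaginary axis must occur at a critical point, because the axis itself lies in the level set, is a point the paper leaves implicit.
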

\begin{proof}
	By Lemma \ref{lemma53} (b), $x \mapsto x S_1(x)$ has a local minimum at $p_1$, when restricted to $(x_1, \infty)$. Then 
there is a curve $\Gamma_t$ emanating from $p_1$ into the first quadrant
such that $z S_1(z)$ is real and decreasing along $\Gamma_t$. In particular
\[ z S_1(z) \leq s_1 < t+ 2c  \quad \text{ for } z \in \Gamma_t. \]
We consider $\Gamma_t$ in the first quadrant only. Since there are no critical
points away from the axes, the curve has to exit somewhere on the axes, or
at infinity. However, the curve $\Gamma_t$ cannot go to infinity, 
since $z S_1(z) \to t + 2c$ as $z \to \infty$, by Lemma \ref{lemma51}(c).
This curve cannot  go back to the  positive real axis either, as
$x S_1(x) \geq s_1$ on $[x_1, \infty)$, while $x S_{1,+}(x)$ is
not real for $x \in (0,x_1)$. Thus, $\Gamma_t$ has to end on the positive
imaginary axis, and the only candidate is at $i p_2$, since this
is the only critical point on the positive imaginary axis.
We conclude that $z S_1(z)$ monotonically decreases from $s_1$ to $s_2$
if we move along $\Gamma_t$ from $p_1$ to $i p_2$.

By symmetry in both real and imaginary axes, there are corresponding
curves in the other quadrants, that make up a closed contour that is 
the boundary of a simply connected domain $\Omega_t$ that satisfies the 
statements of  the lemma.
\end{proof}

\begin{figure}[t]
	\centering
	\includegraphics[ trim={0cm 0.5cm 0cm 0.8cm},clip,scale=0.7]{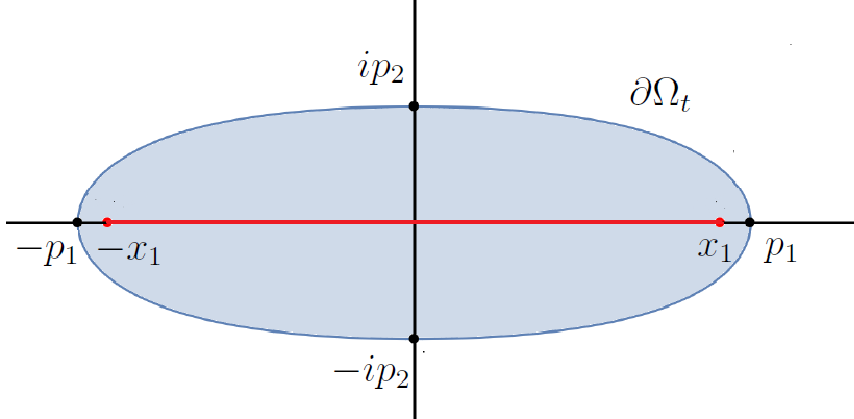}
	\caption{The domain $\Omega_t$ in shaded blue, and the interval $[-x_1,x_1]$ in red, plotted for $a=2$, $c=1$, and $t=0.1$.
	\label{dropletpic}}
\end{figure}

We are now ready for proving  Theorem \ref{thm:droplet}.

\begin{proof}[Proof of Theorem \ref{thm:droplet}]
Let $\Omega_t$ be as in Lemma \ref{lemma54}, so that $z S_1(z)$
is real for $z \in \partial \Omega_t$.

(a) From \eqref{PSz} and $P(S_1(z),z) = 0$ we obtain 
\begin{equation} \label{PS1z}
	z^2 + S_1(z)^2 = \frac{\left(zS_1(z) \right)^3- (4c+t) \left(zS_1(z) \right)^2
    	+ C_1 z S_1(z) - C_2}{a^2(2c+t- z S_1(z))}. 
\end{equation}
The right-hand side of \eqref{PS1z} is real for $z \in \partial \Omega_t$
which means that $z^2 + S_1(z)^2 \in \mathbb R$ for $z \in \partial \Omega_t$.

Then both $zS_1(z)$ and $z^2+S_1(z)^2$ are real for $z \in \partial \Omega_t$.
If $z$ is not real or purely imaginary, elementary algebra tells us that either 
$S_1(z) = \overline{z}$ or $S_1(z) = - \overline{z}$. 
Since $z S_1(z) \in [s_2,s_1]$ for $z \in \partial \Omega_t$ by Lemma \ref{lemma54}
and $s_2 > 0$, we conclude that $S_1(z) = \overline{z}$ for
every $z \in \partial \Omega_t$ that is not on the real or imaginary axis.
By continuity it then also holds for all $z \in \partial \Omega_t$. 
Since $S_1$ is given by \eqref{S1z}, the identity \eqref{S1zidentity} follows
for $z \in \partial \Omega_t$.

From Lemma \ref{lemma54} we have that $[-x_1,x_1]$
is contained in $\Omega_t$, and since $p_2 < a$, the points $\pm ia$
are in $\mathbb C \setminus \Omega_t$.
   
 (b)   By Green's theorem we have for $z$ outside the droplet
\begin{equation} \label{Green1} 
	\frac{1}{\pi}\int_{\Omega_t}\frac{dA(w)}{z-w} = 
	\frac{1}{2\pi i}\oint_{\partial \Omega_t} \frac{\overline{s}}{z-s}ds
	=\frac{1}{2\pi i}\oint_{\partial \Omega_t} \frac{S_{1}(s)}{z-s} ds, 
\end{equation}
since $S_1(s) = \overline{s}$ on the boundary of $\Omega_t$.
	
Since $S_{1}$ is meromorphic in the exterior of $\Omega_t$, the right-hand side of \eqref{Green1} can be evaluated with the residue theorem. 
The poles $\pm ia$ of $S_1$ are in $\mathbb C \setminus \Omega_t$.
Since the residue is $c$, see  formula \eqref{S1z},
the poles at $\pm ia$ give the contribution $-\frac{c}{z \mp ia}$
to the integral \eqref{Green1}. The pole at $z$ gives the contribution
$S_1(z)$ and there is no contribution from infinity due to the behavior
\eqref{S123asymp}.
We obtain
\begin{equation}\label{dropid}
   \frac{1}{\pi} \int_{\Omega}\frac{dA(w)}{z-w}=S_{1}(z)-\frac{2cz}{z^2+a^2}
	 \quad z\in \mathbb C \setminus \Omega_t.
\end{equation}
and part (b) follows because of  \eqref{S1z}.
 
(c) Letting $z \to \infty$ in \eqref{motherbody} 
and comparing terms of order $z^{-1}$, we find  $\area(\Omega_t) = \pi t$.

For $k \geq 1$ we compute the integral \eqref{harmonicmoment} for
the exterior harmonic moment by applying Green's theorem to the exterior domain
\begin{equation}
    -\frac{1}{\pi}\int_{\mathbb C\setminus\Omega_t}\frac{dA(z)}{z^{k}}
    = \frac{1}{2\pi i}\oint_{\partial \Omega_t} \frac{\overline{z}}{z^{k}} dz
    = \frac{1}{2\pi i}\oint_{\partial \Omega_t} \frac{S_1(z)}{z^{k}} dz.
\end{equation}
The latter integral can be computed with the help of
the residue theorem in the exterior domain, as we did in the proof of 
part (b). The only contributions are from the poles of $S_1$
at $\pm ia$ with residue $c$.
They give us 
\[
	-\frac{1}{\pi}\int_{\mathbb C\setminus\Omega_t}\frac{dA(z)}{z^{k}}
	= - \frac{c}{(ia)^k}  - \frac{c}{(-ia)^k}
		= \begin{cases}
				0,  & \quad \text{if $k$ is odd}, \\
				(-1)^{1+ \frac{k}{2}} \frac{2c}{a^{k}}, & \quad \text{if $k$ is even},
			\end{cases}
\]
which agrees with \eqref{wz2}.

(d) Note that $V$ given by \eqref{potential} has
the Maclaurin expansion
\begin{align*} V(z) & = c \log \left( z^2+a^2 \right) = V(0) 
	+  c \log\left( 1 + \frac{z^2}{a^2} \right) \\
	& = V(0) - c \sum_{k=1}^{\infty} \frac{1}{k} \left(- \frac{z^2}{a^2} \right)^k 
\end{align*}
which agrees with \eqref{taylor} with $t_k$ given by \eqref{wz2},
up to the additive constant $V(0)$ which is however immaterial for the model.
Since the exterior harmonic moments characterize the droplet, see \cite{WZ00},
part (d) follows. 
\end{proof}

\begin{remark} \label{fconformaltoR}
Recall that the rational function $f$ from \eqref{conformalmap} is the conformal map from the exterior
of the unit circle to the exterior of the droplet.	From \eqref{fsymmetry}
we obtain that $f(w) = z$ implies $f(w^{-1}) = S_1(z)$ for $|w| = 1$, and then by analytic continuation also in a neighboorhood of $|w|=1$. It then readily follows that
\begin{equation} \label{Rratpar} w \mapsto (S,z) = (f(w^{-1}), f(w)) \end{equation} 
is a rational parametrization of the spectral curve \eqref{PSz}.
In particular $f$ is a conformal map from the Riemann sphere to the Riemann surface $\mathcal R$.
The existence of a rational parametrization \eqref{Rratpar} 
also explains why \eqref{PSz} has to be symmetric in the two variables $z$ and $S$. 
\end{remark}

We can now also state and prove the equations 
that relate the constants $\rho$, $\kappa$ and $\alpha$ in
the conformal map \eqref{conformalmap} to the parameters $a, c$, and $t$
of the model. 
See \cite{B24} for a similar derivation.

\begin{lemma} \label{lemma56}
The parameters $a, c, t$ of the model can be recovered 
via the equations
\begin{align} \label{Eq1}
	a & = \frac{\rho}{\alpha} - \frac{2 \kappa \alpha}{1-\alpha^4}, \\
	\label{Eq2}
	c & = \frac{\rho \kappa}{\alpha^2} + \frac{2\kappa^2(1+\alpha^4)}{(1-\alpha^4)^2}, \\
	\label{Eq3}
	t+2c & = \rho^2 + \frac{2\rho \kappa }{\alpha^2}. 
\end{align}
\end{lemma}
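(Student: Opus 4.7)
The key tool is the functional identity \eqref{fsymmetry}, namely $f(w) = S_1(f(w^{-1}))$, together with the explicit rational form \eqref{conformalmap} of $f$ and the known local behavior of $S_1$ at its poles $\pm ia$ and at infinity. The plan is to read off the three equations by matching the three singularities of $S_1$ to the three special points where \eqref{fsymmetry} becomes singular, namely $w^{-1} = \pm i\alpha$ (giving the points $\pm ia$) and $w = 0$ (giving infinity).

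First I would derive \eqref{Eq1}. By the definition of $\alpha$, the points $\pm i\alpha^{-1}$ are sent by $f$ to $\pm ia$. Plugging $w = i/\alpha$ into \eqref{conformalmap} and using $(i/\alpha)^2 + \alpha^2 = (\alpha^4-1)/\alpha^2$, a direct computation yields
\begin{equation*}
f(i/\alpha) = i\left(\frac{\rho}{\alpha} - \frac{2\kappa\alpha}{1-\alpha^4}\right),
\end{equation*}
and the requirement $f(i/\alpha) = ia$ gives exactly \eqref{Eq1}.

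Next, for \eqref{Eq2}, I would match residues at $z = ia$. The function $S_1$ has residue $c$ at $ia$ by \eqref{S1z}. On the other hand, \eqref{fsymmetry} reads $S_1(f(w)) = f(1/w)$, so as $w \to i/\alpha$, the right-hand side blows up because $1/w \to -i\alpha$, a pole of $f$. The residue of $f$ at $-i\alpha$ is easily computed from \eqref{conformalmap} to be $\kappa$. Expanding both sides near $w = i/\alpha$, using $f(w) - ia \sim f'(i/\alpha)(w - i/\alpha)$ and $f(1/w) \sim \kappa/(\alpha^2(w - i/\alpha))$ (the factor $1/\alpha^2$ coming from the chain rule on $1/w$), one obtains
\begin{equation*}
c = \frac{\kappa\, f'(i/\alpha)}{\alpha^2}.
\end{equation*}
A short computation of $f'(i/\alpha)$ from \eqref{conformalmap} then yields \eqref{Eq2}.

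Finally, for \eqref{Eq3}, I would match the behavior at infinity. From \eqref{S123asymp}, $S_1(z) = (t+2c)/z + O(z^{-3})$. Using \eqref{fsymmetry} near $w = 0$: on one side $f(w) \sim (\rho + 2\kappa/\alpha^2)\,w$, so $z = f(w)$ corresponds to $1/w \sim (\rho + 2\kappa/\alpha^2)/z$; on the other side $f(1/w) \sim \rho/w$. Combining, $S_1(z) \sim \rho(\rho + 2\kappa/\alpha^2)/z$, and comparing with the $(t+2c)/z$ coefficient gives \eqref{Eq3}.

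None of the three steps involves a real obstacle — the proof is essentially three residue/Laurent-coefficient matchings on the rational parametrization of the spectral curve (cf.\ Remark~\ref{fconformaltoR}). The only point requiring mild care is the chain-rule factor $\alpha^{-2}$ when comparing the leading singularity of $f(1/w)$ at $w = i/\alpha$ with that of $S_1 \circ f$, since a slip there would corrupt the second equation; I would double-check it by computing the residue of $S_1(f(w))$ at $w = i/\alpha$ in two independent ways (directly from the expansion of $f(1/w)$, and via $\operatorname{Res}_{z=ia} S_1(z) \cdot (1/f'(i/\alpha))^{-1}$).
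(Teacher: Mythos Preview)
Your proposal is correct and follows essentially the same route as the paper: the paper also derives \eqref{Eq1} from $f(i/\alpha)=ia$, \eqref{Eq2} from the residue computation $c=\lim_{w\to i/\alpha}(f(w)-ia)f(w^{-1})$, and \eqref{Eq3} from the limit $t+2c=\lim f(w)f(w^{-1})$ at the corresponding special point of the parametrization. The only cosmetic difference is that for \eqref{Eq3} the paper reads this limit off the pole of $S_3$ at $z=0$ (i.e., $w\to 0$ with $z=f(w)$), whereas you read it off the asymptotics of $S_1$ at $z=\infty$; by the $z\leftrightarrow S$ symmetry of the spectral curve these yield the identical product $\rho(\rho+2\kappa/\alpha^2)$, so the two are equivalent. (In your third step the labeling of which of $f(w)$, $f(1/w)$ plays the role of $z$ is momentarily swapped, but the computation itself is fine.)
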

\begin{proof} The property $f(i \alpha^{-1}) = ia$ with $f$
	given by \eqref{conformalmap} leads to the equation \eqref{Eq1}.
	Recall from \eqref{S1z} that $S_1$ has a simple pole at $z=ia$ with
	residue $c$. Since $z=ia$ on the first sheet of the Riemann surface
	corresponds to $w=i \alpha^{-1}$ we obtain
	\begin{equation} \label{Eq2proof} 
		c = \lim_{z \to ia} (z-ia) S_1(z) = 
		\lim_{w \to i\alpha^{-1}} (f(w) - ia) f(w^{-1}). \end{equation}
	We already proved that $a$ is given by \eqref{Eq1}. Then
	with $f$ given by \eqref{conformalmap} we obtain \eqref{Eq2} 
	from \eqref{Eq2proof} after straightforward calculations.
	
	Similarly, from \eqref{S3z} we know that $S_3$ has a simple pole 
	at $w=0$ with residue $t+2c$. This point corresponds to $w=0$
	in the rational parametrization \eqref{Rratpar}, and therefore
	\begin{equation} \label{Eq3proof} t + 2c = \lim_{z \to 0} z S_3(z)
		= \lim_{w \to 0} f(w) f(w^{-1}).
		\end{equation} 
	We use the formula \eqref{conformalmap} for $f$ and we obtain \eqref{Eq3}.
\end{proof}

\subsection{Proof of Theorem \ref{thm:mu12} part \ref{item6}}

We first prove that $t^*$ is finite.
\begin{lemma} \label{lemma57}
	$t^*$ is finite.
\end{lemma}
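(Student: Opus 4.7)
The plan is to combine the rational parametrization from Remark 5.5 with the three identities of Lemma 5.6 to derive the explicit bound $t^* \leq c/4$. Fix any $t \in (0, t^*)$; by Lemma 5.6 there exist parameters $\rho, \kappa, \alpha > 0$ with $\alpha \in (0,1)$ such that $f(w) = \rho w + 2\kappa w/(w^2+\alpha^2)$ is the conformal map from the exterior of the unit circle to the exterior of $\Omega_t$. The four real branch points $\pm x_1, \pm x_2$ of the spectral curve are the critical values of $f$ at four real critical points $\pm w_1, \pm w_2$, because via the rational parametrization of Remark 5.5 branch points of the spectral curve correspond to zeros of $f'$, and real critical values paired with Schwarz symmetry and the oddness of $f$ force the critical points themselves to be real.

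First I would translate reality of the critical points into an algebraic inequality. Setting $f'(w) = \rho + 2\kappa(\alpha^2-w^2)/(w^2+\alpha^2)^2 = 0$ and substituting $u = w^2$ yields the quadratic
\[
\rho u^2 + 2(\rho\alpha^2 - \kappa) u + \alpha^2(\rho\alpha^2 + 2\kappa) = 0,
\]
whose discriminant works out to $4\kappa(\kappa - 4\rho\alpha^2)$. Reality of the four roots $\pm w_j$, together with $\kappa > 0$, therefore forces
\[
\kappa \geq 4\rho\alpha^2.
\]

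Next I would use Lemma 5.6 to squeeze $t$. From \eqref{Eq2}, dropping the non-negative second term and then inserting $\kappa \geq 4\rho\alpha^2$ into the first gives
\[
c = \frac{\rho\kappa}{\alpha^2} + \frac{2\kappa^2(1+\alpha^4)}{(1-\alpha^4)^2} \;\geq\; \frac{\rho\kappa}{\alpha^2} \;\geq\; 4\rho^2,
\]
so $\rho^2 \leq c/4$. Subtracting twice \eqref{Eq2} from \eqref{Eq3} cancels the $2\rho\kappa/\alpha^2$ term and produces
\[
t \;=\; \rho^2 - \frac{4\kappa^2(1+\alpha^4)}{(1-\alpha^4)^2} \;\leq\; \rho^2 \;\leq\; \frac{c}{4}.
\]
Since the inequality $t \leq c/4$ holds for every $t \in (0, t^*)$, taking the supremum gives $t^* \leq c/4 < \infty$. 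The main conceptual point to verify is that the real-critical-points characterization of Phase 1 is already in force throughout $(0, t^*)$; once that is granted, the remainder is a short algebraic manipulation of the three relations of Lemma 5.6, and positivity of $\rho$ (the logarithmic capacity), $\alpha \in (0,1)$ (built into the form of $f$) and $\kappa > 0$ (a consequence of $\kappa \geq 4\rho\alpha^2$) suffices to justify every sign in the computation.
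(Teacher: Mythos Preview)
Your approach and the paper's both start from the identities \eqref{Eq1}--\eqref{Eq3} of Lemma~\ref{lemma56}, and both use the relation $t = \rho^2 - 4\kappa^2(1+\alpha^4)/(1-\alpha^4)^2 \leq \rho^2$ obtained by combining \eqref{Eq2} and \eqref{Eq3}. The paper then bounds $\rho$ by $a+\sqrt{2c}$ via a case split on the sign of $\kappa$, yielding $t^*\leq (a+\sqrt{2c})^2$. You instead exploit the Phase~1 feature that $f$ has four \emph{real} critical points: this forces the two roots $u_1=w_1^2,\,u_2=w_2^2$ of your quadratic in $u$ to be positive, and the resulting discriminant inequality $\kappa\geq 4\rho\alpha^2$ feeds back into \eqref{Eq2} to give the sharper bound $\rho^2\leq c/4$, hence $t^*\leq c/4$. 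Your route buys a tighter constant at the cost of invoking a Phase~1--specific fact (reality of the $w_j$), whereas the paper's argument uses only the bare existence of a solution to \eqref{Eq1}--\eqref{Eq3}.

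There is, however, a genuine circularity in your justification of $\kappa>0$. You assert $\kappa>0$ at the outset (``there exist parameters $\rho,\kappa,\alpha>0$''), then use it together with the discriminant to deduce $\kappa\geq 4\rho\alpha^2$, and finally claim $\kappa>0$ is ``a consequence of $\kappa\geq 4\rho\alpha^2$''. Lemma~\ref{lemma56} does not give a sign for $\kappa$ (indeed the paper's own proof treats both signs), and the non-negativity of the discriminant $4\kappa(\kappa-4\rho\alpha^2)$ alone only yields $\kappa\leq 0$ \emph{or} $\kappa\geq 4\rho\alpha^2$. The missing step is Vieta: since $u_1,u_2>0$, their sum $2(\kappa-\rho\alpha^2)/\rho$ must be positive, which gives $\kappa>\rho\alpha^2>0$ directly and then lets the discriminant inequality select the branch $\kappa\geq 4\rho\alpha^2$. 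With that one line inserted, your argument is complete and correct.
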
	
\begin{proof}
	Suppose $t > 0$ is such that there are constants $\rho > 0$,
	$\alpha \in (0,1)$ and $\kappa \in \mathbb R$ that satisfy
	the equations \eqref{Eq1}--\eqref{Eq3}. From \eqref{Eq2} and \eqref{Eq3} it then
	follows that 
	\[ \rho^2 - t = 2 \left( c - \frac{\rho \kappa}{\alpha^2} \right)
	= 4 \frac{\kappa^2 (1+\alpha^4)}{(1-\alpha^4)^2} \geq 0, \]
	and hence $\sqrt{t} \leq \rho$.
	
	We recall $0 < \alpha < 1$. In the case $\kappa \leq 0$ we get from \eqref{Eq1} that $a \geq \frac{\rho}{\alpha}$, and therefore $\rho \leq \alpha a < a$.
	If $\kappa > 0$, then from \eqref{Eq1} we have
	\[ \rho = \alpha a + \frac{2 \kappa \alpha^2}{1-\alpha^4}
	\leq a + \frac{2 \kappa}{1-\alpha^4}. \]
	From \eqref{Eq2} it follows that
	\[ c \geq \frac{2\kappa^2 (1+ \alpha^4)}{(1-\alpha^4)^2}
	\geq  2 \left(\frac{\kappa}{1-\alpha^4} \right)^2. \]
	Combining the latter two inequalities, we conclude that
	$\rho \leq a + \sqrt{2 c}$ holds in case $\kappa > 0$.
	Thus, we have $\rho \leq a + \sqrt{2c}$ in both cases.
	
	Since $\sqrt{t} \leq \rho$, we find $t \leq (a + \sqrt{2c})^2$
	in case the equations \eqref{Eq1}--\eqref{Eq3} have a solution
	with $\rho > 0$, $\alpha \in (0,1)$ and $\kappa \in \mathbb R$.
	
	For $t < t^*$ there is a proper cut-off, which, as we have shown,
	leads to the existence of a droplet with conformal map \eqref{conformalmap}. As a consequence, there is a solution to the equations \eqref{Eq1}--\eqref{Eq3} 
	with $\rho > 0$, $\alpha \in (0,1)$ and $\kappa \in \mathbb R$,
	and thus  
	\begin{equation} \label{tstarbound} 
		t^* \leq (a + \sqrt{2c})^2. \end{equation}
	In particular $t^* < +\infty$. 
\end{proof}

We emphasize that the upper bound \eqref{tstarbound} is not sharp.
Using the equations \eqref{Eq1}--\eqref{Eq3} for the 
constants in the conformal map  \eqref{conformalmap} 
one can show that $t^{*}$ is the unique positive root of the following degree $5$ polynomial  whose coefficients depend only on $a$ and $c$. 
\begin{multline} \label{eqfortstar}
	36 t^5 + 207 c t^4 + (24 a^4 + 468 c^2) t^3 + 
	c (114 a^4 + 522 c^2) t^2 \\
	+ (4 a^8 + 108 a^4 c^2 + 288 c^4) t -c (a^8 + 30 a^4 c^2 - 63 c^4).
\end{multline} 

\begin{proof}[Proof of Theorem \ref{thm:mu12} \ref{item6}]
We already saw that $x_1(t)$ and $x_2(t)$ are increasing functions for $t \in (0,t^*)$
and $t^*$ is finite by Lemma \ref{lemma57}.
Since $\nu_{2,t}$ has the constant density $\frac{a}{\pi}$ on $[-x_2(t),x_2(t)]$
and its total mass is $t +c$, we have the bounds $x_1(t) < x_2(t) < \frac{(t+c) \pi}{2a}$ for $t < t^*$. Hence the limits 
$\lim_{t \to t^*-} x_j(t)$  for $j=1,2$ exist and are finite. Suppose 
$\lim_{t \to t^*-} x_1(t) < \lim_{t \to t^*-} x_2(t)$. After taking $X$ with
$\lim_{t \to t^*-} x_1(t) < X < \lim_{t \to t^*-} x_2(t)$, we would also
have that $\lim_{t \to t^*-} \nu_{j,t}^X$ exist for $j=1,2$ and that 
they are equal to $\nu_{j,t^*}^X$. 
We would get $\supp(\nu_{1,t^*}^X) \subset (-X,X)$ and
$\supp(\sigma_0 - \nu_{2,t^*}^X) \subset \mathbb R \setminus [-X,X]$.
Implying that  $X$ is a proper cutoff for $t^{*}$ with $x_1(t^*) < X < x_2(t^*)$. Then by Lemma \ref{lemma313}
there is an $\varepsilon > 0$ such that  $X$ is proper  for every $t \in (t^*, t^*+\varepsilon)$.
This would lead to a contradiction with the definition of $t^*$ given in \eqref{deft*}.
Hence it must be that  
$\lim_{t \to t^*-} x_1(t) = \lim_{t \to t^*-} x_2(t)$,
and part \ref{item6} of Theorem \ref{thm:mu12}  follows.
\end{proof}

\section{Steepest descent analysis}\label{Steepestdescent}

 The Riemann-Hilbert problem for $Y$ is given in
RH problem \ref{rhproblemY}.
Recall that $\Sigma_Y = \gamma $ is a simple closed contour going around
the origin and around $\pm ia$. It will be chosen such that item \ref{item5}
of Theorem \ref{thm:mu12} is satisfied.

\subsection{\texorpdfstring{First transformation $Y \mapsto X$}{}}

The first transformation $Y \mapsto X$ takes a different
form in the four connected components of the complement of $\Sigma_Y \cup \mathbb R$. The goal is twofold.
This will introduce a jump discontinuity for $X$ on the real line as well reduce the jump matrices to (in all but one case) desired block triangular forms.

\begin{definition}\label{YtoX}
Recall $V_1$ and $V_2$ defined in \eqref{V1} and \eqref{V2} respectively. We define
\begin{align} \label{Xdef1}
	X(z) & = Y(z) \begin{pmatrix} 1 & 0 & 0 \\
		0 & 0 & e^{-n V_2(z)}  e^{-iaNz} \\
		0 & - e^{nV_2(z)} e^{iaNz} & e^{iaNz} \end{pmatrix},
			&& \begin{array}{l} \text{$z$ outside of $\gamma$} \\
				\text{and $\Im z > 0$}, \end{array} \\ \label{Xdef2}
	X(z) & = Y(z) \begin{pmatrix} 1 & e^{n V_1(z)} & 0 \\
	0 & e^{-iaNz} & 0  \\
	0 & -e^{nV_2(z)} e^{iaNz} & e^{iaNz} \end{pmatrix}, 
	&& \begin{array}{l} \text{$z$ inside of $\gamma$} \\ 
		\text{and $\Im z > 0$}, \end{array} \\ \label{Xdef3}
	X(z) & = Y(z) \begin{pmatrix} 1 & 0 & 0 \\
	0 & e^{-iaNz} & 0  \\
	0 & 0 & e^{iaNz} \end{pmatrix}, &&
	\begin{array}{l} \text{$z$ inside of $\gamma$} \\  
		\text{and $\Im z < 0$}, \end{array} \\ \label{Xdef4}
	X(z) & = Y(z) \begin{pmatrix} 1 & 0 & 0 \\
	0 & e^{-iaNz} & e^{-nV_2(z)} e^{-iaNz}   \\
	0 & 0 & e^{iaNz} \end{pmatrix}, && 
	\begin{array}{l} \text{$z$ outside of $\gamma$} \\
		\text{and $\Im z < 0$}. \end{array} 
	\end{align}
\end{definition}

In what follows, we will use $x_{\gamma}$ to denote the point of intersection
of $\gamma$ with the positive real axis. Recall that
$x_3 < x_{\gamma} < x_2$.

\begin{rhproblem} \label{rhproblemX} 
	$X$ is the solution to the following RH problem.
	\begin{itemize}
	\item $X : \mathbb C \setminus \Sigma_X \to \mathbb C^{3 \times 3}$ is analytic,
	where $\Sigma_X = \Sigma_Y \cup \mathbb R$.
	\item $X_+ = X_-J_X$ on $\Sigma_X$, where
	\begin{align} \label{Xjump1} 
		J_X & = \begin{pmatrix} 1 & 0 & e^{n(V_1-V_2)} \\
				0 & 1 & 0 \\ 0 & 0 & 1 \end{pmatrix} \quad \text{ on } \gamma, \\
				\label{Xjump2}
			J_X & = \begin{pmatrix} 1 & e^{n V_1} & 0 \\ 0 & 1 & 0 \\ 0 & -e^{n V_2} & 1 \end{pmatrix}	
			\quad \text{ on } (-x_{\gamma}, x_{\gamma}), \\
			\label{Xjump3}
			J_X & = \begin{pmatrix} 1 & 0 & 0 \\ 0 & 1 & 0 \\ 0 & -e^{nV_2} & 1 \end{pmatrix}	
			\quad \text{ on } \mathbb R \setminus [-x_{\gamma},x_{\gamma}]. \end{align}
	\item $X$ remains bounded near $\pm x_{\gamma}$.
	\item As $z \to \infty$
	\begin{multline} \label{Xasymp} X(z) = \left(I_3 + \mathcal O(z^{-1})\right) X_\infty(z)
		\begin{pmatrix} z^n & 0 & 0 \\
			0 & z^{cN} e^{\pm iaNz} &  0  \\
			0 & 0 & z^{-(n+cN)} e^{\mp ia Nz} \end{pmatrix}, \\ 
		\text{with } \pm \Im z > 0, \end{multline}
	where
	\begin{equation} \label{Xasymp2}
		X_\infty(z) = \begin{cases}  	\begin{pmatrix} 1 & 0 & 0 \\ 0 & 0 & 1 \\ 0 & -1 & e^{2iaNz} \end{pmatrix}, & \Im z > 0, \\
	\begin{pmatrix} 1 & 0 & 0 \\ 0 & 1 & e^{-2iaNz} \\ 0 & 0 & 1 \end{pmatrix}, 
	& \Im z < 0. \end{cases} \end{equation}			 
\end{itemize}
\end{rhproblem}
\begin{proof}
	 We emphasise that while $V_1$ and $V_2$
	are multi-valued, both $e^{nV_1}$ and $e^{nV_2}$ are single-valued (in fact, polynomial) since $e^{n V_1(z)} =  (z^2+a^2)^{cN}$ and $e^{nV_2(z)} = z^{n + 2cN}$ and $n$ and $cN$ are positive integers. The factor $e^{-nV_2(z)}$ that appears in \eqref{Xdef1} and \eqref{Xdef4}
has a pole at $z=0$, which is inside of $\gamma$, but is otherwise analytic. Hence $X$ defined by \eqref{Xdef1}-\eqref{Xdef4} is indeed
	analytic in $\mathbb C \setminus (\gamma \cup \mathbb R)$.
	
	To verify the jump \eqref{Xjump1}
	it is convenient to write the jump matrix \eqref{Yjump} for $Y$ as 
\begin{equation} \label{Yjump2} 
	J_Y(z) = \begin{pmatrix} 1 & 0 & e^{n(V_1(z) - V_2(z))} e^{-iaNz} \\
		0 & 1 & e^{-n V_2(z)} e^{-2iaNz} \\
		0 & 0 & 1 \end{pmatrix}, \quad z \in \gamma. \end{equation}
	Then, for $z \in \gamma \cap \mathbb C^+$, we find from \eqref{Yjump2}, \eqref{Xdef1}, and \eqref{Xdef2}, that
	\begin{align*}
		J_X(z) & = \begin{pmatrix} 1 & 0 & 0 \\
			0 & 0 & e^{-n V_2(z)}  e^{-iaNz} \\
			0 & - e^{nV_2(z)} e^{iaNz} & e^{iaNz} \end{pmatrix}^{-1} \\
			& \qquad \times
			\begin{pmatrix} 1 & 0 & e^{n(V_1(z) - V_2(z))} e^{-iaNz} \\
				0 & 1 & e^{-n V_2(z)} e^{-2iaNz} \\
				0 & 0 & 1 \end{pmatrix}
				 \begin{pmatrix} 1 & e^{n V_1(z)} & 0 \\
				 	0 & e^{-iaNz} & 0  \\
				 	0 & -e^{nV_2(z)} e^{iaNz} & e^{iaNz} \end{pmatrix}   
	\end{align*}
	which indeed gives \eqref{Xjump1} by straightforward calculation.
	A similar calculation leads to the same jump matrix on $\gamma \cap \mathbb C^-$
	and \eqref{Xjump1} follows. The jump conditions \eqref{Xjump2} and \eqref{Xjump3}
	follow similarly. 
	
	For the asymptotic condition, we recall that $e^{n V_2(z)} = z^{n + 2cN}$. Then
	we find from 	\eqref{Yasymp} and \eqref{Xdef1} that as $z \to \infty$ with $\Im z > 0$, 
	\begin{align*} X(z) & = \left(I_3 + \mathcal O(z^{-1}) \right)
		\begin{pmatrix} z^n & 0 & 0 \\ 0 & z^{cN} & 0 \\ 0 & 0 & z^{-n-cN} \end{pmatrix}
		\begin{pmatrix} 1 & 0 & 0 \\ 0 & 0 & z^{-n-2cN} e^{-iaNz} \\
			0 & - z^{n+2cN} e^{iaNz} & e^{iaNz} \end{pmatrix} \\
   & =  \left(I_3 + \mathcal O(z^{-1}) \right)
		\begin{pmatrix}
			1 & 0 & 0 \\
			0 & 0 & 1  \\ 
			0 & -1  & e^{2iaNz} 
		\end{pmatrix}
		\begin{pmatrix} z^n & 0 & 0 \\
			0 & z^{cN} e^{iaNz} & 0  \\
			0 & 0 &  z^{-n-cN} e^{-iaNz}  \end{pmatrix}.
			\end{align*}
	Similarly, from \eqref{Yasymp} and \eqref{Xdef4}, we obtain as $z \to \infty$ with $\Im z < 0$,
	\begin{align*} X(z) & = \left(I_3 + \mathcal O(z^{-1}) \right)
		\begin{pmatrix} z^n & 0 & 0 \\ 0 & z^{cN} & 0 \\ 0 & 0 & z^{-n-cN} \end{pmatrix}
		\begin{pmatrix} 1 & 0 & 0 \\ 0 & e^{-iaNz} & z^{-n-2cN} e^{-iaNz} \\
			0 &  0  & e^{iaNz} \end{pmatrix} \\
		& =  \left(I_3 + \mathcal O(z^{-1}) \right)
		\begin{pmatrix}
			1 & 0 & 0 \\
			0 & 1 & e^{-2iaNz}  \\ 
			0 & 0  & 1 
		\end{pmatrix}
		\begin{pmatrix} z^n & 0 & 0 \\
			0 & z^{cN} e^{-iaNz} & 0  \\
			0 & 0 &  z^{-n-cN} e^{iaNz}  \end{pmatrix}.
	\end{align*}
	This establishes \eqref{Xasymp} with $X_\infty$ given by \eqref{Xasymp2}.
 Since $Y(z)$ is bounded near $\pm x_{\gamma}$ clearly $X(z)$ is also bounded near 
 $\pm x_\gamma$, and we verified all the conditions in the RH problem~\ref{rhproblemX}. 
\end{proof}

\subsection{\texorpdfstring{Second transformation $X \mapsto T$}{}}

The second transformation $X \mapsto T$ makes use
of the measures $\mu_1$, $\mu_2$ that satisfy the
properties listed in Theorem \ref{thm:mu12} corresponding to the
value
\begin{equation} \label{tnN} 
	t = t_{n,N} = \frac{n}{N}. \end{equation}
We assume that $0 < t_{n,N} < t^*$.
The measures $\mu_1$ and $\mu_2$ will enter the $g$-functions defined by
\begin{align} \label{g1g2}
		g_j(z) = \int \log(z-s) d\mu_j(s), \quad j=1,2, \quad z \in \mathbb C \setminus \mathbb R
	\end{align}
associated with the value \eqref{tnN}.
For each $s \in \mathbb R$, $z \mapsto \log(z-s)$ is defined as the principal branch of
the logarithm, i.e., with a branch cut along $(-\infty,s]$. Then, $g_1$ and $g_2$ are analytic in $\mathbb C \setminus \mathbb R$,
with boundary values 
\[ g_{j,\pm}(x) = - U^{\mu_j}(x) \pm \pi i \mu_j([x,\infty)) \]
on the real line. This implies 
\begin{align} \label{g1min} 
	g_{1,+}(x) - g_{1,-}(x) & = \begin{cases}
	0, 	&   \text{ for } x > x_1, \\
	2\pi i \mu_1([x,x_1]), & \text{ for } -x_1 < x < x_1, \\
	2\pi i, & \text{ for } x < -x_1. \end{cases} \\ \label{g2min}
	g_{2,+}(x) - g_{2,-}(x) & = 2 \pi i \mu_2([x,\infty)),  \qquad x \in \mathbb R. 
\end{align}
Recall that $\mu_2 = \sigma$ on $[-x_2,x_2]$, see \eqref{mu12cond3b}. Using also the total mass condition
for $\mu_2$ and the fact that $\mu_2$ is even, we have for $x \in [-x_2,x_2]$,
\[ \mu_2([x,\infty)) = \mu_2([0,\infty)) - \int_0^x d\sigma 
= \frac{1}{2} (1+ \tfrac{c}{t}) - \tfrac{ax}{\pi t}.  \]
Combining this with \eqref{g2min} we get 
\begin{align} \label{g2min2}
	g_{2,+}(x) - g_{2,-}(x) & = \pi i(1+\frac{c}{t}) - \frac{2ai x}{t}, \quad x \in [-x_2,x_2].
\end{align}

For the sum we have $g_{j,+} + g_{j,-} = -2 U^{\mu_j}$, and in view of 
the equilibrium conditions \eqref{mu12cond4}-\eqref{mu12cond5}, we get
\begin{align} \label{g1plus} 
	 g_{1,+} + g_{1,-} - \tfrac{1}{2}(g_{2,+} + g_{2,-}) + \frac{c}{t} \log(x^2+a^2) + \ell
 & \begin{cases} = 0, & \text{ on } [-x_1,x_1], \\
	< 0, & \text{ on } (-x_2,-x_1) \cup (x_1,x_2). \end{cases} \\ \label{g2plus}
 g_{2,+} + g_{2,-} - \tfrac{1}{2}(g_{1,+} + g_{1,-}) - \frac{t+2c}{t} \log|x| &
\begin{cases} > 0, & \text{ on } (-x_2,x_2), \\
	= 0, & \text{ on } (-\infty, -x_2] \cup [x_2, \infty). \end{cases} 
\end{align}

The following transformation uses the \enquote{$g$ functions} defined above in \eqref{g1g2}. As a result the jump matrices simplify to a great extent and the asymptotic behaviour of the RHP as $z\rightarrow\infty$ becomes tractable. 

\begin{definition} \label{XtoT}
We define for $z \in \mathbb C \setminus \Sigma_X$,
\begin{multline} \label{Tdef}
		T(z) = \begin{pmatrix} e^{n (\ell - \frac{\pi i}{2}(1+\frac{c}{t}))} & 0 & 0 \\ 0 & 1 & 0 \\ 0 & 0 & -1 \end{pmatrix}
		X(z) \begin{pmatrix} e^{-n (g_1(z) + \ell - \frac{\pi i}{2}(1+ \frac{c}{t}))} & 0 & 0 \\ 0 & e^{n(g_1(z) - g_2(z))} & 0 \\
			0 & 0 & -e^{n g_2(z)} \end{pmatrix} 
			\\ \times \begin{cases} \begin{pmatrix} 1 & 0 & 0 \\
				0 & e^{- iaNz} & 0 \\ 0 & 0 & e^{iaNz} \end{pmatrix}, &
				\quad \Im z > 0, \\ 
				\begin{pmatrix} 1 & 0 & 0 \\
					0 & (-1)^{n+cN} e^{iaNz} & 0 \\ 0 & 0 & (-1)^{n+cN} e^{- iaNz} \end{pmatrix},
				& \quad \Im z < 0. \end{cases}
		\end{multline}
\end{definition}

\begin{rhproblem} \label{rhproblemT} $T$ is the solution of the following RH problem.
	\begin{itemize}
	\item $T : \mathbb C \setminus \Sigma_T \to \mathbb C^{3\times 3}$
		is analytic, where $\Sigma_T = \Sigma_X = \gamma \cup \mathbb R$.
	\item $T_+ = T_- J_T$ on $\Sigma_T$ where
	\begin{align} \label{Tjump1}  \displaybreak[0]
		J_T & = \begin{pmatrix} 1 & 0 & -e^{n(V_1-V_2 + g_1 + g_2 + \ell \pm \frac{i az}{t} 
				\mp \frac{\pi i}{2}(1+\frac{c}{t}))} \\
			0 & 1 & 0 \\
			0 & 0 & 1 \end{pmatrix} \quad \text{ on } \gamma \cap \mathbb C^{\pm}, \\		\label{Tjump2}	
		J_T & = \begin{pmatrix} e^{-n(g_{1,+} - g_{1,-})} & 1  & 0 \\
			0 & e^{n(g_{1,+}-g_{1,-})} & 0 \\
			0 & (-1)^{n+cN} e^{n(V_2 + g_{1,+} - g_{2,+}-g_{2,-})}   & 1  \end{pmatrix} \quad 
			\text{on } [-x_1,x_1], \\  \label{Tjump3}
		J_T & = \begin{pmatrix} 1 & e^{n(V_1 + g_{1,+} + g_{1,-} -
				\frac{1}{2} g_{2,+}- \frac{1}{2} g_{2,-} +\ell)} & 0 \\
				0 & 1& 0 \\
				0 & (-1)^{n+cN} e^{n(V_2 +g_{1,+} - g_{2,+}-g_{2,-})}   & 1  \end{pmatrix} \quad 
		 \text{on }
			(-x_{\gamma},x_1] \cup [x_1,x_{\gamma}),  		\\  
			\label{Tjump4}
		J_T & = \begin{pmatrix} 1 & 0  & 0 \\
			0 & 1 & 0 \\
			0 & (-1)^{n+cN} e^{n(V_2 +g_{1,+} - g_{2,+}-g_{2,-})}   & 1  \end{pmatrix} \quad 
		\text{on } [-x_2,-x_{\gamma}) \cup (x_{\gamma},x_2]  \\  
		 \nonumber 
		J_T & = \begin{pmatrix} 1 & 0 & 0 \\
			0 & (-1)^{n+cN} e^{-n(g_{2,+}-g_{2,-})} e^{-2iaNx} & 0 \\
			0 & (-1)^{n+cN} & (-1)^{n+cN} e^{n(g_{2,+}-g_{2,-})} e^{2iaNx} \end{pmatrix} \\
			& \hspace*{6cm}	\label{Tjump5}
			\hfill{ \text{ on } (-\infty,-x_2] \cup [x_2,\infty)}. 
	\end{align} 
	\item $T$ remains bounded near $\pm x_{\gamma}$.
	\item As $z \to \infty$,
	 \begin{equation} \label{Tasymp}
	 	T(z)  = \left(I_3 + \mathcal O(z^{-1})\right) 
	 	T_\infty(z) 
	\end{equation}
	with 
	\begin{equation} \label{Tasymp2}
		T_\infty(z) = \begin{cases} \begin{pmatrix} 1 & 0 & 0 \\ 0 & 0 & -1 \\ 0 & 1 & e^{2iaNz} \end{pmatrix},
		 & \Im z > 0, \\
		  \begin{pmatrix} 1 & 0 & 0 \\ 0 & (-1)^{n+cN} & (-1)^{n+cN+1} e^{-2iaNz} \\ 0 & 0 & (-1)^{n+cN} \end{pmatrix},
		 & \Im z < 0. 
		 \end{cases} \end{equation}
\end{itemize}
\end{rhproblem}
\begin{proof}
	Analyticity of $T$ is clear. 
	The jump \eqref{Tjump1} follows from immediate calculations from \eqref{Xjump1}
and \eqref{Tdef} and the fact that $g_1$ and $g_2$ are analytic away from the real axis.
	
	On the part of the real line inside $\gamma$ we obtain from \eqref{Xjump2} and \eqref{Tdef}
	that 
	\begin{equation} \label{Tjumpin}	
		J_T  = \begin{pmatrix} e^{-n(g_{1,+} - g_{1,-})} & e^{n(V_1 + g_{1,+} + g_{1,-} - g_{2,+} +\ell
				- \frac{\pi i}{2}(1+ \frac{c}{t}))} e^{-iaNx}   & 0 \\
		0 & (-1)^{n+cN} e^{n(g_{1,+}-g_{1,-} - g_{2,+}+g_{2,-})} e^{-2iaNx} & 0 \\
		0 & (-1)^{n+cN} e^{n(V_2 + g_{1,+} - g_{2,+}-g_{2,-})}   & (-1)^{n+cN} e^{n(g_{2,+}-g_{2,-})} e^{2iaNx} \end{pmatrix}. \end{equation}
	In view of \eqref{g2min2} we have 
	\begin{equation} \label{g2min3} n(g_{2,+}-g_{2,-}) + 2i aNx = n\pi i \left(1+ \tfrac{c}{t}\right) = 
			(n+cN)\pi i
			\quad \text{ on } [-x_2,x_2], \end{equation}
	which we use to simplify the $(2,2)$ and $(3,3)$ entries (recall that $\mathbb R \setminus [-x_2,x_2]$ is outside of $\gamma$).
	We use \eqref{g1plus} and \eqref{g2min3} for the $(1,2)$ entry, and
	\eqref{g1min} for the $(1,1)$ and $(2,2)$ entries, and we obtain \eqref{Tjump2}
	and \eqref{Tjump3}.
	
	On the real line outside $\gamma$ we obtain from \eqref{Xjump3} and \eqref{Tdef}
	that 
	\begin{equation} \label{Tjumpout}	
		J_T  = \begin{pmatrix} e^{-n(g_{1,+} - g_{1,-})} & 0   & 0 \\
		0 & (-1)^{n+cN} e^{n(g_{1,+}-g_{1,-} - g_{2,+}+g_{2,-})} e^{-2iaNx} & 0 \\
		0 & (-1)^{n+cN} e^{n(V_2 + g_{1,+} - g_{2,+}-g_{2,-})}   & (-1)^{n+cN} e^{n(g_{2,+}-g_{2,-})} e^{2iaNx} 
		\end{pmatrix}
	\end{equation} which is the same as \eqref{Tjumpin}, except that the $(1,2)$ entry is zero.
	Then \eqref{Tjump4} follows from \eqref{Tjumpout} in the same way as \eqref{Tjump3} follows from \eqref{Tjumpin}.
	Outside $(-x_2,x_2)$ we cannot use \eqref{g2min3} to simplify the $(2,2)$ and
	$(3,3)$ entries. However, we do have \eqref{g1min} to simplify the $(1,1)$ and $(2,2)$ entries.
	Finally, we apply \eqref{g2plus} to the $(3,2)$ entry. 
	For $x \geq x_2$, we have $V_2(x) = \frac{t+c}{t} \log |x|$ and $g_{1,-}(x) = g_{1,+}(x)$ 
	and we get from \eqref{g2plus}  
	\[ V_2(x) - g_{2,+}(x) - g_{2,-}(x) + g_{1,+}(x) = 0, \qquad x \geq x_2. \]
	For $x \leq -x_2$, we have $V_2(x) = \frac{t+2c}{t} (\log |x| \pm \pi i)$ and
	$g_{1,+}(x) = g_{1,-}(x) + 2\pi i$, so that by \eqref{g2plus}
	\[ V_2(x) - g_{2,+}(x) - g_{2,-}(x) + g_{1,+}(x) = 
		\pm \tfrac{t+2c}{t} \pi i + \pi i, \qquad x \leq x_2. \]
The asymptotic condition \eqref{Tasymp} follows from \eqref{Xasymp}
	and \eqref{Tdef} if we realize that $e^{ng_1(z)} = z^n (1+O(z^{-1}))$,
	$e^{n g_2(z)} = z^{n+cN}(1+ \mathcal O(z^{-1}))$ as $z \to \infty$.
	The latter property (in particular the $\mathcal O$-term) follows from the
	fact that the density of $\mu_2$ behaves like $\mathcal O(x^{-2})$ as $|x| \to \infty$,
	see \eqref{mu2asymp} below.  
	We also use $ X_\infty(z) \left(I_3 + \mathcal O(z^{-1}) \right) = \left(I_3 + \mathcal O(z^{-1})\right) X_\infty$ 
	as $ z \to \infty$,
	which is indeed valid since $X_\infty(z)$ is uniformly bounded for $z \in \mathbb C \setminus \mathbb R$. As before $T(z)$ is obviously bounded near $\pm x_{\gamma}.$
	\end{proof}

\subsection{Rewriting of the jumps}
We recall the definitions \eqref{S1z}--\eqref{S3z} of $S_{1},S_{2},S_{3}$
with $\nu_j= t \mu_j$ for $j=1,2$.
The point $x_3$ was introduced in the proof of part \ref{item5} of Theorem \ref{thm:mu12} as the unique value in $(x_1,x_2)$ where
$U^{\mu_1} + U^{\mu_2} - \Re V_1 + \Re V_2 = \ell$.

We now introduce the $\phi$ functions, which drastically simplify the entries of the jump matrices of the RH problem \ref{rhproblemT}. This is summarised in Corollary \ref{phicor}.
\begin{definition} The $\phi$-functions are defined as follows.
\begin{enumerate}
    \item [\rm (a)] $\phi_1$ is defined and analytic in $\mathbb C \setminus
	((-\infty, x_1] \cup [x_2, \infty) \cup (-i\infty, -ia] \cup [ia, i \infty))$ with \begin{equation}\label{phi1z}
   \phi_1(z)  = \frac{1}{2t} \int_{x_1}^z (S_2(s) - S_1(s)) ds .
\end{equation}
\item [\rm (b)] $\phi_2$ is defined and analytic in $\mathbb C \setminus
	((-\infty, x_1] \cup [x_2, \infty))$ with  
 \begin{equation}\label{phi2z}
     \phi_2(z)  = \frac{1}{2t} \int_{x_2}^z (S_2(s) - S_3(s)) ds .
 \end{equation}
 \item [\rm (c)] $\phi_3$ is defined and analytic in
	$\mathbb C \setminus ((-\infty,x_1] \cup [x_2, \infty)\cup[-ia,ia])$
	with 
 \begin{equation}\label{phi3z} 
     \phi_3(z)  = \frac{1}{2t} \int_{x_3}^z
	(S_3(s) - S_1(s)) ds. 
 \end{equation}
\end{enumerate}

\end{definition}
\begin{lemma}\leavevmode\newline\vspace{-0.6cm}
\begin{enumerate}
	\item[\rm (a)] The following holds true for $\phi_1$ 
	\begin{align} \label{phi11}
	-2\phi_{1,+} & = 2 \phi_{1,-}  = g_{1,+} - g_{1,-},  \quad \text{ on } [-x_1,x_1], \\
	 \label{phi12}
		- 2\phi_1 & = V_1 + 2 g_{1} - g_{2,+} + \ell - \tfrac{iax}{t}, \quad \text{ on } [x_1,x_2], \\
		\label{phi13}
		-2 \phi_{1,\pm} & = V_1 + 2g_{1,\pm} -g_{2,+} + \ell - \tfrac{iax}{t} \pm 2 \pi i, \quad \text{ on } [-x_2,-x_1]. 
	\end{align}
	\item[\rm (b)] The following holds true for $\phi_2$
	\begin{align} \label{phi21}
		2\phi_{2,+} & = -2 \phi_{2,-}  = g_{2,+} - g_{2,-} + \tfrac{2iax}{t} - \pi i\left(1+\tfrac{c}{t}\right),  \text{ on } [x_2,\infty), \\
		\label{phi22}
		-2\phi_2 & = V_2 + g_{1} - g_{2,+} - g_{2,-}, \quad  \text{ on } [x_1,x_2], \\
		\label{phi23}
		- 2 \phi_{2,\pm} & = V_{2,\pm} + g_{1,\pm} - g_{2,+} - g_{2,-}, \quad \text{ on } [-x_1,x_1], \\
		\label{phi24}
		-2\phi_2 & = V_2 + g_{1} - g_{2,+} - g_{2,-}, \quad  \text{ on } [-x_2,-x_1], \\
		\label{phi25}
		2\phi_{2,+} & = -2 \phi_{2,-}  = g_{2,+} - g_{2,-} + \tfrac{2iax}{t} - \pi i\left(1+ \tfrac{c}{t}\right),  \text{ on }
		(-\infty,-x_2]. 
	\end{align}
	\item[\rm (c)]  $\phi_3$ can be written as follows
	\begin{align} \label{phi31} 
		-2 \phi_3(z) =  V_1(z)-V_2(z) + g_1(z) + g_2(z) + \ell \pm \tfrac{i az}{t} \mp \frac{\pi i}{2}\left(1+ \tfrac{c}{t}\right). 
	\end{align}
	\end{enumerate}
\end{lemma}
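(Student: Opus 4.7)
The plan is to verify each identity by the same two-step procedure: match the derivatives on both sides using the representation of the $S_j$ in terms of $g_j'$ and $V_j'$, and then fix the constant of integration at a point where the $\phi_j$ are known to vanish. The starting observation is that, since $\nu_j = t\mu_j$, the definitions \eqref{S1z}--\eqref{S3z} together with \eqref{V1} and \eqref{V2} give
\begin{align*}
S_1(z) &= t\, g_1'(z) + tV_1'(z), \\
S_2(z) &= -t\, g_1'(z) + t\, g_2'(z) + ia \quad (\Im z > 0), \\
S_3(z) &= -t\, g_2'(z) + tV_2'(z) - ia \quad (\Im z > 0),
\end{align*}
with the sign of $ia$ reversed for $\Im z < 0$. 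Substituting these into $\phi_1'=(S_2-S_1)/(2t)$, $\phi_2'=(S_2-S_3)/(2t)$, and $\phi_3'=(S_3-S_1)/(2t)$, and integrating along a path in the upper half-plane, recovers \eqref{phi12}, \eqref{phi22}, and \eqref{phi31} up to one additive constant each. Those constants are pinned down by the initial conditions $\phi_1(x_1)=0$, $\phi_2(x_2)=0$, $\phi_3(x_3)=0$: the real part of the resulting equation at the endpoint reduces to the equality case of the relevant variational condition from Theorem \ref{thm:mu12}, namely item \ref{item3} at $x_1$, item \ref{item4} at $x_2$, and the defining property $u(x_3)=\ell$ from the proof of item \ref{item5} at $x_3$. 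The imaginary parts are handled by $\Im g_{j,+}(x) = \pi\mu_j((x,\infty))$ together with the explicit mass $\mu_2((x,\infty)) = \frac{1}{2}(1+c/t) - ax/(\pi t)$ for $x \in [-x_2,x_2]$; this is exactly what produces the $\mp\tfrac{\pi i}{2}(1+c/t)$ terms in \eqref{phi31} and the analogous contributions hidden in the constants of \eqref{phi12} and \eqref{phi22}.

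For the identities involving boundary values on a cut, namely \eqref{phi11}, \eqref{phi13}, \eqref{phi21}, \eqref{phi23}, and \eqref{phi25}, I would invoke the sheet relations from Lemma \ref{lemma32}: $S_{1,\pm}=S_{2,\mp}$ on $\Delta_1$ and $S_{2,\pm}=S_{3,\mp}$ on $\Delta_2$. The first yields $\phi_{1,+}'+\phi_{1,-}' = 0$ on $\Delta_1$, and integration from $x_1$ gives $\phi_{1,+}+\phi_{1,-} \equiv 0$; combining with the Sokhotski--Plemelj jump $S_{1,+}-S_{1,-} = -2\pi i\, d\nu_1/dx$ from \eqref{S1pm} then gives $-2\phi_{1,+} = 2\phi_{1,-} = -2\pi i\, \mu_1([x,x_1]) = g_{1,+}-g_{1,-}$, which is \eqref{phi11}. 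The analogous argument on $\Delta_2$ yields \eqref{phi21} and \eqref{phi25}, while \eqref{phi23} is the same idea applied to $\phi_{2,\pm}$ on $[-x_1,x_1]$. The remaining identities \eqref{phi13}, \eqref{phi22}, and \eqref{phi24} are then obtained by restricting the upper-half-plane formula already derived for $\phi_1$ or $\phi_2$ to the real line and applying the jump \eqref{g1min} (which accounts for the $\pm 2\pi i$ in \eqref{phi13}) and the jump \eqref{g2min2} (which allows one to rewrite a single $g_{2,+}$ as $\tfrac{1}{2}(g_{2,+}+g_{2,-})$ plus an explicit linear-in-$x$ correction, producing the symmetric forms in \eqref{phi22} and \eqref{phi24}).

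The only real difficulty is purely bookkeeping: the functions $V_1$ and $V_2$ are multi-valued around $\pm ia$ and $0$, the $g_j$ jump across the supports of $\mu_j$, and the additive constants $\ell$, $\pm 2\pi i$, $\pm \tfrac{\pi i}{2}(1+c/t)$, and $\pm iaz/t$ all have to be propagated consistently through the five distinct regions $[-x_2,-x_1]$, $[-x_1,x_1]$, $[x_1,x_2]$, $\Delta_2$, and the arcs around $\pm ia$. The structure is rigid enough, however, that verifying one identity in each connected region together with the known jumps across the adjacent cuts fixes all the others by analytic continuation, so no single verification is hard --- the work is in organising them.
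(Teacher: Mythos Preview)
Your proposal is correct and follows essentially the same approach as the paper: express $S_j$ in terms of $g_j'$, $V_j'$ and $\pm ia$, match derivatives, and fix the additive constants via $\phi_j(x_j)=0$ together with the variational equalities from Theorem~\ref{thm:mu12} (items \ref{item3}, \ref{item4}) and the condition $u(x_3)=\ell$ for $\phi_3$; the boundary-value identities on the cuts are likewise obtained in the paper from the sheet relations $S_{1,\pm}=S_{2,\mp}$ on $\Delta_1$ and $S_{2,\pm}=S_{3,\mp}$ on $\Delta_2$. One minor slip: in your chain for \eqref{phi11} the intermediate term should be $+2\pi i\,\mu_1([x,x_1])$ rather than $-2\pi i\,\mu_1([x,x_1])$, but your final identity is the right one.
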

\begin{proof}
	(a)
From \eqref{g1min} and \eqref{nu1x} we have
\[ g_{1,+}(x) - g_{1,-}(x)
	= 2\pi i \mu_1([x,x_1])
	= \frac{1}{t} \int_{x}^{x_1} \left(S_{1,-}(s) - S_{1,+}(s) \right) ds.  \]
Then, we use $S_{1,\pm} = S_{2,\mp}$ on $[-x_1,x_1]$ and compare with \eqref{phi1z} to conclude
that \eqref{phi11} holds. For $x \in [x_1,x_2]$ we have by \eqref{phi11}, \eqref{S1z}, and \eqref{S2z},
\begin{align*} 
	- 2 \phi_1'(x)  & = \frac{1}{t} (S_1(x) - S_2(x)) \\
	& = \frac{2cx}{t(x^2+a^2)} + 2 \int \frac{d\mu_1(s)}{x-s} - \left(\int \frac{d\mu_2(s)}{x-s} \right)_+  - \frac{ia}{t}  \end{align*}
which is equal to the derivative of the right-hand side of \eqref{phi12}, see also \eqref{V1}, \eqref{V2}.
Since both sides of \eqref{phi12} are zero at $x_1$, and their derivatives agree on $[x_1,x_2]$,
the identity \eqref{phi12} follows.

\medskip

(b) We compute for $x > x_2$,
\begin{align*} 
	2 \phi_{2,+}(x) & =  \frac{1}{t} \int_{x_2}^x \left( S_{2,+}(s) - S_{3,+}(s)\right) ds \\
	& = \frac{1}{t} \int_{x_2}^x \left( S_{3,-}(s) - S_{3,+}(s)\right) ds \\
	& = \int_{x_2}^x \left( (-g_{2,-}'(s) + \frac{t+2c}{ts} + \frac{ia}{t}) - (-g_{2,+}'(s) + \frac{t+2c}{ts} - \frac{ia}{t}) \right) ds  \\
	& = \left[g_{2,+}(s) - g_{2,-}(s) + \frac{2ias}{t} \right]_{s=x_2}^{s=x}.
\end{align*}
Due to \eqref{g2min2}, the lower limit $s=x_2$ gives the contribution $-\pi i(1+\frac{c}{t})$.
Then \eqref{phi21} follows, from $2\phi_{2,-} = - 2\phi_{2,+}$ on $[x_2,\infty)$.

On $[x_1,x_2]$ we have
\begin{align*} S_3(x) - S_2(x) & =   (- tg_{2,+}'(x) +  tV_2'(x) - ia)  - 
	(-t g_1'(x) + t g_{2,-}'(x) - ia) \\ 
	& = 
	t \left[ V_2'(x) + g_{1}'(x) - g_{2,+}'(x) - g_{2,-}'(x) \right].
		\end{align*}
The right-hand side of \eqref{phi22} vanishes at $x_2$ and,
in view of the above, its derivative for $x \in [x_1,x_2]$
is equal to $\frac{1}{t} (S_3 - S_2)$. Then \eqref{phi22} follows.
The identity \eqref{phi23} follows in a similar way.

\medskip
(c) We compute
\begin{align*}
	-2 t \phi_3'(z) & = S_1(z) - S_3(z) \\
	& = t \int \frac{d\mu_1(s)}{z-s} + t \int \frac{d\mu_2(s)}{z-s} + \frac{2cz}{z^2+a^2} - \frac{t+2c}{z} \pm ia, \\
	& = t g_1'(z) + t g_2'(z) + t V_1'(z) - t V_2'(z) \pm ia.
\end{align*}	
Thus, the two sides of \eqref{phi31} have the same derivative.
The left-hand side vanishes at $z=x_3$.
Because of \eqref{upr2} we have $u(x_3) = \ell$ with
$u$ given by \eqref{upr0} leading to

\[ \ell =  - g_1(x_3) - \frac{1}{2} (g_{2,+}(x_3) + g_{2,-}(x_3))
	- V_1(x_3) + V_2(x_3).  \]
From \eqref{g2min2} and the fact that $x_1 < x_3 < x_2$,
we get
\[ g_{2,+}(x_3) - g_{2,-}(x_3) = \pi i \left(1+\frac{c}{t}\right)
	- \frac{2aix_3}{t}. \] 
Combining all this, we get that the right-hand side 
of \eqref{phi31} 
also vanishes at $z=x_3$, and the identity \eqref{phi31} follows. 
\end{proof}

We use \eqref{phi31} in \eqref{Tjump1},
\eqref{phi11} and \eqref{phi23} in \eqref{Tjump2},
\eqref{phi12}, \eqref{phi13}, \eqref{phi22}, and \eqref{phi24} in
\eqref{Tjump3} and \eqref{Tjump4} (we also use \eqref{g2min2}),
and finally \eqref{phi25} in \eqref{Tjump5} to
find simplified expressions for all the jump matrices for $T$ which is summarised in the following corollary.

\begin{corollary}\label{phicor}
With $\phi_j$ for $j=1,2,3$ defined in \eqref{phi1z}--\eqref{phi3z} the jump matrix $J_T$ in \eqref{Tjump1} can be rewritten as follows.
\begin{align} \label{Tjump1b} 
	J_T & = \begin{pmatrix} 1 & 0 & -  e^{-2n \phi_3} \\
		0 & 1 & 0 \\
		0 & 0 & 1 \end{pmatrix} \quad \text{ on } \gamma, \\		\label{Tjump21b}
	J_T & = \begin{pmatrix} e^{2n \phi_{1,+}} & 1  & 0 \\
		0 & e^{2n \phi_{1,-}} & 0 \\
		0 & (-1)^{n+cN} e^{-2n \phi_{2,+}}  & 1 \end{pmatrix} \quad 
	\text{on } [-x_1,x_1],  \\  \label{Tjump31b}
	J_T & = \begin{pmatrix} 1 & e^{-2n \phi_1} 	& 0 \\
		0 & 1 & 0 \\
		0 & (-1)^{n+cN} e^{-2n \phi_2}   & 1  \end{pmatrix} \quad 
	\text{on } (-x_\gamma,-x_1] \cup [x_1,x_\gamma),  \\  \label{Tjump41b}
	J_T & = \begin{pmatrix} 1 & 0  & 0 \\
		0 & 1 & 0 \\
		0 &  (-1)^{n+cN} e^{-2n \phi_2}   & 1  \end{pmatrix} \quad 
	\text{on } [-x_2,-x_\gamma) \cup (x_{\gamma},x_2], \\ 
	\label{Tjump5b}
	J_T & = \begin{pmatrix} 1 & 0 & 0 \\
		0 & e^{2n \phi_{2,-}} & 0 \\
		0 & (-1)^{n+cN} & e^{2n \phi_{2,+}}  \end{pmatrix} 
	\quad \text{ on } (-\infty, -x_2] \cup  [x_2,\infty).
\end{align} 
\end{corollary}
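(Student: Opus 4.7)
The proof is a direct verification: once the $\phi$-functions have been introduced and the identities \eqref{phi11}--\eqref{phi31} established, the corollary follows by substituting these into the entries of the jump matrices \eqref{Tjump1}--\eqref{Tjump5}. My plan is therefore to go through the five contours of $\Sigma_T$ one by one and check, entry by entry, that the exponents appearing in RH problem \ref{rhproblemT} match (up to an integer multiple of $2\pi i$) the corresponding multiples of $\phi_1$, $\phi_2$, $\phi_3$.

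For the $(1,3)$ entry on $\gamma$, I would simply apply \eqref{phi31}, observing that the two expressions for $-2\phi_3$ valid for $\Im z>0$ and $\Im z<0$ are precisely the exponents appearing in \eqref{Tjump1} on the corresponding arcs, so $\phi_3$ extends across $\gamma$ and the jump takes the single form \eqref{Tjump1b}. For \eqref{Tjump2} on $[-x_1,x_1]$, I would use \eqref{phi11} on the two diagonal entries and \eqref{phi23} on the $(3,2)$ entry. For \eqref{Tjump3} and \eqref{Tjump4} on $[-x_\gamma,x_\gamma]\setminus[-x_1,x_1]$, the $(1,2)$ entry is handled by \eqref{phi12} on the right sub-interval and by \eqref{phi13} on the left, and the $(3,2)$ entry by \eqref{phi22} and \eqref{phi24}. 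Finally, for \eqref{Tjump5} on $(-\infty,-x_2]\cup[x_2,\infty)$, I would apply \eqref{phi21} and \eqref{phi25} to the $(2,2)$ and $(3,3)$ entries.

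The only point that requires actual (but routine) calculation is that the $\phi_j$-identities are written in terms of $g_{2,+}$ alone, while some entries in $J_T$ involve the symmetric combination $\tfrac{1}{2}(g_{2,+}+g_{2,-})$. I would use the jump formula \eqref{g2min2} to interchange these, and check that the resulting extra term $\pm\tfrac{\pi i}{2}(1+c/t)\mp iax/t$ is precisely balanced by the factor $e^{\mp iaNx}$ and by the conjugation constants built into the definition \eqref{Tdef} of $T$, together with $n=tN$ so that $nax/t=aNx$. Likewise, the $\pm 2\pi i$ appearing in \eqref{phi13} on $[-x_2,-x_1]$ contributes a harmless factor $e^{\mp 2\pi i n}=1$, while the $\pi i(1+c/t)$ discrepancies produce factors $e^{\pm \pi i(n+cN)}=(-1)^{n+cN}$; these are absorbed into or cancel against the prefactors $(-1)^{n+cN}$ already present in the third row and column of \eqref{Tdef}.

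I do not anticipate any real obstacle: the heavy lifting has already been done in proving \eqref{phi11}--\eqref{phi31} and in Definition~\ref{XtoT}. The only thing to be careful about is the sign/phase bookkeeping across the real axis, which relies crucially on the integrality of both $n$ and $cN$ to ensure that every $e^{\pi i k (n+cN)}$ that appears is a definite $\pm 1$, and on the jump formulas \eqref{g1min}, \eqref{g2min2} to identify the appropriate branch. Once the entries are matched, the form of the jump matrices in \eqref{Tjump1b}--\eqref{Tjump5b} follows at once.
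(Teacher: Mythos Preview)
Your proposal is correct and follows essentially the same approach as the paper: the authors also obtain the corollary by direct substitution of the identities \eqref{phi11}--\eqref{phi31} into the entries of \eqref{Tjump1}--\eqref{Tjump5}, invoking \eqref{g2min2} as you do to pass between $g_{2,+}$ and $\tfrac{1}{2}(g_{2,+}+g_{2,-})$. Your account of the sign and phase bookkeeping (the $\pm 2\pi i$ and the $(-1)^{n+cN}$ factors) is in fact more explicit than the paper's, which simply lists which $\phi$-identities go into which jump.
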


\subsection{\texorpdfstring{Third transformation $T \mapsto S$}{}}

In the third transformation, we open up lenses around $\Delta_1 = [-x_1,x_1]$ 
and $\Delta_2 = (-\infty, -x_2] \cup [x_2, \infty)$.
We make sure that the unbounded lenses are big enough towards infinity 
so that
they include a sector $|\arg (z-x_2)| \leq \theta$ for some $\theta > 0$
in the right-half plane and a similar sector in the left-half plane.  
There is some freedom in the choice of the lenses, and we 
impose certain restrictions later on.

We use $L_j$ to denote the lens around $\Delta_j$, for $j=1,2$,
and $L_j^{\pm} = L_j \cap \mathbb C^{\pm}$ denotes the part of the lens in
the upper or lower half-plane.

\begin{figure}[t]
	\centering
	\begin{tikzpicture}[scale=4]
		\begin{scope}[very thick,decoration={
				markings,
				mark=at position 0.5 with {\arrow{>}}}
			] 
			\draw [postaction={decorate}](-0.7,-0) ..controls(-0.2,0.3) and (0.2,0.3) .. (0.7,0);
			
			\draw [postaction={decorate}]  (-0.7,-0) ..controls(-0.2,-0.3) and (0.2,-0.3) .. (0.7,0);
			
			\draw [postaction={decorate}]  (-0.7,-0) ..controls(-0.2,-0.0) and (0.2,-0.0) .. (0.7,0);
			
			\draw [postaction={decorate}] (-1.5,0)--(-1,0);
			\draw[postaction={decorate}] (1,0)--(1.5,0);  
			\draw [postaction={decorate}] (-1.5,-0.2)  ..controls (-1.3,-0.15)and (-1.2,-0.1)  ..(-01,-0) ;
			\draw [postaction={decorate}] (-1.5,0.2)  ..controls (-1.3,0.15)and (-1.2,0.1)  ..(-01,0) ;
			\draw [postaction={decorate}] (01,-0) ..controls(1.2,0.1) and (1.3,0.15) .. (1.5,0.2);
			\draw [postaction={decorate}] (01,-0) ..controls(1.2,-0.1) and (1.3,-0.15) .. (1.5,-0.2);
			\draw (-1,0)--(-0.7,0);
			\draw[postaction={decorate}] (0.7,0)--(1,0);
			\draw (0,0.25)  node[above]{$\partial L_{1}$};
			\draw (0,-0.25)  node[below]{$\partial L_{1}$};
			\draw (-1.3,-0.2)  node[below]{$\partial L_{2}$};
			\draw (-1.3,0.2)  node[above]{$\partial L_{2}$};
			\draw (1.3,-0.2)  node[below]{$\partial L_{2}$};
			\draw (1.3,0.2)  node[above]{$\partial L_{2}$};
			\draw (0,0) node[below]{$\Delta_1$};
			\draw (-1.48,0.02) node[below]{$\Delta_2$};
			\draw (1.48,0.02) node[below]{$\Delta_2$};
			
			\filldraw[black] (-0.68,0) circle (0.5pt);
			\draw (-0.7,0) node[left,above]{$-x_1$};
			\filldraw[black] (0.68,0) circle (0.5pt);
			\draw (0.7,0) 	node[left,above]{$x_1$};
			\filldraw[black] (1.02,0) circle (0.5pt);
		    \draw (1,0) node[left,above]{$x_2$};
			\filldraw[black] (-1.02,0) circle (0.5pt);
			\draw (-1.02,0) node[left,above]{$-x_2$};
			
			\filldraw[black] (-0.9,0) circle (0.5pt);
			\draw (-0.94,0) node[left,below]{$-x_\gamma$};
			
			\filldraw[black] (0.9,0) circle (0.5pt);
			\draw (0.9,0) node[left,below]{$x_\gamma$};

			\draw (0,0.8)  node[above]{$\gamma$};
			\coordinate (a) at(0.9,0);
			\coordinate (b) at (0,0.8);
			\coordinate (c) at (-0.9,0);
			\coordinate (d) at (0,-0.8);
			\coordinate (e) at (0.9,0);               
			\path[draw,use Hobby shortcut,closed=true] 
			(a)..(b)..(c)..(d)..(e); 
			\draw [postaction={decorate}] (0,-0.8) node{};
		\end{scope}
	\end{tikzpicture}
	\caption{$\Sigma_S$ consists of the real line, the contour $\gamma$,
	and the lips of the lenses $\partial L_1$, $\partial L_2$ around $\Delta_1$ and $\Delta_2$. \label{fig:SigmaS}}
\end{figure}
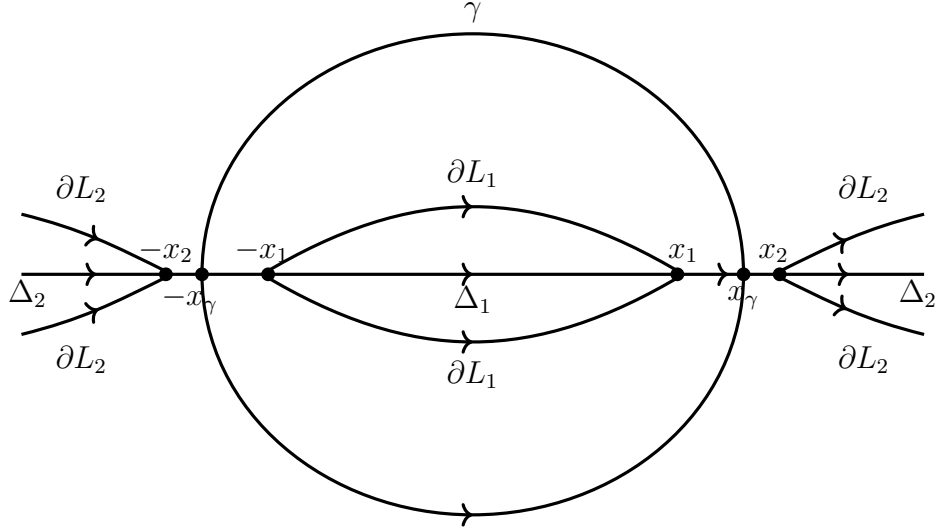

\begin{definition} \label{TtoS}
We define
\begin{equation} \label{Sdef} S = T \times \begin{cases} 
		\begin{pmatrix} 1 & 0 & 0 \\
			\mp e^{2n \phi_1} & 1 & 0 \\
			0 & 0 & 1 \end{pmatrix}, &  \text{ in } L_1^{\pm}, \\
		\begin{pmatrix} 1 & 0 & 0 \\
			0 & 1 & \mp (-1)^{n+cN}  e^{2n \phi_2} \\
			0 & 0 & 1 \end{pmatrix}, &  \text{ in } L_2^{\pm},
		\end{cases} \end{equation}
with $S = T$ outside of the lenses.
 \end{definition}

 \begin{rhproblem}  \label{rhproblemS} $S$ satisfies the following RH problem.
 	\begin{itemize}
 		\item $S : \mathbb C \setminus \Sigma_S 
 		\to \mathbb C^{3\times 3}$
 		is analytic, where $\Sigma_S =\gamma \cup \mathbb R\cup \partial L_1 \cup \partial L_2$
 		with orientation as in Figure \ref{fig:SigmaS}.
 		\item $S_+ = S_- J_S$ on $\Sigma_S$ with  		
 		\begin{align} \label{Sjump1}  
 			J_S & = \begin{pmatrix} 1 & 0 & - e^{-2n \phi_3} \\
 				0 & 1 & 0 \\
 				0 & 0 & 1 \end{pmatrix} \quad \text{ on } \gamma, \\ \label{Sjump2}
 			J_S & = \begin{pmatrix} 0 & 1 & 0  \\ 
 				-1 & 0  & 0 \\
 				(-1)^{n+cN+1} e^{2n(\phi_{1,+}-\phi_{2,+})} & (-1)^{n+cN} e^{-2n \phi_{2,+}}  & 1  \end{pmatrix} \quad 
 			\text{on } (-x_1,x_1), \\  \label{Sjump3}  \displaybreak[0]
 			J_S & = \begin{pmatrix} 1 & e^{-2n \phi_1} 	& 0 \\
 				0 & 1 & 0 \\
 				0 &  (-1)^{n+cN} e^{-2n \phi_2}   & 1  \end{pmatrix} \quad 
 			\text{on } (-x_{\gamma}, -x_1) \cup (x_1,x_\gamma) \\	  \label{Sjump4}
 			J_S & = \begin{pmatrix} 1 & 0  & 0 \\
 				0 & 1 & 0 \\
 				0 &  (-1)^{n+cN} e^{-2n \phi_2}   & 1  \end{pmatrix} \quad
 			\text{on } (-x_2,-x_\gamma)  \cup (x_\gamma, x_2), \\ \label{Sjump5}
 			J_S & = \begin{pmatrix} 1 & 0 & 0 \\
 				0 &  0 & (-1)^{n+cN+1} \\
 				0 & (-1)^{n+cN} & 0   \end{pmatrix} 
 			 \text{ on } (-\infty, -x_2) \cup (x_2,\infty), \\
  \label{Sjump6}
 		J_S & = \begin{pmatrix} 1 & 0 & 0 \\
 				e^{2n \phi_1} & 1 & 0 \\
 				0 & 0 & 1 \end{pmatrix} \quad \text{ on } \partial L_1, \\ \label{Sjump7}
 		J_S & = \begin{pmatrix} 1 & 0 & 0 \\
 				0 & 1 & (-1)^{n+cN} e^{2n \phi_2}  \\
 				0 & 0 & 1 \end{pmatrix} \quad \text{ on } \partial L_2.
 		\end{align}
 		\item $S$ remains bounded near $\pm x_1$, $\pm x_2$,
 		and $\pm x_{\gamma}$.
 		\item As $z \to \infty$,
 		\begin{equation} \label{Sasymp}
 			S(z)  = \left(I_3 + \mathcal O(z^{-1})\right) S_\infty(z),
 		\end{equation}
 		with 
 		\begin{equation} \label{Sasymp2} S_\infty(z) = 
 			\begin{cases}
 			\begin{pmatrix} 1 & 0 & 0 \\ 0 & 0 & -1 \\ 0 & 1 & 0 \end{pmatrix}, & \Im z > 0, \\
 			\begin{pmatrix} 1 & 0 & 0 \\ 0 & (-1)^{n+cN} & 0 \\ 0 & 0 & (-1)^{n+cN} \end{pmatrix}, & \Im z < 0. 
 			\end{cases}  \end{equation}
 	\end{itemize}
 \end{rhproblem}
 \begin{proof}
 	All jumps follow from direct calculations from the definition \eqref{Sdef} and
 	the jumps for $T$ as written in \eqref{Tjump21b}--\eqref{Tjump5b}.
 	The asymptotic condition \eqref{Sasymp}--\eqref{Sasymp2} outside of the lens $L_2$ is immediate from
 	\eqref{Tasymp}--\eqref{Tasymp2} as the non-constant entries in \eqref{Tasymp2} are
 	exponentially small as $z \to \infty$ outside $L_2$.
 	
 	Inside $L_2$ in the upper-half plane, the definition
 	\eqref{Sdef} together with \eqref{Tasymp}-\eqref{Tasymp2} yields
 	\begin{align} S(z) \label{Sasymp3} 
 			& = \left(I_3 + \mathcal{O}(z^{-1}) \right) 
 			\begin{pmatrix} 1 & 0 & 0 \\ 
 				0 & 0 & -1 \\
 				0 & 1 & -  e^{2n \phi_2(z) + (n+cN) \pi i} + e^{2ia Nz} \end{pmatrix}. 
 			\end{align}
 		For $z \in L_2^+$, $\Re z > 0$, we note that by \eqref{phi2z}
 		\begin{equation} \label{phi2zinL} 
 			2 \phi_2(z) = 2 \phi_{2,+}(|z|) + \frac{1}{t} \int_{|z|}^z (S_2(s) - S_3(s)) ds. 
 		\end{equation}
 		Because of \eqref{g2min} and \eqref{phi21} the first term on the right of \eqref{phi2zinL}
 		is 
 		\begin{align*} 
 			2 \phi_{2,+}(|z|) & = \frac{2ia|z|}{t} - \left(1 + \frac{c}{t}\right) \pi i + 2 \pi i \mu_2([|z|,\infty)) \\
 			& = \frac{2ia|z|}{t} - \left(1 + \frac{c}{t}\right) \pi i + \mathcal{O}(|z|^{-1}) \end{align*}
 		as $z \to \infty$, since $\mu_2$ has a density $\frac{d\mu_2}{dx}$ that decays like 
 		$\mathcal O(x^{-2})$. For the second term, we have by \eqref{S123asymp}
 		\begin{align*} \frac{1}{t} \int_{|z|}^z (S_2(s) - S_3(s)) ds &
 			= \frac{1}{t} \int_{|z|}^z \left(2ia + \mathcal O(s^{-2})\right) ds 
 			 = \frac{2iaz}{t} - \frac{2ia|z|}{t}  + \mathcal O(z^{-1}) \end{align*}
 		as $z \to \infty$.  We conclude 
 		\begin{equation}\label{phi2inf}
 		    2 \phi_2(z) + \left(1+ \frac{c}{t}\right) \pi i - \frac{2iaz}{t}  = \mathcal O(z^{-1}).\end{equation}
 		Then, the $(3,3)$ entry of \eqref{Sasymp3} is
 		\[ e^{2ia Nz} \left(1 - e^{n(2 \phi_2(z)+   (1+ \frac{c}{t}) \pi i - \frac{2ia}{t} z)} \right)
 			= \mathcal O(z^{-1}) \]
 		as $z \to \infty$ uniformly within $L_2^+$ in the right-half plane,
 		and \eqref{Sasymp3} reduces to \eqref{Sasymp2} for $z \in L_2^+$, $\Im z >0$.
 		
 		Similar calculations give us \eqref{Sasymp2} in the other parts of the unbounded
 		lens $L_2$.
   Since $T(z)$ and the transformation in \eqref{Sdef} are bounded near $\pm x_1$,
   $\pm x_2$, and $\pm x_{\gamma}$ so is $S(z)$.
  \end{proof}
 
 \subsection{Global parametrix}
 
 The following lemma shows that the non-constant (in $z$) entries in the
 jump matrices \eqref{Sjump1}--\eqref{Sjump7} tend to zero as $n \to \infty$
 at an exponential rate if we stay away from the branch points $\pm x_1$ and $\pm x_2$.
 
 \begin{lemma} \label{lemma612}
 	The following statements hold true
 	\begin{enumerate}
 		\item[\rm (a)] $\Re \phi_3 > 0$ on $\gamma$,
 		\item[\rm (b)] $\Re \phi_{2,+} > 0$ on $(-x_2,x_2)$,
 		\item[\rm (c)] $\Re \phi_{1,+} = 0$ on $[-x_1,x_1]$, and $\Re \phi_1 > 0$ on
 		$[-x_2,x_1) \cup (x_1,x_2]$,
 		\item[\rm (d)] The lens $L_1$ around $\Delta_1$ can be opened in such a way
 		that $\Re \phi_1 < 0$ on $\partial L_1 \setminus \{\pm x_1\}$,
 		\item[\rm (e)] The lens $L_2$ around $\Delta_2$ can be opened in such a way that it contains a sector $|\arg (z-x_{2})|\leq \theta$ for some $\theta>0$ along with
 		 $\Re \phi_2 < 0$ on $\partial L_2 \setminus \{\pm x_2 \}$.
 	\end{enumerate} 
 \end{lemma}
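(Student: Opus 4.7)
The strategy in each part is to re-express $\Re \phi_j$ via the alternative representations \eqref{phi11}--\eqref{phi31} in terms of the logarithmic potentials $U^{\mu_j}$ and the external fields $\Re V_j$, and then invoke the variational conditions of Theorem~\ref{thm:mu12}. For (a), taking the real part of \eqref{phi31} and using $\Re g_j = -U^{\mu_j}$ gives
\begin{equation*}
 2\Re\phi_3(z) = U^{\mu_1}(z) + U^{\mu_2}(z) - \Re V_1(z) + \Re V_2(z) + \tfrac{a}{t}|\Im z| - \ell = u(z) - \ell,
\end{equation*}
with $u$ as in \eqref{upr0}; since $u > \ell$ on $\gamma$ by part \ref{item5}, (a) follows. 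For (b), the real parts of \eqref{phi22}--\eqref{phi24} (with $\Re g_{2,\pm} = -U^{\mu_2}$ and $\Re V_{2,\pm} = \Re V_2$) collapse to $-2\Re\phi_{2,+} = \Re V_2 - U^{\mu_1} + 2U^{\mu_2}$ on $(-x_2,x_2)$, which is strictly negative by part \ref{item4}. For (c), \eqref{phi11} shows $2\phi_{1,+} = g_{1,-}-g_{1,+}$ is purely imaginary on $[-x_1,x_1]$ via \eqref{g1min}, while on $[-x_2,-x_1)\cup(x_1,x_2]$ the real parts of \eqref{phi12}--\eqref{phi13} yield $-2\Re\phi_1 = \Re V_1 - 2U^{\mu_1} + U^{\mu_2} + \ell$, strictly negative by part \ref{item3}.

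For (d), differentiating \eqref{phi11} and applying Sokhotskii--Plemelj with \eqref{nu1x} gives $\phi_{1,+}'(x) = \tfrac{1}{2t}(S_{1,-}(x)-S_{1,+}(x)) = \pi i\,\tfrac{d\mu_1}{dx}(x)$, which is purely imaginary with strictly positive imaginary part on $(-x_1,x_1)$ by part \ref{item1}. Since $\Re\phi_{1,+} \equiv 0$ on this interval, the Cauchy--Riemann equations force $\partial_y\Re\phi_1|_{y=0^+} = -\pi\,\tfrac{d\mu_1}{dx}(x) < 0$, and Schwarz symmetry handles the lower side; hence $\Re\phi_1 < 0$ in a two-sided neighbourhood of $(-x_1,x_1)$. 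The square-root vanishing of $d\mu_1/dx$ at $\pm x_1$ forces $\phi_1(z) \sim C_\pm (z\mp x_1)^{3/2}$ with the sign of $C_\pm$ pinned down by (c), so $\{\Re\phi_1 = 0\}$ emanates from each endpoint along two additional arcs at angles $\pm\pi/3$ to the real axis, and one opens the lens lips inside the complementary sectors $\{\Re\phi_1 < 0\}$. Part (e) near the finite endpoints $\pm x_2$ is analogous: $\phi_{2,\pm}'(x) = \pm\tfrac{\pi i}{t}\,\tfrac{d(\sigma-\mu_2)}{dx}(x)$ on $\Delta_2$ provides the same negative normal derivative by Cauchy--Riemann and an identical $(z\mp x_2)^{3/2}$ endpoint expansion.

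To extend $L_2$ so that it contains a sector $|\arg(z\mp x_2)| \leq \theta$ at infinity, I would use the asymptotic estimate \eqref{phi2inf} together with its Schwarz-symmetric analogue in the lower half plane (valid because $\phi_2$ is real on $(x_1,x_2)$ by \eqref{phi22}) to obtain
\begin{equation*}
 \Re\phi_2(z) = -\tfrac{a}{t}|\Im z| + \mathcal{O}(z^{-1}) \quad \text{as } z\to\infty \text{ off } \mathbb R.
\end{equation*}
Consequently any ray leaving $\pm x_2$ at a small positive angle to the real axis lies, outside a compact set, in $\{\Re\phi_2 < 0\}$, and patching such a ray with the local lip near $\pm x_2$ furnishes the required lens. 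The main technical obstacle is exactly this patching: one must verify that the local $\{\Re\phi_2<0\}$ sectors at the endpoints connect to the asymptotic cone without any zero curve of $\Re\phi_2$ returning to the real axis in between. This follows because $\Re\phi_2 > 0$ on $(-x_2,x_2)$ by (b), while $\Re\phi_{2,\pm} \equiv 0$ on $\Delta_2$ with strictly negative normal derivative, so the component of $\{\Re\phi_2<0\}\cap\mathbb C^+$ containing the asymptotic sector is connected down to $\Delta_2$ and hosts the upper lip; the lower lip is obtained by Schwarz reflection.
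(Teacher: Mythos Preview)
Your proof is correct and follows essentially the same approach as the paper: for (a)--(c) you reduce via \eqref{phi11}--\eqref{phi31} to the variational inequalities of Theorem~\ref{thm:mu12}, and for (d)--(e) you use the Cauchy--Riemann argument for the normal derivative of $\Re\phi_j$ together with the large-$z$ behavior \eqref{phi2inf}. The only minor discrepancy is a stray factor of $1/t$ in your formula $\phi_{2,+}'(x) = \tfrac{\pi i}{t}\,\tfrac{d(\sigma-\mu_2)}{dx}$ (it should be $\pi i\,\tfrac{d(\sigma-\mu_2)}{dx}$, cf.\ the paper's computation of $\partial_x\Im\phi_{2,+}$), and your connectivity discussion in (e) is more elaborate than needed---the paper simply patches the lens at a fixed radius $|z|=R_1$, opening it close to $\Delta_2$ inside and widening it to contain the sector outside.
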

 \begin{proof}
 	(a) follows from \eqref{phi31} and the inequality \eqref{mu12cond6} from item \ref{item5}  in Theorem \ref{thm:mu12}.
 	
 	(b) is a direct consequence of \eqref{phi23} and \eqref{mu12cond5} in item \ref{item4} 
 	of Theorem \ref{thm:mu12}. 
 	
 	(c) By \eqref{phi11} and \eqref{g1min} we have 
 	\begin{equation} \label{phi1onDelta1} 
 		- \phi_{1,+}(x) = \pi i \mu_1([x,x_1]) \in i \mathbb R^+,
 		\quad \text{for } x \in [-x_1,x_1],
 		\end{equation}   
 	and \eqref{phi12} and \eqref{phi13} imply
 	\[ 2 \Re \phi_1 = - \Re \left( V_1 - 2 U^{\mu_1} + U^{\mu_2} + \ell \right) 
 		 \quad \text{on } [-x_2, -x_1] \cup [x_1,x_2] \]  
 	which is positive on $[-x_2,-x_1) \cup (x_1,x_2]$ due to \eqref{mu12cond4}
 	in item \ref{item3}  of Theorem~\ref{thm:mu12}.

 	(d) The identity \eqref{phi1onDelta1} also shows that
 	$\frac{\partial }{\partial x} \Im \phi_{1,+}(x) > 0$ for $x \in (-x_1,x_1)$.
 	The Cauchy-Riemann equations, then, yield
 	\[ \lim_{y \to 0+} \frac{\partial}{\partial y} \Re \phi_1(x+iy) < 0,
 		\quad x \in (-x_1,x_1). \]
 	Since $\Re \phi_{1,+}(x) = 0$, it follows that $\Re \phi_1 < 0$ in some
 	region above
 	the interval $(-x_1,x_1)$ in the complex plane. In a similar way, $\Re \phi_1 $ is strictly negative below the interval. We open up the
 	lens $L_1$ within these regions and (d) holds. 
 
 	(e) 
 	By \eqref{phi21}--\eqref{phi25} and \eqref{g2min} we have
 	\[ 2 \phi_{2,+}(x) = 2 \pi i \mu_2([x,\infty)) + \frac{2iax}{t} - \pi i \left(1+\frac{c}{t}\right), \]
 	for $x \in \Delta_2$. Thus for $x \in (-\infty,-x_2) \cup (x_2, \infty)$
 	\[  \frac{\partial}{\partial x} \Im  \phi_{2,+}(x)
 		=  -\pi \frac{d\mu_2}{dx} + \frac{a}{t} > 0.  \]
 	The strict inequality holds since the density of $\mu_2$ is strictly smaller than $ \frac{a}{\pi t}$ on  
 	  $(-\infty,-x_2) \cup (x_2, \infty)$ by item \ref{item2}  in Theorem \ref{thm:mu12}.
 	Anew by the Cauchy-Riemann equations, similar to the reasoning in the proof of (d), will imply that $\Re \phi_2 < 0$ on $\partial L_2 \setminus \{ \pm x_2 \}$
 	provided the lens is opened close enough to $\Delta_2$. 
  
  Now we ensure the first condition that the lens $\partial L_{2}$ contains a sector $|\arg (z-x_{2})|\leq \theta$ for some $\theta>0$ along with the condition that $\Re \phi_{2}<0$ on $\partial L_{2}$.
  On the upper half plane  we have  $\Re \phi_{2}<0$ as $|z|\to\infty$, due to \eqref{phi2inf}. Hence, for large enough $R_1$ we have,
  \begin{equation}\label{phi2large}
      \Re \phi_{2}<0 \qquad \mathrm{when} \hspace{0.2cm}|z|>R_1.
  \end{equation}
   Our method of choosing the lens $\partial L_{2}$ is as follows. If $|z|<R_1$, we open our lens $\partial L_{2}$ close enough to $\Delta_{2}$ such that $\Re \phi_{2}<0$. For $|z|>R_1$ we deform our lens such that   $ L_{2}$ contains a sector $|\arg (z-x_{2})|\leq \theta$, for some $\theta>0$. $\Re \phi_{2}<0$ is still satisfied on $\partial L_{2}$ due to \eqref{phi2large}.
  The same construction can be carried out in the lower half-plane.
 	 \end{proof}
 
 For the global parametrix, we forget about the non-constant entries in $J_S$.
Therefore we ask for a matrix valued function $M$ satisfying the following.
 
\begin{rhproblem}  \label{rhproblemM}
 	We are looking for $M$ satisfying
 	\begin{itemize}
 		\item $M : \mathbb C  \setminus \Sigma_M \to  \mathbb C^{3\times 3}$
 		is analytic,  		where $\Sigma_M = \Delta_1 \cup \Delta_2$.
 		\item $M_+ = M_- J_M$ on $\Sigma_M$ with 
 		\begin{align} \label{Mjump1} \displaybreak[0]
 			J_M & = \begin{pmatrix} 0 & 1 & 0  \\ 
 				-1 & 0  & 0 \\
 				0 & 0  & 1  \end{pmatrix} \quad 
 			\text{on } \Delta_1 = (-x_1,x_1), \\  \label{Mjump2}
 			J_M & = \begin{pmatrix} 1 & 0 & 0 \\
 				0 &  0 & (-1)^{n+cN+1} \\
 				0 & (-1)^{n+cN} & 0   \end{pmatrix} 
 			\quad \text{ on } \Delta_2 = (-\infty, -x_2) \cup (x_2,\infty).
 		\end{align}
 		\item As $z \to x^*$ with $x^* \in \{ \pm x_1, \pm x_2\}$,
 		\begin{equation} \label{Mnearbranch} 
 			M(z) = \mathcal{O}\left((z-x^*)^{-1/4} \right).
 		\end{equation} 
 		\item As $z \to \infty$,
 		\begin{equation} \label{Masymp}
 			M(z)  = \left(I_3 + \mathcal O(z^{-1})\right) M_\infty(z) 
 		\end{equation}
 		with $M_\infty(z) = S_{\infty}(z)$, see \eqref{Sasymp2}.
 	\end{itemize}
 \end{rhproblem}
 
It turns out that $M$ cannot remain bounded near $\pm x_1$ and $\pm x_2$, as is the
 case for $S$, and the behavior \eqref{Mnearbranch} with exponent $-1/4$ is
 the weakest kind of singularity that will work out as we will see. The notation \eqref{Mnearbranch} should be understood entrywise,
 but actually the entries in the third column of $M$ remain bounded near $\pm x_1$,
 and the entries in the first column remain bounded near $\pm x_2$.
 \begin{figure}[t]
	\centering
	\includegraphics[trim={1cm 0.5cm 1.5 0.7cm}, clip, scale=0.5]{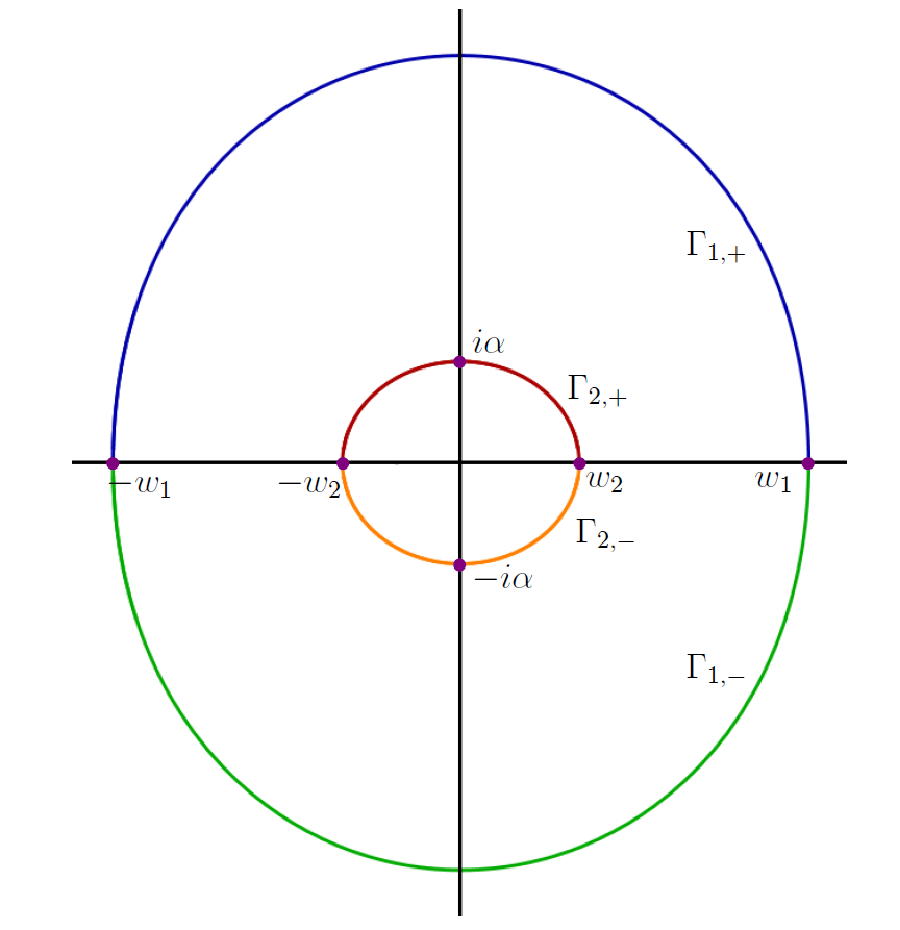}
	\caption{The contours $\Gamma_{1,\pm}$ and  $\Gamma_{2,\pm}$
	\label{cdcontour}}
\end{figure}
 The asymptotic condition \eqref{Masymp} is compatible with the jump \eqref{Mjump2} since
 \[ \begin{pmatrix} 1 & 0 & 0 \\ 0 & 0 & -1 \\ 0 & 1 & 0 \end{pmatrix}
 	= \begin{pmatrix} 1 & 0 & 0 \\ 0 & (-1)^{n+cN} & 0 \\ 0 & 0 & (-1)^{n+cN} \end{pmatrix}
 		\begin{pmatrix} 1 & 0 & 0 \\ 0 & 0 & (-1)^{n+cN+1} \\ 0 & (-1)^{n+cN} & 0 \end{pmatrix}. \]
 	
We find $M$ by means of a conformal map $F : \mathcal R \to \mathbb C \cup \{\infty\}$ to the Riemann sphere, that we take as the inverse of the
rational function $f$ from \eqref{conformalmap},  which by Remark \ref{fconformaltoR} 
is the conformal map from the Riemann sphere to $\mathcal R$.
Let $F_k$ denote the restriction of $F$ to sheet $\mathcal R_k$, for $k=1,2,3$, and
\begin{align*} 
	\Gamma_{1}^{\pm} & = F_{1,\pm}(\Delta_1), \qquad \Gamma_1 = \Gamma_1^+ \cup \Gamma_1^-, \\
	\Gamma_{2}^{\pm} & = F_{3,\pm}(\Delta_2), \qquad \Gamma_2 = \Gamma_2^+ \cup \Gamma_2^-.
	\end{align*}
For $j=1,2$, it holds that $\Gamma_j^+$ is the part of $\Gamma_j$ in the upper half-plane, while $\Gamma_j^-$ is the part in the lower half plane, see Figure \ref{cdcontour}.
 $\Gamma_1$ and $\Gamma_2$ are two simple closed contours in the complex plane,
with $\Gamma_2$ lying inside $\Gamma_1$. The sheet $\mathcal{ R}_1$ is mapped
to the  exterior of $\Gamma_1$, the sheet $\mathcal{R}_2$ is mapped
to the annular region bounded by $\Gamma_1$ and $\Gamma_2$, and $\mathcal{R}_3$
is mapped to the bounded region enclosed by $\Gamma_2$. Let us denote the
endpoints of $\Gamma_j^{\pm}$ by $\pm w_j$ for $j=1,2$, meaning  $w_1 = F_1(x_1) = F_2(x_1)$, 
$w_2 = F_2(x_2) = F_3(x_2)$, $0 < w_2 < w_1$, and
$\Gamma_2$ lies inside $\Gamma_1$. 
Also note that $\pm i \alpha \in \Gamma_2$, since these are
the poles of $f$, see \eqref{conformalmap}. 

\begin{lemma} \label{Mlemma}
	Let  
	\begin{equation} \label{Rwdef} 
		h(w) = \left((w^2-w_1^2)(w^2-w_2^2) \right)^{1/2} \end{equation}
	be defined and analytic on $\mathbb C \setminus (\Gamma_1^+ \cup \Gamma_2^+)$
	in case $n+cN$ is even, and on $\mathbb C \setminus (\Gamma_1^+ \cup \Gamma_2^-)$
	in case $n+cN$ is odd, and in both cases $h(w) \sim w^2$ as $w \to \infty$.  
	Then, the RH problem \ref{rhproblemM} has the solution 
	\begin{equation} \label{Mzdef} 
		M(z) = \begin{pmatrix} M_j(F_k(z)) \end{pmatrix}_{j,k=1,2,3}
	\end{equation}
	with scalar functions $M_j$ for $j=1,2,3$, defined by 
	\begin{align} \label{M1wdef}
		M_1(w) & = \frac{w^2 + \alpha^2}{h(w)}, \\ \label{M2wdef}
		M_2(w) & = -\frac{h_-(i \alpha)}{2i \alpha} \frac{w+i\alpha}{h(w)}, \\ \label{M3wdef}
		M_3(w) & = -\frac{h_+(-i\alpha)}{2i\alpha} \frac{w-i\alpha}{h(w)}. 
	\end{align}
	
	In particular, the $(1,1)$ entry satisfies
	\begin{equation} \label{M11z}
			M_{11}(z) = M_1(F_1(z)) = \left(\rho F_1'(z) \right)^{1/2}, 
			\quad z \in \mathbb C \setminus [-x_1, x_1],
		\end{equation}
	where $\rho > 0$ is such that $\rho F_1'(z) \to 1$ as $z \to \infty$, see also 
	\eqref{conformalmap},
	and the branch of the square root is taken in \eqref{M11z} 
	that is real and positive on $(x_1, \infty)$.	 
\end{lemma}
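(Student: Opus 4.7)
The plan is to realise each row of $M$ as the three-sheeted boundary values of a single meromorphic function on $\mathcal R$, transported to $\mathbb{C}$ via the conformal isomorphism $F=f^{-1}$. Since $\mathcal R$ has genus zero (because $\Delta_1$ is one interval), every meromorphic function on $\mathcal R$ pulls back to a rational function $M_j$ on $\mathbb{C}$, and the three choices \eqref{M1wdef}--\eqref{M3wdef} are tailored so that their zero patterns at $\pm i\alpha$ and at $\infty$, combined with the sign flips of $h$ across the chosen cuts on $\Gamma_1^+$ and $\Gamma_2^{\pm}$, reproduce both the jump structure and the asymptotic normalisation demanded by RH problem~\ref{rhproblemM}.

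The main verification is the jumps. On $\Delta_1$, sheets $\mathcal R_1$ and $\mathcal R_2$ are glued crosswise, which gives $F_{1,\pm}(x)=F_{2,\mp}(x)$ on $\Gamma_1^{\pm}$. Because $h$ has a cut on $\Gamma_1^+$ but not on $\Gamma_1^-$, one obtains the sign-free identity $M_j(F_{1,-}(x))=M_j(F_{2,+}(x))$ on $\Gamma_1^-$ and the sign-reversed identity $M_j(F_{1,+}(x))=-M_j(F_{2,-}(x))$ on $\Gamma_1^+$. Reading column by column yields exactly \eqref{Mjump1}. The argument on $\Delta_2$ is analogous, with the cut of $h$ placed on $\Gamma_2^+$ when $n+cN$ is even and on $\Gamma_2^-$ when $n+cN$ is odd; this is precisely what produces the factors $(-1)^{n+cN}$ and $(-1)^{n+cN+1}$ in \eqref{Mjump2}.

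For the asymptotic condition I would identify the three preimages of $\infty$ under $f$ as $\infty, +i\alpha, -i\alpha$, so that $F_1(\infty)=\infty$ while $F_2$ and $F_3$ send $\infty$ to $\pm i\alpha$ depending on the half-plane. A local expansion $f(w)\sim \kappa/(w\mp i\alpha)$ near $\pm i\alpha$ determines on which side of $\Gamma_2$ each branch arrives, and the constants $-h_-(i\alpha)/(2i\alpha)$ and $-h_+(-i\alpha)/(2i\alpha)$ in \eqref{M2wdef}--\eqref{M3wdef} are chosen exactly so that $M_2(F_k(z))$ and $M_3(F_k(z))$ pick up the limiting values prescribed by $M_\infty$. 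The local estimate \eqref{Mnearbranch} follows from the factorisation $f'(w)=\rho(w^2-w_1^2)(w^2-w_2^2)/(w^2+\alpha^2)^2$ obtained by direct differentiation of \eqref{conformalmap}: near a critical value $\pm x_i$ one has $F_k(z)-(\pm w_i)=\mathcal O((z-(\pm x_i))^{1/2})$, while $1/h$ has a $(w-(\pm w_i))^{-1/2}$ singularity.

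The identity \eqref{M11z} then follows immediately from the same formula for $f'$: one has $\rho F_1'(z)=\rho/f'(F_1(z))=M_1(F_1(z))^2$, and the positive branch of the square root is pinned down by the normalisation $M_1(\infty)=1$. I expect the principal obstacle to be consistent bookkeeping of orientations: tracing precisely which side of $\Gamma_j^{\pm}$ is approached by each $F_{k,\pm}$ from above or below in $\mathbb C$, so that the sign flips of $h$ combine with the crosswise gluing to reproduce every entry of $J_M$ and of $M_\infty$. Once this is fixed by a single local check near, say, $x_1$ and near $i\alpha$, the remaining verifications reduce to a finite number of mechanical evaluations.
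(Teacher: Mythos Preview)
Your proposal is correct and follows essentially the same route as the paper: transport the RH problem to the $w$-plane via $F$, recognise that the jumps \eqref{Mjump1}--\eqref{Mjump2} translate into sign conditions $M_{j,+}=\pm M_{j,-}$ on $\Gamma_1^\pm$ and $\Gamma_2^\pm$ which are realised by $Q_j/h$ with the specified cut structure for $h$, and then fix each polynomial $Q_j$ by imposing the values at $\infty$ and $\pm i\alpha$ dictated by $M_\infty$. The derivation of \eqref{M11z} from $f'(w)=\rho(w^2-w_1^2)(w^2-w_2^2)/(w^2+\alpha^2)^2$ is likewise identical to the paper's.
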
  
\begin{proof}
	Consider the form \eqref{Mzdef} of $M$ with scalar valued functions $M_j$
	that are analytic in $\mathbb C \setminus (\Gamma_1 \cup \Gamma_2)$.
	Given the mapping properties of 
	the conformal map $F$ and its branches $F_k$ for $k=1,2,3$, one sees that
	the jump condition \eqref{Mjump1}  is satisfied if and only if
	(we use counterclockwise orientation on both $\Gamma_1$ and $\Gamma_2$),
	\begin{equation} \label{Mjjump1} 
		M_{j,+} = - M_{j,-} \ \text{ on } \Gamma_1^+,
	\quad M_{j,+} = M_{j,-} \ \text{ on } \Gamma_1^-, \quad j=1,2,3, \end{equation}
	while \eqref{Mjump2} is satisfied if and only if
		\begin{equation} \label{Mjjump2} M_{j,+} = (-1)^{n+cN+1} M_{j,-} \ \text{ on } \Gamma_2^+, 
			\quad
		M_{j,+} = (-1)^{n+cN} M_{j,-} \ \text{ on } \Gamma_2^-, \quad j=1,2,3. \end{equation}
	The asymptotic condition \eqref{Masymp} is satisfied
	if and only if (recall that $\pm i\alpha$ are the images of the points at infinity
	on the second and third sheets of $\mathcal R$) 	
	\begin{align} \label{M1asymp}
		 M_1(\infty) & = 1, & M_{1,\pm}(i\alpha) & = 0,
		 &  M_{1,\pm}(-i\alpha) & = 0, \\ \label{M2asymp}
		 M_2(\infty) & = 0, & M_{2,+}(i\alpha) & = (-1)^{n+cN},
		 &  M_{2,\pm}(-i\alpha) & = 0, \\ \nonumber
		&& M_{2,-}(i\alpha) & = -1, && \\ \label{M3asymp}
		 M_3(\infty) & = 0, & M_{3,\pm}(i\alpha) & = 0,
		 &  M_{3,+}(-i\alpha) & = 1, \\
		 &&&& M_{3,-}(-i\alpha) & = (-1)^{n+cN}. \nonumber
		\end{align}
	Here $M_{j,+}(\pm i\alpha)$ in \eqref{M1asymp}--\eqref{M3asymp} denotes the limiting value of $M_j(w)$ as $w \to \pm  i\alpha$
	from inside of $\Gamma_2$, and $M_{j,-}(\pm i\alpha)$ the limiting value from outside
	of $\Gamma_2$ (which is consistent with the counterclockwise orientation of $\Gamma_2$).
	 	 
	The choice of the branch cut for $h$ in \eqref{Rwdef} is such that
	any function of the form $M_j = \frac{Q_j}{h}$ with a polynomial $Q_j$ 
	will satisfy the required jumps \eqref{Mjjump1} and \eqref{Mjjump2}.
	The polynomials $Q_j$ are then determined so that the asymptotic conditions
	\eqref{M1asymp}--\eqref{M3asymp} hold. From \eqref{M1asymp} we get that
	$Q_1$ is monic of degree $2$
	with zeros in $\pm i\alpha$ and we obtain \eqref{M1wdef}. 
	From \eqref{M2asymp} we get that $Q_2$ has degree one, with a zero in $-i\alpha$.
	Hence $Q_2(w) =  C(w + i\alpha)$ with a constant $C$ that should be such that
	$C \frac{w+i\alpha}{h(w)} \to -1$ as $ w \to i\alpha$  from outside $\Gamma_2$.
	Thus, $C =  - \frac{h_-(i\alpha)}{2i\alpha}$ and \eqref{M2wdef} follows.
	The formula \eqref{M3wdef} follows in a similar fashion.
	
	The construction of $M$ also shows that near the branch points $\pm x_1$,
	$\pm x_2$ one has 
	\begin{equation} \label{Mbranch}
		M(z)  = \begin{cases} \mathcal{O} \begin{pmatrix} 
				(z \mp x_1)^{-1/4} & (z \mp x_1)^{-1/4} & 1 \\
				(z \mp x_1)^{-1/4} & (z \mp x_1)^{-1/4} & 1 \\
				(z \mp x_1)^{-1/4} & (z \mp x_1)^{-1/4} & 1 
			\end{pmatrix}  & \text{as } z \to \pm x_1, \\
			\mathcal{O} \begin{pmatrix} 
				1 &	(z \mp x_2)^{-1/4} & (z \mp x_2)^{-1/4}  \\
				1 & (z \mp x_2)^{-1/4} & (z \mp x_2)^{-1/4}  \\
				1 & (z \mp x_2)^{-1/4} & (z \mp x_2)^{-1/4}  
			\end{pmatrix} & \text{as } z \to \pm x_2, \end{cases} \end{equation}
	with the $\mathcal{O}$ being taken entrywise.
	
	\medskip
	Finally, we prove \eqref{M11z}. The endpoints  $\pm w_1, \pm w_2$ of 
	$\Gamma_1^{\pm}$ and $\Gamma_2^{\pm}$ 
	are the critical points of the inverse map \eqref{conformalmap}, i.e., the
	zeros of its derivative. Because of the form \eqref{conformalmap} we have
	\[ \left(F^{-1}\right)'(w) = f'(w)  = \rho \frac{(w^2-w_1^2)(w^2-w_2^2)}{(w^2 + \alpha^2)^2}. \]
	Then, if $w = F_1(z)$, we have by the rule for the derivative of the inverse function
	\[ \rho F_1'(z) = \frac{(w^2 + \alpha^2)^2}{(w^2-w_1^2)(w^2-w_2^2)} = M_1(w)^2
		= M_1(F_1(z))^2, \]
	see \eqref{M1wdef} and \eqref{Rwdef}.  
	The identity \eqref{M11z} follows.
\end{proof}

\subsection{Local parametrices}

The local parametrices are defined in small neighborhoods of the 
branch points $\pm x_1$ and $\pm x_2$. 
For definiteness we take  
\begin{equation} \label{defD} D =  D(x_1,\delta) \cup D(-x_1,\delta) \cup D(x_2,\delta) \cup D(-x_2,\delta) \end{equation}
as the union of four disks of radius $\delta > 0$, but
we could choose other small neighborhoods.
For $\delta > 0$ sufficiently small, the local parametrix $P$
is defined in $D$ with jump matrices that agree with $J_S$ up to 
a number of exponentially small entries. In addition, $P$ should agree with $M$ 
on the boundary of the disks up to an error $\mathcal O(n^{-1})$
as $n \to \infty$. It leads to the following RH problem for $P$.

\begin{rhproblem} \label{rhproblemP}
	$P$ satisfies the following.
	
	\begin{itemize}
		\item $P : D \setminus \Sigma_P \to \mathbb C^{3 \times 3}$ is analytic,
		where $\Sigma_P = \Sigma_S \cap D$ (recall $\Sigma_S$ from \ref{rhproblemS}).
		\item $P_+ = P_- J_P$ on $\Sigma_P$ with
		\begin{align} \label{JP1}
			J_P  = \begin{cases} \begin{pmatrix} 0 & 1 & 0  \\ 
				-1 & 0  & 0 \\
				0 & 0  & 1  \end{pmatrix}  & \text{ on } [-x_1,-x_1+\delta) \cup (x_1-\delta, x_1], \\
			\begin{pmatrix} 1 & e^{-2n \phi_1} 	& 0 \\
					0 & 1 & 0 \\
					0 &  0  & 1  \end{pmatrix} & \text{ on } [-x_1-\delta,-x_1] \cup [x_1,x_1+\delta), \\
			\begin{pmatrix} 1 & 0 & 0 \\
		e^{2n \phi_1} & 1 & 0 \\
		0 & 0 & 1 \end{pmatrix} & \text{ on } \partial L_1 \cap (D(-x_1,\delta) \cup D(x_1,\delta)),
		\end{cases}
		\end{align}
		\begin{align} \label{JP3}
			J_P = \begin{cases} \begin{pmatrix} 1 & 0 & 0  \\ 
				0 & 0  & (-1)^{n+cN+1} \\
				0 & (-1)^{n+cN} & 0  \end{pmatrix} & \text{ on } (-x_2-\delta,-x_2] \cup [x_2, x_2 + \delta), \\
			\begin{pmatrix} 1 & 0 & 0 \\
				0 & 1 & 0 \\
				0 & (-1)^{n+cN} e^{-2n \phi_2} & 1  \end{pmatrix} &  
				\text{ on } [-x_2,-x_2+\delta) \cup (x_2-\delta,x_2], \\
			\begin{pmatrix} 1 & 0 & 0 \\
				0 & 1 & (-1)^{n+cN} 	e^{2n \phi_2} \\ 
				0 & 0 & 1 \end{pmatrix} & \text{ on } \partial L_2 \cap (D(-x_2,\delta) \cup  D(x_2,\delta)),
				\end{cases}
		\end{align}
	\item $P$ matches with the global parametrix $M$ in the sense that
	\begin{equation} \label{Pmatching}
		P(z) = \left(I_3 + \mathcal O(n^{-1}) \right) M(z) \quad \text{ as } n \to \infty, \end{equation}
	uniformly for $z \in \partial D$.
	\end{itemize}
\end{rhproblem}
   The construction of $P$ is done in each disk separately. In each disk, we have nontrivial jumps in a $2 \times 2$ block, only. These jumps can be mapped to the jumps in the usual Airy parametrix, and  the
   RH problem for $P$ can be solved with Airy functions. For the matching condition
   \eqref{Pmatching} it is important that  the functions $\phi_j$ for $j=1,2$, vanish at $x_j$ with an exponent $3/2$, and their $2/3$ powers are conformal maps in the corresponding disk 
   (provided $\delta > 0$ is small   enough.)
   At $- x_j$ it will be convenient to replace $\phi_j$ by
   \begin{align} \label{phitilde1z}
   	\widetilde{\phi}_1(z) & = \frac{1}{2t} \int_{-x_1}^z (S_2(s) - S_1(s)) ds, \\
   	\widetilde{\phi}_2(z) & = \frac{1}{2t} \int_{-x_2}^z (S_2(s) - S_3(s)) ds.
   \end{align} 	 
   Then, it is $\widetilde{\phi}_j = \phi_j \pm 2 \pi i$, and changing $\phi_j$ to $\widetilde{\phi}_j$
   does not change any of the jumps. However, $\widetilde{\phi}_j$ does vanish at $-x_j$ with exponent $3/2$,
   and its $2/3$ power gives a conformal map.
    
   The lenses will be opened such that the boundaries of the lenses
   are mapped by the conformal map to rays with argument either $\pm 2 \pi /3$ (in case of $x_1$ and $-x_{2}$)
   or $\pm \pi/3$ (in case of $x_2$ and $-x_{1}$).

	The standard Airy parametrix is given by ($\Ai$ denotes the Airy function and $\omega = 2 \pi i/3$)
\begin{equation} \label{airyparametrix} 
	A(z)= \sqrt{2\pi} \times \begin{cases} \begin{pmatrix}
		\Ai(z)&-\omega^{2}\Ai(\omega^{2}z)\\
		-i\mathrm{Ai'}(z) &i\omega \Ai'(\omega^{2}z)    \end{pmatrix}, 
		&  0 < \arg z <2\pi/3, \\
		\begin{pmatrix}
		-\omega\Ai(\omega z)&-\omega^{2}\Ai(\omega^{2}z)\\
		i\omega^{2}\Ai'(\omega z) &i\omega \Ai'(\omega^{2}z)    \end{pmatrix},
		&  2\pi/3< \arg z <\pi, \\
		 \begin{pmatrix}
		-\omega^{2}\Ai(\omega^{2} z)&\omega\Ai(\omega z)\\
		i\omega\Ai'(\omega^{2} z) &-i\omega^{2} \Ai'(\omega z) \end{pmatrix},
		& -\pi< \arg z <-2\pi/3, \\
	\begin{pmatrix}
		\Ai(z)&\omega\Ai(\omega z)\\
		-i\Ai'(z) &-i\omega^{2} \Ai'(\omega z) \end{pmatrix},
		&  -2\pi/3< \arg z<0. \end{cases}
\end{equation}
The prefactor $\sqrt{2\pi}$ is not essential, but we include it in the definition since in this way $\det A \equiv 1$.
The real line is oriented from left to right, and the two rays $\arg z = \pm 2\pi/3$
are oriented towards the origin. This gives  $A_+ = A_- J_A$ with piecewise
constant jump matrices
\begin{equation} \label{jumpsJA}
	 J_A = \begin{cases} 
	 	\begin{pmatrix} 1 & 1 \\ 0 & 1 \end{pmatrix}, &  \arg z = 0, \\
	 	\begin{pmatrix} 0 & 1 \\ -1 & 0 \end{pmatrix}, &  \arg z = \pi, \\
	 	\begin{pmatrix} 1 & 0 \\ 1 & 1 \end{pmatrix}, & \arg z = \pm 2\pi/3. \end{cases} 	 
\end{equation}

In the following lemma, we use $	\sigma_1 = \begin{pmatrix} 0 & 1 \\ 1 & 0 \end{pmatrix}$ and
$\sigma_3 = \begin{pmatrix} 1 & 0 \\ 0 & -1 \end{pmatrix}$ the first and the third Pauli matrix.

\begin{lemma}
	For $z \in D$, let
	\[ A_n(z) = \begin{cases}  A\left( \frac{2}{3} n^{2/3} \phi_1(z)^{3/2}\right),
		&  z \in D(x_1,\delta), \\
		\sigma_3 A\left( \frac{2}{3} n^{2/3} \widetilde{\phi}_1(z)^{3/2}\right) \sigma_3,
		& z \in D(-x_1,\delta), \\
		\sigma_3^{n+cN-1} \sigma_1 A\left( \frac{2}{3} n^{2/3} \phi_2(z)^{3/2}\right)
		\sigma_1 \sigma_3^{n+cN-1},
		& z \in D(x_2,\delta), \\
		\sigma_3^{n+cN}  \sigma_1 A\left( \frac{2}{3} n^{2/3} \widetilde{\phi}_2(z)^{3/2}\right) 
		\sigma_1 \sigma_3^{n+cN},
		& z \in D(-x_2,\delta). 		
		\end{cases} \]  
	
	Then, there is an analytic prefactor $E_n : D \to \mathbb C^{3 \times 3}$ such that
	\begin{align} \label{Pdef} P(z) =  E_n(z) \times \begin{cases}
		\begin{pmatrix} A_{n}(z)  &  \begin{matrix} 0  \\ 0 \end{matrix} \\
			\begin{matrix} 0 & 0 \end{matrix} & 1 \end{pmatrix}
		\begin{pmatrix} e^{n \phi_1(z)} & 0 & 0 \\ 0 & e^{-n \phi_1(z)} & 0 \\
			0 & 0 & 1 \end{pmatrix}, & z \in D(x_1,\delta), \\
		\begin{pmatrix} A_{n}(z)  &  \begin{matrix} 0  \\ 0 \end{matrix} \\
			\begin{matrix} 0 & 0 \end{matrix} & 1 \end{pmatrix}
		\begin{pmatrix} e^{n \widetilde{\phi}_1(z)} & 0 & 0 \\ 0 & e^{-n \widetilde{\phi}_1(z)} & 0 \\
			0 & 0 & 1 \end{pmatrix}, & z \in D(-x_1,\delta), \\
		\begin{pmatrix}  1  &  \begin{matrix} 0  & 0 \end{matrix} \\
				\begin{matrix} 0 \\ 0 \end{matrix} & A_{n}(z) \end{pmatrix}
			\begin{pmatrix} 1 & 0 & 0 \\ 0 & e^{-n \phi_2(z)} & 0  \\ 0 & 0 & e^{n \phi_2(z)} 
			\end{pmatrix}, & z \in D(x_2,\delta), \\
		\begin{pmatrix}  1  &  \begin{matrix} 0  & 0 \end{matrix} \\
			\begin{matrix} 0 \\ 0 \end{matrix} & A_{n}(z) \end{pmatrix}
		\begin{pmatrix} 1 & 0 & 0 \\ 0 & e^{-n \widetilde{\phi}_2(z)} & 0  \\ 0 & 0 & e^{n \widetilde{\phi}_2(z)} 
		\end{pmatrix}, & z \in D(-x_2,\delta).
			\end{cases}
	\end{align}
	satisfies the RH problem \ref{rhproblemP}.
	\end{lemma}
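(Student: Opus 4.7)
The plan is to build $P$ disk-by-disk using the standard Airy parametrix construction \cite{DKMVZ99,KMVV04}, exploiting the fact that in each disk the jump matrices in RH problem \ref{rhproblemP} are nontrivial only in a $2\times 2$ block embedded in the natural way inside a $3\times 3$ matrix. I will treat the disk $D(x_1,\delta)$ in detail; the other three disks are analogous after conjugating by $\sigma_1$ and/or $\sigma_3$ to swap the rows/columns and absorb the sign $(-1)^{n+cN}$ appearing in $J_M$ on $\Delta_2$.

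The first step is to show that $\zeta_1(z)=\tfrac{2}{3}\phi_1(z)^{3/2}$ is a conformal map from $D(x_1,\delta)$ onto a neighborhood of $0$, provided $\delta>0$ is small enough. This follows from the square-root vanishing of the density of $\mu_1$ at $x_1$ stated in item \ref{item1} of Theorem \ref{thm:mu12}: from \eqref{phi11} and the Sokhotski–Plemelj formula one gets $\phi_1(z)=c_1(z-x_1)^{3/2}(1+\mathcal O(z-x_1))$ with $c_1\ne 0$, hence $\zeta_1$ is conformal. By the freedom in opening the lens $\partial L_1$ (Lemma \ref{lemma612}(d)) I will choose the lens so that $\zeta_1$ maps $\partial L_1\cap D(x_1,\delta)$ onto the rays $\arg\zeta=\pm 2\pi/3$ and maps $[x_1-\delta,x_1]\cup[x_1,x_1+\delta]$ onto $\arg\zeta=\pi$ and $\arg\zeta=0$ respectively. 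Under this identification the piecewise constant jumps \eqref{jumpsJA} of the Airy parametrix $A$ transform, after the conjugation $A_n(z)\,e^{n\phi_1(z)\sigma_3}$, exactly into the $2\times 2$ upper-left block of the jumps \eqref{JP1}, because $\phi_{1,+}+\phi_{1,-}=0$ on $\Delta_1$ by \eqref{phi11}. The analogous statements in the other three disks use $\widetilde\phi_1$, $\phi_2$, $\widetilde\phi_2$ in place of $\phi_1$, together with the conjugations by $\sigma_1$ and $\sigma_3^{n+cN}$ in \eqref{Pdef} to account for the orientation of the lens and the sign in $J_M$ on $\Delta_2$.

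For any analytic prefactor $E_n:D\to\mathbb C^{3\times 3}$ the formula \eqref{Pdef} then satisfies the correct jumps, so it remains to choose $E_n$ so that the matching condition \eqref{Pmatching} holds on $\partial D$. Using the classical large-argument expansion of the Airy function one has
\[
 A(\zeta) = \zeta^{-\sigma_3/4}\,\tfrac{1}{\sqrt{2}}\begin{pmatrix} 1 & i \\ i & 1 \end{pmatrix}\bigl(I_2+\mathcal O(\zeta^{-3/2})\bigr)\,e^{-\tfrac{2}{3}\zeta^{3/2}\sigma_3},
\]
so on $\partial D(x_1,\delta)$ the exponential factor $e^{-\tfrac{2}{3}(\tfrac{2}{3}n^{2/3}\phi_1^{3/2})^{3/2}\sigma_3}$ cancels the conjugation $e^{n\phi_1\sigma_3}$ in \eqref{Pdef}. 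The matching forces
\[
 E_n(z) = M(z)\begin{pmatrix} \tfrac{1}{\sqrt{2}}\begin{pmatrix} 1 & -i \\ -i & 1 \end{pmatrix}\bigl(n^{2/3}\phi_1(z)^{3/2}\bigr)^{\sigma_3/6}\cdot n^{-\sigma_3/6}\!\!\!\!\! & \begin{matrix} 0 \\ 0 \end{matrix} \\[2pt] \begin{matrix} 0 & 0 \end{matrix} & 1 \end{pmatrix}
\]
on $D(x_1,\delta)$, with analogous expressions on the other disks; the factor $n^{-\sigma_3/6}$ is absorbed into the global normalization. The key verification is that this prescription defines a genuinely analytic function $E_n$ on each disk. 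Across the real segment inside the disk, the jump of the fractional power $(\phi_1^{3/2})^{\sigma_3/6}=\phi_1^{\sigma_3/4}$ is exactly compensated by the jump \eqref{Mjump1} of $M$ (one checks that the $(1,2)$-swap $\begin{pmatrix}0&1\\-1&0\end{pmatrix}$ intertwines the two branches of $\zeta^{\sigma_3/4}$ via the middle factor $\tfrac{1}{\sqrt{2}}\begin{pmatrix}1&-i\\-i&1\end{pmatrix}$). The resulting $E_n$ has at worst a fourth-root singularity at $x_1$, but combined with the compensating fourth-root singularity of $M$ from \eqref{Mnearbranch}, the product is bounded, and so Riemann's removable singularity theorem guarantees analyticity at $x_1$ itself.

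The remaining task is purely computational: insert the Airy asymptotic expansion to obtain \eqref{Pmatching} with error term $\mathcal O(n^{-1})$ uniform on $\partial D$. The only point requiring genuine care is the analyticity of $E_n$ across the branch cut and the removability of the apparent $(z-x^*)^{-1/4}$ singularity at the branch point, which is the main obstacle and is handled as sketched above. The constructions in the disks $D(-x_1,\delta)$, $D(\pm x_2,\delta)$ are parallel, with the $\sigma_3$ and $\sigma_1$ conjugations in \eqref{Pdef} chosen precisely so that the corresponding local conformal maps have argument zero along the parts of the real line pointing into $\Delta_j$ and so that the sign $(-1)^{n+cN}$ in the jump \eqref{JP3} arising from $J_M$ is absorbed; the matching condition then determines $E_n$ in each disk by the same procedure.
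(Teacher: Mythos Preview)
Your approach is the standard Airy-parametrix construction and coincides with the paper's proof: verify that the local coordinate built from $\phi_j$ (resp.\ $\widetilde\phi_j$) is conformal, check that the embedded $2\times2$ Airy jumps reproduce \eqref{JP1}--\eqref{JP3} after the diagonal conjugation, and then fix $E_n$ by the large-argument Airy asymptotics together with \eqref{Mnearbranch} to ensure analyticity and the matching \eqref{Pmatching}. The paper in fact spells out the $D(x_2,\delta)$ case (orientation reversal and $\sigma_1,\sigma_3^{n+cN-1}$ conjugations) rather than $D(x_1,\delta)$, but the content is the same.

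One computational slip: in your displayed formula for $E_n$ the exponent should be $\sigma_3/4$ applied to the full argument of $A$, not $\sigma_3/6$, and there is no leftover factor $n^{-\sigma_3/6}$ to ``absorb''; compare the paper's explicit prefactor \eqref{Enx1}, which is
\[
E_n(z)=M(z)\begin{pmatrix}\tfrac{1}{\sqrt 2}&\tfrac{i}{\sqrt 2}&0\\ \tfrac{i}{\sqrt 2}&\tfrac{1}{\sqrt 2}&0\\ 0&0&1\end{pmatrix}
\begin{pmatrix}\bigl(\tfrac{2}{3}n^{2/3}\phi_1(z)^{3/2}\bigr)^{1/4}&0&0\\ 0&\bigl(\tfrac{2}{3}n^{2/3}\phi_1(z)^{3/2}\bigr)^{-1/4}&0\\ 0&0&1\end{pmatrix}.
\]
With this correction your analyticity argument (jump of $M$ cancels jump of the fractional power, and the combined $(z-x_1)^{-1/4}$ singularities cancel) goes through exactly as you describe.
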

\begin{proof} 
	$z \mapsto \frac{2}{3} \phi_1(z)^{3/2}$ is a conformal map on $D(x_1,\delta)$ that
	is real and increasing on $(x_1-\delta, x_1+\delta)$, mapping $x_1$ to $0$, and
	 $\partial L_1 \cap D(x_1, \delta)$ into the rays $ \arg z = \pm 2\pi/3$.
	Inside $D(x_1,\delta)$, $A_n(z)$ has the jumps \eqref{jumpsJA} but 
	on $(-x_1-\delta, x_1+\delta)$ and $\partial L_1 \cap D(x_1,\delta)$. 
	Because of the extra diagonal factor
	in the definition \eqref{Pdef} of $P$ inside $D(x_1,\delta)$, we get the jumps \eqref{JP1}.
	On $(x_1-\delta, x_1]$ we also  use $\phi_{1,+} = -\phi_{1,-}$,  see \eqref{phi11}.
	The prefactor $E_n$ has no influence on the jumps but it should be chosen such
	that the matching condition \eqref{Pmatching}  holds. It will 
	take the form
	\begin{equation} \label{Enx1} E_n(z) = M(z) \begin{pmatrix} \frac{1}{\sqrt{2}} & \frac{i}{\sqrt{2}} & 0 \\
		\frac{i}{\sqrt{2}} & \frac{1}{\sqrt{2}} & 0 \\
		0 & 0 & 1 \end{pmatrix}
		\begin{pmatrix}  \left( \frac{2}{3} n^{2/3} \phi_1(z)^{3/2} \right)^{1/4} &
			0 & 0 \\
			0 & \left( \frac{2}{3} n^{2/3} \phi_1(z)^{3/2} \right)^{-1/4} & 0 \\
			0 & 0 & 1 \end{pmatrix},   
	\end{equation}
	for $z \in D(x_1,\delta)$, which is based on the asymptotics of the Airy parametrix
	at infinity. The fact that $E_n$ is analytic is also based on the
behavior \eqref{Mbranch} of $M$ near $x_1$.
	
	The reasoning for the other disks is similar, except that one has to take care
of a possible reversal of the orientation (in the case of $D(-x_1,\delta)$ and $D(x_2,\delta)$)
	and a possible opposite triangularity (in case of $D(\pm x_2,\delta)$)
	of the triangular jump matrices. 
	
	For example, inside $D(x_2,\delta)$ the mapping $z \mapsto \frac{2}{3} \phi_2(z)^{3/2}$
is conformal, but is real and decreasing on $(x_2-\delta, x_2+\delta)$. It implies
	that $A_n$ has the jump matrices $\sigma_3^{n+cN-1} \sigma_1 J_A^{-1} \sigma_1 \sigma_3^{n+cN-1}$ 
	with matrices $J_A$ as in \eqref{jumpsJA} (the inverse is there because of the reversed orientation and the conjugation by $\sigma_1$ gives the opposite triangularity).
	Yet distributed  over the contours inside $D(x_2,\delta)$ as follows
	\begin{align*} 
		\sigma_3^{n+cN-1} \sigma_1 \begin{pmatrix} 1 & 1 \\ 0 & 1 \end{pmatrix}^{-1} \sigma_1 \sigma_3^{n+cN-1}
	   & = \begin{pmatrix} 1 & 0 \\ (-1)^{n+cN} & 1 \end{pmatrix}  
		  	\text{ on } (x_2-\delta, x_2], \\
		\sigma_3^{n+cN-1} \sigma_1 \begin{pmatrix} 0 & 1 \\ -1 & 0 \end{pmatrix}^{-1} \sigma_1 \sigma_3^{n+cN-1} & =
		\begin{pmatrix} 0 & (-1)^{n+cN+1} \\ (-1)^{n+cN} & 0 \end{pmatrix} 
		\text{ on } [x_2,x+2+\delta), \\
		\sigma_3^{n+cN-1} \sigma_1 \begin{pmatrix} 1 & 0 \\ 1 & 1 \end{pmatrix}^{-1} \sigma_1 \sigma_3^{n+cN-1} & =
		\begin{pmatrix} 1 & (-1)^{n+cN} \\ 0 & 1 \end{pmatrix} 
		\text{ on } \partial L_2 \cap D(x_2,\delta).
	  \end{align*}
	  The diagonal factor in the definition \eqref{Pdef} of $P$ inside $D(x_2,\delta)$ has the functions $e^{\pm n \phi_2}$.
	  In combination with the above and the fact that $\phi_{2,+} = - \phi_{2,-}$ on
	  $[x_2,x_2+\delta)$ we then get the required jump matrices 
	  \eqref{JP3} for $P$ inside the disk $D(x_2,\delta)$.
	  An appropriate prefactor $E_n$, similar to the one \eqref{Enx1} that we had in $D(x_1,\delta)$,
	  will also provide the matching \eqref{Pmatching} on the circle 
	  $\partial D(x_2,\delta)$.
  For the disc $D(x_{2},\delta)$ due to opposite triangularity, i.e., $E_{n}(z)$ takes the form
  \begin{multline} \label{Enx2} 
  	E_n(z) = M(z) \begin{pmatrix}  1  &  \begin{matrix} 0  & 0 \end{matrix} \\
			\begin{matrix} 0 \\ 0 \end{matrix} & \sigma_{3}^{n+cN-1} \end{pmatrix}\begin{pmatrix}1&0&0\\
   0&\frac{1}{\sqrt{2}} & \frac{i}{\sqrt{2}}  \\
		0&\frac{i}{\sqrt{2}} & \frac{1}{\sqrt{2}}  \\
		 \end{pmatrix}\\ \times
		\begin{pmatrix}1&0&0\\
  0&\left( \frac{2}{3} n^{2/3} \phi_{2}(z)^{3/2} \right)^{-1/4} &0 \\
			0&0 & \left( \frac{2}{3} n^{2/3} \phi_2(z)^{3/2} \right)^{1/4} 
			\end{pmatrix}\begin{pmatrix}  1  &  \begin{matrix} 0  & 0 \end{matrix} \\
			\begin{matrix} 0 \\ 0 \end{matrix} & \sigma_{3}^{n+cN-1} \end{pmatrix}.
   \end{multline}
 The proof for the other two disks $D(-x_1,\delta)$ and $D(-x_2,\delta)$ follows by symmetry.  	
\end{proof}

\subsection{Final transformation}

Having $M$ which is close to $S$ outside $D$ and $P$ which is close to $S$ on $D$, we are ready for the final transformation $S \mapsto R$. In the end we will show $R$ is close  to the Identity matrix in the sense of Corollary \ref{approxid}.

\begin{figure}
    \centering
   \begin{tikzpicture}[scale=4]
         \begin{scope}[very thick,decoration={
    markings,
    mark=at position 0.5 with {\arrow{>}}}
    ] 
   
   \draw [postaction={decorate}] (-0.6,0.05)..controls(-0.2,0.3) and (0.2,0.3) .. (0.6,0.05);

    \draw[postaction={decorate}]  (-0.6,-0.05) ..controls(-0.2,-0.3) and (0.2,-0.3) .. (0.6,-0.05);

  \draw [postaction={decorate}]  (-1.5,-0.25) ..controls (-1.45,-0.24)and (-1.2,-0.19)  .. (-01.030,-0.06);
       \draw  [postaction={decorate}]  (-1.5,0.25) ..controls (-1.45,0.24)and (-1.2,0.19)  .. (-01.030,0.06);
      \draw   [postaction={decorate}](01.03,0.06) ..controls(1.2,0.19) and (1.45,0.24) .. (1.5,0.25);
       
      \draw  [postaction={decorate}] (1.03,-0.06) ..controls(1.2,-0.19) and (1.45,-0.24) .. (1.5,-0.25);
    \draw [postaction={decorate}] (-0.93,0)--(0.93,0);
        
         \draw (0,0.25)  node[above]{$\partial L_{1}$};
             \draw (0,-0.25)  node[below]{$\partial L_{1}$};
                    \draw (-1.3,-0.22)  node[below]{$\partial L_{2}$};
                     \draw (-1.3,0.22)  node[above]{$\partial L_{2}$};
                      \draw (1.3,-0.22)  node[below]{$\partial L_{2}$};
                       \draw (1.3,0.22)  node[above]{$\partial L_{2}$};
          \draw(-0.65,0) circle (2pt);
          \draw  (0.65,0) circle (2pt);
          \draw (1,0,0) circle (2pt);
          \draw (-1,0,0) circle (2pt);
          \draw (0,0.8)  node[above]{$\gamma$};
     \coordinate (a) at (0.85,0);
\coordinate (b) at (0,0.8);
\coordinate (c) at (-0.85,0);
\coordinate (d) at (0,-0.8);
\coordinate (e) at (0.85,0);               
\path[draw,use Hobby shortcut,closed=true]  (a)..(b)..(c)..(d)..(e); 
\draw [postaction={decorate}] (0,-0.8) node{};
\draw [postaction={decorate}] (0.67,0.069) node{};
\draw [postaction={decorate}] (-0.63,0.069) node{};
 \draw [postaction={decorate}] (-0.98,0.069) node{};         
 \draw [postaction={decorate}] (1.02,0.069) node{};
      \end{scope}
\end{tikzpicture}

	\caption{Contour $\Sigma_R$ for the RH problem for $R$. All jump matrices 
		are close to the identity matrix as $n \to \infty$.
		\label{fig:SigmaR}} 
\end{figure}
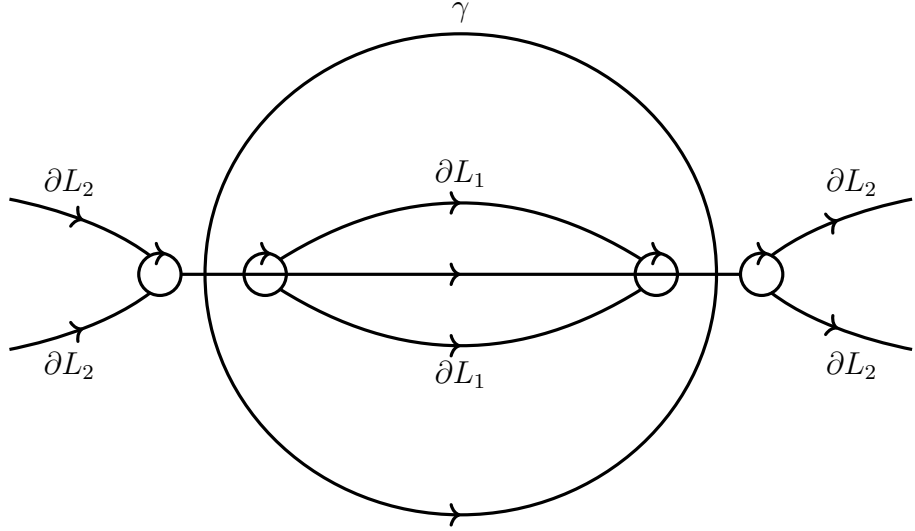

\begin{definition} \label{StoR}
	We define $R : \mathbb C \setminus (\Sigma_S  \cup \partial D) \to \mathbb C^{3\times 3}$
	as follows
\begin{equation} \label{defR}
	R(z)=\begin{cases}
		S(z)M(z)^{-1}, & z \in \left(\mathbb C \setminus \overline{D} \right) 
		\setminus \Sigma_S, \\
		S(z)P(z)^{-1}, & z \in  D \setminus \Sigma_S.
	\end{cases}
\end{equation}
\end{definition}

Since $M$ and $S$ have the same jump on $\Delta_2$, and $P$ and $S$ have
the same jumps inside the disks $D(\pm x_2, \delta)$ and on the 
parts of $\partial L_1$ inside $D(\pm x_1, \delta)$, it is immediate
that $R$ has analytic continuation to $\mathbb C \setminus \Sigma_R$,
where
\[ \Sigma_R = \gamma \cup \left( \partial L_1 \cup \partial L_2 \right) \setminus D
	\cup [-x_2 + \delta, x_2-\delta] \cup \partial D, \]
with orientation as in Figure \ref{fig:SigmaR}. The circles
are oriented clockwise.

\begin{rhproblem} \label{rhproblemR}
	$R$ satisfies the following RH problem
	\begin{itemize}
		\item $R : \mathbb C \setminus \Sigma_R \to \mathbb C^{3 \times 3}$ is analytic.
		\item $R_+ = R_- J_R$ on $\Sigma_R$ with
		\begin{align} \label{Rjump}
			J_R   = \begin{cases}  M J_S M^{-1}  &
				\text{ on } \gamma \cup (\partial L_1 \cup \partial L_2) \setminus D, \\
			M_- J_S M_+^{-1} & \text{ on } (-x_2,x_2) \setminus D, \\ 
			P_- J_S P_+^{-1} & \text{ on } (-x_1-\delta,-x_1+\delta)
				\cup (x_1-\delta,x_1+\delta), \\
			 P M^{-1} & \text{ on } \partial D, \end{cases} 
		\end{align}
		where $J_S$ is given in \eqref{Sjump1}--\eqref{Sjump7}.
		\item $R$ remains bounded near all points of self-intersection of $\Sigma_R$.
		\item $R(z) = I_3 + \mathcal{O}(z^{-1})$ as $z \to \infty$.
	\end{itemize}
\end{rhproblem}

\begin{proof}	
	The jumps of $R$ across the various pieces of $\Sigma_R$ are immediate 
	from the definitions. 
	The asymptotic condition follows from \eqref{Sasymp}, \eqref{Masymp} and the definition \eqref{defR}. \end{proof}

All jump matrices in the RH problem \ref{rhproblemR} for $R$ tend to the identity matrix as $n \to \infty$.
As a result of the steepest descent analysis we then arrive at the following
conclusion, see e.g.\ \cite{DKMVZ99}.
\begin{corollary}\label{approxid} Let $a, c$ be fixed with $a^2\geq 2c$.
	Then, we have
\begin{equation}\label{Rasymp}
    R(z) = I_3 + \mathcal O\left(\frac{1}{n\left(|z|+1\right)}\right) 
    \quad \text{ as } n \to \infty,  \end{equation}
uniformly for $z \in \mathbb C \setminus \Sigma_R$
and uniformly for $t = \frac{n}{N}$ in compact subsets of $(0,t^*)$.
\end{corollary}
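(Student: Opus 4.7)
My plan is to invoke the standard small-norm theory for Riemann-Hilbert problems, once I verify that the jump matrix $J_R$ for the RH problem \ref{rhproblemR} is close to $I_3$ in $L^1 \cap L^2 \cap L^\infty$ with norms of order $1/n$, uniformly in $t$ on compact subsets of $(0,t^*)$. More precisely, I plan to show that there are positive constants $c_0$ and $C_0$ (depending only on the compact subset of $(0,t^*)$) such that on each piece of $\Sigma_R$ the deviation $J_R - I_3$ admits a pointwise bound that is either of order $e^{-c_0 n}$ (exponential in $n$, bounded in $z$) or of order $e^{-c_0 n (|z|+1)}$ (decaying exponentially in $|z|$ along the unbounded lens $\partial L_2$), or of order $C_0/n$ on the circles $\partial D$.

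The bounds on bounded pieces of $\Sigma_R$ are direct consequences of Lemma \ref{lemma612}. On $\gamma$ the jump $J_R - I_3 = M (J_S - I_3) M^{-1}$ contains only the entry $-e^{-2n\phi_3}$, which is exponentially small by Lemma \ref{lemma612}(a). On $\partial L_1 \setminus D$ and the bounded part of $\partial L_2 \setminus D$, analogous statements follow from parts (d) and (e). On the bounded part of $(-x_2,x_2) \setminus D$ the jump has the form $M_-(J_S - I_3) M_+^{-1}$, and the non-identity entries contain $e^{-2n\phi_{2,+}}$, which is exponentially small by Lemma \ref{lemma612}(b). On the two arcs $(-x_j - \delta, -x_j + \delta)$ and $(x_j - \delta, x_j + \delta)$ for $j=1,2$, one uses that $P$ absorbs the leading singular behavior, so that $P_-(J_S - I_3)P_+^{-1}$ is again exponentially small there. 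Finally, on the circles $\partial D$ the matching condition \eqref{Pmatching} yields $J_R - I_3 = \mathcal{O}(n^{-1})$ uniformly. The factors of $M^{\pm 1}$, $P_\pm^{\pm 1}$ are uniformly bounded on these pieces away from the branch points, and Lemma \ref{Mlemma} (together with the behaviour \eqref{Mbranch}) guarantees that the restricted factors $M_\pm$ on the relevant arcs adjacent to branch points remain controlled when combined with the prefactor $E_n$ hidden inside $P$.

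The one step requiring genuine care, and in my view the main technical obstacle, is the unbounded portion of $\partial L_2$. There one needs decay not only in $n$ but also in $|z|$ in order to get $J_R - I_3$ into $L^1$ and $L^2$. I plan to use the asymptotic relation \eqref{phi2inf}, which rearranges to $\Re\phi_2(z) = -(a/t)\, |\Im z| + \mathcal{O}(|z|^{-1})$ as $z \to \infty$ along $\partial L_2$. By the construction in Lemma \ref{lemma612}(e), the lens $\partial L_2$ was chosen so that for $|z|$ large it lies in a sector $|\arg(z \mp x_2)| \leq \theta$ with $\theta > 0$, which forces $|\Im z| \geq c_1 |z|$ on $\partial L_2 \setminus D$ for some $c_1 > 0$. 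Hence $\Re\phi_2(z) \leq -c_2(|z|+1)$ on $\partial L_2 \setminus D$, so $|e^{2n\phi_2(z)}| \leq e^{-2 c_2 n(|z|+1)}$. Since $M$ is uniformly bounded on $\partial L_2$ (it tends to $M_\infty$ at infinity by \eqref{Masymp}), this exponential decay is inherited by $J_R - I_3$. Uniformity in $t$ on compact subsets of $(0,t^*)$ follows from continuous dependence of $\mu_1, \mu_2$, $x_j(t)$ and the constants in the conformal map \eqref{conformalmap} on $t$; in particular the constant $c_2$ can be taken uniform on any compact subset.

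Once these pointwise bounds are in place, I would cite the standard small-norm theorem for Riemann-Hilbert problems, see \cite{DKMVZ99, DZ93}, which yields that $R - I_3$ is $L^2$-small and then that
\[
R(z) - I_3 = \frac{1}{2\pi i} \int_{\Sigma_R} \frac{R_-(s)\,(J_R(s) - I_3)}{s-z}\, ds.
\]
Using the pointwise estimates and the boundedness of $R_-$ in $L^\infty(\Sigma_R)$, the right-hand side is bounded in absolute value by $\frac{C}{n}\cdot \sup_{s \in \Sigma_R} \frac{1}{|s-z|} \cdot w(s)$-type integrals where $w$ is an integrable weight carrying the exponential decay along the unbounded lens. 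Standard manipulations then produce the factor $\frac{1}{|z|+1}$ for $|z|$ away from $\Sigma_R$ (by writing $\frac{1}{|s-z|} \leq \frac{2}{|z|+1}$ uniformly in $s$ on a bounded portion of $\Sigma_R$, and using the $|z|$-decay of the integrand on the unbounded portion), while for $z$ close to $\Sigma_R$ the small-norm bound directly gives $\mathcal{O}(n^{-1})$. The combined estimate is exactly \eqref{Rasymp}, and uniformity in $t$ over compact subsets of $(0,t^*)$ follows from the uniformity of all pointwise bounds above.
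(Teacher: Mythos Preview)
Your approach is essentially the same as the paper's: estimate $J_R - I_3$ piecewise on $\Sigma_R$ (exponentially small everywhere except on $\partial D$, where it is $\mathcal{O}(n^{-1})$ by the matching condition \eqref{Pmatching}), with the only delicate point being the exponential decay in $|z|$ along the unbounded lips $\partial L_2$, and then invoke small-norm RH theory.

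There is one slip in your treatment of $\partial L_2$. You state that $\partial L_2$ lies \emph{in} the sector $|\arg(z \mp x_2)| \leq \theta$, but Lemma \ref{lemma612}(e) says that the lens \emph{region} $L_2$ contains this sector, so the lip $\partial L_2^+$ lies \emph{outside} it, i.e.\ in $\{\arg(z-x_2) \geq \theta\}$ for large $|z|$. Your stated premise would only yield an \emph{upper} bound $|\Im z| \leq |z-x_2|\sin\theta$, not the lower bound $|\Im z| \geq c_1|z|$ you claim from it. With the correct inclusion the lower bound does follow, and then \eqref{phi2inf} gives $\Re\phi_2(z) \leq -c_2|z|$ as you want; this is exactly how the paper argues it. A second minor point: there are no arcs $(\pm x_2 - \delta, \pm x_2 + \delta)$ in $\Sigma_R$, since $P$ and $S$ have identical jumps inside $D(\pm x_2, \delta)$; your case ``$j=1,2$'' should read $j=1$ only.
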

\begin{proof}
For each $t \in (0,t^*)$ we have 
\begin{equation} \label{JRest1}
	 J_R = P M^{-1} =  I_3  + \mathcal O(n^{-1}) \text{ as } n \to \infty,
	\text{ uniformly on } \partial D, \end{equation}
because of the matching condition \eqref{Pmatching}.
 
The off-diagonal entry in the jump matrix \eqref{Sjump1} for $S$ on $\gamma$
tends to zero at an exponential rate as $n \to \infty$.
Outside of the disks the off-diagonal entries in the jump matrices 
\eqref{Sjump6} and \eqref{Sjump7} on the lips of the lenses 
also tend to zero at an exponential rate. On the unbounded lens $\partial L_2$
the absolute value of the off-diagonal entry is $e^{2n \Re \phi_2}$.
From the asymptotic behavior \eqref{phi2inf} (valid for $\Im z > 0$) 
and the fact that  $ L_{2}$ contains a sector $|\arg(z-x_{2})|\leq \theta$, 
we conclude $\Re \phi_{2}(z) \leq -c_{2}|z|$ for some $c_{2}>0$ as $z\to \infty$ along $\partial L_{2}^+$. Since $\phi_2(\overline{z}) = \overline{\phi_2(z)}$
we find the same estimate along $\partial L_2^-$. 
Hence, there is a constant $c_0 > 0$ such that
$J_{S}=I_{3}+\mathcal O(e^{-c_0 n|z|})$  as $n \to \infty$ uniformly for $z \in \partial L_2^{\pm} \setminus D$.
Since $M$ and $M^{-1}$ are bounded on $\mathbb C \setminus D$,
we  conclude that for some constant $c_1 > 0$,
\begin{equation} \label{JRest2} J_{R}=I_{3}+\mathcal O(e^{-c_1 n|z|}) \quad \text{ as } n \to \infty,
	\text{ uniformly on } \gamma \cup (\partial L_1 \cup \partial L_2) \setminus D. 
	\end{equation}
	
The remaining part of $\Sigma_R$ is the interval $(-x_2+\delta, x_2-\delta)$
and we claim that for some constant $c_2 > 0$,
\begin{equation} \label{JRest3} 
	J_R = I_3 + \mathcal{O}(e^{-c_2n})	\quad 
	\text{ as } n \to \infty,
	\text{ uniformly on } (-x_2+\delta, x_2-\delta). \end{equation}
Indeed, on $(-x_1, x_1)$ the non-constant entries in \eqref{Sjump2} tend to zero
exponentially fast as $n \to \infty$. Then, since $M$ has the jump
matrix \eqref{Mjump1}
on $(-x_1,x_1)$ and $P$ has the same jump matrix on $(-x_1,-x_1+\delta)$ and
$(x_1-\delta, x_1)$, \eqref{JRest3} uniformly on $(-x_1,x_1)$.
On $(-x_2,-x_1-\delta)$ and $(x_1+\delta,x_2)$ we similarly use
that \eqref{Sjump3} and \eqref{Sjump4} tend to $I_3$ at an exponential rate,
and we obtain \eqref{JRest3} holds true on these intervals as well.
On $(-x_1-\delta,-x_1)$ and $(x_1,x_1+\delta)$ only
the $(3,2)$ entry in \eqref{Sjump3} is exponentially small.
Then we use that the jump matrix $J_P$ in \eqref{JP1} agrees with
\eqref{Sjump3} up to the $(3,2)$ entry to obtain
\[
	J_R  
		= I_3 + (-1)^{n+cN} e^{-2n \phi_2}  
		P_- \begin{pmatrix} 0 & 0 & 0 \\ 0 & 0 & 0 \\ 0 & 1 & 0 \end{pmatrix}
		P_+^{-1} \]
on $(-x_1-\delta,-x_1) \cup (x_1,x_1+\delta)$.
Since $P$ and $P^{-1}$ are uniformly bounded in $D$ and $\Re \phi_2 > 0$
we find \eqref{JRest3} on $(-x_1-\delta,-x_1)$ and $(x_1,x_1+\delta)$ as well
(maybe with a different constant $c_2>0$),
and thus \eqref{JRest3} holds  on the full interval $(-x_2+\delta,x_2-\delta)$ indeed.
 
In conclusion \eqref{JRest1}--\eqref{JRest3} imply that \eqref{Rasymp} 
holds, by standard arguments
on small norm RH problems, see e.g.\ \cite{DKMVZ99}, 

Further inspection shows that the $\mathcal{O}$-terms in 
\eqref{JRest1}--\eqref{JRest3} are also uniform for $t$ in compact
subsets of $(0,t^*)$. Then it also follows by the same arguments 
that \eqref{Rasymp}
is valid uniformly for $t$ in compact subsets of $(0,t^*)$.
\end{proof}

\subsection{Proof of Theorem \ref{thm:PnNasym}}
\begin{proof}
We take $z$ in a compact subset $K$ of $\overline{\mathbb C} \setminus [-x_1,x_1]$
and we  may assume that the lens $L_1$ and the disks
$D(\pm x_1,\delta)$ are such that $K$ is in their exterior.
However, $K$ could intersect the unbounded lens $L_2$ and the disks
around $\pm x_2$.

Following the transformations
\[ Y \mapsto X \mapsto T \mapsto S \mapsto R \]
and recalling $P_{n,N} = Y_{11}$ we find the asymptotic formula as follows.
From transformations $Y \mapsto X \mapsto T$ in Definitions \ref{YtoX} and \ref{XtoT} we obtain
\[  P_{n,N}(z)=Y_{1,1}(z)=X_{1,1}(z) = T_{1,1}(z) e^{ng_1(z)}. \]
 From the transfomation $T \mapsto S$ in Definition  \ref{TtoS} we have 
\[ T_{1,1}(z)=S_{1,1}(z), \]
since $z$ is in the exterior of the lens $L_1$.
Equation \eqref{defR} in Definition \ref{StoR} yields 
\[ S_{1,1} = \begin{cases}
\left(R M \right)_{1,1}, & \text{ on } \mathbb C \setminus \left(\overline{D} \cup \Sigma_S \right) \\
\left(R  P \right)_{1,1}, & \text{ on } \overline{D}\setminus \Sigma_S.
\end{cases} \]
Combining all this with \eqref{Rasymp} we obtain
\begin{equation} \label{PnNasymp1}
    P_{n,N}(z)=\begin{cases}
    M_{1,1}(z) e^{ng_{1}(z)}\left(1+\mathcal{O}\left(n^{-1}\right)\right), &
    z\in \mathbb C \setminus \left(\overline{D}\cup L_1 \cup \Sigma_S \right),\\
    P_{1,1}(z) e^{ng_{1}(z)}
    \left(1+\mathcal{O}\left(n^{-1}\right)\right),
    & z\in \left( \overline{D(x_2,\delta) \cup D(-x_2,\delta)} \right) \setminus \Sigma_S,
    \end{cases}\end{equation}
uniformly on the indicated sets.

From the explicit construction of the local parametrix $P$ in \eqref{Pdef},
we obtain
$P_{1,1}= \left(E_{n} \right)_{1,1}$ on $D(\pm x_{2},\delta)$.
Due to \eqref{Enx2} we find $\left(E_{n}\right)_{1,1}=M_{1,1}$
on $D(x_2,\delta)$, and by symmetry also on $D(-x_{2},\delta)$.
Thus, we may rewrite \eqref{PnNasymp1} as  
\begin{equation} \label{PnNasymp2}
	P_{n,N}(z)  = 
		M_{1,1}(z) e^{ng_{1}(z)}\left(1+\mathcal{O}\left(n^{-1}\right)\right), 
\end{equation}
uniformly for $z \in \mathbb C \setminus \left(\overline{D(x_1,\delta) \cup D(-x_1,\delta)} \cup L_1 \cup \Sigma_S \right)$.
The three functions $P_{n,N}$, $M_{1,1}$ and $e^{ng_1}$ are analytic
on $\mathbb C \setminus [-x_1,x_1]$, and therefore
the formula \eqref{PnNasymp2} extends across 
the part of $\Sigma_S$ that is outside of $\overline{D(x_1,\delta) \cup D(-x_1,\delta)} \cup L_1$,
 with the same error estimate,
and \eqref{PnNasymp2} holds uniformly for
$z \in \mathbb C \setminus \left(\overline{D(x_1,\delta) \cup D(-x_1,\delta)} \cup L_1\right)$, and in particular on $K$.

Since $M_{1,1}(z)$ is given by \eqref{M11z} and $F_1 = F$ is the
conformal map, we obtain \eqref{PnNasym}, uniformly on $K$,
but with $\rho F$ and $g_1$ associated with the
parameter $t_{n,N} = \frac{n}{N}$.
Under the assumptions of Theorem \ref{thm:PnNasym} we have that
$t_{n,N} = t + \mathcal{O}(n^{-1})$ as $n\to \infty$.
Since $\rho F$ and $g_1$ depend on $t$ in a real analytic way,
we also obtain \eqref{PnNasym} for $\rho F$ and $g_1$
associated with $t$, as stated in the theorem. 

This completes the proof of Theorem \ref{thm:PnNasym}.
 \end{proof}
 We find it quite remarkable that although with our transformations 
 we introduced jumps on the unbounded contours $\Delta_2$ and 
 $\partial L_{2}$, these contours as well as the branch points at $\pm x_2$
 do not affect the asymptotic formula \eqref{PnNasym}.
 The measure $\mu_{2}$ plays the role of an auxiliary measure in the 
 asymptotic analysis, and ultimately the polynomial $P_{n,N}$
 itself does not see it.

 \subsection{Proof of Theorem \ref{thm:zeros}}
 \begin{proof}
 The claim is almost immediate from the strong asymptotic
 formula \eqref{PnNasym} Due to \eqref{M1wdef} we have $M_{1,1}(z)\neq 0$ for $z\in \mathbb C\setminus [-x_1,x_1]$. Hence, by \eqref{PnNasym} the
 zeros of $P_{n,N}$ accumulate on $[-x_1,x_1]$ as $n \to \infty$
 as in the theorem.
 
From \eqref{PnNasym} we further conclude  that
\begin{equation}
    \lim_{n\to\infty}\frac{1}{n}\log\left.|P_{n,N}(z)\right|=\Re g_{1}(z) =\int\log|z-s|d\mu_{1}(s)
\end{equation}
uniformly for $z$ in compact subsets of $\mathbb C \setminus [-x_1,x_1]$, 
which is known as weak asymptotic formula for the sequence of polynomials
$(P_{n,N})_n$. It is well known, see e.g. \cite{ST97}, that 
it implies that $\mu_{1}$ is the weak limit of the normalized zero counting 
measures. This completes the proof of 
Theorem \ref{thm:zeros}.
\end{proof}

\section{Conclusion and further work}

So far, our result is only valid in the regime $0<t<t^{*}$,  we plan to investigate the strong asymptotics of the polynomials in other regimes in future work. 
 By item \ref{item6}  of Theorem \ref{thm:mu12} at $t=t^{*}$ the branch point $x_{1}(t^{*})$, $x_{2}(t^{*})$ coincides and a simple application Riemann-Hurwitz formula tells that the density of the measure $\mu_{1}$ decays like a cube root. We expect in this scenario that the local parametrices will be built out of the Pearcey integrals. We refer to \cite{BK07,BH98} for works in this direction.

 For $t^{*}<t<t_{c}$ the branch points move into the complex plane. In this case the support of the limiting zero counting measure is no longer just an interval, but also contains certain contours on the complex plane that enjoy special symmetry properties \cite{MR16, R12}. These contours can also be described in terms of trajectories of certain quadratic differentials as also observed in \cite{BS20, KS15, MS16}.

Regarding the phase transition, we have been able to show that the droplet is simply connected for $t<t_{c}$  and can be determined by the same conformal map $f(w)$ given in \eqref{conformalmap}. Remarkably, the spectral curve degenerates to three linear factors at $t=t_{c}$ and remains degenerate there after.
For $t=t_{c}$, the density of the measure $\mu_{1}$ has a double zero, and the local parametrices are built out of  Hastings-McLeod solution of the Painleve II equation as done in \cite{BBLM15, KLY23}.

Furthermore, we have simplified our model by assuming $a^2\geq 2c$ and symmetric insertion of point charges. We are curious to see what happens when these constraints are removed or multiple point charges are inserted. 

\section*{Acknowledgments}
We extend our  gratitude to Iv\'{a}n Parra for many discussions and insights during the initial phase of the project.

M.K. has been supported
by the Australian Research Council via the Discovery Project grant DP210102887. A.K. is supported by Methusalem grant METH/21/03 – long term structural funding of
the Flemish Government. S.L. acknowledges financial support from the International Research Training
Group (IRTG) between
 KU Leuven and University of Melbourne and  Melbourne Research Scholarship of University of Melbourne.

\end{document}